\documentclass[11pt]{amsart}
\usepackage{amsmath,amsthm,amssymb,enumerate,mathscinet,mathtools}
\usepackage{fullpage}
\usepackage{graphicx}
\usepackage{verbatim}
\usepackage{mathrsfs}
\usepackage{hyperref}
\newcommand{\msc}[1]{\begin{center}MSC2000: #1.\end{center}}
\newtheorem{theorem}{Theorem}[section]
\newtheorem{fact}[theorem]{Fact}
\newtheorem{lemma}[theorem]{Lemma}

\newtheorem{corollary}[theorem]{Corollary}
\newtheorem{conjecture}[theorem]{Conjecture}
\newtheorem{claim}[theorem]{Claim}
\newtheorem{definition}[theorem]{Definition}

\newtheorem{thm}[theorem]{Theorem}
\newtheorem{prop}[theorem]{Proposition}
\newtheorem{obs}[theorem]{Observation}

\newtheorem{cor}[theorem]{Corollary}

\newtheorem{defin}[theorem]{Definition}

\numberwithin{equation}{section}

\def\eps{\varepsilon}

\def\eps{\varepsilon}
\def\card#1{\left| #1 \right|}
\def\vSet#1#2{V_{#1}^{(#2)}}

\def\mc#1{\mathcal{#1}}

\DeclareMathOperator{\Aut}{Aut}
\DeclareMathOperator{\Var}{Var}

\DeclarePairedDelimiter\floor{\lfloor}{\rfloor} 
\DeclarePairedDelimiter\ceil{\lceil}{\rceil}  

\def\COMMENT#1{}

\allowdisplaybreaks

\title{Ramsey properties of randomly perturbed graphs: \\ cliques and cycles}

\author{Shagnik Das and
 Andrew Treglown}

\thanks{
SD: Freie Universit\"at Berlin, Germany, {\tt shagnik@mi.fu-berlin.de}.  Research supported by GIF grant G-1347-304.6/2016. \\
\indent AT: University of Birmingham, United Kingdom, {\tt a.c.treglown@bham.ac.uk}. Research supported by EPSRC grant EP/M016641/1.}

\begin{document}

\begin{abstract}
Given graphs $H_1,H_2$, a graph $G$ is $(H_1,H_2)$-Ramsey if  for every colouring  of the edges of $G$ with red and blue, there is a red copy of $H_1$ or a blue copy of $H_2$.
 In this paper we investigate Ramsey questions in the setting of \emph{randomly perturbed graphs}:
this is a random graph model introduced by Bohman, Frieze and Martin~\cite{bfm1} in
which one starts with a dense graph and then adds a given number of random edges to it.
The study of Ramsey properties of randomly perturbed graphs was initiated by Krivelevich, Sudakov and Tetali~\cite{kst} in 2006; they determined how many random edges must be added to a dense graph  to 
ensure the resulting graph is with high probability $(K_3,K_t)$-Ramsey (for $t\geq 3$).
They also raised the question of generalising this result to pairs of graphs other than  $(K_3,K_t)$.
We make significant progress on this question, giving a precise solution  in the case when  $H_1=K_s$ and $H_2=K_t$ where $s,t \geq 5$.
Although we again show that one requires polynomially fewer edges than in the purely random graph, our result shows that the problem in this case is quite different to the $(K_3,K_t)$-Ramsey question.
Moreover, we give bounds for the corresponding $(K_4,K_t)$-Ramsey question; together with a construction of Powierski~\cite{power} this resolves the $(K_4,K_4)$-Ramsey problem.

We also give a precise solution to the analogous question in the case when both $H_1=C_s$ and $H_2=C_t$ are cycles. Additionally we consider the corresponding multicolour problem.
Our final result gives another generalisation of the Krivelevich, Sudakov and Tetali~\cite{kst} result. Specifically, we determine how many random edges must be added to a dense graph  to 
ensure the resulting graph is with high probability $(C_s,K_t)$-Ramsey (for odd $s\geq 5$ and $t\geq 4$).

To prove our results we combine a mixture of approaches, employing the container method, the regularity method as well as dependent random choice, and apply robust extensions of recent asymmetric random Ramsey results.
\end{abstract}

\date{\today}

\maketitle
\msc{5C55, 5C80, 5D10}

\section{Introduction}

\begin{center}
``Introduce a little anarchy, upset the established order, and everything becomes chaos."
\end{center}
\begin{flushright}
--- The Joker, \emph{The Dark Knight} (2008)
\end{flushright}

Let $G$ and $H$ be graphs and $r \in \mathbb N$. 
We say that $G$ is \emph{$(H,r)$-Ramsey} if every $r$-colouring of $G$ yields a monochromatic copy of $H$ in $G$. 
More generally, for graphs $H_1,\ldots,H_r$, a graph $G$ is \emph{$(H_1,\ldots,H_r)$-Ramsey} if  for any $r$-colouring  of $G$ there is a copy of $H_i$ in colour $i$ for some   $i \in [r]$.
In the case $r = 2$, we shall take the first colour to be red and the second colour to be blue.
Ramsey's classic theorem tells us that if $n\in \mathbb N$ is sufficiently large then $K_n$ is $(H_1,\ldots,H_r)$-Ramsey.
Whilst in general for given graphs $H_1,\dots,H_r$ it seems out of reach to determine the \emph{smallest} $n =: R(H_1,\dots,H_r)$ with this property, much effort has gone into establishing good upper and lower bounds on $R(H_1,\dots,H_r)$
(particularly in the case when the $H_i$ are cliques; see e.g.~\cite{dc, spencer}).

\subsection{Ramsey properties of random graphs}
There has also been significant interest in understanding  Ramsey properties of the \emph{random graph $G(n,p)$}. Recall that $G(n,p)$ has vertex set $[n]:=\{1,\dots, n\}$ and each edge is present with probability $p$, independently of all other choices.
Seminal work of R\"odl and Ruci\'nski~\cite{random1, random2, random3} determined the \emph{threshold} for the   $(H,r)$-Ramsey property in $G(n,p)$ for all fixed $H$ and $r\geq 2$. Before we state their result (in a slightly restricted form) we require two definitions.
Given a graph $H$, set $d_2(H):=0$ if $e(H)=0$; $d_2(H):=1/2$ when $H$ is precisely an edge and define $d_2(H) := (e(H)-1)/(v(H)-2)$ otherwise.
Then define $m_2(H) := \max_{H' \subseteq H}d_2(H')$ to be the \emph{$2$-density} of $H$.
We say that $H$ is \emph{strictly $2$-balanced} if $m_2(H')<m_2(H)$ for all $H' \subsetneq H$.
R\"odl and Ruci\'nski showed that the $2$-density of $H$ is the parameter that governs the threshold for the   $(H,r)$-Ramsey property in $G(n,p)$.

\begin{thm}[\cite{random1, random2, random3}]\label{randomramsey} Let $r \geq 2$ be a positive integer and let $H$ be a graph that is not a forest consisting of stars and paths of length $3$.
There are positive constants $c,C$ such that
$$\lim _{n \rightarrow \infty} \mathbb P [G(n,p) \text{ is } (H,r)\text{-Ramsey}]=\begin{cases}
0 &\text{ if } p<cn^{-1/m_2(H)}; \\
 1 &\text{ if } p>Cn^{-1/m_2(H)}.\end{cases}$$
\end{thm}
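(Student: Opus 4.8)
The statement splits into the \emph{$0$-statement} (with high probability $G(n,p)$ is \emph{not} $(H,r)$-Ramsey when $p<cn^{-1/m_2(H)}$) and the \emph{$1$-statement} (with high probability $G(n,p)$ \emph{is} $(H,r)$-Ramsey when $p>Cn^{-1/m_2(H)}$), and my plan is to prove these separately. For the $0$-statement it is enough to treat $r=2$, since a red/blue colouring of $E(G)$ with no monochromatic $H$ is in particular an $r$-colouring with no monochromatic $H$ (using only two colours). For the $1$-statement there is no such reduction: one must handle every $r\geq 2$ simultaneously, and this is really the crux, since being $(H,r)$-Ramsey is a strictly stronger property as $r$ grows whereas the threshold is claimed not to move.

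For the $0$-statement I would first reduce to the case that $H$ is strictly $2$-balanced: pick $H^{*}\subseteq H$ edge-minimal with $m_2(H^{*})=m_2(H)$; then $H^{*}$ is strictly $2$-balanced, $m_2(H^{*})=m_2(H)$, and a $2$-colouring of $G$ with no monochromatic $H^{*}$ has no monochromatic $H$ (a monochromatic $H$ contains a monochromatic $H^{*}$) --- for the host graphs of interest, $H=K_s$ and $H=C_s$, which are already strictly $2$-balanced, this is vacuous. Now take $H$ strictly $2$-balanced, $p=cn^{-1/m_2(H)}$ with $c$ small, and let $X$ count the copies of $H$. The two relevant estimates are $\mathbb{E}[X]=\Theta\big(c^{\,e(H)-1}n^{2}p\big)$ --- so copies of $H$ are only an $O(c^{\,e(H)-1})$-fraction as abundant as the edges --- and, using strict $2$-balancedness, that the expected number of pairs of copies of $H$ sharing at least one edge is dominated by the pairs sharing exactly one edge and is again $\Theta\big(c^{\,e(H)-1}\mathbb{E}[X]\big)$, every denser overlap contributing $o(\mathbb{E}[X])$. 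Hence, for $c$ small, with high probability all but a small fraction of the copies of $H$ are edge-disjoint from every other copy, and a deletion argument together with the Lov\'asz Local Lemma (applied to the hypergraph whose vertices are $E(G)$ and whose edges are the copies of $H$, after stripping out the few edges lying in many copies) yields a $2$-colouring of $E(G)$ with no monochromatic $H$. The non-excluded forests with $m_2(H)=1$ slip through this reduction, since their strictly $2$-balanced core is an excluded graph; but such a forest necessarily contains a path with three edges, so it suffices to observe that for $c$ small $G(n,p)$ is with high probability a disjoint union of small components each of which decomposes into a bounded number of star forests, and any colouring of $E(G)$ into star forests has no monochromatic path with three edges.

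For the $1$-statement the plan is to transfer a robust \emph{dense} Ramsey statement into the sparse random world. Ramsey's theorem says $K_n$ is $(H,r)$-Ramsey for $n$ large, and a supersaturation argument strengthens this to: every $r$-colouring of $K_n$ contains $\Omega(n^{v(H)})$ monochromatic copies of $H$. I would feed this into the hypergraph container method, applied to the $e(H)$-uniform hypergraph on vertex set $E(K_n)$ (or $E(K_n)\times[r]$) whose edges are the (monochromatic) copies of $H$; the key point is that the $\ell$-th codegrees of this hypergraph are controlled by $m_2(H)$, so the container lemma produces a family $\mathcal{C}$ with $\log|\mathcal{C}|=O\big(n^{2-1/m_2(H)}\log n\big)$ such that every colour class of a monochromatic-$H$-free colouring of any graph lies inside a single $C\in\mathcal{C}$, and each $C$ is $H$-sparse (it spans $o(n^{v(H)})$ copies of $H$, hence, by Erd\H{o}s--Simonovits supersaturation, has at most $(\pi(H)+o(1))\binom{n}{2}$ edges, where $\pi(H)$ is the Tur\'an density of $H$). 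If $G(n,p)$ were not $(H,r)$-Ramsey then $E(G)$ would be covered by $r$ members of $\mathcal{C}$, and one wants a first-moment estimate ruling this out once $p>Cn^{-1/m_2(H)}$.

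That last estimate is where the real difficulty lies, and it is the step I expect to be the main obstacle. The naive union bound over all $r$-tuples of containers carries a factor $\exp\big(O(rn^{2-1/m_2(H)}\log n)\big)$, and its logarithm just fails to be dominated by $p\binom{n}{2}=\Theta(n^{2-1/m_2(H)})$; crucially one cannot make up the difference by counting edges alone, because the $1$-statement does \emph{not} follow from the random Tur\'an theorem when $\pi(H)>0$ (i.e.\ when $\chi(H)\geq 3$, as for $H=K_s$ or $H=C_s$ with $s$ odd): in a $2$-colouring some colour class has half of $E(G)$, but only an excess of $\pi(H)+\varepsilon$ over half would force a copy of $H$ in it. The way around this is to exploit the structure of the containers --- an $H$-sparse container is close to a complete $(\chi(H)-1)$-partite graph, of which there are only $\exp(O(n))$ --- together with a two-round exposure $G(n,p)=G_1\cup G_2$: reveal $G_1$, argue that conditioning on $G_1$ collapses the number of relevant container-tuples, and then use the fresh randomness of $G_2$ to kill the surviving bad events; this is the route of Nenadov and Steger. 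Alternatively one can invoke the transference theorems of Conlon--Gowers or Schacht, where the analogous difficulty is absorbed inside the sparse regularity and pseudorandomness machinery and the dense Ramsey supersaturation is transferred wholesale. Either way one obtains the constant $C$ and completes the proof. (For bipartite $H$, such as $C_s$ with $s$ even, $\pi(H)=0$ and the $1$-statement does follow from the random Tur\'an theorem, since any colour class retains a positive density of edges.)
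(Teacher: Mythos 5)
The paper does not actually prove Theorem~\ref{randomramsey}: it is imported from R\"odl and Ruci\'nski, with~\cite{ns} cited for a short proof, and the nearest in-paper analogue is the container argument for Theorem~\ref{thm:global1ramsey} in Appendix~\ref{sec:global1ramseyproof}. Measured against that, your 1-statement plan has the right skeleton (containers for nearly-$H$-free subgraphs of $K_n$ with codegrees governed by $m_2(H)$, plus supersaturated Ramsey), but your diagnosis of the final step is off. One does not union bound over $r$-tuples of containers (which, as you correctly note, costs $\exp(\Theta(n^{2-1/m_2(H)}\log n))$ and loses to $p\binom{n}{2}$); one union bounds over $r$-tuples of \emph{fingerprints} $T$ and charges the probability $p^{|T|}$ that the fingerprint is present in $G(n,p)$. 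Since fingerprints have size at most $M_n=\Theta(n^{2-1/m_2(H)})$, the sum $\sum_T p^{|T|}\le\sum_{s\le M_n}\binom{\binom{n}{2}}{s}(Kp)^s$ has largest term $\exp\bigl(O(n^{2-1/m_2(H)})\bigr)$ because $\binom{n}{2}p/M_n=O(1)$ --- the logarithmic factor never appears --- and this is beaten by $(1-p)^{\pi\binom{n}{2}}$ once $C$ is large; this is precisely the computation culminating in~\eqref{eqn:errorbounda}. No two-round exposure and no structural stability of the containers is needed, and your attribution of such a sprinkling argument to Nenadov--Steger is inaccurate (multi-round exposure is the mechanism in Schacht's transference, which, like Conlon--Gowers, is a legitimate alternative route that you correctly list). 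So your container plan does close, but only after replacing your proposed closing step with the fingerprint-weighted union bound.

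Your 0-statement sketch conceals the genuinely hard part of that direction. The reductions (to $r=2$ and to a strictly $2$-balanced core $H^*$) and the first/second-moment computations are fine, but ``stripping out the few edges lying in many copies'' and then applying the Local Lemma is not a proof: the stripped edges must still be coloured, every copy of $H$ through them must still be made bichromatic, and the Local Lemma's dependency degree is unbounded exactly because of those edges. The published proofs spend most of their effort showing that w.h.p.\ the connected clusters of pairwise edge-intersecting copies of $H$ are small or have a degenerate structure admitting a proper colouring, and then colour cluster by cluster; that deterministic structural analysis is the content of the 0-statement, not a routine deletion. Similarly, in the forest case you need each component of $G(n,c/n)$ to split into at most \emph{two} $P_4$-free graphs (disjoint unions of stars and triangles), not ``a bounded number'' of star forests, and you must handle unicyclic components, which occur with probability bounded away from $0$. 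None of this says your architecture is wrong, but as written these are genuine gaps rather than omitted routine details.
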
 

We remark that a short proof of Theorem~\ref{randomramsey} was recently given in \cite{ns}.
There has also been attention on the more general problem of determining the threshold of the $(H_1,\ldots,H_r)$-Ramsey property in $G(n,p)$.
In particular, the focal question in the area is the following conjecture of 
Kohayakawa and Kreuter~\cite{kreu}.
To state it, we need to introduce the \emph{asymmetric density} of $H_1,H_2$ where $m_2(H_1)\geq m_2(H_2)$ via
\begin{equation}\label{2density}
m_2(H_1,H_2):=\max \left \{\frac{e(H_1')}{v(H_1')-2+1/m_2(H_2)}: H_1' \subseteq H_1 \text{ and } e(H_1') \geq 1 \right \}.
\end{equation}
We say that $H_1$ is \emph{strictly balanced with respect to $m_2(\cdot, H_2)$} if no $H'_1 \subsetneq H_1$ with at least one edge maximises (\ref{2density}).
\begin{conjecture}[\cite{kreu}]\label{conjkreu}
For any graphs $H_1,\ldots,H_r$ with $m_2(H_1) \geq \ldots \geq m_2(H_r)>1$, there are positive constants $c,C > 0$ such that
$$
\lim_{n \rightarrow \infty} \mathbb{P}\left[ G(n,p) \text{ is } (H_1,\ldots,H_r)\text{-Ramsey} \right] =\begin{cases}
0 &\text{ if } p<cn^{-1/m_2(H_1,H_2)}; \\
 1 &\text{ if } p>Cn^{-1/m_2(H_1,H_2)}.\end{cases}$$
\end{conjecture}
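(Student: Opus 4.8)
The plan is to prove the two directions of the threshold separately, reducing each to the two-colour case $r=2$. The $0$-statement reduces immediately: a red/blue colouring of $E(G(n,p))$ with no red $H_1$ and no blue $H_2$ is also an $r$-colouring, using only two of the available colours, with no copy of $H_i$ in colour $i$ for any $i\in[r]$, so it suffices to construct such a colouring for the pair $(H_1,H_2)$, which by the ordering of the $m_2(H_i)$ governs the threshold. For the $1$-statement a R\"odl--Ruci\'nski-type merging of colour classes reduces the problem to the same pair at the cost of the constants $c,C$. So write $m:=m_2(H_1,H_2)$, assume $r=2$, and --- passing to a subgraph --- assume $H_1$ is strictly balanced with respect to $m_2(\cdot,H_2)$; taking $H_1'$ to be a single edge in (\ref{2density}) gives $m\ge m_2(H_2)>1$, so $p=\Theta(n^{-1/m})$ always lies at or above the R\"odl--Ruci\'nski threshold for $H_2$ in $G(n,p)$.

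For the $1$-statement ($p>Cn^{-1/m}$), suppose for contradiction that with non-negligible probability $G(n,p)$ admits a colouring with no red $H_1$ and no blue $H_2$, and let $B$ be its blue edge set; then $B$ is $H_2$-free and $G(n,p)\setminus B$ is $H_1$-free. I would apply the hypergraph container method to $H_2$-free graphs, obtaining $\exp(o(n^2))$ containers, each containing only $o(n^{v(H_2)})$ copies of $H_2$, such that $B$ lies inside one of them, and then try to bound, uniformly over containers $C$, the probability that $G(n,p)\setminus C$ is $H_1$-free. The exponent $1/m$ is precisely the one for which the expected number of copies of $H_1$ in $K_n\setminus C$ surviving in $G(n,p)$ diverges as a fixed power of $n$, and a Janson-type bound controls the failure probability for a \emph{fixed} $C$. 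The delicate point --- the real content of this direction --- is that a crude union bound over the $\exp(o(n^2))$ containers is too lossy (it loses already a $\log n$ factor in the exponent for cliques), so one must instead run a more careful, multi-round exposure argument, as done by Nenadov--Steger in the symmetric case and by Mousset--Nenadov--Samotij in the asymmetric case; when $H_1$ is not strictly $2$-balanced one replaces it throughout by the densest subgraph realising (\ref{2density}).

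The $0$-statement ($p<cn^{-1/m}$) is the main obstacle. Here I must produce, with high probability, a colouring of $G(n,p)$ with no red $H_1$ and no blue $H_2$. Following Marciniszyn--Skokan--Sp\"ohel--Steger for cliques, the strategy is to first \emph{clean} $G(n,p)$ by deleting every edge lying in ``too many'' copies of $H_1$ and arguing the deleted graph is itself harmless (colouring it red); in the cleaned graph the copies of $H_1$ overlap only weakly, so one can pick one edge from each copy to colour blue and colour every other surviving edge red --- automatically leaving no red $H_1$ among the edges lying in no copy of $H_1$ --- after which it remains to verify that the blue edges span no copy of $H_2$. This last verification is what pins down the value of $m$: the denominator $v(H_2)-2+1/m_2(H_2)$ in (\ref{2density}) is calibrated so that at $p=cn^{-1/m}$ the expected number of configurations consisting of a copy of $H_2$ together with a copy of $H_1$ covering each of its edges is $o(1)$, so such obstructions do not occur. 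I expect the genuine difficulty to lie not in any single estimate but in making the cleaning step and the edge-selection work for \emph{arbitrary} $H_1,H_2$ simultaneously: one needs a robust local structure theorem for the hypergraph of overlaps of $H_1$-copies in $G(n,p)$, together with a selection rule (greedy, or via alteration) compatible with the global $H_2$-free requirement. This is exactly why the conjecture is presently known only for restricted families --- pairs of cliques (under mild conditions), pairs of cycles, and a cycle versus a clique --- the cases our later results exploit; in the randomly perturbed setting one additionally needs the $1$-statement in a form robust to adding a dense deterministic graph (a resilience-type strengthening), whereas the $0$-statement transfers as a lower-bound construction directly.
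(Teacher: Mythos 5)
The statement you are addressing is Conjecture~\ref{conjkreu}, which the paper does not prove: it is recorded as an open conjecture of Kohayakawa and Kreuter, and the paper only ever invokes the known partial results (the 1-statement under hypotheses (a) or (b) via Theorem~\ref{thm:global1ramsey}, and the 0-statement for cliques and cycles via Theorem~\ref{thm:randramlower}). Your proposal is accordingly a survey of known strategies rather than a proof: you defer the ``real content'' of the 1-statement to Mousset--Nenadov--Samotij and explicitly identify the 0-statement's ``main obstacle'' without resolving it, and that obstacle is precisely what remains open for general $H_1,H_2$. Beyond this, two of your reductions are invalid as stated. Passing to a subgraph of $H_1$ to assume strict balancedness with respect to $m_2(\cdot,H_2)$ is fine for the 0-statement (avoiding a red copy of a subgraph is a stronger requirement) but goes the wrong way for the 1-statement: a red copy of $H_1'\subsetneq H_1$ is not a red copy of $H_1$, which is exactly why hypothesis (b) appears in Theorem~\ref{thm:global1ramsey} and why the unbalanced case needed the separate argument of~\cite{mns}. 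Likewise, merging colour classes does not reduce the $r$-colour 1-statement to the $2$-colour one: a blue $H_2$ in the merged colouring may use edges from several of the original colours $2,\dots,r$ and hence be monochromatic in none of them.

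There is also a concrete quantitative error in your heuristic for the 0-statement. The expected number of configurations consisting of a copy of $H_2$ with each edge covered by a copy of $H_1$ is not $o(1)$ at $p=cn^{-1/m_2(H_1,H_2)}$: such a configuration (with the $H_1$-copies pairwise edge-disjoint) has $v(H_2)+e(H_2)(v(H_1)-2)$ vertices and $e(H_1)e(H_2)$ edges, so its expected count is
\[
c^{e(H_1)e(H_2)}\, n^{v(H_2)}\bigl(n^{v(H_1)-2}p^{e(H_1)}\bigr)^{e(H_2)}=\Theta\bigl(n^{v(H_2)-e(H_2)/m_2(H_2)}\bigr),
\]
a positive power of $n$ in the cases of interest (e.g.\ $n^{3/2}$ when $H_2=K_3$). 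The paper's Claim~\ref{clm:viableforbid} computes exactly this quantity; it is small only relative to $n^{v(H_2)}$, not absolutely. A first-moment argument therefore cannot rule out these obstructions, and the actual 0-statement proofs of~\cite{msss} and~\cite{kreu} rest on a far more delicate structural and algorithmic analysis of the hypergraph of $H_1$-copies, which is currently known to succeed only for cliques and cycles. As it stands, your proposal does not establish either direction of the conjecture.
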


Note that this conjectured threshold only depends on the `joint density' of the densest two graphs $H_1,H_2$.
Further, notice $m_2(H_1) \geq m_2(H_1,H_2) \geq m_2(H_2)$ with equality if and only if $m_2(H_1)=m_2(H_2)$.
Thus Conjecture~\ref{conjkreu} would generalise Theorem~\ref{randomramsey}.
The initial work on Conjecture~\ref{conjkreu} focused on the cases of cycles and cliques. (We take a similar approach in this paper when considering the analogous question in the perturbed setting.)
In particular, Kohayakawa and Kreuter~\cite{kreu} confirmed Conjecture~\ref{conjkreu} when the $H_i$ are cycles.  When each $H_i$ is a clique, the 0-statement was resolved by Marciniszyn, Skokan, Sp\"ohel and Steger~\cite{msss}, who also observed that the approach used by Kohayakawa and Kreuter~\cite{kreu}  implies the 1-statement of Conjecture~\ref{conjkreu} holds when $H_1$ is strictly 2-balanced \emph{provided} the so-called K\L R conjecture holds.
This latter conjecture was proven by Balogh, Morris and Samotij~\cite{bms}, thereby proving the 1-statement of Conjecture~\ref{conjkreu} holds for strictly 2-balanced graphs $H_1$. Hancock, Staden and Treglown~\cite{hst} then proved 
a general result which implies (a resilient version of)
the 1-statement in 
the case when $m_2(H_1)=m_2(H_2)$.
Very recently, Mousset, Nenadov and Samotij~\cite{mns} have shown the 1-statement is true without any assumptions regarding the balancedness of $H_1$.

\subsection{Ramsey properties of randomly perturbed graphs}
 Note that the results discussed above give us precise information about the Ramsey properties of \emph{typical} graphs of a given density. Indeed, Theorem~\ref{randomramsey} implies that a typical graph of density
$p=\omega( n^{-1/m_2(H)})$ is $(H,r)$-Ramsey whilst a typical graph of density $p=o( n^{-1/m_2(H)})$ is not $(H,r)$-Ramsey.
In this paper we study the question of \emph{how far away} a dense graph is from satisfying a given Ramsey property. 
The model of \emph{randomly
perturbed graphs}, introduced by Bohman, Frieze and Martin~\cite{bfm1}, provides a framework for studying
such questions. In their model one starts with a dense graph and then adds a fixed number of random
edges to it. A natural problem in this setting is to determine how many random edges are required to
ensure that the resulting graph w.h.p. satisfies a given property. For example
Bohman, Frieze and Martin~\cite{bfm1} proved that, given any $n$-vertex graph $G$ of linear minimum degree,
if one adds a linear number of random edges to $G$ then, w.h.p., the resulting graph is Hamiltonian.
In recent years, a whole host of results  have been obtained concerning embedding spanning subgraphs into a randomly perturbed graph, as well as other  properties of the model; see e.g.~\cite{ bwt2, benn, bfkm, bhkmpp, bmpp2, dudek, joos2, kks1, kst, nt}.
The model has also been investigated in the setting of directed graphs and hypergraphs (see e.g.~\cite{bhkm, hanzhao, kks2, mm}). Further, very recently an analogous model of randomly perturbed sets of integers has been studied~\cite{elad}.

The study of Ramsey properties of randomly perturbed graphs was initiated by Krivelevich, Sudakov and Tetali~\cite{kst} in 2006. They considered the question of how many random edges one needs to add to any dense  graph to ensure with high probability
the resulting graph is $(H_1,H_2)$-Ramsey. Specifically, they resolved this question in the case when $H_1=K_t$ and $H_2=K_3$ for any $t \geq 3$. 
In this paper we give several generalisations of their result. Further, we look at the following more refined question: given any fixed $0<d<1$, how many random edges does one need to add to any    graph $G$ of density at least $d$ to ensure with high probability
the resulting graph is $(H_1,H_2)$-Ramsey?
In order to present our results  we define a threshold function below.

\begin{defin}[Perturbed Ramsey threshold probability]
Given a density $0 < d < 1$, a number of colours $r \in \mathbb{N}$ and a sequence of graphs $(H_1, H_2, \hdots, H_r)$, the \emph{perturbed Ramsey threshold probability} $p(n; H_1, H_2, \hdots, H_r, d)$ satisfies the following.
\begin{itemize}
	\item[(i)] If $p = p(n) =\omega( p(n; H_1, H_2, \hdots, H_r, d))$, then for any sequence $(G_n)_{n \in \mathbb{N}}$ of $n$-vertex graphs with density at least $d$, the graph $G_n \cup G(n,p)$ is $(H_1, H_2, \hdots, H_r)$-Ramsey with high probability.
	\item[(ii)] If $p = p(n) =o( p(n; H_1, H_2, \hdots, H_r, d))$, for some sequence $(G_n)_{n \in \mathbb{N}}$ of $n$-vertex graphs with density at least $d$, the graph $G_n \cup G(n,p)$ is with high probability not $(H_1, H_2, \hdots, H_r)$-Ramsey.
\end{itemize}
If it is the case that every sufficiently large graph of density at least $d$ is $(H_1, H_2, \hdots, H_r)$-Ramsey then we define $p(n;H_1, H_2, \hdots, H_r, d):=0$.  In the symmetric case, where $H_1 = H_2 = \hdots = H_r = H$, we denote the threshold by $p(n; r,H, d)$.
\end{defin}

We begin by observing some simple lower and upper bounds on the perturbed Ramsey threshold probability, which will serve as points of reference for our results.

\begin{obs}\label{obs:trivial} The following bounds on $p(n;H_1, H_2, d)$ hold:
\begin{itemize}
	\item[(i)] If $d \le 1 - 1/k$ and $H_2$ is not $k$-partite, then we may take $G$ to be a complete balanced $k$-partite graph and colour all its edges blue.  As long as $G(n,p)$ is $H_1$-free, we may colour all its uncoloured edges red, and so $p(n;H_1, H_2, d)$ is at least the threshold probability for the appearance of $H_1$ in $G(n,p)$.
	\item[(ii)] If $G(n,p)$ alone is already $(H_1, H_2)$-Ramsey, then $G \cup G(n,p)$ will be as well.  Since the 1-statement of Conjecture~\ref{conjkreu} is known to hold, it follows that for any $d$ and graphs $H_1$ and $H_2$ with $m_2(H_1) \ge m_2(H_2) > 1$, we have $p(n; H_1, H_2, d) \le n^{-1/m_2(H_1, H_2)}$.
	\item[(iii)] Suppose there is a $(k+1)$-chromatic graph $H$ that is $(H_1, H_2)$-Ramsey.  If $d > 1 - 1/k$, the Erd\H{o}s--Stone--Simonovits Theorem~\cite{erdosstone} implies any sufficiently large graph of density $d$ will contain $H$, and thus will already be $(H_1, H_2)$-Ramsey before the addition of any random edges.  Hence $p(n; H_1, H_2, d) = 0$ for $d > 1 - 1/k$.
\end{itemize}
In particular, parts (i) and (ii) imply $n^{-2/(t-1)} \le p(n;K_t, K_s,d) \le n^{-2(ts + t - 2s)/(t(t-1)(s+1))}$ for integers $t \ge s \ge 3$ and density $d \le \frac{s-2}{s-1}$.
\end{obs}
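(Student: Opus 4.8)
The plan is to establish (i)--(iii) separately and then derive the displayed clique bounds by specialisation; none of the three items is deep, the work being in choosing the right extremal construction and in one elementary optimisation. For (i) I would take $(G_n)_{n\in\mathbb N}$ to be the sequence of complete balanced $k$-partite graphs (parts as equal as possible); a direct count shows $e(G_n)\ge (1-1/k)\binom n2\ge d\binom n2$, so this is an admissible sequence. Now two-colour $G_n\cup G(n,p)$ by painting every edge of $G_n$ blue and every remaining edge --- that is, every edge of $G(n,p)$ not already present in $G_n$ --- red. The blue graph is a subgraph of the $k$-partite $G_n$, hence contains no copy of the non-$k$-partite $H_2$; the red graph is a subgraph of $G(n,p)$, hence contains no copy of $H_1$ whenever $G(n,p)$ is $H_1$-free. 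Since $G(n,p)$ is $H_1$-free w.h.p.\ as soon as $p$ has smaller order than the classical appearance threshold $n^{-1/m(H_1)}$ of $H_1$ (with $m(H_1)=\max_{\emptyset\ne H'\subseteq H_1}e(H')/v(H')$), below that threshold $G_n\cup G(n,p)$ is w.h.p.\ not $(H_1,H_2)$-Ramsey, which is exactly the claimed lower bound on $p(n;H_1,H_2,d)$.

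Part (ii) is immediate from monotonicity of the Ramsey property under passing to supergraphs: $G_n\cup G(n,p)\supseteq G(n,p)$, so whenever $G(n,p)$ is already $(H_1,H_2)$-Ramsey so is $G_n\cup G(n,p)$, for every $G_n$; hence $p(n;H_1,H_2,d)$ is at most any threshold for $G(n,p)$ itself being $(H_1,H_2)$-Ramsey, and when $m_2(H_1)\ge m_2(H_2)>1$ the now-established $1$-statement of Conjecture~\ref{conjkreu} (Balogh--Morris--Samotij~\cite{bms}, Mousset--Nenadov--Samotij~\cite{mns}) shows this is at most $n^{-1/m_2(H_1,H_2)}$. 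For (iii), suppose $H$ is $(k+1)$-chromatic and $(H_1,H_2)$-Ramsey and that $d>1-1/k$; by the Erd\H{o}s--Stone--Simonovits theorem~\cite{erdosstone}, $\ex(n,H)=(1-1/(\chi(H)-1)+o(1))\binom n2=(1-1/k+o(1))\binom n2$, so for all large $n$ every $n$-vertex graph of density at least $d$ has more than $\ex(n,H)$ edges and therefore contains $H$, hence is itself $(H_1,H_2)$-Ramsey; adding random edges cannot destroy this, giving $p(n;H_1,H_2,d)=0$.

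It remains to deduce the clique bounds for $t\ge s\ge 3$. Since $K_s$ has chromatic number $s$ it is not $(s-1)$-partite, so (i) applies with $k=s-1$, which is legitimate precisely when $d\le 1-\tfrac1{s-1}=\tfrac{s-2}{s-1}$; as $K_t$ is strictly balanced with $m(K_t)=\tfrac{t-1}2$, its appearance threshold is $n^{-2/(t-1)}$, which gives the lower bound. For the upper bound I would invoke (ii), valid since $m_2(K_t)\ge m_2(K_s)=\tfrac{s+1}2>1$, and then run the short optimisation showing that the maximum defining $m_2(K_t,K_s)$ in (\ref{2density}) is attained at $H_1'=K_t$; this yields $m_2(K_t,K_s)=\dfrac{t(t-1)(s+1)}{2(ts+t-2s)}$ and hence the stated exponent $2(ts+t-2s)/(t(t-1)(s+1))$. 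There is no substantial obstacle anywhere; the only points needing genuine care are verifying that the complete balanced $(s-1)$-partite graph really has density at least $d$ throughout the prescribed range, and checking that $K_t$ itself --- rather than a proper subclique --- maximises $e(H_1')/(v(H_1')-2+1/m_2(K_s))$ in the asymmetric density.
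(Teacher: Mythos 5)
Your proposal is correct and follows essentially the same route as the paper: the complete balanced $k$-partite construction with all dense edges blue for (i), monotonicity plus the proven $1$-statement of the Kohayakawa--Kreuter conjecture for (ii), Erd\H{o}s--Stone--Simonovits for (iii), and the specialisation $k=s-1$ together with the computation $m_2(K_t,K_s)=\tfrac{t(t-1)(s+1)}{2(ts+t-2s)}$ for the clique bounds. The points you flag as needing care (density of the Tur\'an graph, and $K_t$ attaining the maximum in the asymmetric density) are indeed the only checks required, and both are standard.
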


The aforementioned result of Krivelevich, Sudakov and Tetali~\cite{kst} shows that the lower bound given by Observation~\ref{obs:trivial}(i) is in fact tight for the $(K_t,K_3)$-Ramsey problem.
\begin{thm}[\cite{kst}] \label{kstthm}
For $0<d \le 1/2$ and $t\geq 3$, we have $p(n;K_t, K_3,d) = n^{-2/(t-1)}$.
\end{thm}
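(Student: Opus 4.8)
Plan for proving Theorem~\ref{kstthm} (the Krivelevich–Sudakov–Tetali result): $p(n;K_t,K_3,d)=n^{-2/(t-1)}$ for $0<d\le 1/2$ and $t\ge 3$.

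\textbf{Lower bound.} Since $K_3$ is not bipartite and $d\le 1/2=1-1/2$, Observation~\ref{obs:trivial}(i) applies with $k=2$: taking $G$ to be a complete balanced bipartite graph and colouring all its edges blue, the blue subgraph remains triangle-free after adding $G(n,p)$ (every cross-edge of $G(n,p)$ already lies in $G$), so provided $G(n,p)$ is $K_t$-free we may colour its remaining edges red to obtain a colouring with no red $K_t$ and no blue $K_3$. As $K_t$ is balanced with $m(K_t)=(t-1)/2$, it fails to appear in $G(n,p)$ w.h.p.\ once $p=o(n^{-2/(t-1)})$, whence $p(n;K_t,K_3,d)\ge n^{-2/(t-1)}$. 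It remains to prove the matching upper bound: if $p=\omega(n^{-2/(t-1)})$, then for \emph{every} sequence of $n$-vertex graphs $(G_n)$ of density at least $d$, the graph $G_n\cup G(n,p)$ is w.h.p.\ $(K_t,K_3)$-Ramsey.

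\textbf{Reduction.} Fix such a $G=G_n$ and consider a red/blue colouring of $G\cup G(n,p)$ with no blue $K_3$; we must produce a red $K_t$. Two elementary facts drive the argument. First, the blue neighbourhood (in $G\cup G(n,p)$) of any vertex spans only red edges, as otherwise a blue $K_3$ appears. Second, more generally, if $v_1,\dots,v_j$ form a red $K_j$ with $j\ge t-2$ and common red-neighbourhood $W$ in $G\cup G(n,p)$, then any triangle of $(G\cup G(n,p))[W]$ yields a red $K_t$: if some edge $xy$ of that triangle is red then $\{v_1,\dots,v_j,x,y\}$ spans a red $K_{j+2}\supseteq K_t$, and if all three of its edges are blue we have a blue $K_3$. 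So the task reduces to exhibiting a red $K_{t-2}$ in $G\cup G(n,p)$ (a single vertex when $t=3$) whose common red-neighbourhood $W$ has linear size and is such that $(G\cup G(n,p))[W]$ contains a triangle.

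\textbf{Finding the structure.} I would first clean $G$ down to a subgraph of linear order and minimum degree $\gamma n$ for some $\gamma=\gamma(d)>0$, then split according to the colouring of $G$. In the easy regime some vertex has at least $cn$ blue $G$-neighbours; that blue neighbourhood $S$ is then a linearly large set spanning only red edges, and one may search for the red $K_{t-2}$ and its common red-neighbourhood $W\subseteq S$ with no colour constraints left. Otherwise every vertex has fewer than $cn$ blue $G$-neighbours, so the red subgraph $R_G$ of $G$ has $\Omega(n^2)$ edges, and one extracts the red $K_{t-2}$ with a linearly large common red-neighbourhood $W$ directly inside $R_G$. Once $W$ is in hand: if $G[W]$ contains a triangle we are done by the Reduction; otherwise $G[W]$ is triangle-free, and — provided $W$ was chosen so that $G[W]$ has $\Omega(n^2)$ edges — it has $\Omega(n^2)$ pairs that close a cherry, so any one random edge inside $W$ lying on such a pair creates a triangle in $(G\cup G(n,p))[W]$ and hence a red $K_t$. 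The random-graph input needed is mild and union-boundable: w.h.p.\ every vertex set of linear size spans a $G(n,p)$-edge (failure probability at most $2^n(1-p)^{\Omega(n^2)}=o(1)$, since $n^2p=\omega(n)$), and, for any fixed family of $n^{O(1)}$ linearly large sets, w.h.p.\ each of them spans a copy of $K_t$ in $G(n,p)$ (second moment method, valid because $n^jp^{\binom j2}\to\infty$ for all $2\le j\le t$ when $p=\omega(n^{-2/(t-1)})$).

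\textbf{The main obstacle.} The crux is to ensure that in \emph{every} case the set $W$ can be taken simultaneously of linear size, a genuine common red-neighbourhood of a red $K_{t-2}$, and either with $G[W]$ having $\Omega(n^2)$ edges or drawn from a family of only $n^{O(1)}$ candidate sets; only then does the union bound over $G(n,p)$ survive. The problematic case is when the red subgraph of $G$ is itself essentially bipartite — then no red $K_{t-2}$ lives inside $G$, and $G$ is essentially a bipartite graph with almost all edges blue. Here one instead exploits that such a near-bipartition $(P,Q)$ of $G$ is essentially unique, that every $G$-edge inside $P$ or inside $Q$ must be coloured red, and that (for $d$ bounded below) every pair inside $P$ has a common blue $G$-neighbour, which forces \emph{all} random edges inside $P$ to be red; a red $K_t$ then appears inside $G(n,p)[P]$, and since $P$ is determined by $G$ up to $o(n)$ vertices the union bound runs over only $n^{O(1)}$ sets. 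Making this robust for all $d\le 1/2$, in particular for small $d$ where the blue subgraph of $G$ need not be bipartite (Andr\'asfai--Erd\H{o}s--S\'os forces bipartiteness only above density $2/5$) and a finer structural analysis is needed, is the most technical part of the argument.
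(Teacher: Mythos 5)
First, note that the paper does not prove this statement: Theorem~\ref{kstthm} is quoted from Krivelevich, Sudakov and Tetali~\cite{kst}, so there is no in-paper proof to match your argument against. Your lower bound is correct and is exactly Observation~\ref{obs:trivial}(i). The upper bound, however, is a plan with genuine gaps at precisely the points you flag as ``the main obstacle'', and the proposed resolutions do not go through as stated.

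The central difficulty is that at $p=\omega(n^{-2/(t-1)})$ one is only at the \emph{local} threshold for $K_t$: for a fixed linear set $U$ the expected number of copies of $K_t$ in $G(n,p)[U]$ is merely $\omega(1)$, so the failure probability of ``$G(n,p)[U]$ contains $K_t$'' can decay arbitrarily slowly. It therefore cannot survive a union bound over $\binom{n}{o(n)}$ perturbations of a near-bipartition class $P$ (that is far more than $n^{O(1)}$ sets in any case), nor over $n^{O(1)}$ colouring-dependent candidate sets. The target set for any random $K_t$ (or $K_{t-1}$) that is to be combined with common-neighbourhood vertices must be fixed by $G$ alone, \emph{before} the randomness and the colouring are revealed. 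Relatedly, your reduction to ``a red $K_{t-2}$ with a linear common red-neighbourhood $W$'' cannot always be realised: when $G$ is bipartite and the relevant set $S$ lies inside one part, $G[S]$ may be empty, the clique must come from $G(n,p)[S]$, and then each of its vertices has only $O(np)=O(n^{(t-3)/(t-1)})=o(n)$ random neighbours while its common $G$-neighbourhood is uncontrolled, so no linear $W$ exists. Finally, the assertion that every pair inside $P$ has a common \emph{blue} $G$-neighbour is unjustified: for small $d$ a pair in $P$ need not have any common $G$-neighbour, and the adversary chooses which $G$-edges are blue.

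The known route (and the one this paper follows when it generalises Theorem~\ref{kstthm} in Theorem~\ref{thm:cliquevscycle} and in Case~(i) of Theorem~\ref{thm:bigcliques}) avoids all of this: one first fixes an $\eps$-regular pair $(V_1,V_2)$ of the \emph{uncoloured} graph $G$, applies Theorem~\ref{thm:janson} once to find in $G(n,p)[V_1]$ a single copy of the required clique structure whose vertex set has a linear common $G$-neighbourhood $W\subseteq V_2$ (a one-shot, colouring-independent event, so no union bound is needed), then pigeonholes the colour profiles of $W$ towards that copy, and finishes using global Ramsey properties of $G(n,p)$ that live at the smaller exponent $m_2(K_{t-1})=(t-1)/2$ (Theorem~\ref{thm:global1ramsey}), which \emph{do} admit union bounds over all linear sets. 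I would recommend recasting your upper bound along these lines rather than trying to repair the near-bipartite case analysis.
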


\subsection{Our results}

Krivelevich, Sudakov and Tetali~\cite{kst} raised the question of extending Theorem~\ref{kstthm} to 
 other pairs of graphs $(H_1,H_2)$.  
In this paper, we make the first progress on this question since it was raised in 2006,
 focusing on the cases where the $H_i$ are cliques or cycles.
In our first result we  resolve their question in the case of cliques of size at least $5$.
\begin{thm}\label{thm1}
For $0<d \le 1/2$ and $t\geq s \geq 5$, we have $p(n; K_t, K_s, d) = n^{-1/m_2(K_t, K_{\ceil{s/2}})}$.
\end{thm}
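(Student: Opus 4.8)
The plan is to prove the two inequalities $p(n;K_t,K_s,d)\ge n^{-1/m_2(K_t,K_{\ceil{s/2}})}$ and $p(n;K_t,K_s,d)\le n^{-1/m_2(K_t,K_{\ceil{s/2}})}$ separately; set $k=\ceil{s/2}$. For the lower bound, suppose $p=o(n^{-1/m_2(K_t,K_k)})$ and take $G$ to be the complete balanced bipartite graph with parts $A,B$, so $G$ has density $1/2\ge d$ and every edge of $G$ runs between $A$ and $B$; colour all these edges blue. Since $t\ge k$ we have $m_2(K_t)\ge m_2(K_k)$, so the $0$-statement of the asymmetric random Ramsey theorem for cliques~\cite{msss} gives that w.h.p.\ the random graphs induced on $A$ and on $B$ each admit a red/blue colouring with no red $K_t$ and no blue $K_k$; use these to colour $G(n,p)$. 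Then every red clique of $G\cup G(n,p)$ lies inside $A$ or inside $B$ and so has fewer than $t$ vertices, while every blue clique meets each of $A,B$ in at most $k-1$ vertices and so has at most $2k-2\le s-1$ vertices; hence w.h.p.\ $G\cup G(n,p)$ is not $(K_t,K_s)$-Ramsey.

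For the upper bound we assume $p=\omega(n^{-1/m_2(K_t,K_k)})$ and must show that, for \emph{every} $n$-vertex $G$ of density at least $d$ and every red/blue colouring of $G\cup G(n,p)$ without a red $K_t$, a blue $K_s$ appears. The probabilistic ingredient is a robust form of the asymmetric random Ramsey $1$-statement for cliques, obtained from the container and K\L R machinery of Balogh--Morris--Samotij~\cite{bms} (following~\cite{msss} and the resilient versions of~\cite{hst}): w.h.p.\ $G(n,p)$ has the property that for every $W\subseteq V$ with $|W|\ge\eta n$ and every red-$K_t$-free colouring of $G(n,p)[W]$ there are many blue copies of $K_k$ in $G(n,p)[W]$. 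The deterministic ingredient is a coloured Szemer\'edi regularity lemma applied to $G$: since $G$ has density at least $d$, its reduced graph has edge density at least $d-o(1)$, hence contains a pair $(X,Y)$ of clusters that is $\eps$-regular in $G$ with $G$-density at least $d'>0$ and also $\eps$-regular in each colour class. One then splits according to whether this pair is blue-heavy (blue density $\ge d'/2$) or red-heavy. In the blue-heavy case, dependent random choice applied to the blue bipartite graph between $X$ and $Y$ produces a linear-sized $U\subseteq X$ in which every $k$-set has a common blue $G$-neighbourhood of size $n^{1-o(1)}$ in $Y$; the robust Ramsey property then yields a blue $K_k$ on some $A\subseteq U$, and inside the common blue neighbourhood $N$ of $A$ (all cross edges to $A$ being blue) one locates the remaining $\floor{s/2}$ vertices of a blue $K_s$. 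In the red-heavy case one argues symmetrically, exchanging the colours and replacing $k$ by $\ceil{t/2}$, to produce a red $K_t$ — two red cliques of size $\ceil{t/2}$ lying in disjoint linear sets and joined completely in red — which contradicts the hypothesis.

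Two points drive the choice of exponent and are where the work lies. The first is the bookkeeping that the single assumption $p=\omega(n^{-1/m_2(K_t,K_k)})$ is strong enough for all the sub-thresholds encountered in both cases; this reduces to the monotonicity of $\binom{j}{2}/(j-2+c)$ in $j$ (for $0<c<1$) and of $m_2(K_t,K_j)$ in $j$, and is a finite calculation. The second, and the real obstacle, is locating the \emph{second half} of the monochromatic clique: in a host graph of density merely $d$, the neighbourhoods supplied by dependent random choice are only polynomial, not linear, in size, and on a sublinear set $G(n,p)$ lies below the $(K_t,K_k)$-Ramsey threshold, so the robust Ramsey property cannot be reapplied there directly. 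This is why $s$ odd is the easy case — one then needs only a blue $K_{k-1}$ in $N$, whose threshold has dropped strictly and which therefore survives even at scale $n^{1-o(1)}$ — whereas for $s$ even one must carry the density of $G$ into the neighbourhood $N$ and invoke a perturbed (rather than purely random) Ramsey statement at the smaller scale, and one must verify that across every configuration of the coloured reduced graph the resulting threshold never exceeds $n^{-1/m_2(K_t,K_{\ceil{s/2}})}$.
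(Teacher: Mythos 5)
Your lower bound is correct and is exactly the paper's construction: the balanced complete bipartite graph coloured blue, with the random edges inside the two parts coloured via the Marciniszyn--Skokan--Sp\"ohel--Steger $0$-statement at the threshold $n^{-1/m_2(K_t,K_{\ceil{s/2}})}$. The upper bound, however, has a genuine gap at exactly the point you yourself flag as ``the real obstacle''. Dependent random choice in a bipartite graph of constant density gives a linear set $U$ only at the price of common neighbourhoods of size $n^{1-c}$ for a genuine constant $c>0$ (not $n^{1-o(1)}$), and on a set of that size $G(n,p)$ with $p=\omega(n^{-1/m_2(K_t,K_{\ceil{s/2}})})$ lies below the $(K_t,K_{\ceil{s/2}})$-Ramsey threshold. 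Your repair for odd $s$ is sound (only a $K_{\ceil{s/2}-1}$ is needed in the neighbourhood, and the strict inequality $m_2(K_t,K_{\ceil{s/2}-1})<m_2(K_t,K_{\ceil{s/2}})$ absorbs the lost exponent --- this is the mechanism behind Theorem~\ref{thm:global1ramsey}(iii)). But for even $s$ you must produce a full blue $K_{s/2}$ in the sublinear neighbourhood, and ``invoke a perturbed Ramsey statement at the smaller scale'' is not an argument: it is (a global, colouring-robust version of) the very theorem being proved, applied at a scale where no such statement is available by union bound, and for $s=8$ it would moreover lean on the perturbed $(K_t,K_4)$ problem, which the paper leaves open. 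The even case is not closed.

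The paper closes it by dispensing with dependent random choice entirely in the bipartite case. Regularity is applied to the \emph{uncoloured} $G$ to obtain an $\eps$-regular pair $(U,W)$ of linear size; the $\ceil{s/2}$-subsets of $U$ with small common $G$-neighbourhood in $W$ --- at most $\ceil{s/2}\,\eps m^{\ceil{s/2}}$ of them by Lemma~\ref{lem:regpairs}(i), and determined by $G$ alone, not by the colouring --- are declared forbidden, and the \emph{robust} Ramsey property of Theorem~\ref{thm:global1ramsey}(i) yields in $G(n,p)[U]$ a red $K_t$ or a blue $K_{\ceil{s/2}}$ on a non-forbidden set $S$, i.e.\ one whose common neighbourhood in $W$ is \emph{linear}. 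A separate claim, proved via the globally $(K_{t-1},K_{t-1})$-Ramsey property (available since $m_2(K_{t-1},K_{t-1})<m_2(K_t,K_{\ceil{s/2}})$), shows no vertex of $U$ sends many red edges to $W$, so a linear subset of that neighbourhood is joined to $S$ entirely in blue, and the globally $(K_t,K_{\ceil{s/2}})$-Ramsey property finishes at linear scale. The idea your sketch is missing is that one does not need \emph{every} $\ceil{s/2}$-set to have a large common neighbourhood (the trade-off that forces DRC to sublinear scales), only the one monochromatic clique actually used --- and robustness of the Ramsey property is precisely what lets you insist on that.
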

Recall that in the random graph setting one needs $\Theta(n^{2-1/m_2(K_t, K_{s})})$ random edges to ensure $G(n,p)$ is with high probability $(K_t, K_s)$-Ramsey. On the other hand, Theorem 1.6 shows that adding just $\omega(n^{2 - 1/m_2(K_t, K_{\ceil{s/2}})})$ random edges is already enough to make any dense $n$-vertex graph $(K_t, K_s)$-Ramsey. (In fact, in the proof  we see that we only require $\Theta(n^{2-1/m_2(K_t, K_{\ceil{s/2}})})$ random edges.)

To see that this is best possible, consider the complete balanced bipartite graph $G$ on $n$ vertices. Indeed, if $G(n,p)$ is such that there is a $2$-colouring without a red $K_t$ or blue $K_{\ceil{s/2}}$, then by further colouring all the edges in $G$ blue, one obtains a $2$-colouring of $G \cup G(n,p)$ without a red $K_t$ or blue $K_s$.

As one might expect, if one starts with an even denser graph $G$, one needs less randomness to be $(K_t, K_s)$-Ramsey, and we prove a stronger version of Theorem~\ref{thm1} that also gives exact (or log-asymptotically exact) results for larger densities $d$.
\begin{thm} \label{thm:bigcliques}
Given an integer $k \ge 2$, let $d$ be such that $1 - 1/(k-1) < d \le 1 - 1/k$, and let $s$ and $t$ be fixed integers with $2k + 1 \le s \le t$.  If
\begin{itemize}
	\item[(i)] $k = 2$ (that is, $0 < d \le 1/2$), or
	\item[(ii)] $s \equiv 1 \pmod{k}$,
\end{itemize}
then $p(n; K_t, K_s, d) = n^{-1/m_2(K_t, K_{\ceil{s/k}})}$.  Otherwise, if
\begin{itemize}
	\item[(iii)] $k \ge 3$ and $s \not \equiv 1 \pmod{k}$,
\end{itemize}
we have $p(n; K_t, K_s, d) = n^{-(1-o(1))/m_2(K_t, K_{\ceil{s/k}})}$.
\end{thm}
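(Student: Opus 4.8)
The plan is to treat the two bounds separately; throughout set $a:=\lceil s/k\rceil$, and note $a\ge 3$ since $s\ge 2k+1$.

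\emph{The lower bound (all cases).} Take $G$ to be the Tur\'an graph $T_k(n)$ (the complete balanced $k$-partite graph) on parts $V_1,\dots,V_k$; it has density at least $1-1/k\ge d$ and clique number $k$. Colour every edge of $G$ blue. For $p=o(n^{-1/m_2(K_t,K_a)})$, the $0$-statement of the asymmetric random Ramsey theorem for cliques of Marciniszyn, Skokan, Sp\"ohel and Steger~\cite{msss}, applied to each $G(n,p)[V_i]\sim G(\lfloor n/k\rfloor,p)$ together with a union bound over the constantly many parts, shows that w.h.p.\ we can colour the random edges inside each $V_i$ with no red $K_t$ and no blue $K_a$. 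In the resulting colouring of $G\cup G(n,p)$, every red clique lies inside a single part and so has fewer than $t$ vertices, while every blue clique meets each $V_i$ in a blue clique of order at most $a-1$, hence has order at most $k(a-1)\le s-1$ --- the last inequality holding for all $s$, with equality exactly when $s\equiv 1\pmod k$. Thus w.h.p.\ $G\cup G(n,p)$ is not $(K_t,K_s)$-Ramsey, which gives $p(n;K_t,K_s,d)\ge n^{-1/m_2(K_t,K_a)}$ in every case.

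\emph{The upper bound: set-up.} We must prove the $1$-statement: for $p=\omega(n^{-1/m_2(K_t,K_a)})$ in cases (i)--(ii), and for $p=\omega(n^{-(1-\eps)/m_2(K_t,K_a)})$ for every fixed $\eps>0$ in case (iii), every $n$-vertex graph $G$ of density at least $d$ has $G\cup G(n,p)$ $(K_t,K_s)$-Ramsey w.h.p. The first ingredient is a \emph{robust} form of the $1$-statement of the asymmetric random Ramsey theorem for cliques: w.h.p.\ $G(n,p)$ is such that for every $U\subseteq V(G)$ with $|U|\ge\delta n$, every red/blue colouring of $G(n,p)[U]$ without a red $K_t$ contains a blue $K_a$ --- and in fact blue copies of $K_a$ meeting every linear-sized subset of $U$. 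I would obtain this from the recent $1$-statements of Hancock--Staden--Treglown~\cite{hst} and Mousset--Nenadov--Samotij~\cite{mns}, or from a direct application of the container method, together with a union bound over the $2^n$ choices of $U$ (affordable because the relevant failure probability is $e^{-\omega(n)}$). The second ingredient is structural: as $G$ has density at least $d>1-1/(k-1)$, applying the coloured Szemer\'edi regularity lemma to the adversary's colouring of $G$ yields clusters $W_1,\dots,W_M$ of size $n/M$ whose reduced graph --- with an edge between two clusters whenever they form an $\eps$-regular pair in each colour of density at least $d_0$ in the majority colour --- still has density exceeding $1-1/(k-1)$, so by Tur\'an's theorem it contains $K_k$; after relabelling this $K_k$ lies on $W_1,\dots,W_k$, whose pairs are linear-sized, regular, and each dense in one of the two colours.

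\emph{The upper bound: combining, and the obstruction.} Fix a colouring of $G\cup G(n,p)$ with no red $K_t$; we produce a blue $K_s$. If all $\binom k2$ pairs of the reduced $K_k$ are dense in \emph{blue}, build one greedily: use the robust $1$-statement to pick a blue $K_a$ inside $W_1$, and having chosen mutually blue-joined blue copies $A_1,\dots,A_i$ of $K_a$ in $W_1,\dots,W_i$, locate a blue $K_a$ in the common blue neighbourhood of $A_1\cup\dots\cup A_i$ inside $W_{i+1}$ (which remains of order $\Omega(n)$ after iterating regularity boundedly often, provided the $A_j$ are chosen ``regular-typical'' --- the reason the robust input must place blue copies of $K_a$ in arbitrary linear sets); after $k$ steps $A_1\cup\dots\cup A_k$ is a blue clique on $ka\ge s$ vertices. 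The crux is the case where some pair $(W_i,W_j)$ of the reduced $K_k$ is dense in \emph{red}: one then restricts $W_i$ to the red neighbourhood of a typical vertex of $W_j$ --- a linear-sized set in which no red $K_{t-1}$ can occur --- and recurses, essentially replacing $t$ by $t-1$ and losing one cluster, so that the task reduces to an asymmetric random Ramsey statement for $K_{t-1}$ against a somewhat larger clique. Tracing the clique orders through this recursion, one is repeatedly forced to forbid cliques whose sizes are shifted a little, and it is precisely the arithmetic of splitting $s$ into $k$ nearly equal parts --- controlled by the residue of $s$ modulo $k$ --- that decides whether the accounting closes exactly (cases (i) and (ii); in case (ii) because the slack $ka-s=k-1$ is as large as it can be) or only up to the $o(1)$ loss in the exponent (case (iii)). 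Verifying that the recursion terminates in a monochromatic clique of the correct order, while keeping the regularity, density and probability parameters mutually consistent throughout --- in particular handling the tension between wanting $\eps$ small (for the typicality count) and the clusters linear --- is the main technical obstacle.
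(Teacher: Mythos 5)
Your lower bound is essentially the paper's argument (the paper applies the $0$-statement of Theorem~\ref{thm:randramlower} once to the whole of $G(n,p)$ rather than to each part, but colouring all Tur\'an edges blue and confining red components to parts is the same construction), and your arithmetic $k(a-1)\le s-1$ is correct. The upper bound, however, has genuine gaps, and the two places where your sketch is vaguest are exactly the two places where the real work lies.

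First, the mixed-colour case. Your plan --- if some reduced pair is red-dense, pass to the red neighbourhood of a typical vertex and ``recurse, replacing $t$ by $t-1$ and losing one cluster'' --- is not a valid reduction: the restricted set is a subset of a single cluster, the regularity partition says nothing about the density of $G$ \emph{inside} a cluster, so you do not obtain a smaller instance of the perturbed problem, and it is not clear which Ramsey statement at which threshold is being invoked. The paper instead proves that \emph{all} reduced-graph pairs carry the same majority colour: given a red-dense pair $(V_i,V_{j_1})$ and a blue-dense pair $(V_i,V_{j_2})$, dependent random choice (Lemma~\ref{lem:multipartDRC}) produces a set $U_i\subseteq V_i$ of size $n^{1-\beta}$ in which every $(t-1)$-set has a common red neighbour in $V_{j_1}$ and a common blue neighbour in $V_{j_2}$; since $m_2(K_{t-1},K_{s-1})<m_2(K_t,K_a)$, the $n^{-\beta}$-global $(K_{t-1},K_{s-1})$-Ramsey property of Theorem~\ref{thm:global1ramsey}(iii) is available at the stated $p$ and yields a red $K_t$ or blue $K_s$ at once. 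The all-red case (when $s<t$) is then handled symmetrically with the $(K_{\lceil t/k\rceil},K_s)$ property, not by recursion on $t$.

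Second, the clique-building phase, which is where the (ii)/(iii) dichotomy actually originates (not, as you suggest, in the red-pair recursion). To iterate, you need every candidate blue $K_a$ found in $W_{i}$ to have linear common neighbourhood in the remaining clusters. The robust-Ramsey-with-forbidden-sets device works once (this is the paper's $k=2$ argument, which also needs a separate red-degree claim you omit), but cannot be iterated because the forbidden families at later stages depend on the earlier random choices; the alternative, Lemma~\ref{lem:multipartDRC}, guarantees the common-neighbourhood property only for a \emph{sublinear} set $U$ of size $n^{1-\beta}$, so one must find the blue clique inside $U$ using the $n^{-\beta}$-global property. At the exact threshold $p=\Theta(n^{-1/m_2(K_t,K_a)})$ this property is available for $(K_t,K_{a-1})$ but \emph{not} for $(K_t,K_a)$, which is why the paper finds only a blue $K_{a-1}$ in each of the first $k-1$ clusters and a full $K_a$ only in the last (linear-sized) one --- giving $(k-1)(a-1)+a=k(a-1)+1$, i.e.\ exactly $s$ precisely when $s\equiv 1\pmod k$ --- and why case (iii) costs the $(1-\beta)$ in the exponent. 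Your plan of placing a full blue $K_a$ in every cluster, with the host sets kept linear by choosing the cliques ``regular-typical,'' would, if it worked, prove the exact threshold in case (iii) as well; as written it does not get around either obstruction.
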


Comparing this result to the  bounds in Observation~\ref{obs:trivial}, we note that in these cases, the thresholds lie strictly between the lower and upper bound, unlike when $s = 3$.  Moreover, perhaps surprisingly, we see that if $s < s' \le t$ and $\ceil{s/k} = \ceil{s'/k}$, the perturbed Ramsey threshold probabilities for the pairs $(K_t,K_s)$ and $(K_t,K_{s'})$ are essentially the same.

In light of Theorems~\ref{kstthm} and~\ref{thm1}, in the setting of cliques, the Krivelevich--Sudakov--Tetali question now remains open only in the 
$(K_t,K_4)$ case. As we will see in the next proposition though, this problem exhibits a different behaviour to the other cases.

\begin{prop}\label{prop:K4vsclique}
For $k \ge 2$, let $1 - 1/(k-1) < d \le 1 - 1/k$.  If $k+2 \le s \le 2k$ and $t \ge s$, we have $n^{-2t/(t(t-1) + \ceil{t/a})} \le p(n;K_t,K_s,d) \le n^{-2/t}$, where $a$ is the smallest integer for which $R(K_{a+1},K_{s-k}) > k$.
\end{prop}

It is worth noting that the formula in Theorem~\ref{thm:bigcliques}, had it also been valid for $k+2 \leq s \le 2k$, would have implied the lower bound of $n^{-2/(t-1)}$ from Observation~\ref{obs:trivial} is (essentially) correct.  While Theorem~\ref{kstthm} shows this to be the case for $s = 3$, the lower bound in Proposition~\ref{prop:K4vsclique} is higher, highlighting that the threshold probability truly does behave differently when $k+2 \le s \le 2k$.  The upper bound in Proposition~\ref{prop:K4vsclique} also represents an improvement over the upper  bound from Observation~\ref{obs:trivial} .

\medskip

\paragraph{\textbf{Remark}}  After making our manuscript available online, we learnt of the simultaneous and  independent work of Powierski~\cite{power}.  He proves Theorem~\ref{thm1} in the case  $s = t \ge 5$ odd, and improves the lower bound of Proposition~\ref{prop:K4vsclique} when $k = 2$ and $s = t = 4$.  In particular, combined with our upper bound from Proposition~\ref{prop:K4vsclique} this shows $p(n; K_4, K_4, d) = n^{-1/2}$ for $0 < d \le 1/2$.
Thus, the question from~\cite{kst} is now resolved for the $(K_t,K_t)$-Ramsey problem for all $t \geq 3$. We suspect though that resolving the $(K_t,K_4)$-Ramsey problem for all $t \geq 5$ will be a significant challenge.

\medskip

We next turn our attention to cycles, completely determining the perturbed Ramsey thresholds for all pairs of cycles and all densities.

\begin{thm} \label{thm:twocycles}
Let $k, \ell \ge 3$ be integers such that either $k$ is odd and $\ell$ is even, or they have the same parity and $k \le \ell$, and let $d \in (0,1)$.  There exist $d_1 = d_1(k,\ell), d_2 = d_2(k,\ell) \in [0,1]$ such that
\[ p(n; C_k, C_{\ell}, d) = \begin{cases}
	n^{-1} & \textrm{if } 0 < d \le d_1, \\
	n^{-2} & \textrm{if } d_1 < d \le d_2, \\
	0 & \textrm{if } d_2 < d.
\end{cases} \]
Moreover, the values of $d_1$ and $d_2$ are as given below.
\begin{center}
\begin{tabular}{c|c|ccc}
& $k$ even & \multicolumn{3}{c}{$k$ odd} \\
& $\ell$ even & $\ell$ even & $\ell$ = 3 & $\ell \ge 5$ odd \\ \hline
$d_1(k,\ell)$ & $0$ & $1/2$ & $1/2$ & $1/2$ \\
$d_2(k,\ell)$ & $0$ & $1/2$ & $4/5$ & $3/4$ \\ 
\end{tabular}
\end{center}
\end{thm}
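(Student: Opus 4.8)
The plan is to pin down $p(n;C_k,C_{\ell},d)$ for each admissible pair and each $d$ by matching lower bound constructions with $1$-statement arguments; since $(H_1,H_2)$-Ramsey is symmetric in the two colours we have $p(n;C_k,C_{\ell},d)=p(n;C_{\ell},C_k,d)$, so the stated parity/ordering hypotheses cover every pair of cycles. I would rely throughout on three standard inputs: (i) a graph of density $\ge d$ contains $\Theta(n)$ vertex-disjoint copies of $K_{m,m}$ for every fixed $m$, by Kővári--Sós--Turán; (ii) Bondy--Simonovits, so a graph with $\Omega(n^2)$ edges contains every fixed even cycle, together with Bondy's pancyclicity theorem and the stability of extremal $C_k$-free graphs (complete bipartite for $k$ even, close to complete bipartite for $k$ odd); and (iii) the appearance thresholds in $G(n,p)$, namely $n^{-1}$ for a fixed cycle and $n^{-2}$ for a fixed edge, so that $G(n,p)$ is whp acyclic when $p=o(n^{-1})$ and whp empty when $p=o(n^{-2})$.

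The two degenerate regimes are quick. When $k,\ell$ are both even I would show $p=0$ for all $d>0$: extract a $K_{m,m}$ from $G$ with $m$ large, and note $K_{m,m}\to(C_k,C_\ell)$, because in any colouring the colour avoiding its target is $o(m^2)$-sparse by Bondy--Simonovits, so the other colour has $\ge m^2/2$ edges in $K_{m,m}$ and hence contains its even target cycle. For the lower bounds, first note that when $p=o(n^{-1})$ one may take $G=K_{\lceil n/2\rceil,\lfloor n/2\rfloor}$ (density $>1/2$), colour it entirely red and colour the acyclic $G(n,p)$ blue, giving no red $C_k$ whenever $k$ is odd and no blue $C_\ell$ at all; this forces $d_1\ge 1/2$ in the odd-$k$ cases. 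When $p=o(n^{-2})$, $G(n,p)$ is whp empty, so it suffices to exhibit a dense graph that is not $(C_k,C_\ell)$-Ramsey, i.e.\ one whose edges split into a $C_k$-free red graph and a $C_\ell$-free blue graph. The extremal such graphs are complete multipartite: $K_{n/2,n/2}$ (density $1/2$); $K_{n/4,n/4,n/4,n/4}$ with red and blue the blow-ups of two complementary bipartite subgraphs of $K_4$ (density $3/4$); and, only when $C_k=C_\ell=C_3$, $K_{n/5,\dots,n/5}$ with red and blue the blow-ups of the two complementary $5$-cycles on the parts, which are triangle-free by $R(3,3)=6$ (density $4/5$). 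These give the claimed values of $d_2$.

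For $d>d_2$ I would invoke Observation~\ref{obs:trivial}(iii): it is enough to produce a $(C_k,C_\ell)$-Ramsey graph of chromatic number $3$ (for $k$ odd, $\ell$ even), $5$ (for $k,\ell$ odd, not both $3$), or $6$ (for $k=\ell=3$), for which I would take a large balanced complete multipartite graph with $3$, $5$, respectively $6$ parts. Its chromatic number is too large for its edge set to decompose into a $C_k$-free and a $C_\ell$-free graph: if $C_k$ is even the $C_k$-free part is negligible and the remaining near-spanning $C_\ell$-free graph has min degree exceeding half its order, hence is pancyclic by Bondy; if $C_k$ and $C_\ell$ are both odd, stability forces the denser colour to be close to bipartite, and then — because the vertex set splits into $5$ (or $6$) parts — the complementary colour restricted to one side of that near-bipartition still contains a complete tripartite-type graph, which is robustly pancyclic and supplies the missing cycle. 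The small-cycle cases are handled using the Ramsey numbers of complete multipartite graphs.

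The substantive part is the $1$-statement for $d\le d_1$ (hence $k$ odd): for every $d>0$ and $p=\omega(n^{-1})$, every $G$ of density $\ge d$ has $G\cup G(n,p)$ $(C_k,C_\ell)$-Ramsey whp. Passing to a bipartite subgraph I may assume $G$ is bipartite with parts $X,Y$ of linear size. Given a colouring: if $C_\ell$ is even and the blue part of $G$ has $\Omega(n^2)$ edges we finish by Bondy--Simonovits; otherwise the denser colour of $G$, say red, spans a bipartite graph with $\Omega(n^2)$ edges, and by cleaning (aided by dependent random choice) there is a linear-sized $W$, meeting both $X$ and $Y$ linearly, such that any two vertices of $W$ in a common part are joined by a red path of length exactly $k-1$. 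Inside $X$ and inside $Y$ the graph $G(n,p)$ is a binomial random graph on $\Omega(n)$ vertices with $p=\omega(1/n)$, so it is forced to contain $C_\ell$; if some within-part random edge has both ends in $W$ and is red, it closes such a red path into a red $C_k$, while if all such random edges are blue one obtains a blue $C_\ell$ from the within-part random graph. The obstacle I expect to dominate the proof is making this dichotomy \emph{robust}: both $W$ and the two colour classes may depend on $G(n,p)$, and since $G(n,p)$ can have only $n^{o(1)}$ short cycles when $np$ grows slowly, one must rule out the adversary steering $W$ and the two near-bipartitions away from those cycles while keeping one colour too sparse to contain a monochromatic $C_\ell$ — this is precisely where robust versions of the asymmetric random Ramsey results are needed. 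Finally, for $d_1<d\le d_2$, where only $\omega(1)$ random edges are available, one argues that $G$ itself is "one edge away" from Ramsey: given a colouring with no red $C_k$ and no blue $C_\ell$, stability makes both colour classes near-bipartite, and since $d>d_1$ their two near-bipartitions refine $G$ into a near-complete-multipartite graph in which some part $Z$ of linear size has the property that a single edge inside $Z$, of either colour, closes up a monochromatic cycle of exactly the right length using the dense near-bipartite structure of that colour; whp $\omega(1)$ random edges include one inside the fixed set $Z$.
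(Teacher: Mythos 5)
Your lower-bound constructions and the two-even-cycles case are fine and match the paper's, but the upper bounds contain genuine gaps. The most serious is in the main $1$-statement ($k$ odd, $d\le 1/2$, $p=\omega(n^{-1})$). Your final dichotomy needs a blue $C_\ell$ inside $G(n,p)[W\cap X]$, where $W$ depends on the colouring and hence on $G(n,p)$ itself. At $p=\omega(1/n)$ this cannot be rescued by any union bound: if, say, $np=\log\log n$, then $G(n,p)$ contains only $\Theta((np)^{\ell})=\omega(1)$ copies of $C_\ell$ in total, so there are linear-sized subsets meeting none of them, and an adversarial colouring can steer $W$ accordingly. No ``global Ramsey'' statement for a fixed cycle holds at this probability, since $m_2(C_\ell)>1$. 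The paper's key idea, which is absent from your proposal, is to reverse the order of operations: before any colouring is revealed, find (via regularity, Lemma~\ref{lem:regpairs}(i) and Janson on a single fixed pair $(A,B)$) vertex-disjoint copies of $C_k$ and $C_\ell$ in $G(n,p)[A]$ whose union has a linear common neighbourhood $N\subseteq B$ in $G$; then the only colouring-dependent search is for a red $P_{k-1}$ or blue $P_{\ell-1}$ inside a linear subset of $N$, and since $m_2(P_j)=1$ the $\mu$-global $(P_{k-1},P_{\ell-1})$-Ramsey property does hold at $p=\Theta(1/n)$ with failure probability $e^{-\Omega(n)}$ per subset, surviving the $2^n$ union bound.

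The intermediate regime $d_1<d\le d_2$ and the supercritical regime for two odd cycles have related problems. First, with only $\omega(1)$ random edges your argument again has the quantifiers backwards: the set $Z$ is determined by a colouring of $G\cup G(n,p)$, so you would need every linear set that could arise to contain a random edge, which fails when $e(G(n,p))=\omega(1)$. Second, ``stability makes both colour classes near-bipartite'' is unjustified at these densities: a $C_k$-free graph ($k$ odd, $k\ge 5$) with $cn^2$ edges, $c<1/4$, need not be near-bipartite (e.g.\ a blow-up of $C_7$ is $C_5$-free and far from bipartite), and indeed the fact that one colour class of a $5$-chromatic graph must contain the \emph{specific} odd cycles $C_k$ or $C_\ell$ (rather than merely some odd cycle) is the real content here. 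The paper handles both regimes at once by exhibiting a \emph{fixed finite} graph $H_m$ (five parts, two of the cross-pairs reduced to perfect matchings, the rest complete bipartite) that is $(C_k,C_\ell)$-Ramsey — proved via an auxiliary edge-colouring of $K_5$ and the identity $R(\{C_3\},\{C_3,C_5\})=5$, which is exactly where the hypothesis $\ell\ge 5$ enters and why $d_2$ drops from $4/5$ to $3/4$ — and satisfies $\rho_3(H_m)\le 1/2$, so Theorem~\ref{thm:perturbedturan} embeds a copy of it using $\omega(1)$ random edges before any colouring is chosen; for $d>d_2$ Erd\H{o}s--Stone embeds $H_m$ in $G$ alone. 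Your proposal would need to supply a proof of this Ramsey property (or of the Ramsey property of the complete $5$-partite graph) and a colouring-independent embedding mechanism; neither is present.
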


Theorem~\ref{thm:twocycles} shows there are at most three phases: an initial phase, where a linear number of random edges is needed to make a dense graph $(C_k, C_{\ell})$-Ramsey, an intermediate phase, where it suffices to add a large constant number of edges, and a supercritical phase, where the underlying graph is dense enough to already be $(C_k,C_{\ell})$-Ramsey.  The parities and, in some cases, lengths of the two cycles in question determine at which densities (if at all) the transitions between these phases occur.
In all cases though, our result demonstrates that one needs significantly fewer random edges for the perturbed $(C_k,C_{\ell})$-Ramsey question compared to the analogous result in the random graph setting~\cite{kreu}.

We have thus far focused on the perturbed Ramsey thresholds for pairs of graphs, and our next observation explains why this is the case of greatest interest.  Indeed, for the graph pairs we have studied, our results show that, when adding random edges to a graph of positive density, one needs significantly less randomness to make the graph $(H_1, H_2)$-Ramsey than in $G(n,p)$.  However, this is not the case when there are three or more colours; since the probability threshold for $G(n,p)$ to be $(H,r)$-Ramsey does not depend on $r$, one needs a much denser base graph before requiring less randomness in the perturbed model.  For simplicity we consider only the symmetric case, but similar remarks can be made in a more general setting.

\begin{obs} \label{obs:multicol}
For a graph $H$ and $r \ge 3$, set $k := \min \{ \chi(F) : F \textrm{ is } (H,r-2)\textrm{-Ramsey} \}$.  Then for $d \le 1 - 1/(k-1)$, we have $p(n;r,H,d) = n^{-1/m_2(H)}$.

Indeed, if $p \ge C n^{-1/m_2(H)}$ for some constant $C$, Theorem~\ref{randomramsey} shows that $G(n,p)$ itself will be $(H,r)$-Ramsey, and hence this is an upper bound on the perturbed Ramsey threshold.

For the lower bound, take $G$ to be a complete balanced $(k-1)$-partite graph.  By definition of $k$, we can $(r-2)$-colour the edges of $G$ without creating a monochromatic copy of $H$.  We use the remaining two colours on the edges of $G(n,p)$.  By Theorem~\ref{randomramsey}, if $p \le c n^{-1/m_2(H)}$ for some constant $c$, then with high probability $G(n,p)$ is not $(H,2)$-Ramsey.  This thus gives an $r$-colouring of $G \cup G(n,p)$ without a monochromatic copy of $H$.
\end{obs}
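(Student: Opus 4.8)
The plan is to squeeze $p(n;r,H,d)$ between matching upper and lower bounds, both of which come directly from the R\"odl--Ruci\'nski theorem (Theorem~\ref{randomramsey}); no container, regularity or dependent-random-choice input is needed here. Throughout I assume $H$ is not a forest of stars and paths of length $3$, so that Theorem~\ref{randomramsey} applies (the excluded forests have $m_2(H) \le 1$ and can be treated by hand). First note that $k$ is well defined: by Ramsey's theorem $K_m$ is $(H,r-2)$-Ramsey once $m$ is large, so $\{\chi(F) : F \text{ is } (H,r-2)\text{-Ramsey}\}$ is a non-empty set of positive integers and has a minimum; moreover if $H$ has an edge then every $(H,r-2)$-Ramsey graph has an edge, so $k \ge 2$ and $1 - 1/(k-1) \ge 0$.

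For the upper bound, suppose $p = \omega(n^{-1/m_2(H)})$. For $n$ large we then have $p \ge C n^{-1/m_2(H)}$ with $C$ the constant from Theorem~\ref{randomramsey}, so w.h.p.\ $G(n,p)$ is itself $(H,r)$-Ramsey. Since $(H,r)$-Ramseyness is preserved under adding edges and $G(n,p) \subseteq G_n \cup G(n,p)$, the perturbed graph is w.h.p.\ $(H,r)$-Ramsey for \emph{any} choice of $G_n$, whence $p(n;r,H,d) \le n^{-1/m_2(H)}$ for every $d$.

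For the lower bound, fix $p = o(n^{-1/m_2(H)})$ and take $G_n$ to be the complete balanced $(k-1)$-partite graph on $n$ vertices. A short computation shows $e(G_n) \ge (1 - 1/(k-1))\binom{n}{2}$, so $G_n$ has density at least $1 - 1/(k-1) \ge d$, as required. Since $\chi(G_n) = k-1 < k$, minimality of $k$ forces $G_n$ not to be $(H,r-2)$-Ramsey, so we may fix an $(r-2)$-colouring $\phi$ of $E(G_n)$, using colours $1,\dots,r-2$, with no monochromatic $H$. For the random part, $p = o(n^{-1/m_2(H)})$ gives $p \le c n^{-1/m_2(H)}$ for large $n$, with $c$ the constant from Theorem~\ref{randomramsey} applied with two colours, so w.h.p.\ $G(n,p)$ is not $(H,2)$-Ramsey and hence admits a $2$-colouring $\psi$, using colours $r-1$ and $r$, with no monochromatic $H$. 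Now $r$-colour $G_n \cup G(n,p)$ by colouring an edge according to $\phi$ if it lies in $G_n$, and according to $\psi$ otherwise (such an edge then lies in $G(n,p)$). A monochromatic copy of $H$ in a colour $i \le r-2$ would have all edges inside $G_n$ and monochromatic under $\phi$ --- impossible; a monochromatic copy in colour $r-1$ or $r$ would be monochromatic under $\psi$ inside $G(n,p)$ --- also impossible. Thus $G_n \cup G(n,p)$ is w.h.p.\ not $(H,r)$-Ramsey, giving $p(n;r,H,d) \ge n^{-1/m_2(H)}$; combined with the upper bound this yields equality.

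The argument is soft, and the only points needing care are the three highlighted above: the well-definedness of $k$ (immediate from Ramsey's theorem); the density estimate for the extremal $(k-1)$-partite graph, which is exactly why the hypothesis $d \le 1 - 1/(k-1)$ is imposed; and the legitimacy of the combined colouring. The last of these is the only mildly structural point --- and it is still easy --- namely that the first $r-2$ colour classes are confined to $G_n$ and the last two to $G(n,p)$, so any would-be monochromatic $H$ is localised to one of the two separately-controlled pieces. There is no substantial obstacle; the whole observation is a direct consequence of Theorem~\ref{randomramsey}.
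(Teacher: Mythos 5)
Your proof is correct and follows essentially the same two-part argument as the paper: the upper bound from the $1$-statement of Theorem~\ref{randomramsey} applied to $G(n,p)$ alone, and the lower bound by $(r-2)$-colouring the complete balanced $(k-1)$-partite graph $H$-free (using minimality of $k$) and reserving the last two colours for $G(n,p)$ via the $0$-statement. The extra remarks you include (well-definedness of $k$, the density calculation, the caveat about the excluded forests in Theorem~\ref{randomramsey}) are helpful precisions but do not change the substance.
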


Despite this, in our next result we consider the symmetric multicoloured perturbed Ramsey thresholds for long cycles.  As predicted by Observation~\ref{obs:multicol}, there is a large subcritical regime, where the perturbed thresholds are the same as those from $G(n,p)$.  However, once the underlying graph is very dense, we observe some similarities to Theorem~\ref{thm:twocycles}.  There remains one range of densities where we have been unable to determine the perturbed threshold.

\begin{thm} \label{thm:manycycles}
For a number of colours $r \ge 3$ and odd cycle length $\ell \ge 2^r + 1$, we have
\[ p(n; r,C_{\ell}, d) \begin{cases}
	= n^{-1 + 1/(\ell - 1)} & 0 \le d \le 1 - 2^{-r + 2}, \\
	\in [n^{-1}, n^{-1 + 1/(\ell - 1)}] & 1 - 2^{-r+2} < d \le 1 - 2^{-r+1}, \\
	= n^{-2} & 1 - 2^{-r+1} < d \le 1 - 2^{-r}, \\
	= 0 & 1 - 2^{-r} < d \le 1.
	\end{cases} \]
\end{thm}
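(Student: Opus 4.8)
The plan is to treat the four density regimes separately, with the multicolour structure reducing much of the work to the two-colour results already developed for Theorem~\ref{thm:twocycles} and to the classical Ramsey-type facts about odd cycles. The key structural fact I would use throughout is that $C_\ell$ for odd $\ell$ has chromatic number $3$, and more importantly that the obvious blow-up obstructions for odd cycles are complete multipartite graphs whose parts can be two-coloured without a monochromatic odd cycle by splitting each part into an independent-set bipartition; iterating this, a complete $2^{r-1}$-partite graph can be $(r-1)$-coloured with no monochromatic odd cycle (each colour class being bipartite), which is exactly why the density thresholds appear at $1 - 2^{-r+j}$.

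For the upper bounds (1-statements): in the regime $0 \le d \le 1 - 2^{-r+2}$, we ignore the base graph and appeal directly to Theorem~\ref{randomramsey} with $H = C_\ell$, whose $2$-density is $m_2(C_\ell) = (\ell-1)/(\ell-2)$, giving $n^{-1/m_2(C_\ell)} = n^{-1+1/(\ell-1)}$ as an upper bound. In the regime $1 - 2^{-r+1} < d \le 1 - 2^{-r}$ I would show that adding a large constant number of random edges suffices: a dense graph of density $> 1 - 2^{-r}$ contains (by Erd\H{o}s--Stone--Simonovits) a large complete $2^r$-partite subgraph, and I would argue that such a graph, plus a constant number of random edges forming a suitable small gadget, is already $(r, C_\ell)$-Ramsey — the point being that in any $r$-colouring, pigeonhole on the $2^r$ parts forces some colour to contain a complete bipartite graph $K_{m,m}$ with $m$ large, into which a constant-sized gadget of random edges can be used to close an odd cycle of length exactly $\ell$ (here $\ell \ge 2^r+1$ guarantees enough room). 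The $n^{-2}$ threshold reflects that $\Theta(1)$ random edges require $p = \Theta(n^{-2})$. For $d > 1 - 2^{-r}$, Observation~\ref{obs:trivial}(iii)-style reasoning gives $p = 0$: one exhibits a finite $(2^r+1)$-chromatic graph that is $(r,C_\ell)$-Ramsey, which again follows from the pigeonhole-plus-gadget argument applied to $K_{2^r+1}$-blow-ups combined with standard Ramsey numbers.

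For the lower bounds (0-statements): in the subcritical regime $d \le 1 - 2^{-r+2}$ I would take $G$ to be the complete balanced $(2^{r-2})$-partite graph (density $1 - 2^{-r+2}$), $(r-2)$-colour it with no monochromatic odd cycle as above, and use the remaining two colours on $G(n,p)$; Theorem~\ref{randomramsey} (two-colour case) says that below $cn^{-1/m_2(C_\ell)}$ the random graph is not $(C_\ell,2)$-Ramsey, so the whole colouring avoids a monochromatic $C_\ell$ — this exactly matches the claimed threshold. In the regime $1 - 2^{-r+1} < d \le 1 - 2^{-r}$, I would take $G$ to be the complete balanced $2^{r-1}$-partite graph, $(r-1)$-colour it with colour classes bipartite (hence odd-cycle-free), and use the last colour together with $p = o(n^{-2})$ random edges; with $o(n^{-2})$ edges w.h.p.\ $G(n,p)$ is empty, so there is no $C_\ell$ in the last colour and none in the others, giving $p(n;r,C_\ell,d) \ge n^{-2}$. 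The intermediate regime $1 - 2^{-r+2} < d \le 1 - 2^{-r+1}$ is where we only get the interval $[n^{-1}, n^{-1+1/(\ell-1)}]$: the lower bound $n^{-1}$ comes from taking the complete balanced $2^{r-1}$-partite graph, $(r-1)$-colouring it with bipartite colour classes, and using the last colour on $G(n,p)$ — a linear-sized random graph w.h.p.\ has all components of bounded size (in fact below the $n^{-1}$ threshold it is a forest of small components) and hence no cycle at all; the upper bound $n^{-1+1/(\ell-1)}$ is again just Theorem~\ref{randomramsey}.

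The main obstacle is the gadget argument in the very dense regimes: one must show precisely that a complete $2^r$-partite (resp.\ $(2^r+1)$-partite) host, after a single colour is forced to contain a large complete bipartite graph, can have an odd cycle of the exact length $\ell$ completed using only $O(1)$ random edges — this requires a careful pigeonhole over the $2^r$ parts to locate the monochromatic $K_{m,m}$, and then a path-routing argument inside it showing that two random edges (forming, say, a triangle or a short odd closure) extend to a monochromatic $C_\ell$, crucially using $\ell \ge 2^r + 1$ to have enough vertices; making this robust against an adversarial colouring and an adversarial dense base graph (rather than just the extremal blow-up) is the delicate point, and is presumably why the theorem is stated only for $\ell \ge 2^r+1$ and why the one intermediate regime resists a matching bound.
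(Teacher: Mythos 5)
Your overall framework — four density regimes, lower bounds from complete multipartite constructions $(r-j)$-coloured with bipartite colour classes, upper bounds from either $G(n,p)$ alone or a constant-size Ramsey gadget — matches the paper's plan, and your treatments of the first, second, and fourth regimes are essentially the paper's arguments (the second regime's lower bound in the paper is simply that $G(n,p)$ with $p = o(1/n)$ w.h.p.\ has no copy of $C_\ell$ by Markov; your remark about ``forest of small components'' is an overstatement but not a gap).

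However, your treatment of the third regime $1 - 2^{-r+1} < d \le 1 - 2^{-r}$ has a genuine flaw. You write that ``a dense graph of density $> 1 - 2^{-r}$ contains (by Erd\H{o}s--Stone--Simonovits) a large complete $2^r$-partite subgraph,'' but in this regime the density is only guaranteed to exceed $1 - 2^{-r+1}$, which is \emph{strictly smaller} than $1 - 1/(2^r - 1)$ for every $r \ge 2$. So Erd\H{o}s--Stone does not give you a large complete $2^r$-partite subgraph of $G$: it only gives you a complete $2^{r-1}$-partite subgraph. The whole point of this regime is that $G$ alone is \emph{not} dense enough to contain a $(2^r+1)$-chromatic (or even $2^r$-chromatic) Ramsey gadget, and the random perturbation must supply the missing chromaticity. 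Your ``pigeonhole on the $2^r$ parts'' step therefore has no $2^r$ parts to pigeonhole over. Separately, even if you did have a monochromatic $K_{m,m}$ and were handed two extra random edges, there is no way to guarantee the random edges are the \emph{same} colour as that bipartite block, since the colouring is chosen adversarially \emph{after} the random edges are revealed; a constant-size ``gadget plus pigeonhole'' argument cannot resolve this.

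The mechanism the paper actually uses is to define a graph $H_{m,r}$ on $(2^r+1)$ parts of size $m$, where between each of $2^{r-1}$ designated pairs of parts there is only a perfect matching, and between all other pairs a complete bipartite graph. Lemma~\ref{l3} (a non-trivial Ramsey statement, proved via repeated bipartite Ramsey reductions followed by Fact~\ref{fact1} that any $r$-colouring of $K_{2^r+1}$ has a monochromatic odd cycle) shows $H_{m,r}$ is $(C_\ell,r)$-Ramsey for $\ell \ge 2^r+1$. The crucial design feature is that grouping each matched pair of parts together gives a $(2^{r-1}+1)$-partition of $V(H_{m,r})$ witnessing $\rho_{2^{r-1}+1}(H_{m,r}) \le 1/2$, so Theorem~\ref{thm:perturbedturan} (the perturbed Tur\'an result) applies: for $d > 1 - 2^{-r+1}$ and $p = \omega(n^{-2})$, $H_{m,r} \subseteq G \cup G(n,p)$ w.h.p. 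It is precisely this custom structure — sparse (matchings) within groups, complete between groups — that lets the random edges raise the effective chromaticity from $2^{r-1}$ to $2^r+1$, and this is what your sketch is missing. The condition $\ell \ge 2^r+1$ enters via Fact~\ref{fact1}, not via room to close a cycle inside a $K_{m,m}$.

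Your fourth regime is fine in spirit: once $d > 1 - 2^{-r}$, Erd\H{o}s--Stone gives $H_{m,r} \subseteq G$ (since $H_{m,r}$ is $(2^r+1)$-partite), so $G$ is already $(C_\ell,r)$-Ramsey. But the heavy lifting is Lemma~\ref{l3}, which your proposal waves at with ``pigeonhole-plus-gadget'' and ``standard Ramsey numbers'' without actually constructing the colour-homogeneous skeleton (via bipartite Ramsey iterated over pairs of parts, then cleaning up the apex part and the matchings) that makes the final application of Fact~\ref{fact1} valid.
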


Finally, we combine these different settings, determining the perturbed Ramsey thresholds for odd cycles versus cliques.  Our last result provides a different extension of Theorem~\ref{kstthm}, showing that the same perturbed Ramsey threshold remains valid if $K_3$ is replaced with any odd cycle.

\begin{thm} \label{thm:cliquevscycle}
For any clique size $t \ge 4$, odd cycle length $\ell \ge 5$, and density $0 < d \le 1/2$, we have $p(n; K_t, C_{\ell}, d) = n^{-2/(t-1)}$.
\end{thm}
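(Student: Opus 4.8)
\emph{Lower bound.} Since $\ell$ is odd the cycle $C_\ell$ is not bipartite, and since $d\le 1/2=1-\tfrac12$, Observation~\ref{obs:trivial}(i) (with $H_1=K_t$, $H_2=C_\ell$, $k=2$) shows that $p(n;K_t,C_\ell,d)$ is at least the threshold for the appearance of $K_t$ in $G(n,p)$; as $K_t$ is balanced, that threshold is $n^{-v(K_t)/e(K_t)}=n^{-2/(t-1)}$. (Explicitly: colour a complete balanced bipartite $G$ blue, creating no blue odd cycle, and, whenever $G(n,p)$ is $K_t$-free, colour it red.)

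\emph{Upper bound: random inputs.} Now assume $p=\omega(n^{-2/(t-1)})$ and let $G$ be any $n$-vertex graph of density $\ge d$, which after passing to a subgraph we may assume has minimum degree $\ge\gamma n$ for a constant $\gamma=\gamma(d)>0$. I would first record two properties that hold w.h.p.\ in ``localised'' form, that is, for every $S\subseteq V$ with $|S|\ge\gamma' n$, the union bound over the $2^{O(n)}$ sets $S$ being affordable because $p=\omega(1/n)$:
\begin{itemize}
	\item[(a)] $G(n,p)[S]$ contains a copy of $K_{t-1}$ --- indeed $\omega(n^{-2/(t-1)})$ is exactly the density at which the expected number of copies of $K_{t-1}$ in $G(\gamma'n,p)$ overtakes $n$, and these copies are spread out enough for Janson's inequality --- and, since $n^{-2/(t-1)}$ is the $2$-density threshold $n^{-1/m_2(K_{t-2})}$ of $K_{t-2}$ (for $t\ge 5$; for $t=4$ the clique $K_{t-2}$ is a single edge and the statement is trivial), the container method gives that every subgraph of $G(n,p)[S]$ carrying more than a $\bigl(1-\tfrac1{t-3}+\varepsilon\bigr)$-fraction of its edges contains a $K_{t-2}$;
	\item[(b)] since $2/(t-1)\le(\ell-2)/(\ell-1)=1/m_2(C_\ell)$ for all $t\ge 4$, $\ell\ge5$, the (localised) random Tur\'an theorem for $C_\ell$ applies: as $\mathrm{ex}(m,C_\ell)=\lfloor m^2/4\rfloor$ for $\ell$ odd and $m$ large, every subgraph of $G(n,p)[S]$ carrying more than half of its edges contains a copy of $C_\ell$.
\end{itemize}

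\emph{Upper bound: the colouring argument.} Fix a red/blue colouring of $G\cup G(n,p)$ with no blue $C_\ell$; we must produce a red $K_t$. If the blue subgraph of $G$ has more than $\lfloor n^2/4\rfloor$ edges it already contains $C_\ell$, so we may assume not; then the red subgraph of $G$ is dense (it has at least $e(G)-n^2/4$ edges) unless $G$ has density essentially $1/2$ and is close to a maximum bipartite graph. In the dense case, apply dependent random choice to the dense red subgraph of $G$ to obtain $U$ with $|U|\ge\gamma' n$ in which every $(t-2)$-set has $\ge\gamma' n$ common red $G$-neighbours; since $G[U]$ contains a clique of size $\omega(1)\ge R(K_{t-2},K_\ell)$ (or else $e(G[U])$ is small and we are in the near-bipartite regime), Ramsey's theorem applied to it yields either a blue $K_\ell\supseteq C_\ell$ or a red $K_{t-2}$, say $Q\subseteq U$. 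Let $S$ be the common red $G$-neighbourhood of $Q$, of size $\ge\gamma' n$ after discarding any vertex joined into $Q$ by a blue edge. Then either the red edges inside $S$ contain an edge, so $Q$ together with it is a red $K_t$, or every edge of $G(n,p)[S]$ is blue and (b) produces a blue $C_\ell$ inside $S$. In the remaining near-bipartite case one works with an (essentially fixed) large independent set $I$ of $G$ coming from the bipartition: every edge of $G(n,p)[I]$ must be red, since a blue one, closed along blue paths through the dense bipartite part, would create a blue $C_\ell$ (and if $G(n,p)[I]$ carries $\Omega(n^2p)$ blue edges then (b) again applies), and hence a red $K_t$ lies in $G(n,p)[I]\cong G(|I|,p)$ by the ordinary appearance theorem for cliques in the binomial random graph.

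\emph{The main obstacle.} The crux is extracting the red $K_t$. Because $p=\omega(n^{-2/(t-1)})$ sits only barely above the appearance threshold of $K_t$, it is \emph{not} true that every linear-sized vertex set spans a $K_t$ in $G(n,p)$ --- only a $K_{t-1}$, and, robustly, only a $K_{t-2}$ --- so one genuinely cannot finish inside an all-red set by brute force: the $t-2$ vertices $Q$ pulled out of $G$ are indispensable, and the argument must be arranged so that the adversarial colouring of the random edges meshes with the container and random-Tur\'an statements (a) and (b), and so that the boundary structural configurations (blue subgraph of $G$ close to extremal, or almost all random edges blue) are treated uniformly. We expect the details to parallel, but be appreciably more delicate than, the $(K_t,K_3)$ argument of~\cite{kst}.
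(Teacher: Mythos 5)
Your lower bound is exactly the paper's (Observation~\ref{obs:trivial}(i) with a complete bipartite base graph), but the upper bound has a genuine gap at its crux. In the ``dense red subgraph'' case you extract the red $K_{t-2}$ from the \emph{deterministic} graph: you assert that $G[U]$ contains a clique of size $R(K_{t-2},K_\ell)$ ``or else $e(G[U])$ is small''. This dichotomy is false. Take $G=K_{n/2,n/2}$ with every edge coloured red: the red subgraph has $n^2/4$ edges, so you are squarely in your dense case, yet $G[U]$ is bipartite for every $U$ and contains no triangle, while $e(G[U])$ can be as large as $|U|^2/4$. Density of a graph of density at most $1/2$ buys you no cliques at all, so Ramsey's theorem has nothing to act on, and the fallback to your ``near-bipartite regime'' does not trigger (that case was predicated on the red subgraph of $G$ being sparse). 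The red $(t-2)$-clique must come from the random edges under an adversarial colouring, and your listed random-graph properties do not deliver it: (a) produces a $K_{t-2}$ only in subgraphs of $G(n,p)[S]$ carrying more than a $(1-\tfrac1{t-3}+\eps)$-fraction of the edges, and (b) produces a $C_\ell$ only in subgraphs carrying more than half, so a colouring in which red gets, say, a $0.6$-fraction and blue a $0.4$-fraction of $G(n,p)[S]$ defeats both for $t\ge 6$. What is really needed is the asymmetric statement that $G(n,p)[S]$ is $(K_{t-2},C_\ell)$-Ramsey (and $(K_{t-1},P_{\ell-1})$-Ramsey) for every large $S$, i.e.\ Theorem~\ref{thm:global1ramsey}, noting $m_2(K_{t-1},P_{\ell-1})=(t-1)/2$. (A secondary overreach: dependent random choice cannot give a \emph{linear}-sized $U$ in which every $(t-2)$-set has linearly many common neighbours; one of the two must degrade to $n^{1-\beta}$, which is why the paper's applications pair Lemma~\ref{lem:multipartDRC} with the polynomially-global Ramsey property.)

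The paper's route avoids your obstacle entirely: it fixes an $\eps$-regular pair $(V_1,V_2)$ in the uncoloured $G$, plants in $G(n,p)[V_1]$ a copy $H_0$ of the disjoint union $K_t\sqcup C_\ell$ having a linear common neighbourhood in $V_2$ (Janson, using $\mu_1=n^tp^{\binom t2}=\omega(1)$), passes to a subset $U$ of that neighbourhood with a constant colour profile towards $V(H_0)$, and then case-splits on that profile: mixed profiles are killed by the global $(K_{t-1},P_{\ell-1})$-Ramsey property, an all-red profile by the global $(K_{t-2},C_\ell)$-Ramsey property applied to a red edge of the planted $C_\ell$, and an all-blue profile by stitching a blue $C_\ell$ alternately through $H_0$ and $U$ starting from a blue edge of the planted $K_t$. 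The planted copy of $K_t$ in the random graph is what substitutes for the nonexistent cliques of $G$; your argument needs some analogue of this step before it can close.
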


We note that the missing cases are covered by previous results; when $\ell = 3$, the threshold is given by Theorem~\ref{kstthm}, while the case $t = 3$ is covered by Theorem~\ref{thm:twocycles}.
Theorem~\ref{thm:cliquevscycle} is perhaps of additional interest as it is the only example of the perturbed Ramsey question where we know the threshold precisely, but the analogous question in the setting of random graphs remains open.

\subsection{Notation and organisation of the paper}
Let $G$ be a graph. We define $V(G)$ to be the vertex set of $G$ and $E(G)$ to be the edge set of $G$. 
Define $v(G):=|V(G)|$ and $e(G):=|E(G)|$.  For each $x \in V(G)$, we define the \textit{neighbourhood of $x$ in $G$} to be $N_G(x):= \{y\in V(G): xy \in E(G)\}$ and define $d_G(x) := |N_G(x)|$. We define $\Delta (G)$ to be the \emph{maximum degree of $G$}; that is, the maximum value of $d_G(x)$ over all $x \in V(G)$.  Given $X \subseteq V(G)$ we write $N_G(X):= \cap _{x \in X} N_G(x)$ for the \emph{common neighbourhood of $X$ in $G$}.

Let $X \subseteq V(G)$. Then $G[X]$ is the \textit{graph induced by $X$ on $G$} and has vertex set $X$ and edge set $E_G(X) := \{xy \in E(G): x,y \in X\}$. Similarly, if $A, B \subseteq V(G)$ are disjoint, we write $G[A,B]$ for the bipartite graph with vertex classes $A$ and $B$ and edge set $E_G(A,B) := \{xy \in E(G): x \in A, y \in B \}$, and define $e_G(A,B):= \card{E_G(A,B)}$. We will often drop the subscript $G$ from our notation if the graph under consideration is clear from context.

Given two graphs $G,H$ on the same vertex set $V$ we write $G\cup H$ for the graph with vertex set $V$ and edge set $E(G) \cup E(H)$.  We shall assume the vertex set of an $n$-vertex graph to be $[n] := \{1, 2, \hdots, n\}$, unless otherwise specified.

Given a set $A$ and $k \in \mathbb N$ we denote by $A^k$ the set of all ordered $k$-tupes $(a_1, \hdots, a_k)$ of elements from $A$, while $\binom{A}{k}$ denotes the set of all (unordered) $k$-element subsets $\{a_1, \hdots, a_k\}$ of $A$.

Suppose $H_1,\dots, H_s$ and $H'_1,\dots, H'_t$ are graphs. We write $R(\{H_1,\dots, H_s \}, \{H'_1,\dots, H'_t \})$ for the smallest $n \in \mathbb N$ such that whenever $K_n$ is $2$-coloured, there is a red copy of \emph{some} $H_i$ (with $1\leq i \leq s$) or there is a blue copy of \emph{some} $H'_j$ (with $1\leq j \leq t$).

We write $0 < a \ll b \ll c < 1$ to mean that we can choose the constants $a,b,c$ from right to left, with each constant sufficiently small with respect to those preceding it. More precisely, there exist non-decreasing functions $f: (0,1] \to (0,1]$ and $g: (0,1] \to (0,1]$ such that for all $b \leq f(c)$ and $a \leq g(b)$ our calculations and arguments in our proofs are correct. Thus $a \ll b$ implies that we may assume e.g. $a < b$ or $a < b^2$, as needed.  Hierarchies of different lengths are defined similarly.

We will also use the standard asymptotic notation.  Given two positive functions $\alpha, \beta: \mathbb{N} \rightarrow \mathbb{R}$, we write $\alpha = o(\beta)$ and $\beta = \omega(\alpha)$ if $\lim_{n \rightarrow \infty} \alpha / \beta= 0$.  We emphasise that while in certain texts, $\alpha = o(\beta)$ and $\alpha \ll \beta$ are used interchangeably, that is not the case here.

\smallskip

The paper is organised as follows. In the next section we introduce useful tools concerning structures in dense, random, and randomly perturbed graphs. Then in Section~\ref{sec:proofs} we give the proofs of all of our main results. Some concluding remarks are given in Section~\ref{conclude}.  Finally, we include proofs of the less standard results from Section~\ref{sec:tools} in Appendix~\ref{sec:app}.

\section{Useful tools} \label{sec:tools}

\begin{center}
``If I have seen further it is by standing on the shoulders of giants."
\end{center}
\begin{flushright}
--- Isaac Newton (1676)
\end{flushright}

In this section we collect several of the tools we shall use in proving our results.  We shall need some results for finding useful structures in the dense underlying graph $G$ and others to analyse the random edges from $G(n,p)$.

\subsection{Structures in dense graphs}\label{21}

When working with the dense graph, our main tool will be the famous Szemer\'edi Regularity Lemma~\cite{szemeredi}.  We recall some of the key definitions and facts here, omitting many details that can be found in the survey of Koml\'os and Simonovits~\cite{ks}.  We start with that of an $\eps$-regular pair, which is a pair of vertex sets that is very regular in terms of edge densities between linear-sized subsets, as is characteristic of bipartite random graphs.

\begin{defin} \label{def:epsregular}
Given $\eps > 0$, a graph $G$ and two disjoint vertex sets $A, B \subset V(G)$, the pair $(A,B)$ is $\eps$-regular if for every $X \subseteq A$ and $Y \subseteq B$ with $\card{X} > \eps \card{A}$ and $\card{Y} > \eps \card{B}$, we have $\card{d(X,Y) - d(A,B)} < \eps$, where $d(S,T) := e(S,T)/(\card{S}\card{T})$ for any vertex sets $S$ and $T$.
\end{defin}

This strong regularity condition implies many desirable properties, some of which are collected in the following lemma.  The first states that almost all $\ell$-sets of vertices from the same side of an $\eps$-regular pair will have a large common neighbourhood, while the second states that $\eps$-regularity is essentially inherited by subsets.  We omit the proofs of these standard facts.

\begin{lemma} \label{lem:regpairs}
Let $(A,B)$ be an $\eps$-regular pair in a graph $G$ with $d(A,B) = d$.
\begin{itemize}
	\item[(i)] If $\ell \ge 1$ and $(d - \eps)^{\ell - 1} > \eps$, then
	\[ \card{ \left\{ (x_1, x_2, \hdots, x_{\ell}) \in A^{\ell} : \card{ \cap_i N(x_i) \cap B } \le (d - \eps)^{\ell} \card{B} \right\} } \le \ell \eps \card{A}^{\ell}. \]
	\item[(ii)] If $\alpha > \eps$, and $A' \subset A$ and $B' \subset B$ satisfy $\card{A'} \ge \alpha \card{A}$ and $\card{B'} \ge \alpha \card{B}$, then $(A',B')$ is an $\eps'$-regular pair of density $d'$, where $\eps':= \max \{ \eps / \alpha, 2 \eps \}$ and $\card{d' - d} < \eps$.
\end{itemize}
\end{lemma}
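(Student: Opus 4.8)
The plan is to derive both parts directly from Definition~\ref{def:epsregular} by elementary averaging and triangle-inequality arguments; these are the standard ``regularity is inherited by large subsets'' and ``most $\ell$-sets have a large common neighbourhood'' facts, so I do not anticipate a genuine obstacle. For part (ii), since $\card{A'} \ge \alpha\card{A} > \eps\card{A}$ and $\card{B'} \ge \alpha\card{B} > \eps\card{B}$, applying $\eps$-regularity of $(A,B)$ with $X = A'$ and $Y = B'$ already gives $\card{d(A',B') - d(A,B)} < \eps$, which is the assertion $\card{d'-d}<\eps$. For the regularity of $(A',B')$, I would take $X \subseteq A'$ and $Y \subseteq B'$ with $\card X > \eps'\card{A'}$ and $\card Y > \eps'\card{B'}$; then $\card X > (\eps/\alpha)\cdot\alpha\card A = \eps\card A$ and similarly $\card Y > \eps\card B$, so $\eps$-regularity of $(A,B)$ yields $\card{d(X,Y)-d(A,B)}<\eps$, and combining this with the bound on $\card{d(A',B')-d(A,B)}$ gives $\card{d(X,Y)-d(A',B')} < 2\eps \le \eps'$, as required.

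For part (i), I would induct on $\ell$. When $\ell = 1$, if the set $X := \{x\in A : \card{N(x)\cap B} \le (d-\eps)\card B\}$ had $\card X > \eps\card A$, then $e(X,B) \le \card X (d-\eps)\card B$ forces $d(X,B) \le d-\eps$, contradicting $\eps$-regularity applied with $Y = B$; hence $\card X \le \eps\card A$. For the inductive step, I would classify each ``bad'' tuple $(x_1,\dots,x_\ell)$ (one with $\card{\cap_i N(x_i)\cap B} \le (d-\eps)^\ell\card B$) according to whether its prefix $(x_1,\dots,x_{\ell-1})$ is bad or good. If the prefix is bad, i.e. $\card{\cap_{i<\ell}N(x_i)\cap B}\le (d-\eps)^{\ell-1}\card B$, then the induction hypothesis --- applicable since $(d-\eps)^{\ell-2}\ge (d-\eps)^{\ell-1} > \eps$ --- bounds the number of such prefixes by $(\ell-1)\eps\card A^{\ell-1}$, giving at most $(\ell-1)\eps\card A^\ell$ bad $\ell$-tuples of this type. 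If the prefix is good, write $Z := \cap_{i<\ell}N(x_i)\cap B$, so $\card Z > (d-\eps)^{\ell-1}\card B > \eps\card B$; then badness of the full tuple forces $\card{N(x_\ell)\cap Z} \le (d-\eps)^\ell\card B < (d-\eps)\card Z$, and exactly as in the base case --- now applying $\eps$-regularity of $(A,B)$ with $Y = Z$ --- there are at most $\eps\card A$ such $x_\ell$. Summing over the at most $\card A^{\ell-1}$ good prefixes gives at most $\eps\card A^\ell$ further bad tuples, and adding the two contributions yields the claimed bound $\ell\eps\card A^\ell$.

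The only points requiring care are purely bookkeeping: one must check the size hypotheses of Definition~\ref{def:epsregular} at each invocation (in particular that $d - \eps > 0$, which is forced by $(d-\eps)^{\ell-1}>\eps>0$, and that $\eps < 1$ so that $\card B > \eps\card B$), and one must track the strict-versus-nonstrict inequalities in the good-prefix case, where the strict bound $\card Z > (d-\eps)^{\ell-1}\card B$ is what yields $(d-\eps)^\ell\card B < (d-\eps)\card Z$. I expect no substantive difficulty here; this is precisely the kind of routine consequence of regularity that the authors omit.
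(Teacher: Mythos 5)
Your proof is correct. The paper omits the proof of this lemma entirely, describing parts (i) and (ii) as ``standard facts'' from the regularity method (with the survey of Koml\'os and Simonovits as the implicit reference), so there is no argument in the paper to compare against; your induction for (i) and the triangle-inequality argument for (ii) are precisely the standard proofs, and the bookkeeping (in particular, that $(d-\eps)^{\ell-2} \ge (d-\eps)^{\ell-1} > \eps$ since $0 < d - \eps < 1$, and that $\eps < 1$ justifies taking $Y = B$) checks out.
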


We can now state the Regularity Lemma itself.  We present the multicoloured version, as given in~\cite{ks}.

\begin{thm} \label{thm:reglemma}
For any $\eps > 0$ and $r,t \in \mathbb{N}$, there are $T = T(\eps, r, t)$ and $n_0 = n_0(\eps, r, t)$ such that if $n \ge n_0$ and the edges of an $n$-vertex graph $G$ are $r$-coloured, with $G_{\ell}$ representing the $\ell$th colour class, the vertex set $V(G)$ can be partitioned into sets $V_0, V_1, \hdots, V_k$ for some $t \le k \le T$, such that $\card{V_0} < \eps n$, $\card{V_1} = \card{V_2} = \hdots = \card{V_k}$, and all but at most $\eps k^2$ pairs $(V_i, V_j)$, $1 \le i < j \le k$, are $\eps$-regular pairs in each of the subgraphs $G_{\ell}$ simultaneously.
\end{thm}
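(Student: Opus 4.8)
The plan is to run the classical mean-square-density (``energy'') increment argument of Szemer\'edi, carried out over all $r$ colour classes simultaneously. For an equipartition $\mathcal{P}=\{V_0,V_1,\dots,V_k\}$ with exceptional part $V_0$ and $\card{V_1}=\dots=\card{V_k}$, and for each colour $\ell\in[r]$, let $d_\ell(X,Y)$ denote the density of $G_\ell$ between $X$ and $Y$, and define the colour-$\ell$ index $q_\ell(\mathcal{P}):=\sum_{1\le i<j\le k}\frac{\card{V_i}\card{V_j}}{n^2}\,d_\ell(V_i,V_j)^2$ and the total index $q(\mathcal{P}):=\sum_{\ell=1}^r q_\ell(\mathcal{P})$. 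Since each density lies in $[0,1]$ and $\sum_{i<j}\card{V_i}\card{V_j}\le n^2$, one has $0\le q(\mathcal{P})\le r$. It is this a priori bound on $q$, independent of $k$ and $n$, that will force the refinement process to stop.

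First I would record the two standard consequences of the defect form of the Cauchy--Schwarz inequality: (a) if $\mathcal{Q}$ refines $\mathcal{P}$ then $q_\ell(\mathcal{Q})\ge q_\ell(\mathcal{P})$ for every $\ell$, hence $q(\mathcal{Q})\ge q(\mathcal{P})$; and (b) if $(V_i,V_j)$ fails to be $\varepsilon$-regular in colour $\ell$, witnessed by sets $X\subseteq V_i$, $Y\subseteq V_j$, then refining $V_i$ by $\{X,V_i\setminus X\}$ and $V_j$ by $\{Y,V_j\setminus Y\}$ increases the colour-$\ell$ contribution of this pair by at least $\varepsilon^4\frac{\card{V_i}\card{V_j}}{n^2}$. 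The core step is then: given an equipartition $\mathcal{P}$ into $k$ parts for which more than $\varepsilon k^2$ of the pairs $(V_i,V_j)$ are \emph{bad} (irregular in at least one colour), I would, for each bad pair, fix \emph{one} colour in which it is irregular together with a witnessing pair of subsets, then let $\mathcal{Q}_i$ be the common refinement of $V_i$ by all witness sets coming from bad pairs through $i$ (at most $2^{k-1}$ parts), and set $\mathcal{Q}:=\bigcup_i\mathcal{Q}_i$. By (a) and (b), summing the per-pair gains over the $>\varepsilon k^2$ bad pairs and using $\card{V_i}\card{V_j}\ge(1-\varepsilon)^2 n^2/k^2$, we get $q(\mathcal{Q})\ge q(\mathcal{P})+c\varepsilon^5$ for an absolute constant $c>0$. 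Finally re-equalise: cut every part of $\mathcal{Q}$ into blocks of a common size $m$ chosen so the number of resulting parts stays controlled, and move all leftover vertices together with the old $V_0$ into a new exceptional part; a routine estimate shows this loses at most $c\varepsilon^5/2$ in the index and keeps the exceptional part below $\varepsilon n$ once $n$ is large, yielding an equipartition $\mathcal{P}'$ with $q(\mathcal{P}')\ge q(\mathcal{P})+c\varepsilon^5/2$.

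To finish, start from an arbitrary equipartition into exactly $t$ non-exceptional parts (this secures the lower bound $k\ge t$, which persists since $k$ only grows), and iterate the core step. Since $q$ increases by at least $c\varepsilon^5/2$ per step and never exceeds $r$, the process halts after at most $2r/(c\varepsilon^5)$ steps, and when it halts the current partition has at most $\varepsilon k^2$ bad pairs --- exactly the conclusion. The number of parts multiplies by at most $2^{k-1}$ (times the re-equalising factor) at each step, so after boundedly many steps it is bounded by a tower-type function $T=T(\varepsilon,r,t)$; taking $n_0=n_0(\varepsilon,r,t)$ large enough validates every ``$n$ large'' condition above and keeps the cumulative exceptional set below $\varepsilon n$.

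The main obstacle is the bookkeeping in the multicoloured refinement: one must verify that a single refinement $\mathcal{Q}$ of the whole partition simultaneously neutralises the recorded irregularity of every bad pair while the part count stays bounded, and that the $\Theta(\varepsilon^5)$ energy gain genuinely survives both the common-refinement step --- where contributions from different bad pairs sharing a part must not cancel, which is exactly what monotonicity (a) rules out --- and the re-equalisation step, where vertices dumped into $V_0$ can only reduce $q$ by a controllable amount. Everything else is the standard single-colour argument applied colourwise and then summed.
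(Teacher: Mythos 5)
The paper does not prove Theorem~\ref{thm:reglemma}: it is quoted directly from the Koml\'os--Simonovits survey~\cite{ks} and used as a black box, so there is no in-paper proof to compare against. Your proposal is the classical Szemer\'edi energy (mean-square-density) increment argument, carried out colourwise and summed, which is precisely the proof given in that reference, and it is correct at the level of detail you present. One point to tighten in the bookkeeping you flag at the end: if you literally initialise with exactly $t$ non-exceptional parts, the vertices discarded at each re-equalisation step (roughly the number of pieces of the common refinement times the new block size) can be too large a fraction of $n$ when $t$ is small; the standard fix is to initialise with $\max\{t,\,k_0(\eps,r)\}$ parts for a suitable $k_0$ of order $\log(1/\eps)$ plus the iteration count, or equivalently to choose the new common block size small enough, so that the cumulative exceptional set stays below $\eps n$ over all $O(r/\eps^{5})$ iterations while still ensuring $k\ge t$.
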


Informally speaking, the lemma says that, apart from a small exceptional set, the vertices of $G$ can be partitioned into a large but bounded number of parts, such that between almost all pairs of parts the edges of each colour seem randomly distributed.  We shall often apply this in the form of the following corollary, which follows by combining the Regularity Lemma with Tur\'an's Theorem~\cite{turan} (see~\cite{ks} for details).

\begin{cor} \label{cor:regtuple}
For every $r \ge 1$ and $\eps , \delta> 0$ with $\delta \geq 3 \eps$ there is some $\eta = \eta(\eps,\delta, r) > 0$ and $n_0=n_0(\eps,\delta, r)\in \mathbb N$ such that the following holds for all $n \geq n_0$ and $k \ge 2$.
If  $G$ is an $r$-coloured $n$-vertex graph of density at least $1 - 1/(k-1) + \delta$, then there are pairwise disjoint vertex sets $V_1, V_2, \hdots ,V_k \subset V(G)$ such that $\card{V_1} = \hdots = \card{V_k} \ge \eta n$, and, for each $1 \le i < j \le  k$, there is some colour $c_{i,j} \in [r]$ for which the edges between $V_i$ and $V_j$ of colour $c_{i,j}$ form an $\eps$-regular pair of density at least $\delta/(2r)$.
\end{cor}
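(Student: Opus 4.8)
The plan is to run the standard reduced-graph argument: apply the multicoloured Regularity Lemma (Theorem~\ref{thm:reglemma}) to the $r$-coloured graph $G$ and then invoke Tur\'an's Theorem on the resulting reduced graph. First I would observe that we may assume $k \le 1 + 1/\delta$ (equivalently, $1 - 1/(k-1) + \delta \le 1$): otherwise no $n$-vertex graph has density at least $1 - 1/(k-1) + \delta$, so the statement is vacuous. This preliminary point is what guarantees that we can force the Regularity Lemma to output at least $k$ parts. Accordingly, I apply Theorem~\ref{thm:reglemma} with regularity parameter $\eps' := \min\{\eps, \delta/32\}$ and with $t := \lceil 16/\delta \rceil$, obtaining a partition $V(G) = V_0 \cup V_1 \cup \dots \cup V_m$ with $k \le t \le m \le T := T(\eps', r, t)$, with $\card{V_1} = \dots = \card{V_m} = (n - \card{V_0})/m$, and with all but at most $\eps' m^2$ of the pairs $(V_i, V_j)$ being $\eps'$-regular in every colour class $G_\ell$ simultaneously. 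Since $\eps'$-regularity implies $\eps$-regularity (the regularity condition only weakens as the parameter grows) and colour densities are unaffected by the choice of parameter, it suffices to work throughout with $\eps'$.

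Next I would form the reduced graph $R$ on vertex set $[m]$ by putting $ij \in E(R)$ exactly when $(V_i, V_j)$ is $\eps'$-regular in each colour class and $d_G(V_i, V_j) \ge \delta/2$. The crux is to show that $e(R)$ is large via the usual edge count: each edge of $G$ is incident to $V_0$ (at most $\card{V_0} n < \eps' n^2$ such edges), or lies inside some part $V_i$ (at most $n^2/(2t)$ edges), or lies in a pair failing $\eps'$-regularity (at most $\eps' m^2 (n/m)^2 = \eps' n^2$ edges), or lies in an $\eps'$-regular pair of density below $\delta/2$ (fewer than $(\delta/2)\binom{m}{2}(n/m)^2 < \delta n^2/4$ edges), or lies in a pair contributing to $R$ (at most $e(R)(n/m)^2$ edges). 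Substituting $e(G) \ge (1 - 1/(k-1) + \delta)\binom{n}{2}$, using $1 - 1/(k-1) + \delta \le 1$ to absorb lower-order terms, and recalling $\eps' \le \delta/32$ and $1/(2t) \le \delta/32$, one obtains for $n$ large that $e(R)/m^2 > \tfrac{1}{2}(1 - 1/(k-1))$, hence $e(R) > \ex(m, K_k)$, so Tur\'an's Theorem~\cite{turan} yields a copy of $K_k$ in $R$. Let $V_{i_1}, \dots, V_{i_k}$ be the parts forming this clique; after relabelling, these are the desired sets $V_1, \dots, V_k$.

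Finally I would verify the two conclusions. Each chosen part has size $(n - \card{V_0})/m \ge (1 - \eps')n/T =: \eta n$, with $\eta = \eta(\eps, \delta, r) > 0$ depending only on $\eps, \delta, r$ through $\eps'$, $t$ and $T$, and the threshold $n_0 = n_0(\eps, \delta, r)$ comes from the Regularity Lemma together with the ``$n$ large'' requirement in the edge count. For each pair $i_a i_b$ of the clique, $(V_{i_a}, V_{i_b})$ is $\eps'$-regular in every colour and has $d_G(V_{i_a}, V_{i_b}) \ge \delta/2$; as the edges between these sets are partitioned among $r$ colours, some colour $c_{i_a,i_b}$ satisfies $d_{G_{c_{i_a,i_b}}}(V_{i_a}, V_{i_b}) \ge d_G(V_{i_a}, V_{i_b})/r \ge \delta/(2r)$, and $(V_{i_a}, V_{i_b})$ is $\eps'$-regular, hence $\eps$-regular, in $G_{c_{i_a,i_b}}$ — exactly as required. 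I expect the only genuine work to be the bookkeeping in this edge count, checking that all the error terms are comfortably dominated by $\delta/4$ once the internal regularity parameter is taken small and the number of parts large (the hypothesis $\delta \ge 3\eps$ is more than enough for this); the rest follows the well-trodden path of extracting a clique from a dense reduced graph via Tur\'an's Theorem, cf.~\cite{ks}.
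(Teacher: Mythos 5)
Your proof is correct and follows exactly the approach the paper intends — the paper itself gives no explicit argument, simply asserting that Corollary~\ref{cor:regtuple} "follows by combining the Regularity Lemma with Tur\'an's Theorem (see~\cite{ks} for details)," and your reconstruction via the reduced graph is the standard argument being alluded to. The edge-counting bookkeeping checks out with the stated choices of $\eps' = \min\{\eps, \delta/32\}$ and $t = \lceil 16/\delta\rceil$, the preliminary observation that $k \le 1 + 1/\delta$ correctly handles the dependence of the lower bound $t$ on $k$, and the pigeonhole over colours at the end correctly extracts an $\eps'$-regular (hence $\eps$-regular) colour class of density at least $\delta/(2r)$ on each pair of the Tur\'an clique.
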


We will use the structure from Corollary~\ref{cor:regtuple} to  build monochromatic cliques.  We will be able to do so because, as given by Lemma~\ref{lem:regpairs}, almost all $\ell$-tuples of vertices in $V_i$ will have large common neighbourhoods in each of the other parts $V_j$.  However, in some of our applications we will require \emph{all} $\ell$-tuples to have large common neighbourhoods.  For that we make use of another powerful tool from extremal combinatorics, dependent random choice.  The following lemma, proven in Appendix~\ref{sec:multipartDRCproof}, is a multipartite version of the basic lemma from the survey of Fox and Sudakov~\cite{fs}.

\begin{lemma} \label{lem:multipartDRC}
Given $s \in \mathbb{N}$ and $\delta > 0$, suppose $0 < \eps \le \min\{ 1/(2s), \delta/2 \}$.  Let $V_1, V_2, \hdots, V_s$ be disjoint sets of vertices from a graph $G$, each of size $m$, such that for all $i \in [s-1]$, the pair $(V_i, V_s)$ is $\eps$-regular of density at least $\delta$.  For every $\beta > 0$ and $\ell \in \mathbb{N}$ there is some $\gamma = \gamma(s, \delta, \beta, \ell) > 0$ such that, if $m$ is sufficiently large, there is a subset $U_s \subseteq V_s$ of size at least $m^{1 - \beta}$ such that every set of $\ell$ vertices from $U_s$ has at least $\gamma m$ common neighbours in each of the sets $V_i$, $i \in [s-1]$.
\end{lemma}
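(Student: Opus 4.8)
The plan is to follow the standard dependent random choice template from the Fox--Sudakov survey, adapted to the multipartite setting where all density is concentrated in pairs with the single part $V_s$. I would fix a large integer $a = a(s,\delta,\beta,\ell)$ to be chosen at the end, and pick a uniformly random ordered $a$-tuple $\mathbf{t} = (t_1,\hdots,t_a) \in V_s^a$. Let $A := \bigcap_{j} N(t_j) \cap V_{i_0}$ for a fixed $i_0 \in [s-1]$; more precisely I would work with all of $V_1,\hdots,V_{s-1}$ at once, but it is cleanest to first argue for a single $V_{i_0}$ and then take intersections at the end. Define the candidate set $U_s$ to be the set of vertices $v \in V_s$ such that $v$ is adjacent to every vertex of the common neighbourhood tuple in the relevant sense; the two quantities to control are (a) the expected size of $U_s$ is large, namely $\ge m^{1-\beta}/2$ say, and (b) the expected number of ``bad'' $\ell$-subsets of $U_s$ — those with fewer than $\gamma m$ common neighbours in some $V_i$ — is small, at most $m^{1-\beta}/4$. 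Deleting one vertex from each bad $\ell$-set then leaves a set of size $\ge m^{1-\beta}/4$ with the desired property; absorbing constants into $\beta$ or adjusting $\gamma$ handles the loss.

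The key computations are as follows. For (a): the expected size of $U_s$ — where I take $U_s$ to consist of vertices $v\in V_s$ all of whose chosen-tuple interactions behave well — needs a lower bound. The cleanest route is to let $W \subseteq V_s$ be those $v$ with $|N(v)\cap V_{i_0}| \ge (\delta - \eps)m$; by $\eps$-regularity of $(V_{i_0},V_s)$, all but at most $\eps m$ vertices of $V_s$ lie in $W$ (this is exactly the $\ell=1$ case of Lemma~\ref{lem:regpairs}(i), or a direct regularity argument), and for $v\in W$ the probability that all $a$ of the $t_j$ lie in $N(v)$ — equivalently that $v$ survives into the common-neighbourhood structure — is at least $((\delta-\eps)/1)^{a}$-ish after the right setup; summing gives $\mathbb{E}|U_s| \ge (\delta-\eps)^{a} (m - \eps m) \ge m^{1-\beta}$ provided $m$ is large, since the right-hand side is polynomially small in a fixed power while $m^{1-\beta}$ can be beaten by choosing... wait — this is the subtle point and I address it below. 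For (b): a fixed $\ell$-set $S = \{v_1,\hdots,v_\ell\}\subseteq V_s$ is bad for $V_i$ if $|N(S)\cap V_i| < \gamma m$. Conditioned on $S$ being bad, the probability that $S \subseteq U_s$ is at most the probability that the random tuple avoids the complement appropriately, which is bounded by $(\gamma m / ((\delta-\eps)m))^{a} = (\gamma/(\delta-\eps))^{a}$ roughly; summing over the at most $(s-1)\binom{m}{\ell} \le s m^{\ell}$ bad pairs $(S,i)$ gives expected number of bad sets at most $s m^{\ell} (\gamma/(\delta-\eps))^{a}$.

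The main obstacle — and the reason the parameters must be juggled carefully — is reconciling the polynomial target size $m^{1-\beta}$ with the fact that $\mathbb{E}|U_s|$ as bounded above is only $\Omega(m)$ times a fixed constant $(\delta-\eps)^a$, which is a \emph{constant} fraction, not a polynomially small one. In the classical DRC lemma one gets $|U| \ge m^{1-\beta}$ precisely because one trades: choosing $a$ moderately large makes the constant $(\delta-\eps)^a$ small but the bad-set count $m^{\ell}(\gamma/(\delta-\eps))^a$ decays like a geometric in $a$, so one picks $a$ large enough that $s m^{\ell}(\gamma/(\delta-\eps))^a \le \tfrac14 (\delta-\eps)^a m$ — i.e. $a \ge \ell/\log((\delta-\eps)/\gamma) \cdot (\text{const})$ after taking logs, which forces $\gamma$ to be chosen first (small, depending on $s,\delta,\beta,\ell$) and then $a$ in terms of $\gamma$. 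Since $(\delta - \eps)^a m \gg m^{1-\beta}$ for any fixed $a$ once $m$ is large, the $m^{1-\beta}$ bound is actually generous and we even get a linear-sized $U_s$; stating it as $m^{1-\beta}$ just matches the survey's formulation. I would therefore: (1) fix $\gamma := \tfrac12 (\delta/2)^{\,C}$ for a suitable $C = C(s,\ell)$, or more simply leave $\gamma$ as a free small parameter; (2) choose $a$ large in terms of $\delta,\gamma,s,\ell$ so the bad-set expectation is below $\tfrac14 \mathbb{E}|U_s|$; (3) apply Markov / alteration to extract $U_s$ of size $\ge \tfrac12 \mathbb{E}|U_s| \ge m^{1-\beta}$ (for $m$ large) with every $\ell$-subset having $\ge \gamma m$ common neighbours in each $V_i$, $i\in[s-1]$ — the ``each $V_i$'' handled by having folded all $s-1$ parts into the bad-set union bound. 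The only genuinely fiddly point is making the conditional-probability estimate in (b) rigorous: one writes $\mathbb{P}[S\subseteq U_s \mid S \text{ bad for } V_i]$ as a ratio of tuple-counts and bounds the common neighbourhood of $S$ inside $V_s$ using that $\bigcap_{v\in S} N(v)\cap V_i$ is small, via $\eps$-regularity of each $(V_i,V_s)$ — here $\eps \le \min\{1/(2s),\delta/2\}$ is exactly what keeps $(\delta-\eps)$ bounded away from $0$ and the regularity estimates usable.
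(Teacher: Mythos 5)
Your proposal follows the right template but contains a genuine gap in the parameter choices, and the claim that it yields a linear-sized $U_s$ is false.

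The crux is that you fix the dependent random choice tuple length $a = a(s,\delta,\beta,\ell)$ as a constant from the outset. With $a$ fixed, the expected number of bad $\ell$-sets is $\Theta(m^{\ell})$ times a fixed constant less than $1$ raised to the power $a$ --- still $\Theta(m^{\ell})$ --- while the expected survivor count is $\Theta(m)$. For $\ell \ge 2$, the former dominates the latter once $m$ is large, no matter how you choose $\gamma$ and $a$ as constants. Indeed, rearranging your target inequality
\[
  s\, m^{\ell} \left(\frac{\gamma}{\delta - \eps}\right)^{a} \le \tfrac14 (\delta - \eps)^{a} m
\]
gives $a \gtrsim (\ell - 1)\log m / \log\bigl((\delta - \eps)^2/\gamma\bigr)$, so $a$ must grow like $\log m$. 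This is exactly what the paper does: the tuple length $t := \lfloor \log(4m^{-\beta})/((s-1)\log(\delta/2)) \rfloor = \Theta(\log m)$, and as a consequence the expected surviving set has size $\approx m(\delta/2)^{(s-1)t} = \Theta(m^{1-\beta})$, \emph{not} $\Theta(m)$. Your statement that ``the $m^{1-\beta}$ bound is actually generous and we even get a linear-sized $U_s$'' is therefore incorrect; the $\beta$-loss in the exponent is essential to DRC, not a cosmetic artifact of the Fox--Sudakov formulation. Your subsequent parenthetical --- ``forces $\gamma$ to be chosen first$\ldots$ and then $a$ in terms of $\gamma$'' --- drops the $\log m$ dependence and does not fix the problem.

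Two smaller issues. First, you sample the random tuple $\mathbf{t} \in V_s^a$, the same part as the target set $U_s$, which is inconsistent with the later computation ``for $v \in V_s$ the probability that all $a$ of the $t_j$ lie in $N(v)$'' using $\eps$-regularity of $(V_{i_0}, V_s)$: for that you need $t_j \in V_{i_0}$. The paper selects a tuple $T_i$ of size $t$ inside \emph{each} side part $V_i$, $i \in [s-1]$, and takes $W_s$ to be the vertices of $V_s$ adjacent to all of $\cup_i T_i$; your exposition should do the same. Second, the bad-set probability should be bounded by $\gamma^a$ directly (a bad $\ell$-set $S \subseteq U_s$ forces the sampled tuple in $V_i$ to land entirely in $N(S) \cap V_i$, a set of size less than $\gamma m$, so the probability is less than $(\gamma m / m)^a = \gamma^a$); your extra factor $(\delta - \eps)^{-a}$ from conditioning is unnecessary and only weakens the bound.
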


When we are instead seeking slightly sparser structures within our $\eps$-regular $k$-tuples, we shall use the following well-known result, a special case of the so-called Key Lemma (see~\cite{ks}).

\begin{lemma}  \label{keylem}
Let $H$ be a fixed graph, $r\geq 2$, $0 < \varepsilon \ll  d$, and let $m\in \mathbb N$ be sufficiently large. Suppose $R$ is a graph on $[r]$ and suppose that $G$ is a graph with vertex classes $V_1,\dots, V_r$ so that
\begin{itemize}
	\item[(i)] $|V_i|=m$ for all $i \in [r]$, and
	\item[(ii)] $(V_i,V_j)$ forms an $\eps$-regular pair of density at least $d$ in $G$ for all $1\leq i<j \leq r$ with $\{i,j\} \in E(R)$.
\end{itemize}
If there is a homomorphism from $H$ to $R$ 
then $G$ contains a copy of $H$. 
\end{lemma}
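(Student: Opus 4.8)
The plan is to prove Lemma~\ref{keylem} by the standard greedy embedding argument underlying the Key/Embedding Lemma. Fix a homomorphism $\phi \colon V(H) \to V(R) = [r]$ and enumerate $V(H) = \{h_1, h_2, \dots, h_{v(H)}\}$. I would embed the $h_j$ one at a time, sending $h_j$ into the part $V_{\phi(h_j)}$, while maintaining for every not-yet-embedded vertex a \emph{candidate set} consisting of those vertices in its target part that are adjacent in $G$ to all of its already-embedded neighbours in $H$.

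First I would record the two consequences of $\phi$ being a homomorphism into the (simple, loopless) graph $R$: if $h_i h_j \in E(H)$ then $\phi(h_i)\phi(h_j) \in E(R)$ and $\phi(h_i) \neq \phi(h_j)$, so every edge of $H$ is embedded across an $\varepsilon$-regular pair of density at least $d$. Then I would set up the induction: after step $j$ we have embedded $h_1, \dots, h_j$ to distinct vertices $v_1, \dots, v_j$ respecting all edges among them, and for each $\ell > j$ we have a set $C_\ell \subseteq V_{\phi(h_\ell)}$ of size at least $(d - \varepsilon)^{b_\ell(j)} m$, where $b_\ell(j)$ is the number of $H$-neighbours of $h_\ell$ among $h_1, \dots, h_j$, and $C_\ell \subseteq \bigcap N_G(v_i)$ over those neighbours. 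Since $b_\ell(j) \le \Delta(H) \le v(H) - 1$ and $\varepsilon \ll d$, every $C_\ell$ has size at least $(d-\varepsilon)^{v(H)-1} m$, which is linear in $m$; in particular $\card{C_\ell} > \varepsilon \card{V_{\phi(h_\ell)}}$.

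The key step is the choice of $v_{j+1} \in C_{j+1}$. We must avoid (a) the at most $v(H)$ previously used vertices, and (b) for each not-yet-embedded $H$-neighbour $h_\ell$ of $h_{j+1}$, the bad set $Y_\ell \subseteq V_{\phi(h_{j+1})}$ of vertices $v$ with $\card{N_G(v) \cap C_\ell} < (d-\varepsilon)\card{C_\ell}$. Here $\varepsilon$-regularity of the pair $(V_{\phi(h_{j+1})}, V_{\phi(h_\ell)})$ does the work: if $\card{Y_\ell} > \varepsilon \card{V_{\phi(h_{j+1})}}$ then, as also $\card{C_\ell} > \varepsilon \card{V_{\phi(h_\ell)}}$, regularity forces $d(Y_\ell, C_\ell) > d - \varepsilon$, contradicting that every vertex of $Y_\ell$ sends fewer than $(d-\varepsilon)\card{C_\ell}$ edges into $C_\ell$; hence $\card{Y_\ell} \le \varepsilon m$. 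There are fewer than $v(H)$ such neighbours, so the set to avoid has size at most $v(H) + v(H)\varepsilon m$, which is smaller than $\card{C_{j+1}} \ge (d-\varepsilon)^{v(H)-1} m$ once $\varepsilon \ll d$ and $m$ is large; so a valid $v_{j+1}$ exists. Embedding it and updating $C_\ell \leftarrow C_\ell \cap N_G(v_{j+1})$ for each not-yet-embedded neighbour $h_\ell$ of $h_{j+1}$ keeps $\card{C_\ell} \ge (d-\varepsilon)^{b_\ell(j+1)} m$, restoring the invariant. After $v(H)$ steps the map $h_j \mapsto v_j$ is injective and preserves every edge of $H$, yielding the desired copy.

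There is no serious obstacle here — the argument is a routine greedy/first-moment computation — so the only points needing care are bookkeeping: choosing the hierarchy $\varepsilon \ll d$ (and then $m$ large) so that $(d-\varepsilon)^{v(H)-1} m$ dominates $v(H)\varepsilon m + v(H)$ uniformly over all stages, and correctly tracking the exponents $b_\ell(j)$ so the candidate sets never drop below linear size. One should also note that vertices of $H$ sharing an image under $\phi$ cause no difficulty: non-adjacent vertices impose no edge constraint, and there is ample room within each part $V_i$ to keep the embedding injective.
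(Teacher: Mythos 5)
Your proof is correct and is exactly the standard greedy-embedding proof of the Key (Embedding) Lemma found in the Koml\'os--Simonovits survey that the paper cites: the candidate-set invariant, the $\eps$-regularity bound on the bad sets $Y_\ell$, and the injectivity bookkeeping within a part are all handled properly under the hierarchy $\eps \ll d$ with $H$ fixed. The paper itself omits the proof, stating the lemma as a well-known special case of the Key Lemma, so there is no distinct approach to compare against.
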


\subsection{Properties of random graphs}\label{22}

The previous lemma guaranteed the existence of fixed subgraphs within deterministic graphs.  When we are instead dealing with the random graph $G(n,p)$, we shall apply the following result, a standard application of the Janson inequality~\cite[Theorem 2.14]{jlr}, whose proof is given in Appendix~\ref{sec:jansonproof}.

\begin{thm} \label{thm:janson}
Let $H$ be a graph with $v \ge 2$ vertices and $e \ge 1$ edges.  Let $[n]$ be the vertex set of $G(n,p)$, and, for some $\xi > 0$, let $\mc H$ be a collection of $\xi n^v$ possible copies of $H$ supported on $[n]$.  The probability that $G(n,p)$ does not contain any copy of $H$ from $\mc H$ is at most $\exp(- \xi \mu_1 / ( 2^{v+1} v!))$, where $\mu_1=\mu_1 (H) := \min \{ n^{v(F)} p^{e(F)} : F \subseteq H, e(F) \ge 1 \}$.
\end{thm}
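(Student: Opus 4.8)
The final statement to prove is Theorem~\ref{thm:janson}, a Janson-inequality estimate for the non-appearance of any copy of $H$ from a large family $\mc H$ in $G(n,p)$.

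\bigskip

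\noindent\textbf{Proof plan for Theorem~\ref{thm:janson}.}

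The plan is to apply the lower-tail version of the Janson inequality in the form stated as \cite[Theorem 2.14]{jlr}. First I would set up the index family: for each potential copy $K \in \mc H$ let $A_K$ be the event that all $e$ edges of $K$ are present in $G(n,p)$, and let $X = \sum_{K \in \mc H} \mathbf{1}_{A_K}$ count the copies of $H$ from $\mc H$ that appear. The Janson inequality bounds $\mathbb P[X = 0]$ by $\exp(-\mu + \Delta/2)$, where $\mu = \mathbb E[X] = \sum_K \mathbb P[A_K]$ and $\Delta = \sum_{K \ne K',\, E(K) \cap E(K') \ne \emptyset} \mathbb P[A_K \cap A_{K'}]$ (the sum over \emph{ordered} pairs of distinct copies sharing at least one edge). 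Since each $K \in \mc H$ has exactly $e$ edges, $\mathbb P[A_K] = p^e$, so $\mu = |\mc H|\, p^e = \xi n^v p^e$. The goal is then to show $\mu - \Delta/2 \ge \xi \mu_1 / (2^{v+1} v!)$, which in particular handles the regime where $\Delta$ might dominate $\mu$ by instead invoking the alternative Janson bound $\exp(-\mu^2/(2\Delta))$ when $\Delta \ge \mu$; combining the two gives the clean uniform bound $\exp(-\mu^2/(2(\mu+\Delta)))$ or, more simply, one uses that $\mathbb P[X=0] \le \exp(-\tfrac14 \min\{\mu, \mu^2/\Delta\})$ and bounds each of $\mu$ and $\mu^2/\Delta$ below by a constant multiple of $\mu_1$.

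The heart of the computation is the estimate on $\Delta$. I would partition the ordered pairs $(K, K')$ of distinct overlapping copies according to the isomorphism type of the overlap graph $F = K \cap K'$ (as a subgraph of $H$ in both copies), which ranges over subgraphs $F \subseteq H$ with $e(F) \ge 1$. For a fixed $F$ with $v(F) = f$ vertices and $e(F)$ edges, the number of ordered pairs $(K,K')$ with overlap of that type is at most $C_F\, n^{2v - f}$ for a constant $C_F$ depending only on $H$ (choose the $\le 2v - f$ vertices involved), and $\mathbb P[A_K \cap A_{K'}] = p^{2e - e(F)}$. Hence $\Delta \le \sum_{F \subseteq H,\, e(F) \ge 1} C_F\, n^{2v-f} p^{2e - e(F)} = \mu \cdot \sum_F C_F\, n^{v - f} p^{e - e(F)} = \mu \sum_F C_F\, (n^{v(F)} p^{e(F)})^{-1} \cdot n^v p^e \cdot \ldots$; more precisely, writing each term as $\mu^2 / (\xi n^{v(F)} p^{e(F)})$ up to the constant, one gets $\Delta \le \mu^2 \sum_{F} C_F / (\xi\, n^{v(F)} p^{e(F)}) \le (\mu^2 / (\xi\, \mu_1)) \sum_F C_F$. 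Therefore $\mu^2 / \Delta \ge \xi \mu_1 / \sum_F C_F$, and a crude bound on the number of subgraphs $F$ and on each $C_F$ (at most $v!$ ways to embed the relevant vertices, and at most $2^{\binom{v}{2}}$, or more carefully $2^v$-type, choices) yields $\sum_F C_F \le 2^{v+1} v!$ or similar, which is exactly the constant appearing in the statement. Combined with $\mu = \xi n^v p^e \ge \xi \mu_1$ (taking $F = H$ shows $\mu_1 \le n^v p^e$), both $\mu$ and $\mu^2/\Delta$ are at least $\xi \mu_1 / (2^{v+1} v!)$, and plugging into the Janson bound finishes the proof.

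The main obstacle is purely bookkeeping: getting the combinatorial constant $C_F$ counting overlapping pairs to line up with the claimed $2^{v+1} v!$, since one must be careful that the overlap $F$ can be embedded in $K$ and $K'$ in possibly different ways, and that vertices outside the overlap are counted correctly. I expect no genuine difficulty beyond this careful accounting — the inequality is robust, so a somewhat lossy bound on $\sum_F C_F$ suffices as long as it is an absolute function of $v$ (and $v!$), and the two-regime split of the Janson inequality ($\Delta \le \mu$ versus $\Delta > \mu$) handles the full range of $p$ uniformly. One should also note the hypothesis $v \ge 2$, $e \ge 1$ ensures $\mu > 0$ and that there is at least one edge to intersect on, so the degenerate cases are excluded.
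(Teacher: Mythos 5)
Your proposal matches the paper's proof essentially step for step: both apply the two-regime Janson inequality (standard when $\Delta \le \mu$, extended when $\Delta > \mu$), and both bound $\Delta$ by decomposing ordered pairs of edge-intersecting copies according to the isomorphism type of the overlap $F \subseteq H$, then crudely counting the $2^v v!$ or so choices of shared vertices and their assignment (the paper isolates this computation in a separate Lemma, its Lemma~\ref{lem:covariance}, and distinguishes $\mu_0$ over proper subgraphs from $\mu_1$, but the content is identical to your $\sum_F C_F$ bookkeeping). No genuine gap; the only caveat is that your constant $C_F$ must absorb either $\xi$ or, equivalently, you should be careful that the first copy ranges over $\mc H$ (size $\xi n^v$) while the second ranges over all copies, which is exactly why the final bound is linear in $\xi$ rather than $\xi^2$.
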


However, for our purposes, the most important properties of random graphs will be their Ramsey properties.  As stated in the introduction, Conjecture~\ref{conjkreu} of Kohayakawa and Kreuter~\cite{kreu} suggests what the threshold for the random graph $G(n,p)$ being $(H_1, \hdots, H_r)$-Ramsey should be.  While the 1-statement is now known to hold in general~\cite{mns}, the corresponding 0-statement is only known for cycles~\cite{kreu} and cliques~\cite{msss}.

\begin{thm}[\cite{kreu, msss}] \label{thm:randramlower}
Let $r \ge 2$, and let $H_1, H_2, \hdots, H_r$ be graphs such that either every $H_i$ is a complete graph or every $H_i$ is a cycle, and $m_2(H_1) \ge m_2(H_2) \ge \hdots \ge m_2(H_r)$.  There is some constant $c > 0$ such that if $p \le c n^{-1/m_2(H_1, H_2)}$, then with high probability $G(n,p)$ is not $(H_1, H_2, \hdots, H_r)$-Ramsey.
\end{thm}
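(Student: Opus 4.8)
The $r$-colour statement reduces at once to the case $r = 2$. Suppose we know that for $p \le cn^{-1/m_2(H_1,H_2)}$ the graph $G(n,p)$ is w.h.p.\ not $(H_1,H_2)$-Ramsey, i.e.\ w.h.p.\ there is a red/blue colouring of $E(G(n,p))$ with no red copy of $H_1$ and no blue copy of $H_2$. Viewing this as an $r$-colouring in which colours $3,\dots,r$ are simply left unused, we obtain an $r$-colouring of $G(n,p)$ with no copy of $H_i$ in colour $i$ for every $i \in [r]$ (the conditions for $i \ge 3$ holding vacuously), so $G(n,p)$ is not $(H_1,\dots,H_r)$-Ramsey. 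Since $m_2(H_1,H_2)$ depends only on the two densest graphs, the probability regime is unchanged. Hence the whole content is the $2$-colour $0$-statement: for $p \le cn^{-1/m_2(H_1,H_2)}$ with $c$ sufficiently small, $G(n,p)$ w.h.p.\ admits an $H_1$-free subgraph $R$ (the red edges) that meets every copy of $H_2$ (so the blue subgraph $G(n,p) \setminus R$ is $H_2$-free).

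\textbf{The two-colour case.} For cliques this is the theorem of Marciniszyn, Skokan, Sp\"ohel and Steger~\cite{msss}, and for cycles the theorem of Kohayakawa and Kreuter~\cite{kreu}, so strictly speaking one may just quote them. To indicate the strategy: set $p = cn^{-1/m_2(H_1,H_2)}$ and use that cliques and cycles are strictly $2$-balanced, which forces $H_1$ to be strictly balanced with respect to $m_2(\cdot,H_2)$ and is exactly what pins the argument to this precise exponent. The plan has two parts. First, a \emph{structural step}: show that w.h.p.\ $G(n,p)$ contains no copy of any graph from a suitable finite family $\mathcal{F}$ of ``obstruction configurations'' --- certain dense graphs obtained by gluing a copy of $H_2$ together with copies of $H_1$ along and around its edges --- each $F \in \mathcal{F}$ being chosen dense enough that $n^{v(F)} p^{e(F)} \to 0$ once $c$ is small; here the value of $m_2(H_1,H_2)$, together with strict $2$-balancedness (to ensure that no proper subgraph of an obstruction undoes the density gain), does the work. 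Second, a \emph{colouring step}: conditioning on the w.h.p.\ event that no member of $\mathcal{F}$ appears, build $R$ by an iterative process --- repeatedly take a copy of $H_2$ not yet hit by $R$ and recolour red one of its edges, chosen so as never to complete an all-red copy of $H_1$; the absence of $\mathcal{F}$-configurations is precisely what guarantees that a legal choice always exists, so the process runs to completion with $R$ simultaneously $H_1$-free and a transversal of the copies of $H_2$.

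\textbf{Main obstacle.} The reduction and the first-moment estimates are routine. The genuine difficulty --- and the reason the clique and cycle cases are proved separately in~\cite{msss} and~\cite{kreu} --- is to determine exactly which local configurations $\mathcal{F}$ must contain so that the colouring step provably cannot get stuck, while keeping every member of $\mathcal{F}$ dense enough to be excluded by the first moment. This is a delicate balance: a naive union of a copy of $H_2$ with edge-disjoint copies of $H_1$ along its edges turns out \emph{not} to be dense enough, so the relevant obstructions must force genuine overlaps among the $H_1$-copies. For cycles the resulting structure can be described fairly explicitly, whereas for cliques it requires a more careful combinatorial analysis of how copies of $H_1$ can share edges; in both cases strict $2$-balancedness of $H_1$ and $H_2$ is what ties the combinatorics to the threshold $n^{-1/m_2(H_1,H_2)}$.
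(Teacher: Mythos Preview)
Your proposal is correct and matches the paper's treatment: the paper does not give its own proof of this theorem but simply cites it from~\cite{kreu} and~\cite{msss}, and your reduction from $r$ colours to two is exactly the standard observation that makes the cited two-colour $0$-statements suffice. The high-level sketch of the obstruction-configuration strategy you include is a reasonable summary of those references and goes beyond what the paper itself provides.
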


In our applications, we require slightly stronger variants of the $1$-statement, and we define the properties we shall need below.

\begin{defin}[Robust and global Ramsey properties]
Let $H_1$ and $H_2$ be two fixed graphs, and let $G$ be an $n$-vertex graph on the vertex set $[n]$.

Given, for $i \in [2]$, families $\mc F_i \subseteq \binom{[n]}{v(H_i)}$ of \emph{forbidden} subsets of $v(H_i)$ vertices, we say $G$ is \emph{robustly $(H_1, H_2)$-Ramsey with respect to $(\mc F_1, \mc F_2)$} if every $2$-colouring of $G$ contains a red copy of $H_1$ or a blue copy of $H_2$, such that the vertex set of the monochromatic subgraph is not forbidden.

Given $\mu >0$, we say that $G$ is \emph{$\mu$-globally $(H_1, H_2)$-Ramsey} if, for every $2$-colouring of $G$ and for every subset $U \subseteq [n]$ of at least $\mu n$ vertices, $G[U]$ contains a red copy of $H_1$ or a blue copy of $H_2$.
\end{defin}

It turns out for a wide collection of pairs of graphs $H_1,H_2$, above the threshold from Conjecture~\ref{conjkreu}, $G(n,p)$ is not only $(H_1, H_2)$-Ramsey, but robustly and globally so.  While~\cite{kreu} does not give explicit bounds on the error terms, making it a little harder to verify that this strengthening is possible, the more recent containers-based proofs given by Gugelmann, Nenadov, Person, \v{S}kori\'c, Steger and Thomas~\cite{gnpsst} and Hancock, Staden and Treglown~\cite{hst} allow for the necessary extensions.  In Appendix~\ref{sec:global1ramseyproof} we prove Theorem~\ref{thm:global1ramsey}, implementing the modifications that must be made to the existing proofs.

\begin{thm}[\cite{gnpsst,hst}] \label{thm:global1ramsey}
Let $H_1$ and $H_2$ be graphs such that $m_2(H_1) \ge m_2(H_2) \ge 1$, and 
\begin{itemize}
	\item[(a)] $m_2(H_1) = m_2(H_2)$, or 
	\item[(b)] $H_1$ is strictly balanced with respect to $m_2(\cdot, H_2)$.
\end{itemize}

The random graph $G(n,p)$ then has the following Ramsey properties:
\begin{itemize}
	\item[(i)] There are constants $\gamma = \gamma(H_1, H_2) > 0$ and $C_1 = C_1(H_1, H_2)$ such that if $p \ge C_1 n^{-1/m_2(H_1, H_2)}$ and, for $i \in [2]$, $\mc F_i \subseteq \binom{[n]}{v(H_i)}$ is a collection of at most $\gamma n^{v(H_i)}$ forbidden subsets, then $G(n,p)$ is with high probability robustly $(H_1, H_2)$-Ramsey with respect to $(\mc F_1, \mc F_2)$.
	\item[(ii)] For every $\mu > 0$ there is a constant $C_2 = C_2(H_1, H_2, \mu)$ such that if $p \ge C_2 n^{-1/m_2(H_1, H_2)}$, then $G(n,p)$ is with high probability $\mu$-globally $(H_1, H_2)$-Ramsey.
	\item[(iii)] If we further have $m_2(H_2) > 1$, then there are constants $\beta_0 = \beta_0(H_1, H_2) > 0$ and $C_3 = C_3(H_1, H_2)$ such that if $0 \le \beta \le \beta_0$ and $p \ge C_3 n^{-(1-\beta)/m_2(H_1, H_2)}$, then with high probability $G(n,p)$ is $n^{-\beta}$-globally $(H_1, H_2)$-Ramsey.
\end{itemize}
\end{thm}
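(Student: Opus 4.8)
The plan is to revisit the container-based proofs of the $1$-statement of Conjecture~\ref{conjkreu} --- that of Gugelmann et al.~\cite{gnpsst} when $H_1$ is strictly balanced with respect to $m_2(\cdot,H_2)$, and that of Hancock, Staden and Treglown~\cite{hst} when $m_2(H_1)=m_2(H_2)$ --- and to obtain the two strengthenings by tracking their quantitative output.  Recall the shape of those arguments: from a red/blue colouring of $G(n,p)$ with no red $H_1$ and no blue $H_2$ one extracts an $H_1$-free red graph $R$ and an $H_2$-free blue graph $B$ with $R\cup B=G(n,p)$; feeding $R$ and $B$ into the container method (applied to the hypergraphs of copies of $H_1$ and of $H_2$) confines them to containers drawn from bounded families, and a union bound over the resulting structured family of container pairs --- combined with a stability analysis of near-extremal containers --- shows that this is extremely unlikely once $p\ge Cn^{-1/m_2(H_1,H_2)}$.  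Crucially, those proofs deliver a failure probability that is exponentially small in a fixed positive power of $n$; this is the slack we will spend.

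For the robust statement~(i): a colouring witnessing failure of the robust property with respect to $(\mc F_1,\mc F_2)$ forces every copy of $H_1$ in the red graph $R$ to have vertex set in $\mc F_1$, and every copy of $H_2$ in the blue graph $B$ to have vertex set in $\mc F_2$; hence $R$ contains only $O(\gamma n^{v(H_1)})$ copies of $H_1$ and $B$ only $O(\gamma n^{v(H_2)})$ copies of $H_2$ --- that is, $R$ and $B$ are ``almost $H_i$-free''.  This is precisely the situation a resilience form of the $1$-statement addresses: the result of~\cite{hst} already handles it when $m_2(H_1)=m_2(H_2)$, and I would check that the container argument of~\cite{gnpsst} admits the same robustification when $H_1$ is strictly balanced, since the container families and union bounds there tolerate $o(n^{v(H_i)})$ stray copies of each $H_i$ with room to spare.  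It then suffices to pick $\gamma=\gamma(H_1,H_2)$ small enough that $O(\gamma n^{v(H_i)})$ falls comfortably inside this tolerance.

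For the global statements~(ii) and~(iii): it suffices to show that with probability $1-o(1)$, for every admissible $U\subseteq[n]$ the induced graph $G(n,p)[U]$ is $(H_1,H_2)$-Ramsey, and here one may restrict to $U$ of the minimum admissible size.  Fix such a $U$ and write $m:=|U|$; then $G(n,p)[U]$ is distributed as $G(m,p)$, and since $m=\lceil\mu n\rceil$ in case~(ii) and $m=\lceil n^{1-\beta}\rceil$ in case~(iii), a one-line computation shows $p\ge C'm^{-1/m_2(H_1,H_2)}$ with $C'=C_2\mu^{1/m_2(H_1,H_2)}$ (resp.\ $C'=C_3$).  Applying the container $1$-statement to $G(m,p)$ with this $C'$ large gives that $G(m,p)$ fails to be $(H_1,H_2)$-Ramsey with probability exponentially small in a fixed power of $m$; when $m_2(H_2)>1$ that power exceeds $1$, so (with $m\ge n^{1-\beta}$) the failure probability is $\exp(-\omega(n^{1-\beta}\log n))$, which beats the $\binom{n}{m}\le n^{m}=\exp(m\log n)$-term union bound over choices of $U$.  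The case $m_2(H_2)=1$ in~(ii) (where $H_2$ is essentially a forest and the admissible $U$ still have linear size) is handled by a direct argument, and~(ii) is otherwise the $\beta=0$ instance of~(iii).

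The hard part is precisely this bookkeeping: verifying that the container proofs of~\cite{gnpsst} and~\cite{hst} can be run (a) with the colour classes only almost $H_i$-free and (b) inside an arbitrary induced subgraph on $m$ vertices, in each case with an error probability small enough for the relevant union bound.  The tightest point is case~(a) of the hypotheses --- where $m_2(H_1)=m_2(H_2)$, so $pn^2$ and the logarithm of the number of containers are of the same order and there is essentially no polynomial slack --- together with the lower end $m\approx n^{1-\beta}$ of~(iii); there one has to exploit the finer features of the argument in~\cite{hst} rather than the crude union bound sketched above.
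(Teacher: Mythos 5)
Your overall plan — re-run the container-based $1$-statement proofs of~\cite{gnpsst} and~\cite{hst}, keeping track of the exponentially small failure probabilities, and then spend that slack on a union bound over vertex subsets — is indeed the strategy the paper takes, and your identification of the tight spots (case~(a), and the lower end $m\approx n^{1-\beta}$ of~(iii)) is accurate.  However, there are two concrete gaps in the proposal that would need to be filled, and one reduction that is simply wrong.

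First, the reduction ``(ii) is otherwise the $\beta=0$ instance of~(iii)'' does not make sense: $n^{-\beta}$-globally with $\beta=0$ means $1$-globally, i.e.\ the trivial statement about the whole vertex set, whereas~(ii) concerns $\mu$-globally for every fixed $\mu>0$, i.e.\ subsets of size $\mu n$.  Conversely, there is no need for a separate ``direct argument'' when $m_2(H_2)=1$: for linear-sized $U$ the per-subset failure probability is $\exp\bigl(-\Omega(n_0^{2-1/m_2(H_1)})\bigr)$ with $n_0=\mu n$ and $m_2(H_1)\ge 1$, which already beats the $\le 2^n$ union bound once the constant is taken large enough in terms of $\mu$.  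The paper handles~(ii) uniformly this way; the exponent $2-1/m_2(H_1)>1$ (hence $m_2(H_2)>1$, via $m_2(H_1)\ge m_2(H_2)$) is only needed for~(iii), where $n_0=n^{1-\beta}$ and one is up against $\binom{n}{n_0}=\exp(n_0^{1+o(1)})$.

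Second, and more seriously, the robustification under hypothesis~(b) is not the minor bookkeeping your proposal suggests.  When $H_1$ is strictly balanced with respect to $m_2(\cdot,H_2)$ and $p\asymp n^{-1/m_2(H_1,H_2)}$, the parameter $\tau$ that containers for nearly-$H_1$-free graphs require is of order $n^{-1/m_2(H_1)}$, which is much smaller than $p$; the number of such containers is then too large for the union bound to close, so one cannot simply ``tolerate $o(n^{v(H_1)})$ stray copies'' inside the $H_1$-containers.  The paper instead uses containers only for $H_2$ (to control the blue side) and handles the red side combinatorially: it splits copies of $H_1$ in $G(n,p)$ into isolated and non-isolated ones, shows the non-isolated ones are few (this yields an edge set $L$), normalises the colouring so that every non-forbidden isolated $H_1$ has exactly one blue edge, and then argues via a notion of viable edge sets (sets of edges each coverable by pairwise edge-disjoint copies of $H_1$) together with FKG that the blue subgraph essentially cannot escape a small container.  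Two further claims (one on viability probabilities, one on viable forbidden copies of $H_2$, proved by a careful second-moment computation) feed into this.  None of this is a small perturbation of the symmetric argument, and your proposal would need to rediscover these ideas.  Relatedly, in the global version under~(b), the event that many non-isolated copies of $H_1$ appear inside a given $U$ is only polynomially unlikely, so one cannot union-bound over it per subset; the paper avoids this by replacing the per-subset event with a single global event over all of $G(n,p)$ (non-isolation inside $U$ implies non-isolation in $G(n,p)$), and you would need the analogous fix.
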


Before we proceed, it is worth comparing the exponents in these different thresholds for pairs of cliques.  For $t \ge 4$, we have $m_2(K_{t-1},K_{t-1}) = \frac{t}{2} < \frac{t(t-1)}{2t-3} = m_2(K_t, K_3) < m_2(K_t,K_4) < \hdots < m_2(K_t, K_t) = \frac{t+1}{2}$.  In particular, when $G(n,p)$ is $(K_t, K_3)$-Ramsey, it will also be $(K_k, K_{\ell})$-Ramsey for $k,\ell < t$.

\subsection{Properties of randomly perturbed graphs} \label{23}

While some of the previous results allowed us to find subgraphs within dense or random graphs, we will sometimes need the existence of certain graphs in the randomly perturbed model.  Krivelevich, Sudakov and Tetali~\cite{kst} showed that the threshold probabilities for this problem depend on the sparsest partitions of the desired graph, and the final result we shall make use of is their $1$-statement.

\begin{thm}[\cite{kst}] \label{thm:perturbedturan}
Given a graph $F$, define $\rho(F) := \max \{ e(F')/v(F') : F' \subseteq F, v(F') \ge 1 \}$, and, for $k \ge 2$, set
\[ \rho_k(F) := \min_{V(F) = \cup_i V_i} \max_i \rho(F[V_i]), \]
where the minimum is taken over all partitions into at most $k$ parts.  If $d > 1 - 1/(k-1)$, $p = \omega\left( n^{-1/\rho_k(F)} \right)$, and $G$ is an $n$-vertex graph of density $d$, then $F \subseteq G \cup G(n,p)$ with high probability.
\end{thm}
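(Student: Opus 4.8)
The plan is to use the density of $G$ to pull out of it, via the regularity method, a large ``blow-up'' of $K_k$ whose parts have linear size, to place an optimal partition of $F$ into these parts so that \emph{every edge of $F$ running between two parts is handled by $G$}, and then to use the random edges of $G(n,p)$ — whose density sits just above the appearance threshold of each $F[W_i]$, by the very definition of $\rho_k(F)$ — to realise the edges of $F$ lying \emph{inside} the parts. The two halves are glued together through Janson's inequality in the form of Theorem~\ref{thm:janson}: one shows there are $\Omega(n^{v(F)})$ ways to realise the between-part edges, and then $G(n,p)$ must contain one of the corresponding inside-part configurations.

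\smallskip

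In detail, first fix a partition $V(F) = W_1 \cup \dots \cup W_k$ with $\max_i \rho(F[W_i]) = \rho_k(F)$, and write $F_{\mathrm{in}} := \bigcup_i F[W_i]$ for the graph of edges lying inside the parts and $F_{\mathrm{cross}}$ for the $k$-partite subgraph of edges between distinct parts. If $\rho_k(F) = 0$ then every $F[W_i]$ is edgeless, so $F$ admits a homomorphism to $K_k$, and an application of Lemma~\ref{keylem} (with $R = K_k$) using the structure below gives $F \subseteq G$ already; hence we may assume $F_{\mathrm{in}}$ has at least one edge, and note that $\rho(F_{\mathrm{in}}) = \max_i \rho(F[W_i]) = \rho_k(F)$ since the maximum subgraph density of a disjoint union is the maximum over its components. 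Next, write $d = 1 - 1/(k-1) + \delta$ with $\delta > 0$, choose $\eps > 0$ sufficiently small in terms of $F, k, \delta$, and apply Corollary~\ref{cor:regtuple} with $r = 1$ to obtain pairwise disjoint sets $V_1, \dots, V_k \subseteq V(G)$ with $\card{V_1} = \dots = \card{V_k} = m \ge \eta n$ such that every pair $(V_i,V_j)$ is $\eps$-regular of density at least $\delta/2$ in $G$.

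\smallskip

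The deterministic core is a counting step. By the quantitative (counting) version of the embedding lemma for $\eps$-regular pairs, i.e.\ the counting version of Lemma~\ref{keylem} (see~\cite{ks}), provided $\eps$ is small enough in terms of $F$ and $\delta$, the number $N$ of injective maps $\psi \colon V(F) \to V_1 \cup \dots \cup V_k$ with $\psi(W_i) \subseteq V_i$ for every $i$ and $\psi(u)\psi(v) \in E(G)$ for every edge $uv$ of $F_{\mathrm{cross}}$ satisfies $N \ge \xi_0 m^{v(F)} \ge \xi_0 \eta^{v(F)} n^{v(F)}$ for some constant $\xi_0 = \xi_0(F,\delta) > 0$ (each of the boundedly many cross-edges ``survives'' the embedding with density at least $\delta/2 - o(1)$, and within each $V_i$ only a $o(1)$ fraction of placements of $W_i$ fail to be injective). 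For each such $\psi$, let $J_\psi$ be the copy of $F_{\mathrm{in}}$ on $\psi(V(F))$ with edge set $\{\psi(u)\psi(v) : uv \in E(F_{\mathrm{in}})\}$, and set $\mc{H} := \{J_\psi : \psi \text{ as above}\}$. Since only a bounded number of these maps (at most $\card{\Aut(F_{\mathrm{in}})}$ up to the choice of parts) produce the same labelled copy of $F_{\mathrm{in}}$, we get $\card{\mc{H}} \ge \xi n^{v(F)}$ for a constant $\xi = \xi(F,\delta) > 0$.

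\smallskip

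Finally, apply Theorem~\ref{thm:janson} with $H = F_{\mathrm{in}}$ (which has $v(F) \ge 2$ vertices and at least one edge) and the collection $\mc{H}$. We may assume $p = o(1)$, as otherwise the first moment already gives the result; then for any $F' \subseteq F_{\mathrm{in}}$ with $e(F') \ge 1$ we have $e(F')/v(F') \le \rho(F_{\mathrm{in}}) = \rho_k(F)$, so $n^{v(F')} p^{e(F')} = (n \cdot p^{e(F')/v(F')})^{v(F')} \ge (n p^{\rho_k(F)})^{v(F')} \ge n p^{\rho_k(F)} = n \cdot \omega(n^{-1}) \to \infty$, whence $\mu_1(F_{\mathrm{in}}) \to \infty$ and Theorem~\ref{thm:janson} shows that with high probability $G(n,p)$ contains some $J_\psi \in \mc{H}$. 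For that $\psi$, every edge of $F$ inside a part is an edge of $J_\psi \subseteq G(n,p)$ and every edge of $F$ between two parts is an edge of $G$; as $\psi$ is injective, this is a copy of $F$ in $G \cup G(n,p)$, as required. The one slightly delicate point is the counting step: it is essential to extract a positive proportion $\Omega(n^{v(F)})$ of admissible between-part configurations, not merely one, so that Janson's inequality has room to operate — this is precisely what the counting form of the Key Lemma provides once $\eps$ is chosen small relative to $F$ and $\delta$.
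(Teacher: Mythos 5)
The paper does not contain a proof of this theorem: it is imported from Krivelevich, Sudakov and Tetali~\cite{kst} as a black box, and only its statement is used downstream. Your argument supplies a correct proof built from the paper's own toolkit, and it follows the same regularity-plus-second-moment strategy as the original proof in~\cite{kst}: fix an optimal partition $V(F)=W_1\cup\cdots\cup W_k$, apply Corollary~\ref{cor:regtuple} to find a linear-sized $\eps$-regular $k$-tuple in $G$, use the counting lemma to produce $\Omega\left(n^{v(F)}\right)$ partition-respecting placements of $V(F)$ whose cross edges are already realised by $G$, and then run Janson (Theorem~\ref{thm:janson}) on the induced family of copies of $F_{\mathrm{in}}$ to find a placement whose within-part edges land in $G(n,p)$. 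The key computation $\mu_1(F_{\mathrm{in}}) = \min\{ n^{v(F')}p^{e(F')} : F'\subseteq F_{\mathrm{in}},\, e(F')\ge 1\} \ge np^{\rho_k(F)} = \omega(1)$ is correct and is precisely where the definition of $\rho_k(F)$ enters. Two minor points, neither affecting correctness: the reduction to $p=o(1)$ is unnecessary, since your chain of inequalities $n^{v(F')}p^{e(F')}\ge\left(np^{\rho_k(F)}\right)^{v(F')}\ge np^{\rho_k(F)}\to\infty$ already holds for all $p\le 1$; and since $F_{\mathrm{in}}$ may have isolated vertices, it is worth remarking that the minimum defining $\mu_1$ is attained on a subgraph without isolated vertices, so that the estimate is unaffected by them.
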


\section{Proofs} \label{sec:proofs}

\begin{center}
``Your faith was strong but you needed proof."
\end{center}
\begin{flushright}
--- Leonard Cohen, \emph{Hallelujah} (1984)
\end{flushright}

With these tools at our disposal, we are now ready to prove our main results, establishing perturbed Ramsey thresholds for various pairs of graphs.

\subsection{Proof of Theorem~\ref{thm:bigcliques}}

Our first result establishes perturbed Ramsey thresholds for pairs of  cliques that are not too small.  Specifically, let $k \ge 2$ be such that $1 - 1/(k-1) < d \le 1 - 1/k$, and suppose $2k+1 \le s \le t$.  For convenience, we set $\ell := \ceil{s/k}$, and note that $\ell \ge 3$.  We shall show $p(n; K_t,K_s,d) = n^{-1/m_2(K_t, K_{\ell})}$ if $k = 2$ or $s \equiv 1 \pmod{k}$, and otherwise obtain this same threshold log-asymptotically.  In all cases, the lower bound follows from the same argument, which we now present.

\begin{proof}[Proof of Theorem~\ref{thm:bigcliques} (lower bound)]
Let $G$ be the $k$-partite $n$-vertex Tur\'an graph with vertex classes $V_1, V_2, \hdots, V_k$, which has density at least $1 - 1/k$, and let $p \le c n^{-1/m_2(K_t, K_{\ell})}$, where the constant $c$ is as in Theorem~\ref{thm:randramlower}.  We need to show that, with high probability, the edges of $G \cup G(n,p)$ can be $2$-coloured without creating a red $K_t$ or a blue $K_s$.  We colour the edges of $G$ blue, leaving only the edges of $G(n,p)[V_i]$, $i \in [k]$, uncoloured.  By Theorem~\ref{thm:randramlower}, with high probability $G(n,p)$ can be $2$-coloured without creating a red $K_t$ or a blue $K_{\ell}$.  This then gives the desired colouring --- the connected components of the red subgraph lie within the red subgraphs of $G(n,p)[V_i]$, which are $K_t$-free, while, since the largest blue clique in each $G(n,p)[V_i]$ has at most $\ell - 1$ vertices, the largest blue clique in $G \cup G(n,p)$ has size at most $k(\ell - 1) < s$.  Hence $p(n;K_t, K_s, d) \ge n^{-1/m_2(K_t, K_{\ell})}$.
\end{proof}

We divide the proof of the upper bound into three parts, treating each case in turn.

\subsubsection{Proof of Theorem~\ref{thm:bigcliques} (upper bound), Case (i)}

We start with the case $k = 2$.  Let $G$ be a graph of density $4d > 0$.  By applying Corollary~\ref{cor:regtuple} with $r = 1$, we can find an $\eps$-regular pair $(U,W)$ of density at least $2d$, where $\eps = \eps(d)>0$ is sufficiently small and $\card{U} = \card{W} = m := \eta n$ for some $\eta = \eta(d) > 0$.

Set $\xi := d^{\ell} / (2 \ell)$, let $\mu := \xi d / 2$, and recall that $m_2(K_t, K_{\ell}) > m_2(K_{t-1}, K_{t-1})$.  Thus, if $C \ge \max \{ C_{2}(K_t, K_{\ell}, \mu \eta), C_{2}(K_{t-1},K_{t-1},\mu \eta) \}$, it follows from Theorem~\ref{thm:global1ramsey} that if $p \ge C n^{-1/m_2(K_t, K_{\ell})}$, then with high probability,
\begin{itemize}
	\item[(a)] $G(n,p)$ is both $\mu \eta$-globally $(K_t, K_{\ell})$-Ramsey and $\mu \eta$-globally $(K_{t-1}, K_{t-1})$-Ramsey.
\end{itemize}

Furthermore, set $\mc F_1 = \emptyset$, and let $\mc F_2 \subseteq \binom{U}{\ell}$ be those $\ell$-sets with fewer than $d^{\ell} m$ common neighbours in $W$.  By Lemma~\ref{lem:regpairs}(i), it follows that $\card{\mc F_2} \le \ell \eps m^{\ell}$.  Since $\eps$ was chosen to be sufficiently small, we can ensure $\ell \eps < \gamma(K_t, K_{\ell})$, where $\gamma$ is as in Theorem~\ref{thm:global1ramsey}.  Since $G(n,p)[U] \sim G(m,p)$, provided $C \ge C_{1}(K_t, K_{\ell}) \eta^{-1/m_2(K_t, K_{\ell})}$ as well, with high probability we also have, by Theorem~\ref{thm:global1ramsey},
\begin{itemize}
	\item[(b)] $G(n,p)[U]$ is robustly $(K_t, K_{\ell})$-Ramsey with respect to $(\mc F_1, \mc F_2)$.
\end{itemize}

We may therefore assume $G(n,p)$ has Properties (a) and (b), and shall show that this implies $G \cup G(n,p)$ is $(K_t, K_s)$-Ramsey.  Suppose for a contradiction that $G \cup G(n,p)$ has a $2$-colouring with neither a red $K_t$ nor a blue $K_s$.  We first need the following claim.

\begin{claim} \label{clm:fewrededges}
We may assume that no vertex in $U$ has more than $\xi m$ red edges to $W$.
\end{claim}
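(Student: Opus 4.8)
The plan is to prove the claim by contradiction within the standing assumption that $G \cup G(n,p)$ has a $2$-colouring with neither a red $K_t$ nor a blue $K_s$: I will show that no vertex of $U$ can then have more than $\xi m$ red edges to $W$. So suppose some $u \in U$ does, and let $W_u \subseteq W$ be the set of vertices joined to $u$ by a red edge of $G \cup G(n,p)$, so $|W_u| > \xi m$. The first point is that $\xi > \mu$, since $\mu = \xi d/2 < \xi$ as $d < 1$; hence $|W_u| > \xi m = \xi \eta n > \mu\eta n$, so $W_u$ is large enough to apply the global Ramsey properties in (a).

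Now apply Property (a), in the form that $G(n,p)$ is $\mu\eta$-globally $(K_{t-1}, K_{t-1})$-Ramsey, to the set $W_u$: restricted to $G(n,p)[W_u]$, our colouring contains a monochromatic $K_{t-1}$. If it is red, on a vertex set $T$ say, then every vertex of $T \subseteq W_u$ sends a red edge to $u$ in $G \cup G(n,p)$, so $T \cup \{u\}$ spans a red $K_t$, contradicting our assumption. Thus we may assume $G(n,p)[W_u]$ contains a blue $K_{t-1}$. When $t \ge s+1$, any $s$ of its vertices form a blue $K_s$, another contradiction; so the one remaining case is $t = s$, in which we must upgrade a blue $K_{t-1}$ to a blue $K_t$.

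For $t = s$, the idea is to replay the extension argument that drives the main proof. Since $\ell = \ceil{s/2} \le s-1 = t-1$, the blue $K_{t-1}$ contains a blue $K_\ell$ on an $\ell$-set $B \subseteq W$. By $(U,W)$-regularity (Lemma~\ref{lem:regpairs}(i)) a typical $\ell$-set of $W$ has at least $d^\ell m$ common neighbours in $U$; if $B$ is such a set and each of its vertices has at most $\xi m$ red edges to $U$, then, as $\ell \xi m = d^\ell m/2$, at least $d^\ell m/2 \ge \mu\eta n$ of these common neighbours are blue-complete to $B$. Calling this set $X \subseteq U$ and applying the $\mu\eta$-global $(K_t, K_\ell)$-Ramsey property of (a) inside $X$ gives a red $K_t$ (contradiction) or a blue $K_\ell$ on some $S'' \subseteq X$, whence $B \cup S''$ spans a blue $K_{2\ell} \supseteq K_s$ --- the final contradiction. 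To make this rigorous one must first discard from $W_u$ the vertices with more than $\xi m$ red edges to $U$ (a minority, unless more than half of $W$ is of this kind, in which case one runs the whole argument with $U$ and $W$ exchanged), and add the bad $\ell$-subsets of $W$ (those with fewer than $d^\ell m$ common neighbours in $U$) to the forbidden families defining the robust Ramsey properties, so that the blue $K_\ell$ obtained may be assumed to have a large common neighbourhood.

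I expect the blue case with $t = s$ to be the main obstacle: the red subcase closes the argument at once and a blue $K_{t-1}$ is already enough when $t > s$, so the delicate part is promoting a blue $K_{t-1}$ to a blue $K_t$, which forces one to control simultaneously the common neighbourhood of the relevant $\ell$-set and the number of red edges emanating from it, to do so symmetrically on both sides without circularity, and to respect that the forbidden families can only be fixed on $U$ and $W$, not on the colour-dependent set $W_u$. The remaining arithmetic --- choosing $\eps$, hence $\xi$ and $\mu$, small enough for all these inequalities and for the forbidden families to stay below the threshold $\gamma$ of Theorem~\ref{thm:global1ramsey} --- is routine.
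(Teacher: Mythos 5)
Your handling of the case $s \le t-1$ (apply the global $(K_{t-1},K_{t-1})$-Ramsey part of Property~(a) inside the red neighbourhood $W_u$, then extend by $u$ or observe $K_{t-1} \supseteq K_s$) is exactly the paper's argument. But your handling of the diagonal case $t=s$ takes a genuinely different and, as written, incomplete route, and the gap is real.

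The core problem is a circularity you yourself flag but never dissolve. You want to extract a blue $K_\ell$ on some $B \subseteq W_u \subseteq W$ and then push into its common neighbourhood in $U$; for that you need both that $B$ has $\ge d^\ell m$ common neighbours in $U$ and that each vertex of $B$ has $\le \xi m$ red edges to $U$. The second condition is precisely the ($U \leftrightarrow W$ symmetric version of the) claim you are in the middle of proving, and you cannot invoke it; your fallback of ``run the argument with $U$ and $W$ exchanged'' does not produce the statement actually needed downstream (the main proof uses the claim in the form stated, pairing it with the robust Ramsey property of $G(n,p)[U]$ and common neighbourhoods in $W$). The first condition would require a robust Ramsey property for $G(n,p)[W]$ with forbidden family $\mc F'_2 \subseteq \binom{W}{\ell}$, but the paper only sets up Property~(b) for $G(n,p)[U]$, and the blue $K_{t-1}$ you extract in $W_u$ comes from the \emph{global} Ramsey Property~(a), which gives no guarantee at all on common neighbourhoods. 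Patching all of this (a robust property on $W$, simultaneous control of red degrees on both sides, handling the colour-dependent set $W_u$) is substantially more work than the claim deserves and the circularity is not obviously breakable.

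The paper avoids all of this by not trying to build a $K_s$ inside the claim at all. It instead applies the $(K_{t-1},K_{t-1})$ part of Property~(a) twice in a simple dichotomy: by symmetry assume at least half of $G[U,W]$ is blue, pick $v \in W$ with $\ge dm$ blue neighbours $A \subseteq U$ and let $B \subseteq W$ be $u$'s $\ge \xi m$ red neighbours; by $\eps$-regularity $(A,B)$ has many edges, and whichever colour has the majority there hands you a vertex ($b \in B$ or $a \in A$) with a $\ge \mu m$-sized monochromatic neighbourhood in the opposite part, inside which the global $(K_{t-1},K_{t-1})$-Ramsey property immediately produces a $K_{t-1}$ extendable by $u$, $v$, $a$ or $b$ to a red or blue $K_t$. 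No common-neighbourhood bookkeeping, no robust property, no symmetry between $U$ and $W$ is needed. You should replace your $t=s$ argument with this dichotomy.
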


\begin{proof}
First suppose $s \le t-1$.  If we have a vertex $u \in U$ with at least $\xi m$ red neighbours in $W$, let $Y \subseteq W$ be the set of those red neighbours.  By Property (a), we find a red $K_{t-1}$ in $G(n,p)[Y]$, in which case we can add $u$ to obtain the desired red $K_t$, or we find a blue $K_{t-1}$, which contains the desired blue $s$-clique.

Next suppose $s = t$.  By symmetry, we may assume that at least half the edges of $G$ between $U$ and $W$ are blue.  In particular, since the density of $(U,W)$ is at least $2d$, this means we can find a vertex $v \in W$ and a set $A \subset U$ of $d m$ blue neighbours of $v$.  Now suppose further that there is a vertex $u \in U$ with a set $B \subset W$ of $\xi m$ red neighbours of $u$.

By $\eps$-regularity, there are at least $d \card{A} \card{B}$ edges between $A$ and $B$.  If at least half of these are red, then we can find a vertex $b \in B$ with a set $A' \subseteq A$ of at least $d \card{A} / 2$ red neighbours of $b$.  Since $\card{A'} \ge \mu m$, by Property (a), $G(n,p)[A']$ has a red $K_{t-1}$, which together with $b$ gives a red $K_t$, or a blue $K_{t-1}$, which together with $v$ gives a blue $K_t$.

Hence at least half the edges between $A$ and $B$ are blue, which gives a vertex $a \in A$ with a set $B' \subseteq B$ of at least $d \card{B} / 2$ blue neighbours of $a$.  Again, $\card{B'} \ge \mu m$, and so by Property (a) we find in $G(n,p)[B']$ a red $K_{t-1}$, which extends via $u$ to a red $K_t$, or a blue $K_{t-1}$, to which we can add $a$ to obtain a blue $K_t$.
\end{proof}

With the claim established, we complete our proof.  By Property (b), $G(n,p)[U]$ contains  a red $K_t$, in which case we are done, or a blue $\ell$-clique whose vertex set $S$ does not lie in $\mc F_2$.  In particular, this implies that $S$ has at least $d^{\ell} m$ common neighbours in $W$.

We discard those common neighbours with a least one red edge to $S$; by Claim~\ref{clm:fewrededges}, we are left with a set $Y$ of at least $(d^\ell m -\ell \xi m)= d^{\ell} m/ 2$ common blue neighbours of $S$.  By Property (a), $G(n,p)[Y]$ has a red $K_t$, in which case we are done, or a blue $K_{\ell}$, which together with $S$ gives the desired blue $s$-clique.

Thus, when $k = 2$, we have $p(n;K_t, K_s, d) \le n^{-1/m_2(K_t, K_{\ell})}$, matching our lower bound.\qed 

\subsubsection{Proof of Theorem~\ref{thm:bigcliques} (upper bound), Case (ii)}

We next consider the case $k \ge 3$, where we provide the exact result if $s \equiv 1 \pmod{k}$.  Specifically, we wish to show that if $G$ is a graph of density $d = 1 - 1/(k-1) + \delta$, where $\delta > 0$, then if $p \ge C n^{-1/m_2(K_t, K_{\ell})}$ for some appropriately large constant $C$, with high probability $G \cup G(n,p)$ will be $(K_t, K_s)$-Ramsey.

Our strategy bears some resemblance to the previous proof.  In Case (i), we built a blue $K_s$ in two stages, finding half in each part of an $\eps$-regular pair.  When $k \ge 3$ and $s = k(\ell - 1) + 1$, we instead take $k$ steps to build the blue clique within an $\eps$-regular $k$-tuple, finding a blue $K_{\ell - 1}$ in each of the first $k-1$ parts and a blue $K_{\ell}$ in the last part.

When we find a set of $\ell - 1$ vertices from a part to add to the growing blue clique, we will need to ensure that they have a sufficiently large neighbourhood in each of the remaining parts, so that the process may continue.  We will do this through repeated use of Lemma~\ref{lem:multipartDRC}.\footnote{In Case (i), we identified a forbidden set of vertex sets with small neighbourhoods, and then used the robust Ramsey properties of $G(n,p)$ to ensure the monochromatic $K_{\ell}$ had a large neighbourhood.  However, in this setting the $\eps$-regular pairs we have depend on the vertices chosen earlier, and our error bounds do not allow for so many applications of the robust Ramsey property.}  However, this leaves us with a sublinear set in which we will have to find a blue clique, and so we will need strong global Ramsey properties.

To that end, recall that if a pair $(q,r)$ with $q \ge r$ precedes $(t,\ell)$ lexicographically (that is, either $q < t$, or $q = t$ and $r < \ell$), then $m_2(K_q,K_r) < m_2(K_t,K_{\ell})$.  We can therefore fix some $\beta > 0$ such that, for all such pairs $(q,r)$, we have $m_2(K_q, K_r) \le (1 - \beta) m_2(K_t, K_{\ell})$ and $\beta \le \beta_0(K_q, K_r)$, where $\beta_0$ is as in Theorem~\ref{thm:global1ramsey}.

We now define a sequence of constants that we shall use in the sequel.  To start, let $\delta_0 := \delta$, and set $\delta_1 := \delta_0/4$.  For $i \in [k-2]$, set $\delta_{i+1} := \delta_i / 2$.  For some soon-to-be-determined $\eps_1$, let $\eta := \eta(\delta_1, \eps_1,2)$, where $\eta$ is as in Corollary~\ref{cor:regtuple}, and set $m_1 := \eta n$.  For $i \in [k-1]$, let $\gamma_i := \gamma(k-i + 1, \delta_i, \beta/2, t-1)$, where $\gamma$ is as in Lemma~\ref{lem:multipartDRC}, and set $m_{i+1} := \ceil{\gamma_i m_i}$.  Note that the $m_i$ are linear in $n$, and set $\mu := m_k / n$.

Further, for $i \in [k-2]$, set $\eps_{i+1} := \max \{ 2 \eps_i, \eps_i / \gamma_i \}$.  Noting that the $\delta_i$ and $\gamma_i$ are independent of $\eps_1$, while the $\eps_i$ are linear in $\eps_1$, choose $\eps_1$ sufficiently small so that $\eps_i \le \delta_i/2$ for all $i \in [k-1]$.  We shall further assume that $n$ is large enough for all following calculations and applications of the lemmas to be valid.

With these technicalities out of the way, we can proceed with our proof.  We shall show that $G \cup G(n,p)$ is $(K_t, K_s)$-Ramsey, provided $G(n,p)$ satisfies the following properties.
\begin{itemize}
	\item[(a)] $G(n,p)$ is $n^{-\beta}$-globally $(K_{t-1}, K_{s-1})$-Ramsey,
	\item[(b)] $G(n,p)$ is $n^{-\beta}$-globally $(K_t, K_{\ell - 1})$-Ramsey,
	\item[(c)] $G(n,p)$ is $\mu$-globally $(K_t, K_{\ell})$-Ramsey, and 
	\item[(d)] if $s < t$, then $G(n,p)$ is $n^{-\beta}$-globally $(K_{\ceil{t/k}}, K_s)$-Ramsey.
\end{itemize}

Theorem~\ref{thm:global1ramsey} ensures Properties (a), (c), (d) and, if $\ell \ge 4$, (b) hold with high probability whenever $p \ge C n^{-1/m_2(K_t, K_{\ell})}$ for a large enough constant $C$.  When $\ell = 3$, being $n^{-\beta}$-globally $(K_t, K_2)$-Ramsey is equivalent to every induced subgraph on $n^{1- \beta}$ vertices containing a $t$-clique.  Hence in this case we instead apply Theorem~\ref{thm:janson}, taking a union bound over all such vertex subsets.

\medskip

Now let $G$ and $G(n,p)$ be as above, and suppose for contradiction there is a $2$-colouring of $G \cup G(n,p)$ with neither a red $K_t$ nor a blue $K_s$.  Applying Corollary~\ref{cor:regtuple} with $r = 2$ to the coloured graph $G$, we find pairwise disjoint vertex sets $\vSet11, \vSet21, \hdots, \vSet{k}{1}$, each of size at least $m_1$, such that for each pair $\{i,j\} \in \binom{[k]}{2}$ there is some colour $c_{i,j}$ for which $(\vSet{i}{1}, \vSet{j}{1})$ is $\eps_1$-regular with density at least $\delta_1$ in the colour $c_{i,j}$.

\begin{claim}
All the colours $c_{i,j}$, $\{i,j\} \in \binom{[k]}{2}$, are the same.
\end{claim}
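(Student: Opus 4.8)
The plan is to derive a contradiction from assuming that two of the colours $c_{i,j}$ differ. Suppose not all the $c_{i,j}$ agree. Since $k \geq 3$ and the colour assignment is on the edges of the reduced "complete graph" on $[k]$, by a pigeonhole/Ramsey-type argument we can find a small configuration of indices witnessing both colours; the cleanest case to isolate is the following. Either (1) some index $i$ has both a red-labelled pair $\{i,j\}$ and a blue-labelled pair $\{i,j'\}$, or (2) the colouring splits $[k]$ so that, say, some pair is red and some disjoint or adjacent pair is blue. In case (1) we have an index $i$ with $(\vSet{i}{1},\vSet{j}{1})$ red-$\eps_1$-regular of density $\geq \delta_1$ and $(\vSet{i}{1},\vSet{j'}{1})$ blue-$\eps_1$-regular of density $\geq \delta_1$. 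I would then argue that we can build \emph{both} a large red clique structure and a large blue clique structure hanging off a common large subset of $\vSet{i}{1}$, and this overloads $G(n,p)$ restricted to that subset.

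More concretely, the key steps are: first, use Lemma~\ref{lem:regpairs}(i) (or Lemma~\ref{lem:multipartDRC}) on the red pair $(\vSet{i}{1},\vSet{j}{1})$ to pass to a linear-sized subset $U \subseteq \vSet{i}{1}$ every $(t-1)$-subset of which has $\geq \gamma m_1$ red common neighbours in $\vSet{j}{1}$; simultaneously (or in a second pass inside $U$), use the blue pair $(\vSet{i}{1},\vSet{j'}{1})$ to pass to a further linear-sized $U' \subseteq U$ every $(s-1)$-subset of which has many blue common neighbours in $\vSet{j'}{1}$. Since $m_2(K_t,K_\ell) > \max\{m_2(K_{t-1},K_{t-1}), m_2(K_t,K_2)\}$ and $p \geq Cn^{-1/m_2(K_t,K_\ell)}$, Property (a)/(b)-type global Ramsey statements (Theorem~\ref{thm:global1ramsey}), applied to $G(n,p)[U']$, force either a red $K_{t-1}$ or a blue $K_{s-1}$ inside $U'$. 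A red $K_{t-1}$ in $U'$ extends through its red common neighbourhood in $\vSet{j}{1}$ (plus a further red clique there, again by a global Ramsey property, or just one extra red vertex if $s \leq t-1$) to a red $K_t$, a contradiction; a blue $K_{s-1}$ in $U'$ extends through its blue common neighbourhood in $\vSet{j'}{1}$ to a blue $K_s$, again a contradiction. This is essentially the two-stage argument of Case~(i) and Claim~\ref{clm:fewrededges}, but now the two colours come from two different regular pairs rather than from the red/blue split of a single pair.

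For case (2), where the monochromatic pairs do not share a common vertex, I would first reduce to case (1): in the reduced graph on $[k]$ with $k \geq 3$ vertices, if there is a red edge and a blue edge then, because every pair of vertices is joined by an edge of \emph{some} colour, walking from one edge to the other along the (complete) reduced graph must pass through a vertex incident to both colours — i.e.\ a monochromatic "cherry" of mixed colours exists, which is exactly case (1). So the structural reduction is short, and the substance is the two-stage clique-building contradiction above. I expect the main obstacle to be bookkeeping the constants: one needs $\eps_1$ small enough, and the linear-size losses $\gamma_i$ controlled, so that after passing to $U'$ the set is still linear in $n$ and the relevant global Ramsey thresholds from Theorem~\ref{thm:global1ramsey} apply with $\mu$ of the right order; this is the same kind of hierarchy already set up in the preamble to the proof of Case~(ii), so it should go through, but it must be tracked carefully to make sure that extending a red $K_{t-1}$ (resp.\ blue $K_{s-1}$) into the neighbouring part really does land us a red $K_t$ (resp.\ blue $K_s$) and not merely a near-miss.
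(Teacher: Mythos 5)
Your proposal is correct and follows essentially the same route as the paper: reduce to a mixed-colour ``cherry'' $c_{i,j_1}$ red, $c_{i,j_2}$ blue (which exists whenever two labels disagree in a complete reduced graph), apply Lemma~\ref{lem:multipartDRC} to get a subset of $\vSet{i}{1}$ of size $n^{1-\beta}$ whose $(t-1)$-sets have common red and blue neighbours in $\vSet{j_1}{1}$ and $\vSet{j_2}{1}$ respectively, then invoke the $n^{-\beta}$-global $(K_{t-1},K_{s-1})$-Ramsey property (a) and extend by a single common neighbour. The only cosmetic difference is that the paper applies Lemma~\ref{lem:multipartDRC} once to the triple $(\vSet{j_1}{1},\vSet{j_2}{1},\vSet{i}{1})$ in the auxiliary red/blue graph rather than in two passes, and your parenthetical about needing ``a further red clique'' when extending the red $K_{t-1}$ is superfluous --- one extra common neighbour always suffices.
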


\begin{proof}
Suppose this were not the case.  There are then $i,j_1, j_2$ such that $c_{i,j_1}$ is red while $c_{i,j_2}$ is blue.  Apply Lemma~\ref{lem:multipartDRC} to the triple $\vSet{j_1}{1}, \vSet{j_2}{1}, \vSet{i}{1}$ to find a set $U_i \subset \vSet{i}{1}$ of size $m_1^{1 - \beta/2} > n^{1 - \beta}$ such that every $(t-1)$-set in $U_i$ has at least one common neighbour in both $\vSet{j_1}{1}$ and $\vSet{j_2}{1}$, using red and blue edges respectively (note that our conservative definition of $\gamma_1$ in fact guarantees us linearly many common neighbours in both $V_{j_1}^{(1)}$ and $V_{j_2}^{(1)}$, but here we only require one).

By Property (a), $G(n,p)[U_i]$ must contain a red $K_{t-1}$ or a blue $K_{s-1}$.  Extending this monochromatic clique with a common neighbour in $\vSet{j_1}{1}$ or $\vSet{j_2}{1}$ respectively gives a red $K_t$ or a blue $K_s$ in $G \cup G(n,p)$, a contradiction.
\end{proof}

Let us first assume that $c_{i,j}$ is blue for every pair $\{i,j\}$ (if $s = t$, we may assume this without loss of generality).  We shall either find a red $K_t$ or build a blue $K_s$ in $k$ stages.

Suppose we have already run $a \ge 0$ stages of this process, thereby building a blue $K_{a (\ell - 1)}$ on vertices $W = \cup_{i=1}^a W_{k+1-i}$, where $W_j$ is a set of $\ell - 1$ vertices from $V_j$.  Further, for every $i \in [k-a]$, we have sets $\vSet{i}{a+1}$ of size $m_{a+1}$ contained in the common (blue) neighbourhood of $W$, such that each pair $( \vSet{i}{a+1}, \vSet{j}{a+1} )$, $\{i,j\} \in \binom{[k-a]}{2}$, is $\eps_{a+1}$-regular of density at least $\delta_{a+1}$ in blue.

\medskip

If $a \le k-2$, we apply Lemma~\ref{lem:multipartDRC} to the $(k-a)$-tuple $\vSet{1}{a+1}, \vSet{2}{a+1}, \hdots, \vSet{k-a}{a+1}$.  This gives us a set $U_{k-a} \subset \vSet{k-a}{a+1}$ of size $m_{a+1}^{1 - \beta/2} > n^{1 - \beta}$ (since $m_{a+1}$ is linear in $n$), such that any $(\ell - 1)$-set in $U_{k-a}$ has at least $m_{a+2}$ common (blue) neighbours in each $\vSet{i}{a+1}$, $i \in [k-1-a]$.

By Property (b), $G(n,p)[U_{k-a}]$ contains  a red $K_t$ or a blue $K_{\ell - 1}$.  In the former case we are done, so we may assume there is a blue ($\ell - 1$)-clique on the vertices $W_{k-a} \subset U_{k-a}$.  For each $i \in [k-1-a]$, let $\vSet{i}{a+2} \subset \vSet{i}{a+1}$ be a set of $m_{a+2}$ common blue neighbours of $W_{k-a}$.  By Lemma~\ref{lem:regpairs}(ii), each pair $(\vSet{i}{a+2}, \vSet{j}{a+2})$, $\{i,j\} \in \binom{[k-1-a]}{2}$, is $\eps_{a+2}$-regular with density at least $\delta_{a+1} - \eps_{a+1} > \delta_{a+2}$.

\medskip

In the final stage, when $a = k-1$, we simply set $U_1 := \vSet{1}{k}$, which has size $m_k = \mu n$.  By Property (c), $G(n,p)[U_1]$ contains  a red $K_t$ or a blue $K_{\ell}$.  We are again done in the former case, so we may assume the existence of a blue $\ell$-clique on the vertices $W_1 \subset U_a$.  This then gives a blue $K_s$ on the vertices $\cup_{i=1}^k W_i$, contradicting our assumption that $G \cup G(n,p)$ has no blue $K_s$.

\medskip

On the other hand, if $s < t$ and each colour $c_{i,j}$ is red instead, we follow a very similar process to that above, except in each $U_i$ we use Property (d) to find either a blue $K_s$ or a red $K_{\ceil{t/k}}$ instead.  In this way, we either have a blue $K_s$ or we build a red $K_t$ in the $k$ stages, obtaining the desired contradiction.

Hence we indeed have $p(n;K_t, K_s, d) = n^{-1/m_2(K_t, K_{\ell})}$ in this case as well.\qed

\subsubsection{Proof of Theorem~\ref{thm:bigcliques} (upper bound), Case (iii)}  The third and final case is when $k \ge 3$ and $k(\ell - 1) + 2 \le s \le k \ell$.  In this setting we can only match the lower bound on the perturbed Ramsey threshold log-asymptotically; that is, for any $\beta > 0$, we show that if $G$ is a graph of density $d \ge 1 - 1/(k-1) + \delta$ and $p \ge C n^{- (1 - \beta)/m_2(K_t, K_{\ell})}$, then with high probability $G \cup G(n,p)$ will be $(K_t, K_s)$-Ramsey.

\medskip

The proof is essentially the same as in Case (ii), except when building the blue clique, we will find a blue $K_{\ell}$ in each part $V_i$, rather than just a $K_{\ell - 1}$.  Properties (a) and (d)  hold as before.  We can replace Properties (b) and (c) above with the following.
\begin{itemize}
	\item[(b')] $G(n,p)$ is $n^{-\beta}$-globally $(K_t, K_{\ell})$-Ramsey.
\end{itemize}

Since we may freely assume that $\beta$ is small enough to satisfy $\beta \le \beta_0(K_t, K_{\ell})$, Theorem~\ref{thm:global1ramsey} guarantees $G(n,p)$ satisfies Property (b') with high probability.  We can then run the same process as before, picking up $\ell$ vertices from each $U_i$ to add to the blue clique.  Thus, after $k$ stages, we would have built a blue $K_{k \ell} \supseteq K_s$.

This shows $p(n;K_t,K_s,d) \le n^{-(1- \beta)/m_2(K_t, K_{\ell})}$ for $n$ sufficiently large.  Since $\beta$ can be taken to be arbitrarily small, we have $p(n;K_t, K_s, d) = n^{-(1 - o(1))/m_2(K_t, K_{\ell})}$, thereby completing the proof of Theorem~\ref{thm:bigcliques}.\qed

\subsection{Proof of Proposition~\ref{prop:K4vsclique}}

In Theorem~\ref{thm:bigcliques}, we required $s \ge 2k+1$ or, equivalently, $\ell := \ceil{s/k}\ge 3$.  This condition was necessary to apply Theorem~\ref{thm:global1ramsey}, which asserts that $G(n,p)$ will be globally $(K_t, K_{\ell})$-Ramsey when $p =\omega( n^{-1/m_2(K_t, K_{\ell})})$. Unfortunately, this is not true for $\ell = 2$.  Indeed, being $(K_t,K_2)$-Ramsey is equivalent to containing a copy of $K_t$, and it is well known that the local property of the appearance of $K_t$ in $G(n,p)$ occurs at a lower threshold probability than the global property of every large subset containing a $t$-clique.

This gives hope of improving the lower bound when we are dealing with a smaller clique: rather than our simplistic approach in Theorem~\ref{thm:bigcliques}, where all edges of the dense graph $G$ received the same colour, we might hope to take advantage of the sparseness of the $t$-cliques in $G(n,p)$ to find a cleverer colouring of the edges of $G$, thus making it easier to avoid monochromatic copies of $K_t$ and $K_s$ when $s \le 2k$.  Proposition~\ref{prop:K4vsclique}, despite falling short of determining $p(n;K_t,K_s,d)$, shows that this is indeed the case, and that one can improve upon both the obvious lower and upper bounds when $k + 2 \le s \le 2k$.  We start with the former.

\begin{proof}[Proof of Proposition~\ref{prop:K4vsclique} (lower bound)]
We write $\ell := \ceil{t/a}$ for simplicity.  Let $G$ be the $k$-partite Tur\'an graph with vertex classes $V_1, V_2, \hdots, V_k$.  We shall show that there is some constant $b > 0$ such that if $p \le b n^{-2t / (t(t-1) + \ell)}$, then with high probability $G \cup G(n,p)$ is not $(K_t, K_s)$-Ramsey.

Using a result of Kreuter~\cite{kreu96} concerning asymmetric vertex-Ramsey properties of random graphs, if $p \le b n^{-2t / (t(t-1) + \ell)}$ for some constant $b = b(t,\ell)$, then we can with high probability partition the vertices 
$V (G(n,p))= A \cup B$ such that $G(n,p)[A]$ is $K_t$-free and $G(n,p)[B]$ is $K_{\ell}$-free.  
For $i \in [k]$, let $A_i := A \cap V_i$ and $B_i := B \cap V_i$.

For each $i \in [k]$, colour all edges within $A_i$ and within $B_i$ red.  We further colour all edges from $A_i$ to any other part blue.  Now all that remains are the edges between $B_i$ and $B_j$ for $1 \le i < j \le k$.

Recall that we have $R(a+1,s-k) > k$.  Hence we can find a colouring $\varphi : \binom{[k]}{2} \rightarrow \{ \mathrm{red}, \mathrm{blue} \}$ of $K_k$ with no red clique of size $a+1$ and no blue clique of size $s-k$.  Then, for each $1 \le i < j \le k$, we colour all edges between $B_i$ and $B_j$ with the colour $\varphi(\{i,j\})$.

We claim that this colouring of $G \cup G(n,p)$ has neither a red $K_t$ nor a blue $K_s$.  First consider the red subgraph.  Each part $A_i$ is disconnected from the remainder of the graph, and since the only edges within $A_i$ come from $G(n,p)$, we know that they are $K_t$-free.  Any red component within $B$ that corresponds to a clique in $\varphi$ can involve at most $a$ parts $B_i$.  The largest clique within such a part has size at most $\ell - 1$, and so the largest red clique in $B$ has size at most $a(\ell - 1) < t$.  Hence there is indeed no red $K_t$.

Within the blue subgraph, the parts $A_i$ and $B_i$ are independent sets, and hence any blue clique $K$ can contain at most one vertex from each part.  Moreover, by the colouring $\varphi$, there can be at most $s-k-1$ vertices from $B$ in $K$.  As there are only $k$ parts in $A$, this shows that the largest blue clique has size at most $s-1$, and hence there is no blue $K_s$ either.  This completes the proof of the lower bound.
\end{proof}

We next establish the upper bound $p(n; K_t, K_s, d) \le n^{-2/t}$.

\begin{proof}[Proof of Proposition~\ref{prop:K4vsclique} (upper bound)]
We start by defining sequences of constants that we shall require in the proof.  Let $d := 1 - 1/(k-1) + \delta$ for some $\delta > 0$.  We then set $\delta_1 := \delta / 2$ and shall soon (implicitly) specify a sufficiently small $\eps_1$.  Let $\eta = \eta(\eps_1,\delta _1,  1)$ be as given by Corollary~\ref{cor:regtuple}, and set $m_1 := \eta n$.  Now, for $i \in [k-1]$, set $\delta_{i+1} := \delta_i / 2$, $m_{i+1} := \delta_i^2 m_i / 4$, and $\eps_{i+1} := 4 \eps_i / \delta_i^2$.  Observe that each $m_i$ is linear in $n$, and that the ratios $m_i/m_1$ are independent of $\eps_1$.

Given these constants, choose $\alpha = \alpha(\delta)  > 0$ sufficiently small to ensure $2 k \alpha m_1 \binom{m_i}{t-1} \le \frac14 \binom{m_i}{t}$ for all $i$, noting that this is independent of our choice of $\eps_1$.  Finally, we choose $\eps_1$ to be small enough that $\eps_1 < \alpha$, $\delta_i \ge 3 \eps_i$ for each $i$, and $2 k \eps_i m_i^2 \binom{m_i}{t-2} \le \frac14 \binom{m_i}{t}$ for all $i$.  With these preliminaries sorted, we can now begin the proof.

\medskip

We first seek a well-structured part of the deterministic graph $G$ before exposing the random edges.  Since $G$ has density $d$, we can apply Corollary~\ref{cor:regtuple} with $r = 1$ to find sets $V_1, V_2. \hdots, V_k$, each of size 
$m_1$, such that each pair $(V_i, V_j)$ is $\eps_1$-regular of density at least $\delta_1$.

Note that this is taking place in the uncoloured graph $G$, and hence the parts $V_i$ are determined before we expose the random edges of $G(n,p)$.  The edges in these regular pairs could later be coloured either red or blue, but Claim~\ref{clm:blueedges} will show that we may assume they are almost all blue.  For this, we require the following properties of our random graph $G(n,p)$, where $\beta = \beta(t,s)$ is defined below:
\begin{itemize}
	\item[(a)] $G(n,p)$ is $(\delta_1 \alpha \eta / 4)$-globally $(K_{t-1}, K_{s-1})$-Ramsey, and
	\item[(b)] if $t > s$, $G(n,p)$ is $n^{-\beta}$-globally $(K_{\ceil{t/2}}, K_s)$-Ramsey.
\end{itemize}

For the first property, we have $m_2(K_{t-1}, K_{s-1}) \le m_2(K_{t-1}) = t/2$.  Next, if $t > s$, let $s' := \min ( s, \ceil{t/2})$ and $t' := \max (s, \ceil{t/2})$.  Observe that $m_2(K_{t'}, K_{s'}) < t/2$, and so we can find some $\beta = \beta(t,s) > 0$ such that $m_2(K_{t'}, K_{s'})/(1-\beta) = t/2$.  By Theorem~\ref{thm:global1ramsey}, it follows that if $p = Cn^{-2/t}$ for some suitably large constant $C$, then $G(n,p)$ has both properties with high probability.

\begin{claim} \label{clm:blueedges}
If $G(n,p)$ has Properties (a) and (b) above, then we may assume that in any $2$-colouring of $G \cup G(n,p)$ with neither a red $K_t$ nor a blue $K_s$, for every pair $i \neq j$, the maximum red-degree in $(V_i, V_j)$ is at most $\alpha m_1$.
\end{claim}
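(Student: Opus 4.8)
The plan is to prove the contrapositive: I assume we are handed a $2$-colouring of $G \cup G(n,p)$ with no red $K_t$ and no blue $K_s$, and that some $v \in V_i$ nonetheless has a set $Y \subseteq V_j$ of more than $\alpha m_1$ red neighbours, and from this I will produce a red $K_t$ or a blue $K_s$, a contradiction. A preliminary simplification: deleting all edges of $G$ lying inside one of the parts $V_1, \dots, V_k$ only shrinks $G \cup G(n,p)$ and changes no vertex's red-degree across any pair $(V_i,V_j)$, so I may assume $G$ has no such edges. Hence every edge of $G \cup G(n,p)$ inside a part $V_\ell$ is an edge of $G(n,p)[V_\ell] \sim G(m_1,p)$, and I also assume the (whp) event that $G(n,p)$ has maximum degree $o(n)$, so that every vertex has $o(n)$ neighbours inside its own part.

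The engine of the argument is the repeated application of Property (a) to $G(n,p)$ restricted to the still-unused portion of $Y$. Since $|Y| > \alpha m_1 = \alpha \eta n$ exceeds the global-Ramsey threshold $\delta_1 \alpha \eta / 4$ of Property (a) by a constant factor, I can run $N = \Theta(n)$ rounds before the remaining set falls below that threshold, deleting $s-1$ vertices per round. In each round Property (a) hands me either a red $K_{t-1}$ — in which case, as all of $Y$ is joined to $v$ by red edges of $G$, adjoining $v$ yields a red $K_t$ and we are done — or a blue $(s-1)$-clique. So I may assume the rounds produce pairwise disjoint blue $(s-1)$-cliques $S_1, \dots, S_N \subseteq Y \subseteq V_j$ with $N = \Theta(n)$.

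It then remains to play these cliques off against one another. If some pair $S_a, S_b$ has all $(s-1)^2$ edges between them blue, then $S_a \cup S_b$ is a blue clique on $2(s-1) \ge s$ vertices, containing the desired blue $K_s$. Otherwise every pair $\{S_a, S_b\}$ spans at least one red edge; fixing $S_1 = \{x_1, \dots, x_{s-1}\}$ and choosing, for each $b \ge 2$, a red edge from $S_1$ to $S_b$, a pigeonhole over the endpoint of these edges in $S_1$ yields a single vertex $x \in S_1$ red-adjacent to at least $(N-1)/(s-1) = \Theta(n)$ distinct vertices of $\bigcup_b S_b \subseteq V_j$. But $x$ and all these neighbours lie in $V_j$, so they are neighbours of $x$ inside its own part, contradicting the $o(n)$ degree bound. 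In both cases the forbidden monochromatic clique has appeared, so no such $v$ can exist.

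The crux is the harvesting step: extracting $\Theta(n)$ disjoint blue $(s-1)$-cliques forces $|Y|$ — equivalently the red-degree bound $\alpha m_1$ — to beat the global-Ramsey parameter of Property (a) by a constant factor, which is exactly why $\alpha$ is pinned down as in the preamble, and it relies on the reduction to $G$ being empty on each part, so that "red degree inside $V_j$" is controlled purely by the sparse random graph $G(m_1,p)$. I note that when $t > s$ one could instead avoid harvesting altogether, using Property (b) together with dependent random choice (Lemma~\ref{lem:multipartDRC}) to build a red $K_t$ as the union of two red $K_{\ceil{t/2}}$'s joined across $(V_i,V_j)$; but the argument sketched above is uniform in $s$ and $t$.
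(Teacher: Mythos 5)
The central step of your argument fails: the dichotomy ``some pair $S_a,S_b$ has all $(s-1)^2$ edges between them blue, or every pair spans a red edge'' is not a dichotomy. The sets $S_a, S_b$ both lie inside $V_j$, and after your preliminary reduction (deleting the intra-part edges of $G$), the only edges inside $V_j$ are those of $G(n,p)[V_j]$. Since $p = Cn^{-2/t} = o(1)$, almost every one of the $(s-1)^2$ pairs in $S_a \times S_b$ is a \emph{non-edge}, so typically neither alternative holds: $S_a \cup S_b$ is far from a clique, and there need not be a single red edge between them to feed the pigeonhole. You cannot repair this by \emph{keeping} $G$'s intra-part edges either: Corollary~\ref{cor:regtuple} controls only the bipartite densities $(V_i,V_j)$, so $G[V_j]$ could just as well be empty (same problem) or could contain vertices of linear degree (which destroys your $o(n)$ max-degree bound, since that bound applies only to $G(n,p)$, not to $G$). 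Either way, the harvesting-and-pigeonhole plan does not go through. The paper sidesteps exactly this by never working inside a single part: its preliminary observation forbids a vertex $u$ with a red star and a vertex $v$ with a blue star into a common linear-sized set $U$, extended by Property~(a) applied to $G(n,p)[U]$. For $t > s$ it then uses $\eps$-regularity across $(V_i,V_j)$ together with dependent random choice and Property~(b); for $t = s$ it uses a colour-symmetry argument across a \emph{third} part $V_{j'}$ to manufacture such a forbidden $(u,v,U)$ configuration. Both cases lean on cross-part regularity, which is precisely what your argument throws away by moving the action inside $V_j$. Your closing remark (using Property~(b) and dependent random choice when $t>s$) is in fact the correct direction and is what the paper does, but it is not, as you suggest, an optional shortcut — it is essential, and a separate argument is still needed for $t = s$.
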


We shall prove Claim~\ref{clm:blueedges} in due course, but let us first see how it implies our desired upper bound on $p(n; K_t, K_s, d)$.  Roughly speaking, we shall find $t$-cliques within each part $V_i$.  Since there are no red $t$-cliques, each such $t$-clique must contain a blue edge.  We shall choose the cliques to ensure that these blue edges combine to form a blue $K_{2k}$, contradicting our colouring being blue-$K_s$-free.

More precisely, let $p=C n^{-2/t}$ where $C$ is a sufficiently large constant, and assume (a) and (b) above hold.  Consider any $2$-colouring of $G \cup G(n,p)$, and recall that we assume the vertex set to be $[n]$, which we equip with its natural ordering.

Suppose for some $a \ge 0$, we have selected a set $S_a = \{s_1, s_2, \hdots, s_{2a} \}$ of vertices from $\cup_{i = 1}^a V_i$, such that they induce a blue $K_{2a}$ in our colouring of $G \cup G(n,p)$ and have at least $m_{a+1}$ common neighbours in each $V_i$ for $a+1 \le i \le k$.  When $a = 0$, the set $S_0 := \emptyset$ trivially satisfies these requirements.

For each $a+1 \le i \le k$, let $\vSet{i}{a+1}$ be the first $m_{a+1}$ common neighbours of $S_a$ in $V_i$.  Our goal is to find two vertices $s_{2a+1}, s_{2a+2} \in \vSet{a+1}{a+1}$ to add to $S_a$ in order to obtain a valid set $S_{a+1}$ with which to proceed.  In particular, the two new vertices should share a blue edge, all edges from them to $S_a$ should also be blue, and $s_{2a+1}$ and $s_{2a+2}$ should have many common neighbours in each of the remaining parts.  We therefore define $\mc H_{a+1}$ to be the collection of all copies of $K_t$ whose vertex sets $T \subseteq \vSet{a+1}{a+1}$ satisfy the following properties:
\begin{itemize}
	\item[(A)] all edges between $S_a$ and $T$ are blue, and
	\item[(B)] every pair of vertices in $T$ have at least $m_{a+2}$ common neighbours in $\vSet{j}{a+1}$ for each $a+2 \le j \le k$.
\end{itemize}

Let $R \subset \vSet{a+1}{a+1}$ be those vertices that have a red edge to some vertex in $S_a$.  By Claim~\ref{clm:blueedges}, we have $\card{R} \le \alpha m_1 \card{S_a} \le 2k \alpha m_1$.  Since every $t$-clique in $\vSet{a+1}{a+1}$ violating Condition (A) must contain a vertex from $R$, there are at most $2k \alpha m_1 \binom{m_{a+1}}{t-1}$ such cliques.  By our choice of $\alpha$, this is at most $\frac14 \binom{m_{a+1}}{t}$ cliques.

By Lemma~\ref{lem:regpairs}(ii), it follows that, for each $a+2 \le j \le k$, the pair $(\vSet{a+1}{a+1}, \vSet{j}{a+1})$ is $\eps_{a+1}$-regular of density at least $\delta_{a+1}$.  We may therefore apply Lemma~\ref{lem:regpairs}(i) to deduce that, for each given $j$, there are at most $2 \eps_{a+1} m_{a+1}^2$ pairs in $\vSet{a+1}{a+1}$ with fewer than $m_{a+2}$ common neighbours in $\vSet{j}{a+1}$.  There are thus a total of at most $2k \eps_{a+1} m_{a+1}^2 \binom{m_{a+1}}{t-2}$ cliques of size $t$ in $\vSet{a+1}{a+1}$ violating Condition (B).  Our choice of $\eps_1$ ensures that this is again at most $\frac14 \binom{m_{a+1}}{t}$ cliques.

\medskip

Thus $\mc H_{a+1}$ contains at least $\frac12 \binom{m_{a+1}}{t}$ sets.  Suppose we find in $G(n,p)$ a $t$-clique $H \in \mc H_{a+1}$.  If all edges of $H$ are red, we have our desired red $K_t$, and hence we may assume there is some blue edge $e_{a+1} = \{s_{2a+1}, s_{2a+2} \}$ in $H$.  

This pair of vertices has all the properties we required: they share the blue edge $e_{a+1}$, Property (A) of $T$ ensures that all edges from $e_{a+1}$ to $S_a$ are blue, and Property (B) gives that, for each $j \ge a+2$, $s_{2a+1}$ and $s_{2a+2}$ have at least $m_{a+2}$ common neighbours in $\vSet{j}{a+1}$.

Hence, provided we can find a $t$-clique from the collection $\mc H_{a+1}$, we may set $S_{a+1} := S_a \cup \{s_{2a+1}, s_{2a+2} \}$ and proceed to the next iteration.  We appeal to Theorem~\ref{thm:janson} to find the desired clique.

\medskip

Indeed, we know $\card{\mc H_{a+1}} \ge \frac12 \binom{m_{a+1}}{t}$, which, since $m_{a+1}$ is linear in $n$, is at least $\xi n^t$ for some constant $\xi = \xi(\delta) > 0$.  Moreover, when $H := K_t$, we have $\mu _1 =\mu_1 (H)= n^t p^{\binom{t}{2}}$.  Recall $p = C n^{-2/t}$, so $\mu_1 = C ^{\binom{t}{2}} n$.  Theorem~\ref{thm:janson} thus gives that the probability $G(n,p)$ does not contain a $t$-clique from a given collection $\mc H_{a+1}$ is at most $\exp( - C' n)$, where $C' := \xi C^{\binom{t}{2}} / (2^{t+1} t!)$.

However, the collection of $t$-cliques $\mc H_{a+1}$ depends on the colouring of the edges of $G$, which in turn could depend on the random graph $G(n,p)$ itself.  To resolve this issue, we take a union bound over all collections of $t$-cliques $\mc H_{a+1}$ that could possibly arise.  Observe that the sets $\vSet{i}{a+1}$, for $a+1 \le i \le k$, are determined by the set $S_a$.  This already specifies which cliques in $\vSet{a+1}{a+1}$ fail to satisfy Property (B).  To identify those violating Property (A), it suffices to identify the subset $R \subset \vSet{a+1}{a+1}$ of vertices incident to a red edge from $S_a$.

The collection $\mc H_{a+1}$ is thus fully determined by the pair $(S_a, R)$, and there are fewer than $n^{2a} 2^{m_{a+1}}$ such pairs, which we can (wastefully) bound from above by $4^n$.  Hence, as $C$ has been chosen sufficiently large (with respect to $\xi$ and $t$), it follows from the union bound that with high probability, for each $0 \le a \le k-1$ and for every possible collection $\mc H_{a+1}$ that may arise, $G(n,p)$ contains a copy of $K_t$ from $\mc H_{a+1}$.

\medskip

We can thus repeat this process until we obtain a set $S_k$ that induces a blue $K_{2k}$.  Since $s \le 2k$, this shows that with high probability $G \cup G(n,p)$ is indeed $(K_t, K_s)$-Ramsey, as desired.
\end{proof}

It remains to prove Claim~\ref{clm:blueedges}.

\begin{proof}[Proof of Claim~\ref{clm:blueedges}]

We begin with a straightforward observation: we cannot have a vertex $u$, a vertex $v$, and a set $U$ of $\delta_1 \alpha m_1 / 4$ common neighbours of $u$ and $v$, such that all edges from $u$ to $U$ are red and all edges from $v$ to $U$ are blue.  Indeed, by Property $(a)$, there is  a red $K_{t-1}$ or a blue $K_{s-1}$ in $U$.  Extending this monochromatic clique by $u$ or $v$ respectively gives either a red $K_t$ or a blue $K_s$, contradicting our assumption on the colouring of $G \cup G(n,p)$.

\medskip

Let us first consider the case $t > s$.  Suppose for contradiction we have some $u \in V_i$ with a set $R$ of $\alpha m_1$ red neighbours in $V_j$, for some $j \neq i$.  Let $S := \{ v \in V_i : |N_G (v)\cap R| \ge (\delta_1 - \eps_1) \card{R} \}$.  By the $\eps_1$-regularity of $(V_i, V_j)$, it follows that $\card{S} \ge (1 - \eps_1) m_1$.

By our earlier observation, each $v \in S$ can have at most $\delta_1 \alpha m_1 / 4$ blue edges to $R$, and hence there are at least $(1 - \eps_1)(\delta_1 - \eps_1 - \delta_1 / 4) \card{R} m_1 \ge \delta_1 \card{R} m_1 / 8$ red edges between $S$ and $R$.  It follows from dependent random choice (see~\cite{fs}) that we can find a subset $U \subset S$ of size $n^{1- \beta}$, such that every subset of $\ceil{t/2}$ vertices from $U$ has at least $n^{1 - \beta}$ common red neighbours in $R$.

By Property (b), we find a red $K_{\ceil{t/2}}$ or a blue $K_s$ in $U$.  In the latter case, we are done, so we may assume the former.  Let $A$ be the set of vertices of this $\ceil{t/2}$-clique, and let $W\subseteq R$ be a set of $n^{1 - \beta}$ common red neighbours of $A$.  Applying Property (b) to $W$, we again  find a blue $K_s$, and are done, or find a set $B$ of $\ceil{t/2}$ vertices inducing a red clique.  In this latter case, $A \cup B$ gives rise to a red clique on at least $t$ vertices, and hence we have the desired contradiction.

\medskip

This leaves us with the case $t = s$, where by symmetry we may assume that the majority of edges in $G \cup G(n,p)$ are coloured blue.  For a contradiction, we suppose without loss of generality that there is some $u \in V_1$ with a set $R$ of $\alpha m_1$ red neighbours in $V_2$.  Since we assumed blue was the more popular colour, we can find some (ordered) pair $(V_i, V_j)$, $j \neq 2$, where at least one-third of the edges are blue.  By averaging, this gives some vertex $v \in V_i$ and a set $S \subseteq V_j$ of $\alpha m_1$ blue neighbours of $v$.

Now note that $(V_2, V_j)$ is an $\eps_1$-regular pair, and so there are at least $(\delta_1 - \eps_1) \alpha^2 m_1^2$ edges between $R$ and $S$.  If at least half of these edges were coloured red, then by averaging, we would find a vertex $u' \in R$ with a set $U$ of at least $\delta_1 \alpha m_1 / 4$ red neighbours in $S$.  Then the vertices $u'$ and $v$, together with the set $U$, violate our initial observation.  On the other hand, if half of the edges between $R$ and $S$ are blue, then we find a vertex $v' \in S$ with a set $U'$ of at least $\delta_1 \alpha m_1 / 4$ blue neighbours in $R$.  Then $u, v'$ and $U'$ violate our initial observation instead.

Hence, if the colouring of $G \cup G(n,p)$ has neither a red $K_t$ nor a blue $K_s$, we may indeed assume that the maximum red-degree in each pair $(V_i, V_j)$ is at most $\alpha m_1$.
\end{proof}


\subsection{Proof of Theorem~\ref{thm:twocycles}}
First we restate Theorem~\ref{thm:twocycles} in the following equivalent form.
\begin{thm}\label{c1}
 Let $\ell, k \geq 3$ be integers.
\begin{itemize}
\item[(i)] If $k,\ell \in 2 \mathbb N$ then $p(n; C_k,C_{\ell}, d )=0$ for all $d >0$.
\item[(ii)] If $k \in 2 \mathbb N+1$  then $p(n; C_k,C_{\ell}, d )=1/n$ for all $d \in (0,1/2]$.
\item[(iii)] If $k \in 2 \mathbb N+1$ and $\ell \in 2 \mathbb N$ then $p(n; C_k,C_{\ell}, d )=0$ for all $d >1/2$.
\item[(iv)] 
\[ p(n; C_3, C_{3}, d) = \begin{cases}
	1/n^2 & \textrm{if } d \in (1/2,4/5] \\
0 & \textrm{if } d > 4/5.
\end{cases} \]
\item[(v)] If $k \in 2 \mathbb N+1$ and $\ell \in 2 \mathbb N+3$ then 
\[ p(n; C_k, C_{\ell}, d) = \begin{cases}
	1/n^2 & \textrm{if } d \in (1/2,3/4] \\
0 & \textrm{if } d > 3/4.
\end{cases} \]
\end{itemize}
\end{thm}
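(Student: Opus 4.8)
Each of the five cases of Theorem~\ref{c1} splits into three kinds of assertions: (a) the ``$p=0$'' (and more generally ``high density'') upper bounds, asserting that a sufficiently dense $G$ is already $(C_k,C_\ell)$-Ramsey; (b) the lower bounds, exhibiting a dense $G$ for which $G\cup G(n,p)$ stays non-Ramsey when $p$ is below the claimed threshold; and (c) the ``medium density'' upper bounds (the values $p(n;\cdot)=1/n$ in~(ii), and $p(n;\cdot)=1/n^2$ in~(iv) and~(v)), which are the substantial part. The critical densities have the following explanation. The value $d_2$ is $1-1/(\kappa^\*-1)$, where $\kappa^\*$ is the least $\kappa$ such that every $2$-colouring of $K_\kappa$ admits a monochromatic subgraph on which $C_k$ (in the first colour) or $C_\ell$ (in the second) maps homomorphically: this is $\kappa^\*=2$ if $k,\ell$ are both even (case~(i)), $\kappa^\*=3$ if $k$ is odd and $\ell$ even (case~(iii)), $\kappa^\*=5$ if $k$ is odd and $\ell\ge5$ odd (case~(v)), and $\kappa^\*=6=R(C_3,C_3)$ in case~(iv). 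The value $d_1=1/2$ (when $k$ is odd) reflects the transition from needing a \emph{linear} number of random edges — because a dense \emph{bipartite} $G$ can absorb all edges of $G$ into one colour, leaving no monochromatic odd cycle whatsoever — to needing only a \emph{growing} number, because once $d>1/2$ the dense graph itself supplies a monochromatic triangle‑structure via Corollary~\ref{cor:regtuple} with $k=3$.

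\textbf{High density upper bounds (all ``$p=0$'' statements, and the large-$d$ part of (iv),(v)).} Suppose $G$ has density $>1-1/(\kappa^\*-1)+\delta$. Given any $2$-colouring of $G$, apply Corollary~\ref{cor:regtuple} with $r=2$ and this value $\kappa^\*$ to obtain sets $V_1,\dots,V_{\kappa^\*}$ and colours $c_{i,j}$ such that $(V_i,V_j)$ is $\varepsilon$-regular of density at least $\delta'$ in colour $c_{i,j}$. The colours $(c_{i,j})$ form a $2$-colouring of $K_{\kappa^\*}$, and I would now verify, by an elementary case analysis, that this colouring always contains one of the following configurations: a monochromatic triangle in colour $1$ (onto which $C_k$ homomorphs, as $\chi(C_k)\le 3$ for $k$ odd, and $\chi(C_k)\le 2$ for $k$ even); a monochromatic triangle in colour $2$ (onto which $C_\ell$ homomorphs); a monochromatic edge in colour $2$ when $\ell$ is even; or, when $\kappa^\*=5$ and the colouring is the pentagon colouring, a colour-$2$ copy of $C_5$ (onto which $C_\ell$ homomorphs whenever $\ell\ge5$, since $C_5$ has closed walks of every length $\ge4$). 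In each case Lemma~\ref{keylem} (the Key Lemma), applied with $R$ equal to that configuration and $H$ equal to $C_k$ or $C_\ell$ respectively, produces the desired monochromatic cycle inside $G$. For case~(iv) this needs $\kappa^\*=6$, where $R(C_3,C_3)=6$ guarantees a monochromatic triangle outright. Combined with the Erdős–Stone mechanism already recorded in Observation~\ref{obs:trivial}(iii) (here applied directly to $G$), this gives $p(n;C_k,C_\ell,d)=0$ for $d>d_2$ in each case, and for case~(i) the same argument with $\kappa^\*=2$ works for every $d>0$.

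\textbf{Lower bounds.} Here I would use two facts about sparse random graphs, each a routine first-moment computation: if $p=o(1/n)$ then w.h.p.\ $G(n,p)$ is a forest, and if $p=o(1/n^2)$ then w.h.p.\ $G(n,p)$ has no edges. For the $1/n$ lower bound in~(ii), take $G=K_{\lceil n/2\rceil,\lfloor n/2\rfloor}$ (density $>1/2\ge d$), colour $G$ red — it has no odd cycle, hence no red $C_k$ — and colour the forest $G(n,p)$ blue, so there is no blue $C_\ell$. For the $1/n^2$ lower bounds, one exhibits a $(C_k,C_\ell)$-non-Ramsey graph of density $\ge d$ directly (since $G(n,p)$ is empty): for~(v), the balanced complete $4$-partite Turán graph $K_4(t)$ (density $>3/4\ge d$) coloured so that the red class is the blow-up of a perfect matching of $K_4$ and the blue class the blow-up of the $4$-cycle $K_4\setminus M$ — both blow-ups are bipartite, so neither admits an odd cycle; and for~(iv), the balanced $K_5(t)$ (density $>4/5\ge d$) with the blow-up of the pentagon colouring of $K_5$, each class being $C_5[t]$, which is triangle-free.

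\textbf{Medium density upper bounds (the core).} This is where the work lies. The guiding principle is that Corollary~\ref{cor:regtuple}/the Regularity Lemma must be applied to the \emph{uncoloured} graph $G$, so that the resulting structure is fixed before the random edges are revealed; applying it to a colouring-dependent graph would force a union bound over exponentially many partitions, against which the available random-graph estimates (with $p$ barely above $1/n$, resp.\ $1/n^2$) are too weak. Concretely: when $d\le 1/2$ extract from $G$ a single $\varepsilon$-regular pair $(V_1,V_2)$ of density $\ge\delta'$ with $|V_1|=|V_2|=\eta n$; when $d>1/2$ extract a regular triple $(V_1,V_2,V_3)$ using $\kappa=3$. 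Then reveal $G(n,p)$ and record, via Theorem~\ref{thm:janson} applied to the \emph{fixed} vertex set $V_1$ (no bad union bound), that $G(n,p)[V_1]$ contains a copy of $C_\ell$, together with the elementary fact that $G(n,p)[V_1]$ has $\omega(n)$ (resp.\ $\omega(1)$) edges; where needed one also records, uniformly over the boundedly many ``reduced colourings'' of $G$ that can arise from the regular partition, that plenty of random edges inside $V_1$ join pairs that are connected by a monochromatic path of the appropriate length in the regular structure (this last step is the reason only the coarse statistics of the colouring matter, bounding the union bound). Now fix any $2$-colouring of $G\cup G(n,p)$. If $\ell$ is even, a monochromatic $C_\ell$ already lies inside the regular pair/triple unless one colour class is $o(n^2)$ there, in which case the other class is (after deleting $o(n^2)$ edges) still a regular pair of density $\ge\delta'/2$ and a random edge inside $V_1$ closes a monochromatic path of even length $k-1$ into the desired red $C_k$ (or we are done). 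If $\ell$ is odd, both cycles are odd and must be closed by random edges; the dichotomy is that either enough random edges inside $V_1$ carrying the majority colour of $(V_1,V_2)$ join ``good'' pairs, yielding a monochromatic $C_k$ (or $C_\ell$) by closing a path in the regular structure, or else almost all random edges inside $V_1$ carry the other colour, so that $G(n,p)[V_1]$ is essentially monochromatic and already contains the other cycle; in the triple case ($d>1/2$) one additionally uses that the reduced $2$-colouring of $K_3$ either gives a monochromatic triangle-structure (done by the Key Lemma) or leaves one colour confined to a single pair, whose complement is a bipartite blow-up of $P_3$, so again a single random edge in the ``apex'' part $V_1$ closes the monochromatic odd path. \emph{The main obstacle} throughout is precisely the decoupling of the adversarial colouring from the random edges it is forced to use to avoid monochromatic cycles — in particular, ensuring that some well-chosen random edge simultaneously has the right colour and joins the ends of a monochromatic path of the right length in the (colouring-dependent) majority-colour subgraph — and the plan is to overcome it exactly by the two devices above: applying regularity only to $G$, and replacing the full colouring by its bounded reduced version when one needs to count usable random edges.
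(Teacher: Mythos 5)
Your lower bounds and your ``$p=0$'' arguments are fine (they are minor variants of the paper's: the paper uses the bipartite Ramsey number for (i), Tur\'an's theorem plus $R(C_3,C_3)=6$ for (iv), and the Erd\H{o}s--Stone theorem applied to a fixed $5$-partite Ramsey graph for (v), but your route via Corollary~\ref{cor:regtuple} applied to the coloured graph with $\kappa^*\in\{2,3,5,6\}$ parts followed by the Key Lemma also works). The genuine gaps are in the medium-density upper bounds, which you rightly identify as the core. For case (ii), your dichotomy is not valid: the negation of ``many majority-colour random edges inside $V_1$ join good pairs'' is not ``almost all random edges inside $V_1$ carry the other colour'', and even the latter would not give you a blue $C_\ell$ in $G(n,p)[V_1]$. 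When $np$ is only barely superconstant, $G(n,p)[V_1]$ has $\Theta(n^2p)$ edges but only about $(np)^{\ell}$ copies of $C_\ell$, which are essentially edge-disjoint; the adversary can therefore kill every monochromatic copy of $C_\ell$ inside $V_1$ by recolouring a vanishing fraction of the random edges, while keeping those recoloured edges useless for closing an odd path through $(V_1,V_2)$. What the paper actually does is find, via Janson, a vertex-disjoint $C_k\sqcup C_\ell$ entirely inside $G(n,p)[A]$ together with a linear-sized \emph{common neighbourhood} $N\subseteq B$, pass to a subset $N_1$ of vertices with identical colour profile towards the two cycles, and then invoke the fact that $G(n,p)[N_1]$ is w.h.p.\ $(P_{k-1},P_{\ell-1})$-Ramsey (Theorem~\ref{thm:global1ramsey}(ii), using $m_2(P_{k-1},P_{\ell-1})=1$). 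This extra random Ramsey input for paths, applied inside the common neighbourhood, is what defeats the adversary, and it is absent from your plan.

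For the regime $1/2<d\le d_2$ in cases (iv) and (v) the situation is worse: here $p=\omega(n^{-2})$ supplies only $\omega(1)$ random edges in total, so any argument of the form ``a single random edge in the apex part closes the monochromatic odd path'' is doomed --- the adversary colours that edge (and the relevant paths) after seeing it, and with two odd cycles both colours need closing. The paper's mechanism is entirely different: it exhibits a \emph{fixed} graph that is $(C_k,C_\ell)$-Ramsey and admits a $3$-partition in which every part induces a subgraph of edge density at most $1/2$ (namely $K_6$ for (iv), and for (v) the graph $H_m$ on five parts with two perfect matchings and complete bipartite graphs elsewhere), and then embeds it into $G\cup G(n,p)$ by Theorem~\ref{thm:perturbedturan}; the random edges only ever supply the matchings/single edges inside the three parts, so no colouring-dependent use of randomness is needed at all. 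Proving that $H_m$ is $(C_k,C_\ell)$-Ramsey (Lemma~\ref{l2}) is itself a nontrivial deterministic argument combining the bipartite Ramsey theorem with the identity $R(\{C_3\},\{C_3,C_5\})=5$, and nothing in your proposal plays this role. Without these two ingredients --- the path-Ramsey step inside a common neighbourhood for (ii), and the perturbed-Tur\'an embedding of a fixed Ramsey graph for (iv) and (v) --- the proposal does not yield the claimed upper bounds.
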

\subsubsection{Proof of  Theorem~\ref{c1}(i)}
 Let $r:=R_{bip}(k,\ell)$, the bipartite Ramsey number for $K_{k,k}$ and $K_{\ell,\ell}$.  Then $K_{r,r}$ is a bipartite graph that is $(K_{k,k},K_{\ell,\ell})$-Ramsey, and therefore $(C_k, C_{\ell})$-Ramsey as well.  By Observation~\ref{obs:trivial}(iii), it follows that $p(n;C_k, C_{\ell},d) = 0$ for $d > 0$; that is, any sufficiently large dense graph will already be $(C_k, C_{\ell})$-Ramsey before any random edges are added.
\qed

\subsubsection{Proof of  Theorem~\ref{c1}(ii)}
The lower bound here is the trivial lower bound from Observation~\ref{obs:trivial}, since $C_k$ is not bipartite and for $p = o(1/n)$, $G(n,p)$ is with high probability $C_{\ell}$-free.  Thus $p(n;C_k, C_{\ell}, d) \ge 1/n$ for all $d \in (0,1/2]$.

\smallskip

Let $d>0$ and set $0<\eta \ll \eps \ll \gamma \ll \delta \ll d,1/k,1/\ell$. Suppose that $G$ is a sufficiently large $n$-vertex graph. Consider $G\cup G(n,p)$ where $p=\omega (1/n)$.
By Corollary~\ref{cor:regtuple} (with $r=1$) there exist  disjoint $A,B\subseteq V(G)$ so that $|A|=|B|\geq \eta n$ and $(A,B)_{G}$ is $\eps$-regular with density at least $\delta$.
We call an \emph{ordered} $(k+\ell)$-tuple $(x_1,\dots,x_k,y_1,\dots, y_{\ell}) \in A^{k+\ell}$ \emph{good} if  $|(\cap _i N_{G}(x_i)) \cap  (\cap _i N_{G}(y_i))\cap B|\geq \gamma |B|$.
Lemma~\ref{lem:regpairs}(i) implies that all but at most $\gamma |A|^{k+\ell}$ ordered $(k+\ell)$-tuples in $A^{k+\ell}$ are good.

Using Theorem~\ref{thm:janson}, we deduce that with high probability there is a good $(k+\ell)$-tuple such that in $G(n,p)$, $x_1 x_2 \hdots x_k x_1$ forms a $k$-cycle and $y_1 y_2 \hdots y_{\ell} y_1$ gives an $\ell$-cycle.  Indeed, letting $H$ be the vertex-disjoint union of $C_k$ and $C_{\ell}$, our above argument shows that the collection $\mc H$ of potential copies of $H$ with good supports has size $\xi n^{k + \ell}$ for some $\xi > 0$.  Moreover, we have $\mu_1 = (np)^{\min(k,\ell)} = \omega(1)$, and so the probability of having no copy of $H$ supported on a good $(k + \ell)$-tuple tends to zero as $n$ grows. Therefore w.h.p. in $G(n,p)[A]$ we have a disjoint $k$-cycle $X$  and $\ell$-cycle $Y$ together with a set $N\subseteq B$ where $|N| \geq \gamma |B|$ and $N\subseteq N_G (X \cup Y)$.

Consider any $2$-colouring of $G \cup G(n,p)$. Given any $b \in B$ 
there are $2^{k+\ell}$ possible ways to colour those edges incident to $b$ with an endpoint in $X\cup Y$. Thus, there is  
a set $N_1 \subseteq N$ of size at least $\gamma |B|/2^{k+\ell}$ where each vertex in $N_1$ has the same `colour profile' (i.e. every vertex in $N_1$ has the same red neighbourhood in $X\cup Y$ and therefore the same blue 
neighbourhood in $X\cup Y$).

Colour each vertex $v \in X \cup Y$ with the colour of the edges it receives from $N_1$. So we now have a red/blue colouring of the vertices and edges of $X \cup Y$.
Suppose there is a red vertex $u \in X \cup Y$ and a blue vertex $v \in X \cup Y$.
Theorem~\ref{thm:global1ramsey}(ii) implies that w.h.p. $G(n,p)[N_1]$ is $(P_{k-1},P_{\ell-1})$-Ramsey.
If there is a red copy of $P_{k-1}$ in $N_1$, then together with $u$ we obtain our desired red copy of $C_k$ in $G\cup G(n,p)$;
otherwise we obtain a blue copy of $P_{\ell-1}$ in $N_1$ and thus together with $v$ we obtain a blue copy of $C_{\ell}$.

Thus, we may assume that every vertex in $X\cup Y$ is coloured the same. Suppose that they are all red. If the edges of $Y$ are all blue, we obtain the desired blue copy of $C_{\ell}$.
So we may assume that there is at least one red edge in $Y$. As all the edges between $X\cup Y$ and $N_1$ are red, we can extend this red edge to a red copy of $C_k$ in $G \cup G(n,p)$.
The case when every vertex in $X\cup Y$ is blue is similar (and in fact easier if $\ell$ is even), and so in all cases we obtain a desired monochromatic cycle.
\qed

\subsubsection{Proof of  Theorem~\ref{c1}(iii)}
Let $d>1/2$ and define $\delta >0$ so that $d>1/2 +\delta$. Set $0< \eps \ll \delta$. Let $\eta>0$ be obtained by applying Corollary~\ref{cor:regtuple} with input $r=2$.
Consider any sufficiently large $n$-vertex graph $G$ of density at least $d$, and consider any $2$-colouring of $G$.
By Corollary~\ref{cor:regtuple} we have that there are disjoint sets $V_1,V_2,V_3$ in $G$ so that $|V_1|=|V_2|=|V_2|\geq \eta n$ and for each $1 \leq i <j \leq 3$,
there is some colour $c_{i,j} $ for which the edges between $V_i$ and $V_j$ of colour $c_{i,j}$ form an $\eps$-regular pair of density at least $\delta/4$.
Suppose one of these colours $c_{i,j}$ is blue. Then Lemma~\ref{keylem} implies $G$ contains a blue copy of $C_\ell$.
Otherwise all the $c_{i,j}$ are red and then since $C_{k}$ is $3$-partite, Lemma~\ref{keylem} implies $G$ contains a red copy of $C_{k}$.
\qed

\subsubsection{Proof of  Theorem~\ref{c1}(iv)}
Let $G_n$ denote the $5$-partite Tur\'an graph on $n$ vertices. Since $K_5$ has a $2$-colouring without a monochromatic copy of $C_3$, so does $G_n$.
Let $p=o(1/n^2)$. Then w.h.p. $G(n,p)$ is empty. Thus, w.h.p. $G_n \cup G(n,p)$ is not $(C_3,C_3)$-Ramsey. 
This shows that $p(n, C_3,C_3;d)\geq 1/n^2$ for all $d \leq 4/5$.

Next suppose that $d>1/2$, and let $p=\omega(1/n^2)$. Since $K_6$ is $(C_3,C_3)$-Ramsey, to prove that $p(n; C_3,C_3, d)= 1/n^2$, it suffices to show that given any $n$-vertex graph $G$ of density $d$, w.h.p.
$K_6 \subseteq G\cup G(n,p)$.  This follows immediately from Theorem~\ref{thm:perturbedturan} with $k = 3$, since $\rho_3(K_6) = 1/2$.  So indeed $p(n; C_3,C_3, d)= 1/n^2$.

\smallskip

Let $d>4/5$ and suppose that $G$ is any sufficiently large graph  with density at least $d$. Then by Tur\'an's theorem $G$ contains a copy of $K_6$. Since $K_6$ is $(C_3,C_3)$-Ramsey, any $2$-colouring of $G$ yields a monochromatic
copy of $C_3$. Thus, $p(n; C_3,C_{3}, d)=0$.
\qed

\subsubsection{Proof of  Theorem~\ref{c1}(v)}
To prove this result we will need two additional lemmas.
\begin{lemma}\label{l1}
$R( \{C_3\}, \{C_3,C_5\})=5.$
\end{lemma}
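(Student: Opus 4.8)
\textbf{Proof plan for Lemma~\ref{l1}.}
The claim is that $R(\{C_3\},\{C_3,C_5\})=5$; that is, $5$ is the least $n$ such that every red/blue colouring of $E(K_n)$ yields a red $C_3$ or a blue copy of $C_3$ or $C_5$. The plan has two halves: a lower bound construction on $K_4$ and an upper bound argument on $K_5$.

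For the lower bound $R(\{C_3\},\{C_3,C_5\})>4$, I would exhibit a $2$-colouring of $K_4$ with no red triangle and no blue $C_3$ or $C_5$. Since $K_4$ has only $4$ vertices it contains no $C_5$ at all, so it suffices to $2$-colour $E(K_4)$ with neither colour class containing a triangle. Take the two edge-disjoint Hamilton paths (or the perfect-matching-plus-$C_4$ decomposition): colour a $4$-cycle red and the remaining perfect matching of two diagonals blue. The red graph is $C_4$, which is triangle-free; the blue graph is a matching, which is triangle-free; and neither can contain $C_5$ on four vertices. Hence $K_4$ is not $(\{C_3\},\{C_3,C_5\})$-Ramsey.

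For the upper bound, I would take an arbitrary red/blue colouring of $E(K_5)$ and show it must contain a red $C_3$ or a blue $C_3$ or $C_5$. Suppose there is no red triangle. Then the red graph $R$ on $5$ vertices is triangle-free, so by Tur\'an's theorem (or just Mantel) $e(R)\le \lfloor 25/4\rfloor = 6$, whence the blue graph $B$ has at least $\binom{5}{2}-6 = 4$ edges. That alone is not enough, so I would push further: since $R$ is triangle-free on $5$ vertices, its complement $B$ has independence number at most $2$, i.e.\ $B$ has no independent set of size $3$. A graph on $5$ vertices with $\alpha(B)\le 2$ must have at least $\binom{5}{2}-e(R)\ge \binom52 - 6 = 4$ edges but more usefully it must be fairly dense; in fact I would argue directly by cases on the structure of the triangle-free graph $R$. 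The extremal triangle-free graph on $5$ vertices is $C_5$ itself (and $K_{2,3}$ and their subgraphs). If $R\subseteq C_5$ then $B\supseteq \overline{C_5}=C_5$, giving a blue $C_5$ and we are done. If $R$ is a subgraph of $K_{2,3}$, then $B$ contains the complement of $K_{2,3}$, which is $K_2\cup K_3$, hence a blue triangle. Any other triangle-free $R$ on five vertices is a subgraph of one of these (this is the only part requiring a small check: classify maximal triangle-free graphs on $5$ vertices as $C_5$ and $K_{2,3}$), so in every case $B$ contains a blue $C_3$ or a blue $C_5$. This proves $R(\{C_3\},\{C_3,C_5\})\le 5$, and combined with the lower bound gives equality.

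The only genuinely delicate step is the case analysis on maximal triangle-free graphs on five vertices; this is a finite check and is routine, but one must make sure the list $\{C_5, K_{2,3}\}$ (up to adding isolated-type sparser subgraphs) is complete so that every triangle-free $R$ embeds in one of them. An alternative, perhaps cleaner, route for the upper bound is purely counting-based: if $B$ has no $C_3$ and no $C_5$, then since $B$ has $5$ vertices and no $C_5$, $B$ is not Hamiltonian, and a $2$-connected non-Hamiltonian graph on $5$ vertices without a triangle is very restricted; combined with $\alpha(B)\le 2$ one again forces a contradiction. Either way the bulk of the work is a small finite case check, which I would present compactly.
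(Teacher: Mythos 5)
Your lower bound is fine and equivalent to the paper's: both exhibit a $2$-colouring of $K_4$ with both colour classes triangle-free (the paper uses two edge-disjoint $P_4$'s, you use a $C_4$ and its complementary matching; either works since $K_4$ trivially has no $C_5$).

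Your upper bound takes a genuinely different route from the paper. The paper uses a short degree-counting argument: if some vertex has red (or blue) degree $\ge 3$, the usual neighbourhood argument produces a monochromatic $C_3$; otherwise every vertex has red and blue degree exactly $2$, so the blue graph is $2$-regular on $5$ vertices and must be a $C_5$. Your route instead classifies the maximal triangle-free graphs on $5$ vertices and passes to complements. This is a perfectly reasonable strategy, but as written it has a gap: the list $\{C_5, K_{2,3}\}$ of maximal triangle-free graphs on $5$ vertices is \emph{incomplete}. The star $K_{1,4}$ is also maximal triangle-free (every non-edge among its leaves has the centre as a common neighbour), and it is not a subgraph of either $C_5$ or $K_{2,3}$, both of which have maximum degree at most $3$. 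Your assertion that ``any other triangle-free $R$ on five vertices is a subgraph of one of these'' is therefore false, and the parenthetical ``up to adding isolated-type sparser subgraphs'' does not repair it. The omitted case happens to work out ($\overline{K_{1,4}} = K_4 \cup K_1$ contains a blue triangle), so the lemma still holds, but the case must be stated and checked; as presented the classification claim would mislead a reader. Your sketched alternative (non-Hamiltonicity of $B$) is hand-wavier still, since there is no reason for $B$ to be $2$-connected. The paper's degree argument is cleaner precisely because it sidesteps any classification of triangle-free graphs.
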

\proof
To see that $R(\{C_3\},\{C_3,C_5\})>4$, 
consider a $2$-colouring of $K_4$ whose red edges induce a copy of $P_4$ and whose blue edges induce a copy of $P_4$. 

Next consider any $2$-colouring of $K_5$. If there is a vertex  incident to at least three red edges then we must have a red or blue copy of $C_3$.
The same conclusion holds if there is a vertex incident to at least three blue edges. Thus, we may assume that every vertex has red and blue degree two. In particular,
the blue subgraph is $2$-regular and so is a copy of $C_5$, as desired.
\endproof
Let $H_m$ be the graph formed by taking disjoint vertex sets $V_1,\dots,V_5$ each of size $m$, and with edge set as follows:
$H_m$ contains a perfect matching between $V_1$ and $V_2$;  a perfect matching between $V_3$ and $V_4$; between all other pairs of distinct $V_i$ there are all possible edges.
The next result proves that finding $H_m$ in a graph $G$ (for $m$ sufficiently large) ensures $G$ is $(C_k,C_{\ell})$-Ramsey.
\begin{lemma}\label{l2}
Given  $k \in 2 \mathbb N+1$ and $\ell \in 2 \mathbb N+3$, there exists an $m_0=m_0(k,\ell)$ such that if $m \geq m_0$, then $H_m$ is $(C_k,C_{\ell})$-Ramsey.
\end{lemma}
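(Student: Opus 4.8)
I want to show that for $k$ odd and $\ell \ge 5$ odd, if $m$ is large then $H_m$ is $(C_k, C_\ell)$-Ramsey. Fix any red/blue colouring of $H_m$. The structure of $H_m$ is a blow-up of $K_5$ minus two disjoint edges (the edge $V_1V_2$ and the edge $V_3V_4$ are matchings rather than complete bipartite), so the "dense" pairs are $(V_1,V_3),(V_1,V_4),(V_1,V_5),(V_2,V_3),(V_2,V_4),(V_2,V_5),(V_3,V_5),(V_4,V_5)$ — exactly the non-edges of the colouring used in Lemma~\ref{l1} to $2$-colour $K_4$. The first step is a regularity/partition clean-up: inside each complete bipartite pair $(V_i,V_j)$, a majority colour appears on a positive fraction of a large regular-ish sub-pair, so after passing to linear-sized subsets $V_i' \subseteq V_i$ I may assume each such pair $(V_i',V_j')$ is monochromatic in density in one colour $\chi(ij) \in \{\text{red},\text{blue}\}$, and moreover very regular. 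This reduces the problem to a colouring $\chi$ of the eight "full" pairs, i.e. essentially a $2$-colouring of $K_5$ with the two matching-edges deleted.

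**Case analysis via the auxiliary Ramsey number.** With $\chi$ in hand I run a case analysis on the graph $R$ on vertex set $\{1,\dots,5\}$ whose edges are the pairs coloured, say, blue by $\chi$ (among the eight available pairs). If $R$ contains an odd cycle of length $\le \ell$ that uses only available pairs, then — by Lemma~\ref{keylem} applied in the appropriate colour, together with the fact that an odd cycle $C_r$ maps homomorphically onto any shorter odd cycle and onto $K_3$, so $C_\ell$ maps onto $C_r$ whenever $r \le \ell$ is odd — we get a blue $C_\ell$; symmetrically an odd red cycle of length $\le k$ built from available pairs gives a red $C_k$. So the dangerous configurations are colourings $\chi$ where neither colour class (restricted to available pairs) contains a short odd cycle, i.e. each colour class is bipartite on the available-pair graph. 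This is precisely where Lemma~\ref{l1} enters: $R(\{C_3\},\{C_3,C_5\}) = 5$ tells us that in any $2$-colouring of $K_5$ there is a red $C_3$ or a blue $C_3$ or a blue $C_5$; combined with the constraint that two specific disjoint edges of $K_5$ are missing, one checks (a finite check, or an argument mimicking the proof of Lemma~\ref{l1}) that any bipartite-bipartite colouring $\chi$ of the eight available pairs must in fact contain, in one colour, a cycle structure we can still exploit — specifically a blue walk realising an odd closed walk of length $\le \ell$ once we are allowed to also route through the matching edges $V_1V_2$ or $V_3V_4$.

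**Using the matching edges.** The point of $H_m$ having matchings rather than empty pairs between $V_1V_2$ and $V_3V_4$ is that a single matching edge lets a monochromatic path "cross" that pair at the cost of one vertex, and since we only need an odd cycle (not a balanced bipartite cycle), parity is flexible. Concretely: fix a blue (say) long path inside a regular pair $(V_i',V_j')$ of length about $\ell$; to close it into a $C_\ell$ I need to join its endpoints by a blue path of the complementary length through the other parts, and I have freedom to insert one matching edge to adjust parity. Using Lemma~\ref{lem:regpairs}(i) (almost all tuples have large common neighbourhoods) and Lemma~\ref{keylem}, I can route such connecting paths greedily through the available regular pairs. The bookkeeping — that the lengths of the two pieces sum to exactly $\ell$ (or $k$), have the right parity, and that the common-neighbourhood conditions survive all the subset-passing — is routine but must be done for each surviving case of $\chi$.

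**Main obstacle.** The crux is the finite case analysis on $\chi$: I must verify that \emph{every} $2$-colouring of the eight available pairs of $K_5 \setminus \{V_1V_2, V_3V_4\}$ either directly yields a short monochromatic odd cycle on available pairs (handled by Lemma~\ref{keylem}) or yields a monochromatic odd closed walk of length at most $\min(k,\ell)$ once one matching edge is permitted, with the walk having enough "room" (passing through at least two distinct large parts) to be realised as an actual cycle of the exact required length in $G(n,p)$-free $H_m$. Establishing Lemma~\ref{l1} is the clean combinatorial input that makes this case analysis tractable; without the deleted-edge constraint the analysis would collapse, and it is exactly the interplay between "which two edges are missing" and "$R(\{C_3\},\{C_3,C_5\})=5$" that forces $\ell \ge 5$ and $k$ odd to be the right hypotheses. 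I expect the rest — the regularity clean-up and the path-routing — to be standard given the tools already assembled in Section~\ref{sec:tools}.
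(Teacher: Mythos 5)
Your high-level plan -- reduce the colouring of $H_m$ to a colouring $\chi$ of an auxiliary small graph, apply Lemma~\ref{l1}, then lift to a monochromatic $C_k$ or $C_\ell$ -- matches the spirit of the paper's proof, but the execution you sketch has genuine gaps, and the route you choose is both harder and different from the paper's.

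\textbf{The treatment of the matching edges is not addressed.} You propose a regularity clean-up that passes to linear-sized subsets $V_i'\subseteq V_i$ on which the eight complete bipartite pairs become ``monochromatic in density.'' But the pairs $(V_1,V_2)$ and $(V_3,V_4)$ are only perfect matchings: a matching is $1$-regular, so there is no sense in which $\eps$-regularity or density arguments apply there, and passing to arbitrary subsets $V_1',V_2'$ can destroy the matching between them entirely. Your sketch never explains how the surviving matching inside $(V_1',V_2')$ interacts with the clean-up, yet those very matching edges are what you later say you ``insert to adjust parity.'' By contrast, the paper deliberately avoids regularity and instead $16$-colours an auxiliary complete bipartite graph $B$ whose vertices \emph{are} the matching edges; bipartite Ramsey applied to $B$ then produces subsets of $V_1,\dots,V_4$ that both (a) carry an intact sub-matching and (b) make the four pairs $(V_i',V_j')$, $i\in\{1,2\},j\in\{3,4\}$, \emph{genuinely} monochromatic complete bipartite graphs. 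Subsequent pigeonhole on colour profiles (to $V_5$, then along the matchings) finishes the reduction. This is strictly stronger than regularity and is what makes the later cycle embedding elementary.

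\textbf{The case analysis is not carried out, and it is framed incorrectly.} You want to analyse colourings of ``$K_5$ minus two disjoint edges,'' treating the matchings as missing edges and then arguing about odd closed walks and parity. That is not how the paper proceeds: after the pigeonhole reduction, it \emph{extracts a colour} $c_{1,2}$ and $c_{3,4}$ for each matching (the sub-matchings are monochromatic), so the auxiliary object is a \emph{fully coloured} $K_5$, and Lemma~\ref{l1} applies verbatim to yield a red $C_3$, a blue $C_3$, or a blue $C_5$. Each of those three outcomes is then lifted by hand to a red $C_k$ or blue $C_\ell$, using the monochromatic complete bipartite graphs freely and routing through at most one matching edge of the fixed colour $c_{1,2}$ or $c_{3,4}$. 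Your sketch acknowledges the finite case analysis ``must be done'' but never does it, and since you are missing two edges of $K_5$ rather than having them coloured, the invocation of Lemma~\ref{l1} is not directly applicable to your reduced object; you would need an additional lemma about $K_5$ minus a perfect matching of size two, or a parity/walk argument, neither of which you supply.

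\textbf{The embedding step is harder in your framework.} With only $\eps$-regular pairs of positive density (rather than monochromatic complete bipartite graphs plus a monochromatic sub-matching), embedding a cycle of exact odd length $k$ or $\ell$ that visits prescribed parts and uses exactly the right matching edge requires a careful application of the Key Lemma with correct degree bookkeeping. In the paper, once the reduction is complete the cycle is written down vertex by vertex with no further tools. Your approach is not unsalvageable, but as written it replaces a clean pigeonhole argument with a more fragile regularity argument, fails to preserve the matching under the clean-up, and leaves the heart of the proof (the case analysis and the exact-length embedding) unverified.
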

\proof
Consider any $2$-colouring $c$ of $H_m$. Given an edge $xy \in E(H_m)$, write $c(xy)$ for the colour of $xy$ in $c$.
We first build an auxiliary complete bipartite graph $B$ with classes $B_1$ and $B_2$.
The vertices of $B_1$ are the edges $\{v_{1,i}v_{2,i}  : i \in [m] \}$ of the perfect matching between $V_1$ and $V_2$ in $H_m$;
the vertices of $B_2$ are the edges $\{v_{3,i}v_{4,i}  : i \in [m] \}$ of the perfect matching between $V_3$ and $V_4$ in $H_m$.

Next we $16$-colour the edges of $B$ as follows: for all $i,j \in [m]$, we colour the edge from $v_{1,i}v_{2,i}$ to $v_{3,j}v_{4,j}$ in $B$ 
with the $4$-tuple $(c(v_{1,i}v_{3,j}), c(v_{1,i}v_{4,j}), c(v_{2,i}v_{3,j}), c(v_{2,i}v_{4,j}) )$.
Since $m$ is sufficiently large, the bipartite Ramsey theorem implies the existence of a monochromatic copy $K$ of $K_{4(k+\ell), 4(k+\ell)}$ in $B$.
Let $(c_{1,3}, c_{1,4}, c_{2,3}, c_{2,4})$ be the colour of $K$. Let $V'_i \subseteq V_i$ denote the set of vertices in $V_i$ that are `present' in $K$; for example,
$v_{1,j} \in V'_1$ precisely if $v_{1,j}v_{2,j}$ is a vertex in $K$.
It follows that, for every $i \in \{1,2\}$, $j \in \{3,4\}$, all edges in the complete bipartite graphs
$H_m[V'_i,V'_j]$ have the colour $c_{i,j}$.

Now consider the vertices in $V_5$. There are $2^{16{(k+\ell)}}$ possible ways the edges between a vertex $v \in V_5$ and the vertices in $\cup ^4 _{i=1} V'_i$ can be coloured.
Hence, we can find a set $V''_5 \subseteq V_5$ of at least $m 2^{-16(k+\ell)} \geq k+\ell$ vertices that all have the same colour profile.

Next consider the matching between $V'_1$ and $V'_2$ in $H_m$. For each edge $v_1v_2$ in this matching, there are four possible ways the edges from $\{v_1,v_2\}$ to $V''_5$ can be coloured. 
Hence, there are subsets $V''_1 \subseteq V'_1$ and $V''_2 \subseteq V'_2$ of size $k+\ell$ such that there is a perfect matching in $H_m[V''_1,V''_2]$, and for each $i \in [2]$, all edges
between $V''_i$ and $V''_5$ have colour $c_{i,5}$. Similarly, there are subsets $V''_3 \subseteq V'_3$ and $V''_4 \subseteq V'_4$ of size $k+\ell$ such that there is a perfect matching in $H_m[V''_3,V''_4]$, and for each $i \in \{3,4\}$, all edges
between $V''_i$ and $V''_5$ have colour $c_{i,5}$.

In summary, we have an induced subgraph $H'_m$ of $H_m$ on $\cup _{i=1} ^5 V''_i$ where $|V_i ''|\geq k+\ell$; a perfect matching in $H'_m[V''_1,V''_2]$ and in $H'_m[V''_3,V''_4]$; 
for all other pairs $i<j$, a monochromatic complete bipartite graph of colour $c_{i,j}$ between $V''_i$ and $V''_j$ in $H'_m$.

Fix an edge $x_1x_2$ in $H'_m[V''_1,V''_2]$ and denote its colour in $c$ by $c_{1,2}$. Similarly let $x_3x_4$ be an edge in $H'_m[V''_3,V''_4]$ with colour $c_{3,4}$.
Now consider an auxiliary copy of $K_5$ with vertex set $[5]$, colouring each edge $ij$ with colour $c_{i,j}$. By Lemma~\ref{l1} we find a red $C_3$, blue $C_3$ or blue $C_5$.

{\noindent \bf Case 1: There is a monochromatic copy of $C_3$ on $\{i,j,r\}$ in $K_5$.}
Suppose this $C_3$ is red; the blue case is analogous. Choose vertices $y_1 \in V''_i$, $y_2,y_4,\dots, y_{k-1} \in V''_j$ and $y_3,y_5, \dots , y_k \in V''_r$.
These choices can be made arbitrarily unless $\{1,2\} \subseteq \{i,j,r\}$ in which case we set $i:=1$, $j:=2$, $y_1:=x_1$ and $y_2:=x_2$; or if
$\{3,4\} \subseteq \{i,j,r\}$, in which case we set $i:=3$, $j:=4$, $y_1:=x_3$ and $y_2:=x_4$.

Note that $y_1y_2y_3\dots y_ky_1$ is a red copy of $C_k$ in $H'_m \subseteq H_m$, as desired.

{\noindent \bf Case 2: There is a blue  copy  $\pi _1 \pi_2 \pi_3 \pi_4 \pi_5 \pi _1$ of  $C_5$  in $K_5$.}
We may assume that $\pi _5=5$. Choose vertices $y_1 \in V''_{\pi _1}$, $y_2 \in V''_{\pi _2}$, $y_3 \in V''_{\pi _3}$, $y_4,y_6, \dots, y_{\ell-1} \in V''_{\pi _4}$
and $y_5,y_7, \dots, y_{\ell} \in V''_{5}$ such that $y_i = x_{\pi _i} \in V''_{\pi _i}$ for each $i \in [4]$, with the other vertices chosen arbitrarily.
Then $y_1y_2y_3\dots y_\ell y_1$ is a blue copy of $C_\ell$ in $H'_m \subseteq H_m$, as desired.
\endproof

With Lemma~\ref{l2} at hand it is now straightforward to prove Theorem~\ref{c1}(v).

{\noindent \it Proof of Theorem~\ref{c1}(v)}
 Let $G_n$ denote the $4$-partite Tur\'an graph on $n$ vertices. Colour the edges of $K_4$ so that the red edges induce a $P_4$, and the blue edges also a $P_4$. Lift this to a $2$-colouring of $G_n$.
Since the red (blue) subgraph of $G_n$ is bipartite we do not have a red (blue) copy of any odd cycle. 
Let $p=o(1/n^2)$. Then w.h.p. $G(n,p)$ is empty. Thus, w.h.p. $G_n \cup G(n,p)$ is not $(C_k,C_\ell)$-Ramsey. 
This shows that $p(n; C_k,C_\ell,d)\geq 1/n^2$ for all $d \leq 3/4$.

Next suppose that $d>1/2$. Let $p=\omega (1/n^2)$, and let $m\geq m_0$ as in Lemma~\ref{l2}.
Let $G$ be a sufficiently large $n$-vertex graph of density at least $d$. The $3$-partition $V(H_m) = (V_1 \cup V_2) \cup (V_3 \cup V_4) \cup V_5$ shows $\rho_3(H_m) \le 1/2$, and so by Theorem~\ref{thm:perturbedturan}, w.h.p. $H_m \subseteq G \cup G(n,p)$, and therefore $G \cup G(n,p)$ is $(C_k, C_{\ell})$-Ramsey.  So indeed $p(n; C_k,C_\ell,d)= 1/n^2$ for all $d \in (1/2,3/4]$.

Finally, suppose that $d>3/4$ and let $G$ be a sufficiently large graph of density $d$. Since $H_m$ is $5$-partite, the Erd\H{o}s--Stone--Simonovits theorem~\cite{erdosstone} implies that $H_m \subseteq G$. Lemma~\ref{l2} implies that
$G$ is $(C_k,C_{\ell})$-Ramsey. So $p(n; C_k,C_\ell,d)= 0$ for all $d > 3/4$.
\qed


\subsection{Proof of Theorem~\ref{thm:manycycles}}

We next turn to our multicolour result, where we seek the threshold at which any $r$-colouring of $G \cup G(n,p)$ will contain a monochromatic copy of $C_{\ell}$, where $\ell \ge 2^r + 1$ is odd.
  In the proof of the theorem we will repeatedly make use of the following simple property of $2^i$-partite graphs.
\begin{fact} \label{fact2}
The edge set of any $2^i$-partite graph $H$ can be partitioned into $i$ bipartite graphs.
\end{fact}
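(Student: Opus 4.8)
The plan is to label the parts of $H$ by binary strings and distribute the edges according to where those labels first differ. Since $H$ is $2^i$-partite, fix a partition $V(H) = \bigcup_{x \in \{0,1\}^i} U_x$ into at most $2^i$ independent sets, indexed by the binary strings of length $i$ (if $H$ is in fact $k$-partite for some $k < 2^i$, pad the list with empty classes, which is harmless). Every edge of $H$ joins vertices in two distinct parts $U_x$ and $U_y$, so the strings $x$ and $y$ differ in at least one coordinate; write $j(x,y) \in [i]$ for the least coordinate in which they differ.

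First I would define, for each $j \in [i]$, the spanning subgraph $H_j \subseteq H$ whose edge set consists precisely of those edges $uv$ with $u \in U_x$, $v \in U_y$ and $j(x,y) = j$. As each edge is assigned exactly one value $j(x,y)$, the subgraphs $H_1, \dots, H_i$ partition $E(H)$, giving $i$ edge-disjoint graphs whose union is $H$.

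Next I would verify that each $H_j$ is bipartite. Split $V(H)$ by putting on one side all vertices lying in parts $U_x$ with $x_j = 0$ and on the other side all vertices in parts $U_x$ with $x_j = 1$. Any edge of $H_j$ goes between $U_x$ and $U_y$ with $x$ and $y$ agreeing on coordinates $1, \dots, j-1$ and differing in coordinate $j$; hence one of $x_j, y_j$ equals $0$ and the other equals $1$, so the edge crosses this bipartition. Therefore $H_j$ is bipartite, which completes the argument.

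I do not expect any genuine obstacle here: the statement is a clean combinatorial observation, and the only minor points to keep in mind are that some of the $2^i$ classes may be empty and that the two sides of $H_j$'s bipartition may contain vertices isolated in $H_j$ — neither affects bipartiteness.
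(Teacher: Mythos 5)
Your proof is correct and complete. The paper itself states this fact without proof (only remarking that the companion Fact~\ref{fact1} ``can be proven by induction on $r$''), so there is no printed argument to compare against; your binary-labelling scheme is a standard and clean way to establish it. It is, in effect, the ``unrolled'' form of the natural induction: grouping the $2^i$ parts into two blocks of $2^{i-1}$ yields one bipartite graph for the cross-block edges, and recursing within each block gives the other $i-1$; your coordinate $j$ plays the role of the recursion depth at which an edge first becomes a cross-block edge. One tiny streamlining you could note: for the bipartiteness of $H_j$ you only need that $x$ and $y$ differ in coordinate $j$, not that they agree on coordinates $1,\dots,j-1$; the ``least differing coordinate'' choice is needed only to make the assignment of each edge well-defined and hence the $H_j$ pairwise edge-disjoint.
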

The $2^i$ is best possible, as shown by the following Ramsey result, which can be proven by induction on $r$.
\begin{fact}\label{fact1}
Given any $r \geq 1$, and any $r$-colouring of $K_{2^r+1}$, there is a monochromatic odd cycle in $K_{2^r+1}$.
\end{fact}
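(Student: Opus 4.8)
The plan is to prove Fact~\ref{fact1} by induction on $r$, using the elementary characterisation that a graph contains an odd cycle if and only if it is not bipartite. The base case $r = 1$ is immediate: the only $1$-colouring of $K_{2^1+1} = K_3$ puts all three edges in the same colour, and this triangle is a monochromatic odd cycle.

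For the inductive step, suppose the statement holds for $r-1$ and consider an arbitrary $r$-colouring of $K_{2^r+1}$ with colour classes $G_1, \dots, G_r$. If the colour class $G_r$ contains an odd cycle we are immediately done, so we may assume $G_r$ is bipartite. Fix a bipartition $V(K_{2^r+1}) = A \cup B$ of $G_r$, so that every edge of colour $r$ joins $A$ to $B$. Since $|A| + |B| = 2^r + 1$, one of the two sides — say $A$, without loss of generality — has size at least $\lceil (2^r+1)/2 \rceil = 2^{r-1} + 1$. By construction $A$ spans no edge of colour $r$, so the complete graph $K_{2^r+1}[A]$ is coloured using only the colours $1, \dots, r-1$. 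As $|A| \ge 2^{r-1}+1$, this coloured complete graph contains a copy of $K_{2^{r-1}+1}$, and the induction hypothesis supplies a monochromatic odd cycle inside it, completing the induction.

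There is no real obstacle here; the only point that needs care is the count $\lceil (2^r+1)/2 \rceil = 2^{r-1} + 1$, which is exactly what makes the recursion close, and is precisely why the threshold is $2^r$ rather than a slower-growing function of $r$ (cf. Fact~\ref{fact2}, which shows the bound cannot be improved). If one prefers, the same argument can be unrolled non-inductively: starting from $K_{2^r+1}$, repeatedly take the larger side of a bipartition of the highest remaining colour class, peeling off one colour at each of $r$ steps and arriving at a monochromatic $K_3$; but the inductive phrasing is the cleanest.
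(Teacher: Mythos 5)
Your proof is correct and matches the approach the paper alludes to (the paper does not spell out a proof but says Fact~\ref{fact1} ``can be proven by induction on $r$,'' and yours is precisely the standard such induction). The key step — if the colour class $G_r$ has no odd cycle it is bipartite, so one side of a bipartition has at least $\lceil(2^r+1)/2\rceil = 2^{r-1}+1$ vertices and spans a complete graph coloured with only $r-1$ colours — is exactly right, and the base case is trivial.
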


\subsubsection{The case when $0\leq d \leq 1-2^{-r+2}$.}

When the underlying graph $G$ is not very dense, we can appeal to Observation~\ref{obs:multicol}.  By Fact~\ref{fact2}, any graph that is $(C_{\ell},r-2)$-Ramsey must have chromatic number at least $2^{r-2} + 1$, as otherwise its edges can be partitioned into $r-2$ bipartite (and hence $C_{\ell}$-free) subgraphs.  Thus, by Observation~\ref{obs:multicol}, if $d \le 1 - 2^{-r+2}$, we have $p(n;r,C_{\ell},d) = n^{-1/m_2(C_{\ell})} = n^{-1 + 1/(\ell - 1)}$.

\smallskip
\subsubsection{The case when $1-2^{-r+2}< d \leq 1-2^{-r+1}$.}

For each $n \in \mathbb N$, let $G_n$ denote the $2^{r-1}$-partite Tur\'an graph on $n$-vertices.
Let $p = o(n^{-1})$. By Fact~\ref{fact2} we can $(r-1)$-colour $G_n$ so that each colour class is bipartite, thus avoiding monochromatic copies of $C_{\ell}$.
A simple application of Markov's inequality yields that w.h.p. $G(n,p)$ does not contain a copy of $C_{\ell}$, and thus its edges can be coloured with the remaining colour.
Together this implies that w.h.p. $G_n \cup G(n,p)$ is not $(C_\ell,r)$-Ramsey. Hence, $p(n;r,C_{\ell},d) \ge n^{-1}$ for all $d \leq 1-2^{-r+1}$.  As in the previous case, the upper bound follows from Theorem~\ref{randomramsey}, since $n^{-1 + 1/(\ell - 1)}$ is the threshold for $G(n,p)$ itself to be $(C_{\ell},r)$-Ramsey.  This shows $p(n; r, C_{\ell}, d) \in [n^{-1}, n^{-1 + 1/(\ell - 1)}]$ for all $1 - 2^{-r + 2} < d \le 1 - 2^{-r+1}$.

\smallskip
\subsubsection{The case when $1-2^{-r+1}< d \leq 1-2^{-r}$.}

For each $n \in \mathbb N$, let $G_n$ denote the $2^{r}$-partite Tur\'an graph on $n$-vertices.
Let $p=o(n^{-2})$. By Fact~\ref{fact2} we can $r$-colour $G_n$ so that  there are no monochromatic copies of $C_{\ell}$.
Further w.h.p. $G(n,p)$ is empty.
So w.h.p. $G_n \cup G(n,p)$ is not $(C_\ell,r)$-Ramsey. Hence, $p(n;r,C_{\ell},d)\geq n^{-2}$ for all $1-2^{-r+1}< d \leq 1-2^{-r}$.

To finish this case off, we require the following generalisation of the graph $H_m$ introduced in the proof of Theorem~\ref{thm:twocycles}.
Let $H_{m,r}$ be the graph formed by taking disjoint vertex sets $V_1,\dots,V_{2^r+1}$ each of size $m$, and with edge set as follows:
$H_{m,r}$ contains a perfect matching between $V_{2i-1}$ and $V_{2i}$ for each $1\leq i \leq 2^{r-1}$;  between all other  distinct pairs $V_i$, $V_j$ there are all possible edges.
Note that $H_m=H_{m,2}$.
\begin{lemma}\label{l3}
Given   $\ell \geq 2^{r}+1$, there exists an $m_0=m_0(r,\ell)$ such that if $m \geq m_0$, then $H_{m,r}$ is $(C_{\ell},r)$-Ramsey.
\end{lemma}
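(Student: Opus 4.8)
The plan is to follow the strategy of the proof of Lemma~\ref{l2} (the case $r=2$), with the single application of bipartite Ramsey replaced by an iterated version, and with Fact~\ref{fact1} playing the role that Lemma~\ref{l1} played there. Fix an $r$-colouring $c$ of $H_{m,r}$. The goal is to pass to large subsets $V_1'' \subseteq V_1, \dots, V_{2^r+1}'' \subseteq V_{2^r+1}$, each of size at least $\ell$, such that: (i) for each $i \in [2^{r-1}]$ the perfect matching between $V_{2i-1}''$ and $V_{2i}''$ survives and is monochromatic in some colour $c_{2i-1,2i}$; and (ii) for every other pair $a \neq b$ the complete bipartite graph between $V_a''$ and $V_b''$ is monochromatic in some colour $c_{a,b}$. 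Once this is achieved, the map $\{a,b\} \mapsto c_{a,b}$ is an $r$-colouring of $K_{2^r+1}$, so by Fact~\ref{fact1} it contains a monochromatic odd cycle, which we then blow up to a monochromatic $C_{\ell}$.

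For the reduction, regard each matching $\mathcal{M}_i$ (the perfect matching between $V_{2i-1}$ and $V_{2i}$) as a set of $m$ ``super-vertices'', and also keep the part $W := V_{2^r+1}$. Between $\mathcal{M}_i$ and $\mathcal{M}_j$ ($i \neq j$), colour the super-edge joining $e \in \mathcal{M}_i$ to $f \in \mathcal{M}_j$ by the $4$-tuple of $c$-colours of the four $H_{m,r}$-edges between the endpoints of $e$ and those of $f$; between $\mathcal{M}_i$ and $W$, colour the super-edge by the pair of colours of the two corresponding edges. Applying the multipartite Ramsey theorem --- equivalently, iterating ordinary bipartite Ramsey over the $\binom{2^{r-1}+1}{2}$ pairs of parts, each step shrinking part sizes only by a bounded factor --- we obtain sub-matchings $\mathcal{M}_i' \subseteq \mathcal{M}_i$ and $W' \subseteq W$, all of some common size $M = M(r,m)$ with $M \to \infty$ as $m \to \infty$, so that all of these pairwise super-bipartite graphs are monochromatic. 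Taking $V_a'$ (for $a$ in a matched pair) to be the endpoints of $\mathcal{M}_i'$ in $V_a$ yields property (ii) on the $V_a'$ and on $W'$; a final pigeonhole on each $\mathcal{M}_i'$ keeps a monochromatic sub-matching of size $M/r$, giving (i), and we choose $m_0$ large enough that $M/r \geq \ell$ for $m \geq m_0$.

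It remains to blow up the monochromatic odd cycle. Say Fact~\ref{fact1} produces a cycle $u_1 u_2 \cdots u_p u_1$ in $K_{2^r+1}$ with all edges of colour $\chi$, where $p$ is odd and $3 \le p \le 2^r+1 \le \ell$; since $\ell$ and $p$ are both odd, $\ell - p$ is even. Call a super-edge $\{u_j,u_{j+1}\}$ of this cycle \emph{heavy} if it equals one of the matched pairs $\{2i-1,2i\}$, and \emph{light} otherwise. Two heavy super-edges cannot be adjacent in the cycle (matched pairs sharing a vertex must coincide), so the heavy super-edges form a matching inside the cycle and at least $\lceil p/2 \rceil \geq 2$ super-edges are light; relabel so that $\{u_1,u_2\}$ is light. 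Now build a $C_{\ell}$ in $H_{m,r}$: for each heavy super-edge $\{u_j,u_{j+1}\}$ select one edge of the corresponding perfect matching between $V_{u_j}''$ and $V_{u_{j+1}}''$, fixing the vertices used at those super-positions (these lie on pairwise disjoint vertex sets, as the heavy super-edges are disjoint); along the light super-edge $\{u_1,u_2\}$ insert an alternating path between $V_{u_1}''$ and $V_{u_2}''$ on $\ell - p + 2$ vertices to absorb the excess length, choosing its two endpoints to match any forced choices from neighbouring heavy super-edges; and at every remaining super-vertex pick one fresh vertex. All parts have size at least $\ell$, so distinct vertices can always be found, and each edge of the resulting $C_{\ell}$ lies in a monochromatic complete bipartite graph or monochromatic matching of colour $c_{u_j,u_{j+1}} = \chi$. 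This gives a monochromatic $C_{\ell}$ in $H_{m,r}$, as required.

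The main obstacle is the bookkeeping in this final blow-up: one must route the $C_{\ell}$ around the odd super-cycle while respecting that each heavy super-edge contributes only a single usable edge per traversal, and check that the forced vertex choices at heavy super-edges never conflict with one another nor with the alternating detour along the light super-edge $\{u_1,u_2\}$. This is precisely the phenomenon handled by the explicit case analysis at the end of the proof of Lemma~\ref{l2}; the new ingredient needed for general $r$ is simply the observation that the heavy super-edges of an odd cycle form a matching, so they are disjoint and always leave a light super-edge free for the length adjustment.
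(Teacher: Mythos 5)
Your proof is correct and follows essentially the same approach as the paper: reduce to a monochromatic super-structure via (iterated) bipartite Ramsey, apply Fact~\ref{fact1} to the auxiliary $K_{2^r+1}$, and lift the resulting monochromatic odd $C_{\ell'}$ to a $C_\ell$ by folding the excess length onto one non-matching super-edge. Your explicit observation that the matching super-edges form a matching in the odd super-cycle --- so that a light super-edge always exists for the length adjustment --- is a nice concrete justification of a step the paper dispatches with ``we can thus ensure,'' and your uniform treatment of $V_{2^r+1}$ as singleton super-vertices is a modest streamlining of the paper's two-phase colour-profile pigeonholing. One small inaccuracy: iterated bipartite Ramsey does not shrink part sizes by a ``bounded factor'' (the loss is closer to logarithmic per step), but since a bounded number of iterations still yields $M\to\infty$ as $m\to\infty$, the conclusion that $M/r\ge\ell$ for $m\ge m_0$ is unaffected.
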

\proof
Let $m$ be sufficiently large and
consider any $r$-colouring $c$ of $H_{m,r}$.
By repeatedly arguing as in the proof of Lemma~\ref{l2} we can obtain subsets $V^*_i \subseteq V_i$ (for each $ i \leq 2^r$) of size $r^3\ell$ so that:
\begin{itemize}
\item There is a perfect matching from $V^*_{2i-1}$ to $V^*_{2i}$ for every $1\leq i \leq 2^{r-1}$;
\item For all other pairs  $(V^*_i,V^*_j)$ (with $ i <j \leq 2^r$),  all  edges from $V^*_i$ to $V^*_j$ have the same colour $c_{i,j}$.
\end{itemize}
To construct such sets $V^*_i$ we first construct the auxiliary bipartite graph $B$ precisely as in Lemma~\ref{l2};
this yields sets $V'_1,\dots, V'_4$ (whose sizes will now still be huge).
 Then we construct an analogous auxiliary graph for the pair $((V'_1,V'_2), (V_5,V_6))$, and so on, until we have considered every pair of tuples of such vertex classes.

By further restricting to the most popular colour within each of the perfect matchings, we can now assume that each such set has size $r^2\ell$ and 
\begin{itemize}
\item There is a \emph{monochromatic} perfect matching from $V^*_{2i-1}$ to $V^*_{2i}$ for every $1\leq i \leq 2^{r-1}$. Denote this colour by $c_{2i-1,2i}$.
\end{itemize}

Now consider the vertices in $V_{2^r+1}$. There are $r^{r^2\ell 2^r}$ possible ways the edges between a vertex $v \in V_{2^r}+1$ and the vertices in $\cup ^{2^r} _{i=1} V^*_i$ can be coloured.
Hence, we can find a set $V^{**}_{2^r+1} \subseteq V_{2^r +1}$ of size at least $m r^{-r^2\ell2^r} \geq \ell$ vertices that all have the same colour profile.

Next consider the matching between $V^*_{2i-1}$ and $V^*_{2i}$ in $H_{m,r}$ for all $1\leq i \leq 2^{r-1}$. For each edge $v_{2i-1}v_{2i}$ in this matching, there are $r^2$ possible ways the edges from $\{v_{2i-1},v_{2i}\}$ to $V^{**}_{2^r+1}$ can be coloured. 
Hence, for each $1 \le j \le 2^r$, there are subsets $V^{**}_j \subseteq V^*_j$ of size $\ell$ such that all edges between $V^{**}_j$ and $V^{**}_{2^r+1}$ have colour $c_{j, 2^r + 1}$, and such that there is a perfect matching in $H_{m,r}[V^{**}_{2i-1},V^{**}_{2i}]$ for each $1 \le i \le 2^{r-1}$. 

In summary, we have an induced subgraph $H^*_{m,r}$ of $H_{m,r}$ on $\cup _{j=1} ^{2^r+1} V^{**}_j$ with $|V_j ^{**}|\geq \ell$, a monochromatic perfect matching of colour $c_{2i-1,2i}$ in $H^*_{m,r}[V^{**}_{2i-1},V^{**}_{2i}]$ for all $1\leq i \leq 2^{r -1}$, and, for all other pairs $1 \le i<j\leq 2^{r}+1$, a monochromatic complete bipartite graph of colour $c_{i,j}$ between $V^{**}_i$ and $V^{**}_j$ in $H^*_{m,r}$.

Now consider an auxiliary copy of $K_{2^r+1}$ with vertex set $[2^{r}+1]$, colouring each edge $ij$ with colour $c_{i,j}$. By Fact~\ref{fact1} we find a monochromatic odd cycle $C_{\ell'}=x_1\dots x_{\ell '}x_1$ in $K_{2^r+1}$ where $2 \le \ell' \le 2^r + 1 \le \ell$.
It is easy to see that there is a homomorphism $\phi$ from $C_{\ell}$ to $C_{\ell'}$ so that for all but one of the edges $x_i x_{i+1}$, precisely one edge is mapped onto $x_i x_{i+1}$ by $\phi$.
We can thus ensure that all edges corresponding to parts with a perfect matching between them are mapped to exactly once, and it is then easy to lift the $C_{\ell'}$ to a monochromatic copy of $C_{\ell}$ in $H^*_{m,r}$, as desired.
\endproof

Returning to Theorem~\ref{thm:manycycles}, suppose that $d>1-2^{-r+1}$. Let $p=\omega (1/n^2)$, and let $m\geq m_0$ as in Lemma~\ref{l3}.
Let $G$ be a sufficiently large $n$-vertex graph of density at least $d$. Pairing up the parts of $H_{m,r}$ joined by a perfect matching gives a $(2^{r-1} + 1)$-partition of $V(H_{m,r})$ that shows $\rho_{2^{r-1} + 1}(H_{m,r}) \le 1/2$.  Theorem~\ref{thm:perturbedturan} then implies that w.h.p. $G_n \cup G(n,p)$ contains a copy of $H_{m,r}$. Thus, by Lemma~\ref{l3}, w.h.p. $G_n \cup G(n,p)$ is $(C_\ell,r)$-Ramsey. So indeed $p(n; r, C_\ell, d)= n^{-2}$ for all $1-2^{-r+1}< d \leq 1-2^{-r}$.

\smallskip
\subsubsection{The case when $1-2^{-r} <d\leq 1$.}

Suppose $d > 1 - 2^{-r}$, and $G$ is a sufficiently large graph of density $d$.  Since the graph $H_{m,r}$ from Lemma~\ref{l3} is $(2^{r}+1)$-partite, it follows from the Erd\H{o}s--Stone--Simonovits theorem~\cite{erdosstone} that $H_{m,r} \subseteq G$.  Hence, by Lemma~\ref{l3}, $G$ is $(C_{\ell},r)$-Ramsey, and thus $p(n; r, C_{\ell}, d) = 0$ for $d > 1 - 2^{-r}$, completing the proof of the theorem.

\subsection{Proof of Theorem~\ref{thm:cliquevscycle}}

In our final result, we establish the perturbed Ramsey thresholds for cliques versus odd cycles by showing $p(n;K_t, C_{\ell}, d) = n^{-2/(t-1)}$ for $t \ge 4$, odd $\ell \ge 5$ and $d \in (0,1/2]$.

\begin{proof}[Proof of Theorem~\ref{thm:cliquevscycle}]
First note that the lower bound follows from Observation~\ref{obs:trivial}(i).  Indeed, we can take $G$ to be the balanced complete bipartite $n$-vertex graph, which is $C_{\ell}$-free and of density at least $1/2$, and colour all its edges blue.  If $p =o( n^{-2/(t-1)})$, then with high probability $G(n,p)$ is $K_t$-free, and so we can colour the remaining edges of $G(n,p)$ red, obtaining an edge-colouring of $G \cup G(n,p)$ without any red $K_t$ or blue $C_{\ell}$.

\medskip

For the upper bound, let $p=\omega (n^{-2/(t-1)})$.  Apply Corollary~\ref{cor:regtuple} with $\delta := d, k := 2, r = 1$ and $\eps := (d/4)^{t + \ell}$ to the uncoloured graph $G$ of density $d$, obtaining an $\eps$-regular pair $(V_1, V_2)$ of density at least $d/2$, with $\card{V_1} = \card{V_2} = \eta n$ for some $\eta > 0$.  We shall show that $G \cup G(n,p)$ is $(K_t, C_{\ell})$-Ramsey provided $G(n,p)$ has the three properties given below, where $H$ is the vertex-disjoint union of $K_t$ and $C_{\ell}$, and $\alpha := (d/8)^{t + \ell}$:
\begin{itemize}
	\item[(a)] $G(n,p)[V_1]$ contains a copy of $H$ with at least $\alpha 2^{t + \ell} \card{V_2}$ common neighbours in $V_2$,
	\item[(b)] $G(n,p)$ is $\alpha \eta$-globally $(K_{t-1}, P_{\ell - 1})$-Ramsey, and
	\item[(c)] $G(n,p)$ is $\alpha \eta$-globally $(K_{t-2}, C_{\ell})$-Ramsey.
\end{itemize}

We first show that (a) holds with high probability.  By Lemma~\ref{lem:regpairs}(i), we know that at least half of all $(t + \ell)$-sets of vertices in $V_1$ have at least $(d/2 - \eps)^{t+\ell} \card{V_2}$ common neighbours in $V_2$, and $(d/2 - \eps)^{t +\ell} \ge \alpha 2^{t+\ell}$.  Let $\mc H$ be the collection of all possible copies of $H$ supported on such $(t + \ell)$-sets of $V_1$; it follows that $\card{\mc H} \ge \xi n^{t + \ell}$ for some constant $\xi = \xi(t,\ell,d) > 0$.  Furthermore, as $p = \omega (n^{-2/(t-1)})$, we have $\mu_1(H) = n^t p^{\binom{t}{2}} = \omega(1)$ (attained by taking $F = K_t$ in the definition of $\mu_1$).  Thus, by Theorem~\ref{thm:janson}, the probability that $G(n,p)[V_1]$ does not contain a suitable copy of $H$ is at most $\exp( - \xi \omega(1) / (2^{t + \ell + 1} (t + \ell)!)) = o(1)$.

For (b), we appeal to Theorem~\ref{thm:global1ramsey}.  Observe that $m_2(K_{t-1}) \ge m_2(P_{\ell - 1}) = 1$, $K_{t-1}$ is strictly balanced with respect to $m_2(\cdot, P_{\ell-1})$, and that $m_2(K_{t-1}, P_{\ell - 1}) = (t-1)/2$.  Thus there is some constant $C' = C'(\alpha \eta, K_{t-1}, P_{\ell - 1})$ such that for $p' \ge C' n^{-2/(t-1)}$, $G(n,p')$ satisfies (b) with high probability (so certainly our choice of $p$ satisfies (b) with high probability).

For (c), the case $t \ge 5$ can be handled with Theorem~\ref{thm:global1ramsey} just as above.  If $t = 4$, though, being $(K_{t-2}, C_{\ell})$-Ramsey is equivalent to containing $C_{\ell}$.  For this, we can apply Theorem~\ref{thm:janson} instead.  For every set $U \subset V(G(n,p))$ of size $\alpha \nu n$, let $\mc H_U$ be the possible copies of $C_{\ell}$ within $U$, observing that $\card{\mc H_U} = (\ell - 1)! \binom{\card{U}}{\ell} /2 = \xi' n^\ell$ for some appropriate constant $\xi' > 0$.  Moreover, as $p = \omega (n^{-2/3})$, we have $\mu_1(C_{\ell}) = n^{\ell} p^{\ell} = \omega ( n^{\ell / 3} ) \ge n^{5/3}$.  By Theorem~\ref{thm:janson}, the probability that $G(n,p)$ does not contain a copy of $C_{\ell}$ from $\mc H_U$ is at most $\exp( - \xi' n^{5 / 3} / (2^{\ell + 1} \ell!))$.  Taking a union bound over all possible choices of $U$, we see that with high probability, $G(n,p)$ will contain an $\ell$-cycle in each subset $U$, thus satisfying (c).

\medskip

To finish, let us see how these properties imply that $G \cup G(n,p)$ is $(K_t, C_{\ell})$-Ramsey.  Consider any $2$-colouring of $G \cup G(n,p)$.  By (a), we can find a copy $H_0$ of $H$ within $G(n,p)[V_1]$ with a set $W \subset V_2$ of at least $\alpha 2^{t + \ell} \card{V_2}$ common neighbours of $V(H_0)$.  For each $w \in W$, there are $2^{t + \ell}$ ways the edges between $w$ and $V(H_0)$ can be coloured, and so we can find a subset $U \subset W$ of $\alpha \card{V_2}$ 
common neighbours such that each vertex in $U$ has the same colour profile to $V(H_0)$ (i.e. the red neighbourhood in $V(H_0)$ of each $u \in U$ is the same; as is the blue neighbourhood).

First suppose there are $x, y \in V(H_0)$ such that the edges from $x$ to $U$ are all red while those from $y$ to $U$ are all blue.  By (b), we find  a red $K_{t-1}$ or a blue $P_{\ell - 1}$ in $G(n,p)[U]$.  Extending by either $x$ or $y$ respectively, we find either a red $K_t$ or a blue $C_{\ell}$ in $G \cup G(n,p)$, and are done.

Next, suppose all edges between $V(H_0)$ and $U$ are red.  Recall that $H_0$ contains $C_{\ell}$ as a subgraph.  If all edges of this $\ell$-cycle are blue, we are done, and hence there must be a red edge $\{x, y\}$.  By (c), $U$ must contain either a red $K_{t-2}$ or a blue $C_{\ell}$.  In the latter case, we are done, while in the former, extending the clique by $x$ and $y$ gives the desired red $K_t$.

Thus all edges between $V(H_0)$ and $U$ must be blue.  In this case, recall that $H_0$ contains $K_t$ as a subgraph.  If all its edges are red, then we are done, and hence there is a blue edge $\{u_0, u_1\}$.  Writing $\ell = 2k+1$, let $\{ u_2, u_3, \hdots, u_k \}$ be a set of $k-1$ other vertices from $V(H_0)$, and let $\{v_1, v_2, \hdots, v_k\}$ be an arbitrary set of $k$ vertices from $U$.  We then have a blue $\ell$-cycle $(u_0, u_1, v_1, u_2, v_2, \hdots, u_k, v_k, u_0)$ in $G \cup G(n,p)$, as required.
\end{proof}

\section{Concluding remarks and open problems}\label{conclude}

\begin{center}
``That's all there is, there isn't any more."
\end{center}
\begin{flushright}
--- Ethel Barrymore, \emph{Sunday} (1904)
\end{flushright}

In this paper we have determined how many random edges one must add to a dense graph to ensure w.h.p. the resulting graph is $(K_{t},K_{s})$-Ramsey for all $t \geq s\geq 5$; more precisely 
$p(n;K_t,K_s,d)=n^{-1/m_2 (K_t,K_{\lceil s/2 \rceil})}$ for all $0<d\leq 1/2$ (see Theorem~\ref{thm:bigcliques}). This complements work of Krivelevich,
Sudakov and Tetali~\cite{kst} who determined the corresponding threshold when $s=3$. 
This leaves only one open case for this range of densities; when  $s=4$. 
This is perhaps the most interesting open question resulting from our work. As mentioned previously, our work combined with Powierski's~\cite{power} settles this problem when $t = 4$.  The construction from~\cite{power} can be extended to the $(K_t, K_4)$-Ramsey question as well, and improves the lower bound from Proposition~\ref{prop:K4vsclique} for $t \in \{5,6\}$.  It would be of great interest to determine the correct threshold for $t \ge 5$.

Note Theorem~\ref{thm:bigcliques} does not consider all possible densities $d$;
indeed, it is still an open problem to determine how many random edges one must add to a graph $G$ of density $d$ to ensure it is w.h.p.  $(K_{t},K_{s})$-Ramsey if $d> 1-2/(s-1)$ and $s \leq t$. 
Notice though that if $d> 1-\frac{1}{r-1}$ where $r:=R(K_t,K_s)$, then by Tur\'an's theorem $G$ contains a copy of $K_r$ and thus is $(K_{t},K_{s})$-Ramsey.
Further, setting $r':= \lceil R(K_t,K_s)/2 \rceil$, Theorem~\ref{thm:perturbedturan} implies that if $d> 1- \frac{1}{r'-1}$ then  one only requires $p=\omega (n^{-2})$ to ensure $G \cup G(n,p)$  w.h.p. 
contains $K_r$ and thus is $(K_{t},K_{s})$-Ramsey.

We have completely  resolved the question of how many random edges one must add to a graph of fixed density to ensure w.h.p. the resulting graph is $(C_{k},C_{\ell})$-Ramsey for all $k, \ell \geq 3$ (see Theorem~\ref{thm:twocycles}).
Additionally we made some progress towards the analogous question for more colours via Theorem~\ref{thm:manycycles}; it would be interesting to close the one remaining gap in this statement (i.e. fully resolve the case when $1-2^{-r+1}< d \leq 1- 2^{-r+1}$).
Our work here is also related to a conjecture of Erd\H{o}s and Graham~\cite{eg} from 1973. Indeed,
recall that Theorem~\ref{thm:manycycles} only considers cycles $C_{\ell}$ that are sufficiently large compared to the number of colours $r$ (i.e. $\ell \geq 2^r+1$). 
This is because we apply Fact~\ref{fact1}: any $r$-colouring of $K_{2^{r}+1}$ yields a monochromatic odd cycle. In particular, this monochromatic cycle could have length  $2^r+1$.
Erd\H{o}s and Graham~\cite{eg} asked how large can the smallest monochromatic odd cycle in an $r$-colouring of $K_{2^{r}+1}$ be. Thus, getting non-trivial upper bounds on this question would yield a strengthening of 
Theorem~\ref{thm:manycycles}. We are not aware of such progress on this problem, though
 Day and Johnson~\cite{day} did prove the smallest such monochromatic cycle is unbounded  as $r$ grows,  thereby answering a question of Chung~\cite{chung}.

 Note that when the density $d$ of the graph $G$ is large, our upper bounds on $p(n;H_1,H_2,d)$ often come from the existence of a fixed-size subgraph that is $(H_1, H_2)$-Ramsey.  However, for small densities, the upper bounds use more global arguments, either using 
results on the Ramsey properties of the random graph,
 or finding large common neighbourhoods and then finding subgraphs there. It would be interesting to see to what extent this dichotomy extends to other cases of the Ramsey problem in randomly perturbed graphs.

Finally, there has been significant interest in Ramsey properties of random hypergraphs (see, for example,~\cite{conlongowers, frs, gnpsst}). It would be interesting to obtain analogues  of our results in the setting of randomly perturbed hypergraphs.

\section*{Acknowledgements}
We are grateful to Patrick Morris, whose advice was of great help in proving Claim~\ref{clm:viableforbid} from the appendix.
We are also grateful to the referee for a careful and helpful review.

\appendix

\section{Proofs of useful tools} \label{sec:app}

\begin{center}
``Imitation is the sincerest [form] of flattery."
\end{center}
\begin{flushright}
--- Charles Caleb Colton, \emph{Lacon, Or, Many Things in a Few Words: Addressed to Those who Think} (1824)
\end{flushright}

In this appendix we prove some of the results from Section~\ref{sec:tools} namely; Lemma~\ref{lem:multipartDRC}, a multipartite version of the dependent random choice lemma; Theorem~\ref{thm:janson}, which guarantees the existence of subgraphs within $G(n,p)$; Theorem~\ref{thm:global1ramsey}, which shows that when random graphs have certain Ramsey properties, they have them robustly and globally. These follow from either fairly standard arguments or minor modifications of existing proofs, but we include these results here for the sake of completeness.

\subsection{Proof of Lemma~\ref{lem:multipartDRC}} \label{sec:multipartDRCproof}

In Lemma~\ref{lem:multipartDRC}, copied below, we extend the basic lemma of dependent random choice from~\cite{fs}, showing that if a set of vertices $V_s$ is in $\eps$-regular pairs with other sets $V_1, V_2, \hdots, V_{s-1}$, then we can find a relatively large subset $U \subseteq V_s$ such that all small subsets of $U$ have many common neighbours in each of the other parts $V_i$.

\begingroup
\def\thetheorem{\ref{lem:multipartDRC}}
\begin{lemma}
Given $s \in \mathbb{N}$ and $\delta > 0$, suppose $0 < \eps \le \min\{ 1/(2s), \delta/2 \}$.  Let $V_1, V_2, \hdots, V_s$ be disjoint sets of vertices from a graph $G$, each of size $m$, such that for all $i \in [s-1]$, the pair $(V_i, V_s)$ is $\eps$-regular of density at least $\delta$.  For every $\beta > 0$ and $\ell \in \mathbb{N}$ there is some $\gamma = \gamma(s, \delta, \beta, \ell) > 0$ such that, if $m$ is sufficiently large, there is a subset $U_s \subseteq V_s$ of size at least $m^{1 - \beta}$ such that every set of $\ell$ vertices from $U_s$ has at least $\gamma m$ common neighbours in each of the sets $V_i$, $i \in [s-1]$.
\end{lemma}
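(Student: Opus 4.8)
The plan is to prove Lemma~\ref{lem:multipartDRC} by a multipartite incarnation of the dependent random choice technique from~\cite{fs}, where the random ``seed'' is drawn from all of $V_1,\dots,V_{s-1}$ simultaneously and its size is taken of logarithmic order in $m$. As a preliminary step I would pass to the ``good core'' $V_s^\ast := \{ v \in V_s : |N(v)\cap V_i| \ge (\delta - \eps)m \ge \tfrac{\delta}{2}m \text{ for every } i \in [s-1]\}$. For each fixed $i$, the set of $v \in V_s$ violating the degree condition has size at most $\eps m$: otherwise it would be a set $X \subseteq V_s$ with $|X| > \eps m$ and $d(X, V_i) < \delta - \eps$, contradicting the $\eps$-regularity (and density $\ge \delta$) of $(V_i, V_s)$. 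Hence $|V_s^\ast| \ge (1 - (s-1)\eps)m \ge m/2$, using $\eps \le 1/(2s)$.

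Now fix an integer parameter $t = t(m)$, to be chosen of size $\Theta(\log m)$, and pick independent uniformly random multisets $T_i \subseteq V_i$ of size $t$, one for each $i \in [s-1]$. Let $W := V_s \cap \bigcap_{i\in[s-1]} N(T_i)$ be the set of vertices of $V_s$ that are complete to $T_1 \cup \dots \cup T_{s-1}$. Two estimates drive the argument. On one hand, since $\mathbb{P}[v \in W] = \prod_{i\in[s-1]}(|N(v)\cap V_i|/m)^t$, restricting the sum to $V_s^\ast$ gives $\mathbb{E}|W| \ge |V_s^\ast|\,(\delta/2)^{t(s-1)} \ge \tfrac{m}{2}(\delta/2)^{t(s-1)}$. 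On the other hand, call an $\ell$-subset $S \subseteq V_s$ \emph{bad} if $|N(S)\cap V_i| < \gamma m$ for some $i \in [s-1]$; fixing a witnessing index $i$ for such an $S$, the event $S \subseteq W$ forces $T_i \subseteq N(S)\cap V_i$, so $\mathbb{P}[S \subseteq W] < \gamma^t$. Writing $Y$ for the number of bad $\ell$-subsets of $W$, we get $\mathbb{E}Y \le \binom{m}{\ell}\gamma^t \le m^\ell\gamma^t$, and therefore $\mathbb{E}[\,|W| - Y\,] \ge \tfrac{m}{2}(\delta/2)^{t(s-1)} - m^\ell\gamma^t$.

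The crux is choosing the constants so that this is comfortably positive. I would \emph{first} fix $\gamma = \gamma(s,\delta,\beta,\ell) > 0$ small enough that $\log(1/\gamma) > \tfrac{2(s-1)(\ell+\beta)\log(2/\delta)}{\beta}$, and \emph{then} choose an integer $t = t(m)$ in the range $\tfrac{(\ell - 1 + \beta/2)\log m}{\log(1/\gamma)} \lesssim t \lesssim \tfrac{\beta \log m}{2(s-1)\log(2/\delta)}$, which is non-empty once $m$ is large exactly because of the chosen lower bound on $\log(1/\gamma)$. For such $t$ one has simultaneously $(\delta/2)^{t(s-1)} \ge m^{-\beta/2}$ and $m^\ell\gamma^t \le \tfrac12 m^{1-\beta/2}$ --- the point being that $\gamma^t$ is a negative power of $m$ whose exponent is proportional to $\log(1/\gamma)$, and hence can be made to beat the fixed polynomial $m^\ell$. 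Consequently $\mathbb{E}[\,|W| - Y\,] \ge \tfrac12 m^{1-\beta/2} \ge m^{1-\beta}$ for $m$ large, so some outcome of $T_1,\dots,T_{s-1}$ yields sets $W$ and $Y$ with $|W| - Y \ge m^{1-\beta}$. Deleting one vertex from each bad $\ell$-subset of $W$ leaves a set $U_s \subseteq V_s$ with $|U_s| \ge m^{1-\beta}$, all of whose $\ell$-subsets have at least $\gamma m$ common neighbours in each $V_i$, $i \in [s-1]$.

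The main obstacle here is conceptual rather than computational: one has to realise that the seed must be drawn from \emph{every} part $V_1,\dots,V_{s-1}$ at once (so that a single sample controls all of the target common neighbourhoods), and that $t$ must be allowed to grow with $m$ --- with a constant $t$ the error term $m^\ell\gamma^t$ retains an irreducible factor of $m^{\ell-1}$ and the argument breaks down. Once these choices are made, the only genuine thing to check is that the resulting two constraints on $t$ are compatible, which is precisely the inequality used to define $\gamma$; the $\eps$-regularity hypotheses ($\eps \le 1/(2s)$ and $\eps \le \delta/2$) are used only to produce the core $V_s^\ast$.
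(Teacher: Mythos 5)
Your proposal is correct and takes essentially the same approach as the paper: sample random multisets $T_i$ of logarithmic size from each $V_i$, pass to the "good core" of $V_s$ via regularity, lower-bound $\mathbb{E}|W|$, upper-bound the expected number of bad $\ell$-sets by $m^\ell\gamma^t$, and delete one vertex per bad set. The paper makes the explicit choice $\gamma := (\delta/2)^{2(s-1)\ell/\beta}$ and $t := \lfloor \log(4m^{-\beta})/((s-1)\log(\delta/2))\rfloor$ rather than working with a range, but the calculation is the same. One tiny arithmetic slip: from $\frac{m}{2}(\delta/2)^{t(s-1)} \ge \frac12 m^{1-\beta/2}$ and $m^\ell\gamma^t \le \frac12 m^{1-\beta/2}$ you only get $\mathbb{E}[\,|W|-Y\,] \ge 0$, not $\ge \frac12 m^{1-\beta/2}$ as written; this is immediately repaired by taking $t$ strictly inside your interval (giving polynomial slack on both sides) or by sharpening the constants in the two displayed inequalities, so it does not affect the validity of the argument.
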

\addtocounter{theorem}{-1}
\endgroup

\begin{proof}
For $i \in [s-1]$, let $T_i \subset V_i$ be the subset of vertices obtained when selecting $t$ vertices uniformly and independently (with repetition), where $t$ is to be determined.  Let $W_s := \{ v \in V_s : \cup_i T_i \subseteq N(v) \}$ be those vertices in $V_s$ adjacent to all chosen vertices.  For fixed $v \in V_s$, we have
\[ \mathbb{P}(v \in W_s) = \prod_{i=1}^{s-1} \left( \frac{\card{N(v) \cap V_i}}{m} \right)^t. \]
By Lemma~\ref{lem:regpairs}(i), with $\ell = 1$, the number of vertices in $V_s$ with at most $(\delta - \eps)m$ neighbours in $V_i$ for some $i \in [s-1]$ is at most $(s-1)\eps m$.  Hence for at least $(1 - (s-1)\eps)m$ vertices in $V_s$ we have $\card{N(v) \cap V_i} > (\delta - \eps)m$ for all $i \in [s-1]$.  Thus $\mathbb{E}[\card{W_s}] \ge (1 - (s-1)\eps)m (\delta - \eps)^{(s-1)t} \ge (m/2) (\delta/2)^{(s-1)t}$.

We now set $\gamma := (\delta / 2)^{2(s-1) \ell / \beta}$.  For every $\ell$-set $S \subseteq W_s$ with fewer than $\gamma m$ common neighbours in some $V_i$, remove one vertex of $S$ from $W_s$, and let $U_s$ be the set of remaining vertices.  Clearly every $\ell$-subset of $U_s$ has the requisite number of common neighbours, and so we need only show that $U_s$ is sufficiently large.

Observe that if $S \subseteq W_s$, then we must have $T_i \subseteq N(S) \cap V_i$ for each $i \in [s-1]$.  Thus, if $S$ has fewer than $\gamma m$ common neighbours in some part $V_i$, we can bound $\mathbb{P}(S \subseteq W_s) \le \gamma^t$.  In expectation, the number of vertices that needs to be removed from $W_s$ is thus at most $\binom{m}{\ell} \gamma^t$.  This leaves us with
\[ \mathbb{E}[\card{U_s}] \ge \frac{m}{2} \left( \frac{\delta}{2} \right)^{(s-1)t} - \binom{m}{\ell} \gamma^t \ge \frac{m}{2} \left( \frac{\delta}{2} \right)^{(s-1)t} - m^{\ell} \gamma^t. \]

Thus, if we set $t := \floor{\log (4 m^{-\beta}) / ( (s-1) \log ( \delta/2) )}$, the first term will be at least $2 m^{1- \beta}$, while our choice of $\gamma$ ensures $m^{\ell} \gamma^t = o(1)$.  Hence we can find some set $U_s \subset V_s$ of size at least $m^{1- \beta}$ with the desired property.
\end{proof}

\subsection{Proof of Theorem~\ref{thm:janson}} \label{sec:jansonproof}

The next result is a standard, straightforward, and very useful application of the Janson inequality~\cite{jlr}.

\begingroup
\def\thetheorem{\ref{thm:janson}}
\begin{thm}
Let $H$ be a graph with $v \ge 2$ vertices and $e \ge 1$ edges.  Let $[n]$ be the vertex set of $G(n,p)$, and, for some $\xi > 0$, let $\mc H$ be a collection of $\xi n^v$ possible copies of $H$ supported on $[n]$.  The probability that $G(n,p)$ does not contain any copy of $H$ from $\mc H$ is at most $\exp(- \xi \mu_1 / ( 2^{v+1} v!))$, where $\mu_1=\mu_1 (H) := \min \{ n^{v(F)} p^{e(F)} : F \subseteq H, e(F) \ge 1 \}$.
\end{thm}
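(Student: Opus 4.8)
The plan is to run Janson's inequality in the standard way. First I would, for each $H' \in \mc H$, let $A_{H'}$ denote the increasing event that $H'$ appears in $G(n,p)$, and set $X := \sum_{H' \in \mc H} \mathbf 1[A_{H'}]$, so that $G(n,p)$ contains no copy of $H$ from $\mc H$ exactly when $X = 0$. Since each $H' \in \mc H$ has $e$ edges, $\mathbb P(A_{H'}) = p^e$, whence $\mu := \mathbb E[X] = \card{\mc H}\,p^e = \xi n^v p^e$. I would record here the elementary but important observation that taking $F = H$ in the definition of $\mu_1$ gives $\mu_1 \le n^v p^e$, so that $\mu \ge \xi \mu_1$; this will dispose of the ``sparse'' regime for free.

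The heart of the argument is controlling the dependency term $\bar\Delta := \sum \mathbb P(A_{H_1} \cap A_{H_2})$, where the sum is over ordered pairs of distinct $H_1, H_2 \in \mc H$ sharing at least one edge; for such a pair $\mathbb P(A_{H_1}\cap A_{H_2}) = p^{2e - \card{E(H_1)\cap E(H_2)}}$. I would bound $\bar\Delta$ by fixing $H_1$ and summing over the overlapping $H_2$, grouped according to the overlap edge set $E' := E(H_1)\cap E(H_2)$, a non-empty subset of $E(H_1)$. For a fixed such $E'$, spanning a set of $v(E')$ vertices, the number of copies $H_2$ of $H$ in $[n]$ with $E(H_2) \supseteq E'$ is at most $c_1 n^{v - v(E')}$ for a constant $c_1 = c_1(H)$ (embed the $v(E')$ spanned vertices into $V(H)$ consistently with $E'$ in a bounded number of ways, then map the remaining $v - v(E')$ vertices of $H$ into $[n]$). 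The key point is that the graph with edge set $E'$ is a subgraph of $H$ with at least one edge, so $n^{v(E')}p^{\card{E'}} \ge \mu_1$; hence $E'$ contributes at most $c_1 n^{v - v(E')}p^{2e - \card{E'}} = c_1\,(n^v p^{2e})/(n^{v(E')}p^{\card{E'}}) \le c_1\, n^v p^{2e}/\mu_1$. Summing over the (fewer than $2^e$, and after organising by the vertex set they span, at most $2^v$) overlap patterns and then over the $\card{\mc H} = \xi n^v$ choices of $H_1$ yields a bound of the shape $\bar\Delta \le c\cdot \xi (n^v p^e)^2/\mu_1$ for a constant $c = c(v)$; keeping careful track of the combinatorial factors one can arrange $c = 2^{v}v!$, which is what reproduces the constant in the statement.

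Finally I would feed these into Janson's inequality \cite[Theorem 2.14]{jlr} in the combined form $\mathbb P(X = 0) \le \exp\bigl(-\mu^2/(2\max\{\mu,\bar\Delta\})\bigr)$ --- using the basic bound $e^{-\mu + \bar\Delta/2}$ when $\bar\Delta \le \mu$ and the generalised bound $e^{-\mu^2/(2\bar\Delta)}$ when $\bar\Delta \ge \mu$. If $\bar\Delta \le \mu$ the exponent is at least $\mu/2 \ge \xi\mu_1/2 \ge \xi\mu_1/(2^{v+1}v!)$; if $\bar\Delta > \mu$, then with $\mu = \xi n^v p^e$ and $\bar\Delta \le c\,\xi(n^v p^e)^2/\mu_1$ one gets $\mu^2/(2\bar\Delta) \ge \xi\mu_1/(2c) = \xi\mu_1/(2^{v+1}v!)$. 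Either way $\mathbb P(X=0) \le \exp(-\xi\mu_1/(2^{v+1}v!))$. The main obstacle is the middle step: extracting a clean explicit constant in $\bar\Delta \le c\,\xi(n^vp^e)^2/\mu_1$, i.e.\ verifying that the union bound over all overlap patterns does not blow the constant up and that $\mu_1$ genuinely lower-bounds $n^{v(E')}p^{\card{E'}}$ for every overlap $E'$; the rest --- computing $\mu$, the inequality $\mu \ge \xi\mu_1$, and the two-regime invocation of Janson --- is routine.
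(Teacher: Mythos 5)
Your proposal follows exactly the strategy of the paper's proof (and of its auxiliary Lemma~\ref{lem:covariance}): compute $\mu = \card{\mc H}p^e = \xi n^v p^e$, observe $\mu \ge \xi\mu_1$ by taking $F = H$ in the definition of $\mu_1$, bound the pairwise-overlap sum $\bar\Delta$ using $\mu_1$ (or $\mu_0$) as a lower bound on $n^{v(F)}p^{e(F)}$ over the overlap subgraphs, and then invoke the two regimes of Janson's inequality. This is the right argument.

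The one place where your bookkeeping does not quite go through as written is the claim that grouping by the \emph{overlap edge set} $E' = E(H_1)\cap E(H_2)$ and summing produces the constant $2^v v!$: there are up to $2^e-1$ non-empty edge subsets $E'$ of a copy of $H$, so this route naturally gives a factor of order $2^e v!$, which is larger than $2^v v!$ whenever $e > v$ (and then your final exponent would be $\xi\mu_1/(2^{e+1}v!)$, weaker than the statement). Your parenthetical remark about ``organising by the vertex set they span'' points toward the fix, but as phrased it does not reduce $2^e$ to $2^v$ --- many distinct $E'$ can share the same vertex span, and you cannot collapse their contributions term by term. The clean parametrisation, which is what the paper uses, is to sum not over overlap edge sets $E'$ but over pairs $(S,\varphi)$, where $S = V(H_1)\cap V(H_2) \subseteq V(H_1)$ (at most $2^v$ choices) and $\varphi$ is the injective assignment of the shared vertices into $V(H'')$ (at most $v!$ choices). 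Given $(S,\varphi)$ the overlap graph $F$ is determined, the remaining vertices of $H''$ contribute $n^{v-v(F)}$ choices, and $n^{v(F)}p^{e(F)} \ge \mu_1$ whenever $e(F)\ge 1$, yielding precisely $\Delta \le 2^v v!\,\card{\mc H}\,n^v p^{2e}/\mu_1$. With that substitution your two-regime application of Janson then reproduces the stated bound exactly. So: same approach, correct in spirit, but you should replace the $E'$-based count with the $(S,\varphi)$-based count to honestly obtain the constant $2^{v+1}v!$.
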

\addtocounter{theorem}{-1}
\endgroup

We separate a calculation that will be needed in later proofs as well.  The quantity $\Delta$ bounds the expected number of pairs of distinct edge-intersecting copies of $H$, with the first copy from $\mc H$, that appear in $G(n,p)$, a quantity closely linked to the variance of the number of copies of $H$ from $\mc H$ in $G(n,p)$.

\begin{lemma} \label{lem:covariance}
Let $H$ be a graph with $v$ vertices and $e$ edges, let $\mc K_n(H)$ be the collection of all copies of $H$ in the complete graph $K_n$, and let $\mc H \subseteq \mc K_n(H)$ be a subcollection thereof.  We then have
\[ \Delta := \sum_{\substack{H' \in \mc H, H'' \in \mc K_n(H): \\ H' \neq H'', e(H' \cap H'') \neq 0}} \mathbb{P} \left( H' \cup H'' \subseteq G(n,p) \right) \le 2^v v! \card{\mc H} n^{v} p^{2e} / \mu_0, \]
where $\mu_0 = \mu_0(H) := \min \{ n^{v(F)} p^{e(F)} : F \subsetneq H, e(F) \ge 1 \}$.
\end{lemma}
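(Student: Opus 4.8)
The plan is to bound $\Delta$ by splitting the sum according to the vertex overlap of the two copies, which is what produces the factor $2^v$. Write $\Delta = \sum_{H' \in \mc H} \Delta_{H'}$, where $\Delta_{H'}$ collects the terms of $\Delta$ with first copy $H'$ fixed. Since every copy of $H$ has exactly $v$ vertices and $e$ edges, for each $H'' \in \mc K_n(H)$ we have $\mathbb{P}(H' \cup H'' \subseteq G(n,p)) = p^{e(H') + e(H'') - e(H' \cap H'')} = p^{2e - e(F)}$, where throughout $F := H' \cap H''$ is the graph with vertex set $V(H') \cap V(H'')$ and edge set $E(H') \cap E(H'')$. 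The first (and essentially the only) point of substance is that whenever $H' \neq H''$ the graph $F$ is a \emph{proper} subgraph of $H$: if $F$ had $v$ vertices then $V(H') = V(H'') = V(F)$, and if it also had $e$ edges then $E(H') = E(H'') = E(F)$, forcing $H' = H''$. Hence $F \subsetneq H$ and, if moreover $e(F) \geq 1$, then $n^{v(F)} p^{e(F)} \geq \mu_0$.

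Next I would fix $H' \in \mc H$ and partition the admissible copies $H''$ (those with $H'' \neq H'$ and $e(F) \geq 1$) according to $S := V(H') \cap V(H'')$, which is a subset of the $v$-element set $V(H')$; there are at most $2^v$ possibilities. For a fixed $S$, any copy $H''$ with $V(H'') \cap V(H') = S$ is obtained by mapping some $|S|$-subset of $V(H)$ bijectively onto $S$ and the remaining $v - |S|$ vertices injectively into $[n] \setminus V(H')$, so the number of such copies is at most $\binom{v}{|S|} |S|!\, n^{v-|S|} \leq v!\, n^{v-|S|}$. The aim is then to show that, for each $S$, the total contribution to $\Delta_{H'}$ is at most $v!\, n^v p^{2e} / \mu_0$; summing over the $\leq 2^v$ choices of $S$ gives $\Delta_{H'} \leq 2^v v!\, n^v p^{2e}/\mu_0$, and then summing over the $|\mc H|$ choices of $H'$ yields the claimed bound.

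It remains to bound the contribution of a fixed $S$, and here I would distinguish two cases. If $S \subsetneq V(H')$, then $H'[S]$ is an induced subgraph of $H$ on fewer than $v$ vertices, hence $H'[S] \subsetneq H$; every edge shared by $H'$ and $H''$ lies in $H'[S]$, so $e(F) \leq e(H'[S])$, and we may assume $e(H'[S]) \geq 1$ (otherwise this $S$ contributes nothing). Thus each of the at most $v!\, n^{v-|S|}$ copies contributes at most $p^{2e - e(H'[S])}$, for a total of at most $v!\, n^{v-|S|} p^{2e - e(H'[S])} = v!\, n^v p^{2e} / \bigl( n^{|S|} p^{e(H'[S])} \bigr) \leq v!\, n^v p^{2e}/\mu_0$. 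If instead $S = V(H')$, the factor $n^{v-|S|}$ equals $1$, so there are at most $v!$ admissible copies $H''$; for each, $H'' \neq H'$ forces $F \subsetneq H$ by the first paragraph, whence $\mathbb{P}(H' \cup H'' \subseteq G(n,p)) = p^{2e - e(F)} = p^{2e} p^{-e(F)} \leq p^{2e} n^{v(F)}/\mu_0 \leq p^{2e} n^v / \mu_0$, and again the total over this $S$ is at most $v!\, n^v p^{2e}/\mu_0$. The delicate step is precisely the case $S = V(H')$: the ``lossy'' estimate $e(F) \leq e(H'[S])$ used for the other $S$ is worthless here because $H'[S] = H$ is not a proper subgraph of $H$, so its parameter need not be at least $\mu_0$; one must instead exploit that $F$ itself is necessarily proper and that, when $S = V(H')$, the number of copies $H''$ is already small enough that no further refinement of the count is needed. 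Everything else is routine counting.
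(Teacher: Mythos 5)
Your proof is correct and follows essentially the same approach as the paper: fix $H'$, partition the second copies $H''$ by their vertex-overlap $S$ with $H'$, bound the number of copies with a given overlap by $v!\,n^{v-|S|}$, and compare the resulting term to $\mu_0$ using the fact that the intersection is a proper subgraph of $H$ with at least one edge. The only cosmetic difference is that the paper folds the $|S|=v$ case into the general count by also summing over the assignment $\varphi$, pinning down $F$ exactly, whereas you bound via $H'[S]$ for $|S|<v$ and handle $S=V(H')$ separately; both yield the same $2^v v!$ factor.
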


\begin{proof}
We write $H' \sim H''$ if $H', H''$ are distinct copies of $H$ that share at least an edge, with $H' \in \mc H$ and $H'' \in \mc K_n(H)$.  Thus $\Delta$ denotes the expected number of (ordered) pairs $H' \sim H''$ with both $H'$ and $H''$ appearing in $G(n,p)$, which we want to bound from above.

Given $H' \in \mc H$, let us estimate its contribution to $\Delta$.  There are fewer than $2^v$ ways to choose a subset $S \subseteq V(H')$ of the vertices of $H'$ that are shared with $H''$, and at most $v!$ assignments $\varphi$ of these vertices to the vertices of $H''$.  This then determines the subgraph $F = F(S, \varphi) \subset H$ in which $H'$ and $H''$ intersect.  Let $\mc F := \{ (S, \varphi) : e(F(S,\varphi)) \ge 1 \}$ denote the set of viable pairs $(S, \varphi)$.  For each such pair, letting $F = F(S,\varphi)$, there are at most $n^{v - v(F)}$ choices for the remaining vertices of $H''$, and each such copy $H''$ introduces a further $e - e(F)$ edges.  Hence the probability that both $H'$ and $H''$ appear in $G(n,p)$ is $p^{2e - e(F)}$.  This gives
\[ \Delta \le \sum_{H' \in \mc H} \sum_{(S,\varphi) \in \mc F} n^{v - v(F)} p^{2e - e(F)} = n^v p^{2e} \sum_{H' \in \mc H} \sum_{(S, \varphi) \in \mc F} n^{- v(F)} p^{- e(F)}. \]

The summand is, by definition of $\mu_0$, at most $\mu_0^{-1}$.  Since there are at most $2^v v!$ choices of $(S, \varphi)$ in the inner sum, and $\card{\mc H}$ choices of $H'$ in the outer sum, we get the desired bound of 
\[ \Delta \le 2^v v! \card{\mc H} n^v p^{2e} / \mu_0. \qedhere \]
\end{proof}

Theorem~\ref{thm:janson} follows almost immediately from the lemma.

\begin{proof}[Proof of Theorem~\ref{thm:janson}]
Let $X_{\mc H}$ be the random variable counting the number of copies of $H$ from $\mc H$ that appear in $G(n,p)$, so that we seek to bound $\mathbb{P}(X_{\mc H} = 0)$ from above.  We clearly have $\mu := \mathbb{E}[X_{\mc H}] = \card{\mc H} p^e = \xi n^v p^e$.  Thus, letting $\Delta$ and $\mu_0$ be as in Lemma~\ref{lem:covariance}, we have $\Delta \le 2^v v! \mu^2 / (\xi \mu_0)$.  Furthermore, we have $\mu_1 = \min \{ \mu / \xi, \mu_0 \}$.

By the Janson inequality (see~\cite[Theorem 8.1.1]{as}), if $\Delta \le \mu$, then
\[ \mathbb{P}(X_{\mc H} = 0) \le \exp( - \mu / 2) \le \exp( - \xi \mu_1 / (2^{v+1} v!) ). \]
Otherwise, when $\Delta > \mu$, we can instead apply the extended Janson inequality~\cite[Theorem 8.1.2]{as}:
\[ \mathbb{P}(X_{\mc H} = 0) \le \exp( - \mu^2 / (2 \Delta)) \le \exp( - \xi \mu_0 / (2^{v+1} v!)) \le \exp ( - \xi \mu_1 / (2^{v+1} v!) ). \qedhere \]
\end{proof}

\subsection{Proof of Theorem~\ref{thm:global1ramsey}} \label{sec:global1ramseyproof}

We finally justify our strengthening of the $1$-statement of the asymmetric random Ramsey problem, showing that above the threshold, $G(n,p)$ is both robustly and globally $(H_1,H_2)$-Ramsey.  We recall the precise statement below.

\begingroup
\def\thetheorem{\ref{thm:global1ramsey}}
\begin{thm}[\cite{gnpsst,hst}]
Let $H_1$ and $H_2$ be graphs such that $m_2(H_1) \ge m_2(H_2) \ge 1$, and 
\begin{itemize}
	\item[(a)] $m_2(H_1) = m_2(H_2)$, or 
	\item[(b)] $H_1$ is strictly balanced with respect to $m_2(\cdot, H_2)$.
\end{itemize}

The random graph $G(n,p)$ then has the following Ramsey properties:
\begin{itemize}
	\item[(i)] There are constants $\gamma = \gamma(H_1, H_2) > 0$ and $C_1 = C_1(H_1, H_2)$ such that if $p \ge C_1 n^{-1/m_2(H_1, H_2)}$ and, for $i \in [2]$, $\mc F_i \subseteq \binom{[n]}{v(H_i)}$ is a collection of at most $\gamma n^{v(H_i)}$ forbidden subsets, then $G(n,p)$ is with high probability robustly $(H_1, H_2)$-Ramsey with respect to $(\mc F_1, \mc F_2)$.
	\item[(ii)] For every $\mu > 0$ there is a constant $C_2 = C_2(H_1, H_2, \mu)$ such that if $p \ge C_2 n^{-1/m_2(H_1, H_2)}$, then $G(n,p)$ is with high probability $\mu$-globally $(H_1, H_2)$-Ramsey.
	\item[(iii)] If we further have $m_2(H_2) > 1$, then there are constants $\beta_0 = \beta_0(H_1, H_2) > 0$ and $C_3 = C_3(H_1, H_2)$ such that if $0 \le \beta \le \beta_0$ and $p \ge C_3 n^{-(1-\beta)/m_2(H_1, H_2)}$, then with high probability $G(n,p)$ is $n^{-\beta}$-globally $(H_1, H_2)$-Ramsey.
\end{itemize}
\end{thm}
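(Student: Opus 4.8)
The plan is to follow the container-based proofs of the $1$-statement of the Kohayakawa--Kreuter conjecture due to Gugelmann et al.~\cite{gnpsst} and Hancock, Staden and Treglown~\cite{hst}, and to check that the quantitative estimates implicit there survive the three strengthenings. Recall the shape of those arguments. One applies the hypergraph container method to the hypergraph $\mc G_n$ with vertex set $E(K_n) \times [2]$ whose edges are the sets $\{(e,i) : e \in E(F)\}$ over all $i \in [2]$ and all copies $F$ of $H_i$ in $K_n$; its independent sets are the (partial) $(H_1,H_2)$-free colourings of subgraphs of $K_n$. Fed with the balanced supersaturation estimates for cliques and cycles --- precisely the regime isolated by hypotheses (a) and (b) --- this produces a family $\mc C_n$ of \emph{containers}, each a pair $(D_1, D_2)$ of graphs on $[n]$, such that: (1) the two colour classes of every $(H_1, H_2)$-free colouring of every $G \subseteq K_n$ are contained in $D_1$ and $D_2$ respectively, for some $(D_1,D_2) \in \mc C_n$; (2) each container is determined by a ``fingerprint'' of $O(n^{2 - 1/m_2(H_1,H_2)})$ edges, so the number of containers whose fingerprint appears in $G(n,p)$ is with high probability at most $\exp(O(n^{2-1/m_2(H_1,H_2)}))$ (using $e(G(n,p)) = \Theta(n^{2-1/m_2(H_1,H_2)})$); and (3) each container spans few edges of $\mc G_n$, so $D_i$ has at most $\delta n^{v(H_i)}$ copies of $H_i$ and hence, by the removal lemma for cliques (resp.\ its analogue for cycles), $D_i \subseteq F_i \cup R_i$ with $F_i$ being $H_i$-free and $e(R_i) = o(n^2)$. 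Since $F_1$ is $H_1$-free and $F_2$ is $H_2$-free, $F_1 \cup F_2$ cannot contain $K_{R(H_1,H_2)}$, so by Tur\'an's theorem it --- and hence $D_1 \cup D_2$, bar the $o(n^2)$ edges of $R_1 \cup R_2$ --- omits a set $M$ of $\Omega(n^2)$ edges of $K_n$. If $G(n,p)$ is not $(H_1,H_2)$-Ramsey, then $G(n,p) \subseteq D_1 \cup D_2$ for some container, so $G(n,p)$ misses all but $o(n^2)$ edges of the corresponding $M$; this has probability at most $(1-p)^{\Omega(n^2)} = \exp(-\Omega(n^2 p)) = \exp(-\Omega(n^{2-1/m_2(H_1,H_2)}))$, which for $p \ge C_1 n^{-1/m_2(H_1,H_2)}$ with $C_1$ large beats the union bound over the relevant containers.

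For part (i) one modifies the construction slightly: build the container hypergraph using only those copies of $H_i$ whose vertex set lies outside $\mc F_i$, so that its independent sets are exactly the colourings with no monochromatic copy of $H_i$ in colour $i$ on a good vertex set. Since $\card{\mc F_i} \le \gamma n^{v(H_i)}$ with $\gamma$ a small constant, only an $O(\gamma)$-fraction of the configurations is removed, so the balanced supersaturation estimates underlying the container construction are unaffected; the resulting containers still satisfy $D_i \subseteq F_i \cup R_i$ with $F_i$ being $H_i$-free and $e(R_i) = o(n^2)$, and the Ramsey-plus-$M$ argument above goes through verbatim, now producing a monochromatic copy on a good set. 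For the few configurations (e.g.\ $(K_t,K_2)$, outside the theorem) where supersaturation for the pair degenerates, the relevant copy is instead found directly via Theorem~\ref{thm:janson}; in general, the exponentially small per-container failure probability needed to run a union bound is exactly what Theorem~\ref{thm:janson} (via Lemma~\ref{lem:covariance}) supplies.

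Parts (ii) and (iii) follow by running the whole argument at the reduced scale $m := \card{U}$, using that $G(n,p)[U]$ is distributed as $G(m,p)$. If $p \ge C n^{-1/m_2(H_1,H_2)}$ and $\card{U} \ge \mu n$, then $p \ge (C \mu^{-1/m_2(H_1,H_2)})\, m^{-1/m_2(H_1,H_2)}$, so for $C = C_2(H_1,H_2,\mu)$ large the $1$-statement applies to $G(m,p)$ and fails with probability $\exp(-\Omega(m^{2-1/m_2(H_1,H_2)})) = \exp(-\omega(n))$ (as $m \ge \mu n$ and $2-1/m_2(H_1,H_2) \ge 1$); a union bound over the at most $2^n$ subsets $U$ yields (ii). For (iii), take $\card{U} = m = n^{1-\beta}$ and $p \ge C_3 m^{-1/m_2(H_1,H_2)}$; since $m_2(H_2) > 1$ gives $2 - 1/m_2(H_1,H_2) = 1 + \theta$ with $\theta = \theta(H_1,H_2) > 0$, the failure probability per $U$ is $\exp(-\Omega(m^{1+\theta})) = \exp(-\Omega(n^{(1-\beta)(1+\theta)}))$, which beats the $\binom{n}{m} \le \exp(O(m\log n))$ choices of $U$ once $\beta \le \beta_0(H_1,H_2)$ is small enough that $n^{(1-\beta)(1+\theta)}$ dominates $n^{1-\beta}\log n$.

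The main obstacle is quantitative rather than conceptual: neither~\cite{gnpsst} nor~\cite{hst} records the container count or the exponentially small per-container failure probability in the explicit form used above, and --- most delicately --- one needs a \emph{robust} supersaturation statement, namely that $G(n,p)$ restricted to the complement of a near-$H_1$-free graph spans $\Omega(n^{v(H_2)} p^{e(H_2)})$ copies of $H_2$ avoiding any fixed small family of vertex sets, which must be extracted from the balanced-supersaturation machinery in those papers. Re-running the container algorithm together with the applications of Janson's inequality with constants tracked, using the balanced supersaturation counts for cliques and cycles, is where essentially all the effort goes; given those bounds, the deductions of (i)--(iii) are routine.
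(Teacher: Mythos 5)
Your outline for case (a) is conceptually sound, and parts (ii) and (iii) are essentially what the paper does (a union bound over all $\mu n$- or $n^{1-\beta}$-sized vertex subsets $U$, using $G(n,p)[U]\sim G(|U|,p)$ and the exponential per-container failure probability). You also take a minor detour in deducing the $\Omega(n^2)$ missing edges --- via the removal lemma plus Tur\'an on $F_1\cup F_2$ rather than the supersaturated Ramsey proposition the paper uses --- but both give the same conclusion. The modification you suggest for part (i), pruning the forbidden copies from the container hypergraph, differs from the paper's (which keeps the containers unchanged and separately shows via Chebyshev that few forbidden copies of $H_i$ actually appear in $G(n,p)$), but either should work.

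The genuine gap is in case (b). You assert that the joint container family has fingerprints of size $O(n^{2-1/m_2(H_1,H_2)})$, but this is false when $m_2(H_1)>m_2(H_2)$: the co-degree conditions for the $H_1$-hypergraph force the container parameter $\tau\gtrsim n^{-1/m_2(H_1)}$, which is \emph{larger} than $p\asymp n^{-1/m_2(H_1,H_2)}$, so the $H_1$-side fingerprints have size $\Theta(n^{2-1/m_2(H_1)})\gg e(G(n,p))$ and the union bound over containers collapses. There is no joint hypergraph at this scale; that is precisely why the asymmetric 1-statement was hard and why the hypotheses split into (a) and (b). The fix --- and what the paper's appendix (and~\cite{gnpsst}) actually does --- is to abandon containers for $H_1$ entirely. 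One shows the copies of $H_1$ in $G(n,p)$ are almost all pairwise edge-disjoint (Claim~\ref{clm:fewnoniso}), reduces to colourings where each isolated copy of $H_1$ contributes exactly one blue edge, and observes that each blue edge is then ``charged'' with a full copy of $H_1$, giving an effective blue-edge probability $\rho\sim n^{-1/m_2(H_2)}$ (Claim~\ref{clm:viableprob}). Only then does one apply containers, and only for $H_2$, whose fingerprint size $O(n^{2-1/m_2(H_2)})$ now matches $\rho\binom{n}{2}$; the two events are combined via FKG. This amplification step is the heart of case (b) and is absent from your proposal.
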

\addtocounter{theorem}{-1}
\endgroup

\subsubsection{Containers for nearly-$H$-free graphs and Ramsey supersaturation}

As stated earlier, we shall prove this result by retracing the proofs of Hancock, Staden and Treglown~\cite{hst} and of Gugelmann, Nenadov, Person, \v{S}kori\'c, Steger and Thomas~\cite{gnpsst}, making minor modifications along the way.  One commonality between the two proofs is the use of the hypergraph containers theorems of Balogh, Morris and Samotij~\cite{bms} and of Saxton and Thomason~\cite{st}.

Indeed, if $G(n,p)$ is not $(H_1, H_2)$-Ramsey, then its edges can be partition into a red $H_1$-free subgraph and a blue $H_2$-free subgraph.  The aforementioned containers theorems show there are only a small number of ``containers" that these $H_i$-free subgraphs must belong to, and one may use the properties of these containers to show it is very unlikely that $G(n,p)$ admits such a partition.

However, in the robust Ramsey setting, it is no longer true that the red subgraph must be $H_1$-free or that the blue subgraph must be $H_2$-free, as they may contain some forbidden copies of these subgraphs.  Fortunately, the containers do not only capture $H_i$-free graphs, but also graphs with relatively few copies of $H_i$.  To make this precise, we use the formulation of Saxton and Thomason~\cite{st}, for which we require the following definition concerning $r$-uniform hypergraphs (or $r$-graphs).

\begin{definition}[Co-degree function]
Let $\Gamma$ be a vertex-transitive $d$-regular $r$-graph, and let $\tau > 0$.  Given a set $S$ of at most $r$ vertices, define $d(S)$ to be the number of edges of $\Gamma$ containing $S$.  For each $2 \le j \le r$, we define $\delta_j$ by the equation
\[ \delta_j \tau^{j-1} d = \max_{S \in \binom{V(\Gamma)}{j}} d(S). \]
The \emph{co-degree function} $\delta(\Gamma, \tau)$ is then defined by
\[ \delta(\Gamma, \tau) := 2^{\binom{r}{2} - 1} \sum_{j=2}^r 2^{-\binom{j-1}{2}} \delta_j. \]
\end{definition}
Note that the notion of co-degree can be defined more generally (i.e. not just for vertex-transitive $r$-graphs); we only state the definition in this form to simplify things a little.
In the case of vertex-transitive graphs, Corollary 3.6 in~\cite{st} reads as below.

\begin{corollary}[Saxton--Thomason~\cite{st}] \label{cor:containers}
Let $\Gamma$ be a  vertex-transitive $r$-graph on vertex set $[N]$.  Let $0 < \eps, \tau < 1/2$.  Suppose that $\tau$ satisfies $\delta(\Gamma,\tau) \le \eps / 12r!$.  Then there exists a constant $c = c(r)$ and a function $\psi: \mc P([N])^q \rightarrow \mc P([N])$, where $q \le c \log (1 / \eps)$, with the following properties.  Let $\mc T: = \{ (T^{(1)}, \hdots, T^{(q)}) \in \mc P([N])^q : \card{T^{(j)}} \le c \tau N, j \in [q] \}$, and let $\mc C := \{ \psi(T) : T \in \mc T \}$.  Then
\begin{itemize}
	\item[(a)] for every set $I \subset [N]$ for which $e(\Gamma[I]) \le 24 \eps r! r \tau^r e(\Gamma)$, there exists $T = (T^{(1)}, \hdots, T^{(q)}) \in \mc T \cap \mc P(I)^q$ with $I \subset \psi(T) \in \mc C$,
	\item[(b)] $e(\Gamma[\chi]) \le \eps e(\Gamma)$ for all $\chi \in \mc C$, and
	\item[(c)] $\log \card{\mc C} \le c \log (1 / \eps) N \tau \log(1/\tau)$.
\end{itemize}
\end{corollary}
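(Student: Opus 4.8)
This is Corollary 3.6 of Saxton and Thomason~\cite{st}, so for our purposes it suffices to quote it; but to sketch a self-contained argument, the plan is to run the \emph{hypergraph container algorithm}, of which this corollary is the ``iterated'' form.

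First I would establish a \emph{one-step container lemma}: for a vertex-transitive $r$-graph $\Gamma$ on $[N]$ and a parameter $\tau$ with $\delta(\Gamma,\tau) \le \eps/(12 r!)$, there is a deterministic procedure assigning to every set $I \subseteq [N]$ with $e(\Gamma[I])$ small relative to $e(\Gamma)$ a \emph{fingerprint} $S = S(I) \subseteq I$ with $\card{S} \le \tau N$, together with a \emph{container} $C = C(S) \supseteq I$ that depends on $I$ only through $S$ and satisfies $e(\Gamma[C]) \le (1 - 2^{-r}) e(\Gamma)$ (say). One builds $S$ greedily: fix an order on $[N]$, maintain a shrinking ``available'' set, and scan the vertices; a vertex $v \in I$ is placed into $S$ whenever its degree into the current available set exceeds an appropriate threshold (of order $\tau^{\,j-1} d$ at ``level $j$''), and such a placement lets one safely delete many vertices lying outside $I$ from the eventual container. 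The weighting $2^{-\binom{j-1}{2}}\delta_j$ in the definition of $\delta(\Gamma,\tau)$ is calibrated precisely so that summing the losses over levels $j = 2, \dots, r$ shows, via a supersaturation/averaging estimate, that the vertices deleted outside $I$ account for a constant fraction of $e(\Gamma)$ while $\card{S}$ never exceeds $\tau N$.

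Then I would iterate. Starting from $I_0 := I$ and the container $C_0 := [N]$, at step $t$ apply the one-step lemma inside the current container $C_{t-1}$ (using that $e(\Gamma[I])$ is small, which persists since $I \subseteq C_{t-1}$) to obtain a fingerprint $S_t \subseteq I$ and a sparser container $C_t \subseteq C_{t-1}$ with $e(\Gamma[C_t]) \le (1 - 2^{-r}) e(\Gamma[C_{t-1}])$; stop as soon as $e(\Gamma[C_t]) \le \eps e(\Gamma)$. Each step shrinks the edge count by a constant factor, so the process terminates after $q = O(\log(1/\eps))$ steps, giving the tuple length $q \le c \log(1/\eps)$. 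Define $\psi(S_1, \dots, S_q)$ to be the terminal container (padding the tuple with $\emptyset$ if fewer than $q$ steps occurred); then $I \subseteq \psi(S_1,\dots,S_q)$, each $\card{S_j} \le \tau N$ (widened to $c\tau N$ to absorb constants from the scan), and $e(\Gamma[\psi(S_1,\dots,S_q)]) \le \eps e(\Gamma)$, which are (a) and (b). For (c), every container is determined by its fingerprint tuple, so $\card{\mc C} \le \bigl(\sum_{i \le c\tau N} \binom{N}{i}\bigr)^q \le \exp\bigl(O(q \tau N \log(1/\tau))\bigr) = \exp\bigl(O(\log(1/\eps) N \tau \log(1/\tau))\bigr)$, as required.

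The main obstacle is the one-step lemma, and specifically making the bookkeeping tight enough that a \emph{single} scalar — the co-degree function $\delta(\Gamma,\tau)$ — simultaneously bounds the fingerprint size from above and bounds the container's edge count away from $e(\Gamma)$ from below. This forces one to spread the ``maximum co-degree'' budget carefully across the levels $j = 2, \dots, r$ and to run a delicate averaging argument at each level of the greedy selection; once that is in place, the iteration, the verification of (a)--(c), and the counting of containers are all routine.
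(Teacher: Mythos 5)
Your proposal is correct: the paper itself does not prove this result, but simply restates Corollary~3.6 of Saxton and Thomason~\cite{st} (specialised to vertex-transitive $r$-graphs) with a citation, which is exactly what you do. The additional sketch you give of the container algorithm --- a greedy one-step container lemma calibrated by the co-degree function $\delta(\Gamma,\tau)$, iterated $O(\log(1/\eps))$ times until the container is $\eps$-sparse, with containers counted via their fingerprint tuples --- is a faithful high-level account of the Saxton--Thomason argument, and goes beyond what the paper records.
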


Given a graph $H$, we wish to apply Corollary~\ref{cor:containers} to build containers for $n$-vertex graphs that are nearly $H$-free.  To that end, we set $N := \binom{n}{2}$ and $r := e(H)$, and take $\Gamma$ to be the $r$-graph whose vertices correspond to the edges of $K_n$ and whose hyperedges are the edge-sets of all copies of $H$ in $K_n$.  It is clear that $\Gamma$ is vertex-transitive, and, writing $v := v(H)$ and $e := e(H)$, each vertex in $\Gamma$ has degree
\[ d := \binom{n}{v} \frac{v!}{\card{\Aut(H)}} \frac{e}{\binom{n}{2}} = \Theta \left( n^{v-2} \right). \]

We next must bound $d(S)$ for sets $S \in \binom{V(\Gamma)}{j}$, where $2 \le j \le r$.  Clearly, if the set $S$ of edges spans $a$ vertices of $H$, then $d(S) = \Theta \left(n^{v-a} \right)$.  Consequently,
\[ \max_{S \in \binom{V(\Gamma)}{j}} d(S) = \Theta \left( n^{v- a_j} \right), \]
where $a_j$ is the minimum number of vertices that a set of $j$ edges of $H$ can span.  Hence
\[ \delta_j := \max_{S \in \binom{V(\Gamma)}{j}} d(S) / \left( \tau^{j-1} d \right) = \Theta \left( \tau^{1 - j} n^{2 - a_j} \right), \]
and so $\delta(\Gamma, \tau) = \Theta \left( \max_j \tau^{1-j} n^{2 - a_j} \right)$.  In order to apply Corollary~\ref{cor:containers}, we need the co-degree function to satisfy $\delta(\Gamma,\tau) \le \eps / 12r!$, where $\eps$ will be a small constant, and therefore we cannot let $\tau$ be too small.  Indeed, we need $\tau = \Omega \left( n^{- (a_j - 2) / (j-1) } \right)$ for all $j$, which is equivalent to saying that for every subgraph $F \subseteq H$, we need $\tau = \Omega \left( n^{ - (v(F) - 2)/(e(F) - 1)} \right)$.  Recalling the definition of the $2$-density $m_2(H)$, this implies that for every $0 < \eps < 1/2$, we may apply Corollary~\ref{cor:containers} with $\tau = O_{\eps} \left( n^{-1/m_2(H)} \right)$.  Substituting this choice of $\tau$ into Corollary~\ref{cor:containers} gives the following.

\begin{corollary} \label{cor:Hcontainers}
Given a graph $H$ and a constant $0 < \eps < 1/2$, there are positive constants $q = q(\eps, H), \beta = \beta(\eps, H)$ and $\kappa = \kappa(\eps, H)$ such that for every $n \in \mathbb{N}$, there is a function $\psi: \mc P(E(K_n))^q \rightarrow \mc P(E(K_n))$ with the following properties.  Let $\mc T := \{ (T^{(1)}, \hdots, T^{(q)}) \in \mc P(E(K_n))^q : \card{T^{(j)}} \le \beta n^{2 - 1/m_2(H)}, j \in [q] \}$, and let $\mc C := \{ \psi(T) : T \in \mc T \}$ be the family of corresponding subgraphs of $K_n$.  Then
\begin{itemize}
	\item[(i)] for every subgraph $F \subseteq K_n$ with at most $\kappa n^{v(H) - e(H)/m_2(H)}$ copies of $H$, there is some $T \in \mc T \cap \mc P(E(F))^q$ with $F \subseteq \psi(T) \in \mc C$, and
	\item[(ii)] each subgraph $\psi \in \mc C$ contains at most $\eps \binom{n}{v(H)}$ copies of $H$.
\end{itemize}
\end{corollary}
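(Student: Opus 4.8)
The groundwork for this corollary is already laid in the paragraphs preceding its statement, so the plan is simply to package that discussion into a clean deduction from the hypergraph container statement, Corollary~\ref{cor:containers}. Set $v := v(H)$, $e := e(H)$, $N := \binom{n}{2}$, and $r := e$, and let $\Gamma$ be the $r$-graph on vertex set $E(K_n)$ whose hyperedges are the edge sets of the copies of $H$ in $K_n$. First I would record that $\Gamma$ is vertex-transitive, $d$-regular with $d = \Theta(n^{v-2})$, and has $e(\Gamma) = \binom{n}{v} v!/\card{\Aut(H)} = \Theta(n^{v})$ hyperedges; these are immediate counting estimates.

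The one computation that requires care is the co-degree function. For a set $S$ of $j$ edges of $K_n$ spanning $a$ vertices one has $d(S) = \Theta(n^{v-a})$, so if $a_j$ denotes the least number of vertices that $j$ edges of $H$ can span, then $\delta_j = \Theta(\tau^{1-j} n^{2-a_j})$ and hence $\delta(\Gamma,\tau) = \Theta\!\left(\max_{2 \le j \le r} \tau^{1-j} n^{2-a_j}\right)$. To force $\delta(\Gamma,\tau) \le \eps'/(12 r!)$, where I fix the small constant $\eps' := \eps/v!$ (so that part (ii) comes out with the constant $\eps$), it suffices to take $\tau = c(\eps,H)\, n^{-1/m_2(H)}$ for a suitably small $c(\eps,H)>0$. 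The genuinely combinatorial point here is the identity $\min_{2 \le j \le r} (a_j-2)/(j-1) = 1/m_2(H)$: the subgraph of $H$ formed by $j$ edges spanning $a_j$ vertices gives ``$\ge$'', while taking $j := e(F)$ for the subgraph $F$ attaining the maximum in the definition of $m_2(H)$ gives ``$\le$''.

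With this $\tau$ I would feed $\Gamma$, $\eps'$ and $\tau$ into Corollary~\ref{cor:containers}, obtaining $q := c(r)\log(1/\eps') =: q(\eps,H)$, a function $\psi$, and the family $\mc C$. It remains to translate the three conclusions. The size bound $\card{T^{(j)}} \le c\tau N = \Theta(n^{2-1/m_2(H)})$ is absorbed into the constant $\beta = \beta(\eps,H)$, chosen large enough that every $q$-tuple produced by Corollary~\ref{cor:containers} lies in our $\mc T$; we may harmlessly redefine $\psi$ on the extra inputs of our $\mc T$ so that $\mc C$ acquires no further containers. For (i): a subgraph $F \subseteq K_n$ with few copies of $H$ corresponds to a vertex subset $I \subseteq V(\Gamma)$ with $e(\Gamma[I])$ small, and since $\tau^r e(\Gamma) = \Theta(n^{v - e/m_2(H)})$ (using $r = e$), the hypothesis $e(\Gamma[I]) \le 24\eps' r! \, r\, \tau^r e(\Gamma)$ of Corollary~\ref{cor:containers}(a) holds once $F$ has at most $\kappa n^{v(H) - e(H)/m_2(H)}$ copies of $H$ for a small enough $\kappa = \kappa(\eps,H)$; the conclusion then provides $T \in \mc T \cap \mc P(E(F))^q$ with $F \subseteq \psi(T) \in \mc C$. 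For (ii): Corollary~\ref{cor:containers}(b) gives $e(\Gamma[\chi]) \le \eps' e(\Gamma) \le \eps' v!\binom{n}{v(H)} = \eps\binom{n}{v(H)}$ for every $\chi \in \mc C$, which is exactly the stated bound on the number of copies of $H$.

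I do not expect a serious obstacle: beyond the co-degree identity pinning the threshold for $\tau$ at $n^{-1/m_2(H)}$, the proof is bookkeeping — fixing $\eps', \tau, q, \beta, \kappa$ in the right dependency order and matching polynomial orders. The only mild subtlety is the one flagged above, namely reconciling the size bound $\beta n^{2-1/m_2(H)}$ in our $\mc T$ with the bound $c\tau N$ from Corollary~\ref{cor:containers} while keeping both (i) and (ii); this is handled by taking $\beta$ generous and trimming $\psi$ on out-of-range inputs. (Note also that conclusion (c) of Corollary~\ref{cor:containers}, bounding $\log\card{\mc C}$, is not needed here — it will be used only when this corollary is later applied — so it can be ignored in the present argument.) Small values of $n$, where $\Gamma$ has no hyperedges, are trivially accommodated by letting $\psi$ be constant.
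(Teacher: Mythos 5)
Your proposal is correct and matches the paper's argument: both build the $e(H)$-uniform hypergraph $\Gamma$ on $E(K_n)$ whose hyperedges encode copies of $H$, verify vertex-transitivity and regularity, bound the co-degree function via the quantities $a_j$ to pin the admissible $\tau$ at $O(n^{-1/m_2(H)})$, and then read off the two conclusions from Corollary~\ref{cor:containers} with the appropriate choice of constants. Your explicit identity $\min_{2\le j\le r}(a_j-2)/(j-1) = 1/m_2(H)$ and the rescaling $\eps' = \eps/v!$ to convert $\eps' e(\Gamma)$ into $\eps\binom{n}{v(H)}$ are details the paper elides but which you correctly identify as the only points needing care.
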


Another result we shall need is a supersaturated version of Ramsey's Theorem --- when colouring the edges of a large enough clique, we do not just see a monochromatic copy of a given graph, but we see many such copies.  This follows from the classic Ramsey's Theorem by a standard averaging argument, the  details of which can be found in~\cite{gnpsst}.

\begin{prop}[Folklore] \label{prop:supersatramsey}
Given $r \ge 2$ and graphs $F_1, F_2, \hdots, F_r$, there are positive constants $n_0 = n_0(F_1, \hdots, F_r)$ and $\pi = \pi(F_1, \hdots, F_r)$ such that if $n \ge n_0$, then every $r$-colouring of $K_n$ has some $i \in [r]$ for which there are at least $\pi \binom{n}{v(F_i)}$ monochromatic copies of $F_i$ in colour $i$.
\end{prop}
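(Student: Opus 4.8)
The plan is to deduce this from the classical (finite) Ramsey theorem by a standard averaging (double-counting) argument. Let $R := R(F_1, \dots, F_r)$ be the smallest integer such that every $r$-colouring of $K_R$ contains a copy of $F_i$ in colour $i$ for some $i \in [r]$; such an $R$ exists by Ramsey's theorem, since each $F_i$ embeds into a sufficiently large clique and so $R \le R(K_{v(F_1)}, \dots, K_{v(F_r)}) < \infty$. I would set $n_0 := R$ and $\pi := \min_{i \in [r]} \big( r \binom{R}{v(F_i)} \big)^{-1} > 0$, fix $n \ge n_0$, and consider an arbitrary $r$-colouring of $K_n$.

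First I would examine all $\binom{n}{R}$ subsets $S \subseteq [n]$ with $\card{S} = R$. For each such $S$, the colouring restricted to $K_n[S] \cong K_R$ contains, by the choice of $R$, a monochromatic copy of $F_{i}$ in colour $i$ for some $i = i(S) \in [r]$; fix one such $i(S)$ for every $S$. By the pigeonhole principle there is a single colour $i^* \in [r]$ with $i(S) = i^*$ for at least $\binom{n}{R}/r$ of the sets $S$, each of which therefore contains a monochromatic copy of $F_{i^*}$ in colour $i^*$.

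Next I would double-count pairs $(S, \Phi)$, where $\Phi$ is a monochromatic copy of $F_{i^*}$ in colour $i^*$ with $V(\Phi) \subseteq S$ and $\card{S} = R$. On the one hand, the previous paragraph produces at least $\binom{n}{R}/r$ such pairs. On the other hand, writing $v := v(F_{i^*}) \le R$, each fixed monochromatic copy $\Phi$ occupies exactly $v$ vertices of $[n]$ and hence lies in exactly $\binom{n-v}{R-v}$ of the sets $S$. Consequently the number $N_{i^*}$ of monochromatic copies of $F_{i^*}$ in colour $i^*$ satisfies
\[ N_{i^*} \;\ge\; \frac{1}{r} \cdot \frac{\binom{n}{R}}{\binom{n-v}{R-v}} \;=\; \frac{1}{r \binom{R}{v}} \binom{n}{v} \;\ge\; \pi \binom{n}{v(F_{i^*})}, \]
where the middle equality is the elementary identity $\binom{n}{R}\big/\binom{n-v}{R-v} = \binom{n}{v}\big/\binom{R}{v}$. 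This is precisely the desired conclusion, with $i^*$ playing the role of the colour $i$. There is essentially no obstacle here; the only points needing a little care are the pigeonhole step that pins down a single colour $i^*$ valid for a positive fraction of the $R$-subsets, and the binomial identity above.
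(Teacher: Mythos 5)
Your proof is correct and is exactly the ``standard averaging argument'' that the paper alludes to and delegates to~\cite{gnpsst}: apply Ramsey's theorem to every $R$-subset, pin down one colour $i^*$ by pigeonhole, and double-count pairs of ($R$-set, monochromatic copy inside it) using the identity $\binom{n}{R}/\binom{n-v}{R-v} = \binom{n}{v}/\binom{R}{v}$. No gaps; the constants $n_0 = R$ and $\pi = \min_i \bigl(r\binom{R}{v(F_i)}\bigr)^{-1}$ work as claimed.
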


\subsubsection{Random graphs are robustly Ramsey: Case (a)}

We now prove the robust Ramsey result under the assumption (a); that is, $m_2(H_1) = m_2(H_2) \ge 1$.  Recall that we then have $m_2(H_1, H_2) = m_2(H_1)$ as well.  We wish to show that there are constants $C, \gamma > 0$ such that if $p \ge C n^{-1/m_2(H_1)}$ and, for $i \in [2]$, $\mc F_i$ is a collection of at most $\gamma n^{v(H_i)}$ forbidden copies of $H_i$, then $G(n,p)$ is with high probability robustly $(H_1, H_2)$-Ramsey with respect to $(\mc F_1, \mc F_2)$.  By monotonicity, we may assume $p = C n^{-1/m_2(H_1)}$ and, for each $i$, $\card{\mc F_i} = \gamma n^{v(H_i)}$.

Recall that the only copies of $H_i$ that may be monochromatic in their respective colours are the forbidden ones from $\mc F_i$.  We first claim that there are few forbidden copies of each $H_i$ in $G(n,p)$.

\begin{claim} \label{clm:fewforbidden}
If $p = C n^{-1/m_2(H_i)}$ for some constant $C\geq 1$, then for each $i \in [2]$, with high probability $G(n,p)$ contains at most $2 \gamma C^{e(H_i)} n^{v(H_i) - e(H_i)/m_2(H_i)}$ copies of $H_i$ from $\mc F_i$.
\end{claim}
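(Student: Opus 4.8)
The plan is to estimate the number of forbidden copies by a straightforward second-moment argument. Fix $i \in [2]$ and abbreviate $H := H_i$, $v := v(H)$, $e := e(H)$ and $m := m_2(H)$; recall we have assumed $\card{\mc F_i} = \gamma n^{v}$ and $p = C n^{-1/m}$. Let $\mc H$ be the family of all copies of $H$ in $K_n$ whose vertex set lies in $\mc F_i$, so that $\card{\mc H} = \tfrac{v!}{\card{\Aut(H)}}\,\gamma n^{v}$ (the factor being $1$ when $H$ is a clique, and only a constant otherwise, immaterial in the remainder of the argument), and let $X$ count the members of $\mc H$ present in $G(n,p)$; this is exactly the number of copies of $H$ from $\mc F_i$ in $G(n,p)$. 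It thus suffices to show that $\mathbb{E}[X] \le \gamma\,C^{e}\, n^{v - e/m}$ (up to that constant factor) and that $X < 2\,\mathbb{E}[X]$ with high probability.

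First I would compute the first moment. Each copy in $\mc H$ is present in $G(n,p)$ with probability $p^{e}$, so $\mathbb{E}[X] = \card{\mc H}\,p^{e}$, which with $p = C n^{-1/m}$ equals $\Theta_{\gamma,H}\big(n^{\,v - e/m}\big)$. I would then record the elementary fact that $v(H') - e(H')/m \ge 2 - 1/m \ge 1$ for every subgraph $H' \subseteq H$ with $e(H') \ge 1$: this follows from $m = m_2(H) \ge d_2(H')$ — which rearranges to $e(H') - 1 \le m\,(v(H') - 2)$ when $v(H') \ge 3$, and is trivial for a single edge — together with $m \ge m_2(H_2) \ge 1$. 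In particular both $\mathbb{E}[X]$ and $\mu_0 := \min\{ n^{v(H')} p^{e(H')} : H' \subsetneq H,\ e(H') \ge 1 \}$ are at least of order $n^{2 - 1/m} \ge n$, and hence tend to infinity with $n$.

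Next I would bound the variance using Lemma~\ref{lem:covariance}. Two members of $\mc H$ that are vertex-disjoint, or that share vertices but no edge, are independent in $G(n,p)$, so $\Var(X) \le \mathbb{E}[X] + \Delta$, where $\Delta$ is the quantity of Lemma~\ref{lem:covariance} applied to the collection $\mc H$; that lemma gives $\Delta \le 2^{v}\,v!\;\card{\mc H}\,n^{v}\,p^{2e}/\mu_0$. Hence
\[
\frac{\Var(X)}{\big(\mathbb{E}[X]\big)^{2}} \;\le\; \frac{1}{\mathbb{E}[X]} \;+\; \frac{2^{v}\,v!\; n^{v}}{\mu_0\,\card{\mc H}} \;=\; \frac{1}{\mathbb{E}[X]} \;+\; O_{\gamma,H}\!\left( \frac{1}{\mu_0} \right),
\]
using $\card{\mc H} = \Theta_{\gamma,H}(n^{v})$. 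Both summands are $o(1)$ by the previous paragraph, so Chebyshev's inequality gives $\mathbb{P}\big[ X \ge 2\,\mathbb{E}[X] \big] \le \Var(X)/(\mathbb{E}[X])^{2} = o(1)$, which is exactly the assertion of the claim.

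The computations are routine throughout; the one point worth checking is that, at $p = C n^{-1/m_2(H)}$ with $C$ a fixed constant, the quantities $\mathbb{E}[X]$ and $\mu_0$ genuinely grow with $n$ rather than staying bounded — this is precisely where the hypothesis $m_2(H_2) \ge 1$ is used, and it is what upgrades a mere first-moment (Markov) estimate, which would only give constant probability, to a high-probability statement. I foresee no genuine obstacle.
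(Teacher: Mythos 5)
Your proof is correct and follows essentially the same second-moment argument as the paper: compute $\mathbb{E}[X]$, note that $\mu_0 = \Omega(n)$ because $m_2(H_i) \ge m_2(H_2) \ge 1$, bound $\Var(X)$ via Lemma~\ref{lem:covariance}, and apply Chebyshev. Your extra care in distinguishing vertex sets in $\mc F_i$ from labelled copies (the $v!/\card{\Aut(H)}$ factor) is a harmless refinement that the paper elides, and your verification that the exponents $v(H')-e(H')/m$ are $\ge 1$ is a minor rephrasing of the paper's preliminary calculation.
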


\begin{proof}
We begin with a preliminary calculation.  Since $p = C n^{-1/m_2(H_i)} \ge n^{-1}$, for any subgraph $F \subseteq H_i$ with at least one edge we have
\begin{equation} \label{eqn:subgraphcount}
n^{v(F)} p^{e(F)} = n^2 p \cdot n^{v(F) - 2} p^{e(F) - 1} \ge C^{e(F) - 1} n^2 p = \Omega(n).
\end{equation}

Now let $X_i$ denote the number of forbidden copies of $H_i$ from $\mc F_i$ that appear in $G(n,p)$.  We clearly have $\mu(H_i) := \mathbb{E}[X_i] = \card{\mc F_i} p^{e(H_i)} = \gamma n^{v(H_i)} p^{e(H_i)} = \gamma C^{e(H_i)} n^{v(H_i) - e(H_i)/m_2(H_i)}$.  Moreover, taking $F = H_i$ in~\eqref{eqn:subgraphcount}, we have $\mu(H_i) = \Omega(n)$.

We next wish to estimate the variance of $X_i$.  Since edge-disjoint copies of $H_i$ appear independently of each other, we can bound 
\[ \Var(X_i) \le \sum_{\substack{F_1, F_2 \in \mc F_i: \\ e(F_1 \cap F_2) \ge 1}} \mathbb{P}\left(F_1 \cup F_2 \subseteq G(n,p) \right). \]

The diagonal terms, when $F_1 = F_2$, contribute a total of $\mu(H_i)$, which, since $\mu(H_i) = \Omega(n)$, is $O( \mu(H_i)^2 / n )$.  On the other hand, the sum over ordered pairs $(F_1, F_2)$ with $F_1 \neq F_2$ can be bounded by $\Delta$ from Lemma~\ref{lem:covariance}, applied with $H := H_i$ and $\mc H := \mc F_i$.  The lemma gives $\Delta \le 2^{v(H_i)} v(H_i)! \card{\mc F_i} n^{v(H_i)} p^{2e(H_i)} / \mu_0(H_i) = \Theta( \mu(H_i)^2 / \mu_0(H_i) )$.  By~\eqref{eqn:subgraphcount} we have $\mu_0(H_i) = \Omega(n)$, from which we can deduce $\Delta = O(\mu(H_i)^2 / n)$.

Thus $\Var(X_i) = O( \mu(H_i)^2 / n )$.  Chebyshev's inequality now gives the desired result, since 
\[ \mathbb{P} \left( X_i \ge 2 \gamma C^{e(H_i)} n^{v(H_i) - e(H_i) / m_2(H_i)} \right) = \mathbb{P} \left( X_i \ge 2 \mu(H_i) \right) \le \frac{\Var(X_i) }{ \mu(H_i)^2 } = O\left(\frac{1}{n}\right). \qedhere \]
\end{proof}

Define $\mc E_0$ to be the event that $G(n,p)$ contains more than $2 \gamma C^{e(H_i)} n^{v(H_i) - e(H_i) / m_2(H_i)}$ copies of $H_i$ from $\mc F_i$, for some $i \in [2]$.  Claim~\ref{clm:fewforbidden} shows that $\mc E_0$ almost surely does not occur, and we shall henceforth assume it does not.

Now suppose $G(n,p)$ is not robustly $(H_1, H_2)$-Ramsey with respect to $(\mc F_1, \mc F_2)$.  This means the edges of $G(n,p)$ may be $2$-coloured so that the red copies of $H_1$ and the blue copies of $H_2$ all belong to $\mc F_1$ and $\mc F_2$ respectively.  In light of Claim~\ref{clm:fewforbidden}, the red subgraph has few copies of $H_1$, and hence by Corollary~\ref{cor:Hcontainers} must lie within one of the containers for nearly-$H_1$-free graphs.  Similarly, the blue subgraph must lie within one of the containers for nearly-$H_2$-free graphs.  Since there are very few containers, and the containers are relatively small, this is very unlikely.

To be more precise, let $\pi = \pi(H_1, H_2, K_2)$ be the constant from Proposition~\ref{prop:supersatramsey}.  For $i \in [2]$, we then apply Corollary~\ref{cor:Hcontainers} with $\eps := \pi/2$ and $H_i$ to get constants $q_i$, $\beta_i$ and $\kappa_i$, and a map $\psi_i$ from a collection of sequences $\mc T_i := \{ (T_i^{(1)}, \hdots, T_{i}^{(q_i)}) \in \mc P(E(K_n))^{q_i} : \card{T_i^{(j)}} \le \beta_i n^{2 - 1/m_2(H_i)}, j \in [q_i] \}$ to the set of containers $\mc C_i := \{ \psi_i(T_i) : T_i \in \mc T_i \}$.  Given a choice of the constant $C$, which we shall implicitly specify at the end, we set $\gamma := \min \{ \kappa_i / (2 C^{e(H_i)}) : i \in [2] \}$.

\medskip

Now fix a $2$-colouring of $G(n,p)$ where all the red copies of $H_1$ and blue copies of $H_2$ belong to $\mc F_1$ and
$\mc F_2$ respectively.  By Claim~\ref{clm:fewforbidden}, the red subgraph $G_1$ has at most $2 \gamma C^{e(H_1)} n^{v(H_1) - e(H_1)/m_2(H_1)}$ copies of $H_1$, which, by our choice of $\gamma$, is at most $\kappa_1 n^{v(H_1) - e(H_1)/m_2(H_1)}$.  By Corollary~\ref{cor:Hcontainers}, there is some $T_1 \in \mc T_1 \cap \mc P(E(G_1))^{q_1}$ such that $G_1 \subseteq C_1 := \psi_1(T_1) \in \mc C_1$.  A similar argument for the blue subgraph $G_2$ gives $T_2 \in \mc T_2 \cap \mc P(E(G_2))^{q_2}$ with $G_2 \subseteq C_2 := \psi_2(T_2) \in \mc C_2$.  Since $G(n,p) = G_1 \cup G_2$, we have $G(n,p) \subseteq C_1 \cup C_2$.

Corollary~\ref{cor:Hcontainers} also asserts that, for $i \in [2]$, $C_i$ contains at most $\pi \binom{n}{v(H_i)} / 2$ copies of $H_i$.  By Proposition~\ref{prop:supersatramsey}, it follows that there are at least $\pi \binom{n}{2}$ edges in $R(T_1, T_2) := E(K_n) \setminus (C_1 \cup C_2)$, all of which must be missing from $G(n,p)$.

\medskip

Given $T_1 \in \mc T_1$ and $T_2 \in \mc T_2$, we define the shorthand $T_1 \cup T_2 := \cup_{i \in [2]} \left\{ \cup_{j \in [q_i]} T_i^{(j)} \right\}$.  Now let $\mc E(T_1, T_2)$ be the event that all edges in $T_1 \cup T_2$ are in $G(n,p)$ while all edges in $R(T_1, T_2)$ are missing.  Since $\card{R(T_1, T_2)} \ge \pi \binom{n}{2}$, we have
\[ \mathbb{P}(\mc E(T_1, T_2)) = p^{\card{T_1 \cup T_2}} (1-p)^{\card{R(T_1, T_2)}} \le p^{\card{T_1 \cup T_2}} \exp \left( - \pi \binom{n}{2} p \right). \]

 Our above discussion shows that if $G(n,p)$ does not have the desired robust Ramsey property, then either $\mc E_0$ or one of the events $\mc E(T_1, T_2)$ must occur.  By the union bound, the probability of $G(n,p)$ not being robustly $(H_1, H_2)$-Ramsey with respect to $(\mc F_1, \mc F_2)$ is at most
\begin{align*}
	\sum_{T_1 \in \mc T_1, T_2 \in \mc T_2} \mc E(T_1, T_2) + \mathbb{P}(\mc E_0) &\le \exp \left( - \pi \binom{n}{2} p \right) \sum_{T_1 \in \mc T_1, T_2 \in \mc T_2} p^{\card{T_1 \cup T_2}} + o(1) \\
	&= \exp \left( - \pi \binom{n}{2} p \right) \sum_{S \subseteq E(K_n)} \sum_{\substack{T_1 \in \mc T_1, T_2 \in \mc T_2 : \\ T_1 \cup T_2 = S }} p^{\card{S}} + o(1).
\end{align*}

Now recall that $T_i = (T_i^{(1)}, \hdots, T_i^{(q_i)})$, where $\card{T_i^{(j)}} \le \beta_i n^{2-1/m_2(H_i)}$, and so $\card{T_1 \cup T_2} \le M_n := (\beta_1 q_1 + \beta_2 q_2) n^{2 - 1/m_2(H_1)}$.  Moreover, given a set $S$ of at most this size, for each edge $e \in S$, we must choose which of the sets $T_i^{(j)}$ it belongs to, and hence there are at most $2^{(q_1 + q_2)\card{S}}$ pairs $(T_1, T_2)$ corresponding to $S$.  Therefore
\[ \sum_{S \subseteq E(K_n)} \sum_{\substack{T_1 \in \mc T_1, T_2 \in \mc T_2: \\ T_1 \cup T_2 = S}} p^{\card{S}} \le \sum_{\substack{S \subseteq E(K_n) : \\ \card{S} \le M_n}} \left( 2^{q_1 + q_2} p \right)^{\card{S}} \le \sum_{s = 0}^{M_n} \binom{n^2}{s} \left( 2^{q_1 + q_2} p \right)^s \le \sum_{s=0}^{M_n} \left( \frac{ 2^{q_1 + q_2} e n^2 p }{s} \right)^s. \]

Substituting $p = Cn^{-1/m_2(H_1)}$, we find that the summand is $(2^{q_1 + q_2} C e n^{2 - 1/m_2(H_1)} / s)^s$.  Since the function $f(x) = (A/x)^x$ increases to a maximum at $x = A/e$, it follows that, if $C$ is sufficiently large, the largest summand corresponds to $s = M_n$.  We thus have
\[ \sum_{T_1 \in \mc T_1, T_2 \in \mc T_2} p^{\card{T_1 \cup T_2}} \le (M_n+1) \left( \frac{2^{q_1 + q_2} C e n^{2 - 1/m_2(H_1)}}{M_n} \right)^{M_n} = (M_n+1) \exp(L_C n^{2 - 1/m_2(H_1)}), \]
where $L_C := (\beta_1 q_1 + \beta_2 q_2) \ln \left(2^{q_1 + q_2} C e / (\beta_1 q_1 + \beta_2 q_2) \right)$.  This gives
\begin{align} \label{eqn:errorbounda}
	\mathbb{P} \left( G(n,p) \textrm{ is not robustly Ramsey } \right) &\le (M_n + 1) \exp\left( L_C n^{2 - 1/m_2(H_1)} - \pi \binom{n}{2} p \right) + o(1) \notag \\ 
	&\le (M_n + 1) \exp\left( (L_C - C \pi / 4) n^{2 - 1/m_2(H_1)} \right) + o(1) \notag \\
	&\le \exp\left( - C \pi n^{2 - 1/m_2(H_1)} / 8 \right) + o(1),
\end{align}
where the final inequality holds when we choose $C$ to be sufficiently large, since $L_C$ is logarithmic in $C$, and if $n$ is large enough, since $M_n$ is only polynomial in $n$.  This shows $G(n,p)$ is with high probability robustly $(H_1, H_2)$-Ramsey with respect to $(\mc F_1, \mc F_2)$.
\qed

\subsubsection{Random graphs are robustly Ramsey: Case (b)}

We next prove the robust Ramsey result under the assumption (b), when $H_1$ is strictly balanced with respect to $m_2(\cdot, H_2)$.  When $m_2(H_1) > m_2(H_2)$ and $p = C n^{-1/m_2(H_1, H_2)}$, there are too many nearly-$H_1$-free containers for the union bound calculation of Case (a) to work.

We will instead exploit the fact that nearly all copies of $H_1$ in $G(n,p)$ are edge-disjoint.  To that end, call a copy of $H_1$ in $G(n,p)$ \emph{isolated} if it is edge-disjoint from all other copies of $H_1$, and call it \emph{non-isolated} otherwise.  Our first claim shows that there are few non-isolated copies of $H_1$.

\begin{claim} \label{clm:fewnoniso}
Let $H_1$ be strictly balanced with respect to $m_2(\cdot, H_2)$, and let $p = C n^{-1/m_2(H_1, H_2)}$ for some constant $C\geq 1$.  There is a constant $\delta = \delta(H_1, H_2)$ such that the number of non-isolated copies of $H_1$ in $G(n,p)$ is with high probability at most $n^{2 - 1/m_2(H_2) - \delta}$.
\end{claim}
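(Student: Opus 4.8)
The plan is to bound the expected number of non-isolated copies of $H_1$ in $G(n,p)$ by the pair-counting quantity $\Delta$ appearing in Lemma~\ref{lem:covariance}, to show this expectation is polynomially smaller than $n^{2-1/m_2(H_2)}$, and then to invoke Markov's inequality. Throughout write $v_1 := v(H_1)$, $e_1 := e(H_1)$, $\beta := 1/m_2(H_2)$ and $\alpha := 1/m_2(H_1,H_2)$, so that $p = C n^{-\alpha}$. First I would note that a copy $H'$ of $H_1$ lying in $G(n,p)$ is non-isolated exactly when some other copy $H''$ of $H_1$ sharing at least one edge with $H'$ also lies in $G(n,p)$; union-bounding over the choice of $H''$ and taking expectations gives
\[ \mathbb{E}\bigl[\,\#\{\text{non-isolated copies of } H_1\}\,\bigr] \le \sum_{\substack{H',H'' \in \mc K_n(H_1):\\ H' \neq H'',\ e(H' \cap H'') \ge 1}} \mathbb{P}\bigl(H' \cup H'' \subseteq G(n,p)\bigr) = \Delta, \]
where $\Delta$ is as in Lemma~\ref{lem:covariance} applied with $H := H_1$ and $\mc H := \mc K_n(H_1)$. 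Since $\card{\mc K_n(H_1)} \le n^{v_1}$, that lemma gives $\Delta \le 2^{v_1} v_1!\, n^{2 v_1} p^{2 e_1} / \mu_0(H_1)$, and unwinding the definition of $\mu_0$ this is at most $2^{v_1} v_1! \max_{F} n^{2 v_1 - v(F)} p^{2 e_1 - e(F)}$, the maximum being over proper subgraphs $F \subsetneq H_1$ with $e(F) \ge 1$. (For cliques and cycles two distinct copies cannot share all their edges, so this is exactly the range of intersection types $F$ that occurs.)

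The crux is then a density computation. Since $H_1$ attains the maximum in \eqref{2density}, we have $\alpha e_1 = v_1 - 2 + \beta$, hence $v_1 - \alpha e_1 = 2 - \beta$. Because $H_1$ is \emph{strictly} balanced with respect to $m_2(\cdot,H_2)$ and there are only finitely many subgraphs of $H_1$, there is a constant $\delta' = \delta'(H_1,H_2) > 0$ such that $\alpha\, e(F) \le v(F) - 2 + \beta - \delta'$ for every proper $F \subsetneq H_1$ with $e(F) \ge 1$. Substituting, the exponent of $n$ in a generic term $n^{2 v_1 - v(F)} p^{2 e_1 - e(F)}$ equals
\[ 2 v_1 - v(F) - \alpha\bigl(2 e_1 - e(F)\bigr) = 2(v_1 - \alpha e_1) - v(F) + \alpha\, e(F) = 4 - 2\beta - v(F) + \alpha\, e(F) \le 2 - \beta - \delta'. \]
As $p = C n^{-\alpha}$ with $C \ge 1$, the bounded powers of $C$ only affect the implicit constant, so $\Delta = O\bigl(n^{2 - 1/m_2(H_2) - \delta'}\bigr)$, with the constant depending only on $H_1$, $H_2$ and $C$.

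Finally I would set $\delta := \delta'/2$ and apply Markov's inequality:
\[ \mathbb{P}\Bigl(\#\{\text{non-isolated copies of } H_1\} \ge n^{2 - 1/m_2(H_2) - \delta}\Bigr) \le \frac{\Delta}{n^{2 - 1/m_2(H_2) - \delta}} = O\bigl(n^{-\delta'/2}\bigr) = o(1), \]
which proves the claim. I do not anticipate a genuine obstacle: the only substantive step is the density inequality above, which is precisely where the strict-balancedness hypothesis of Case (b) enters (in Case (a), with $m_2(H_1) = m_2(H_2)$, one argues directly via the containers, so a weaker hypothesis suffices there). The reduction to $\Delta$, the invocation of Lemma~\ref{lem:covariance}, and the Markov step are all routine.
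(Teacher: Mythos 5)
Your proof is correct and follows essentially the same route as the paper's: bound the number of non-isolated copies by the pair-counting quantity $\Delta$ from Lemma~\ref{lem:covariance}, use strict balancedness of $H_1$ with respect to $m_2(\cdot,H_2)$ to extract a polynomial saving in the exponent, and finish with Markov's inequality. The only cosmetic difference is that you introduce a single uniform deficit $\delta'$ directly, whereas the paper defines a per-subgraph deficit $\delta_F$ and then minimises over the finitely many $F\subsetneq H_1$; these are equivalent.
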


\begin{proof}
If a copy $H'$ of $H_1$ is non-isolated, then there is some other copy $H'' \neq H'$ of $H_1$ such that $e(H' \cap H'') \ge 1$.  The number of non-isolated copies of $H_1$ in $G(n,p)$ is thus bounded from above by the expected number of such pairs $\{ H', H'' \}$, which is precisely the quantity $\Delta$ from Lemma~\ref{lem:covariance}, when $H:= H_1$ and $\mc H := \mc K_n(H_1)$.  The lemma gives 
\[ \Delta \le \frac{ 2^{v(H_1)} v(H_1)! \card{\mc K_n(H_1)} n^{v(H_1)} p^{2e(H_1)} }{ \mu_0(H_1) } \le \frac{ 2^{v(H_1)} v(H_1)! (n^{v(H_1)} p^{e(H_1)})^2 }{ \mu_0(H_1) },\]
where $\mu_0(H_1) = \min \{ n^{v(F)} p^{e(F)} : F \subsetneq H_1, e(F) \ge 1\}$.

For each $F \subseteq H_1$ with $e(F) \ge1$, define $\delta_{F}$ such that
\[ m_2(H_1, H_2) = \frac{e(H_1)}{v(H_1) - 2 + 1/m_2(H_2)} =: \frac{e(F)}{v(F) - 2 + 1/m_2(H_2) - 2 \delta_{F}}. \]
It then follows that for every $F \subseteq H_1$ we have
\begin{equation} \label{eqn:mu0calc}
n^{v(F)} p^{e(F)} = C^{e(F)} n^{v(F) - e(F) (v(H_1) - 2 + 1/m_2(H_2)) / e(H_1)} = C^{e(F)} n^{2 - 1/m_2(H_2) + 2 \delta_{F}}.
\end{equation}

We clearly have $\delta_{H_1} = 0$.  On the other hand, since $H_1$ is strictly balanced with respect to $m_2( \cdot, H_2)$, we have $\delta_{F} > 0$ in all other cases.  Letting $\delta = \delta(H_1, H_2) := \min \{ \delta_{F} : F \subsetneq H_1, e(F) \ge 1 \}$, it follows that $\mu_0(H_1) \ge n^{2 - 1/m_2(H_2) + 2\delta}$.

By using these values for $\delta_{H_1}$ and $\mu_0(H_1)$, we can bound the expected number of pairs of intersecting copies of $H_1$ by
\[ \Delta = O \left( \frac{(n^{v(H_1)} p^{e(H_1)})^2 }{\mu_0(H_1)} \right) = O \left( \frac{(n^{2 - 1/m_2(H_2)} )^2}{n^{2 - 1/m_2(H_2) + 2\delta}} \right) = O \left( n^{2 - 1/m_2(H_2) - 2\delta} \right). \]

Using Markov's inequality, the probability that the number of pairs of intersecting copies of $H_1$, and thus the number of non-isolated copies of $H_1$, in $G(n,p)$ is more than $n^{2 - 1/m_2(H_2) - \delta}$ is $O(n^{-\delta})$, establishing the claim.
\end{proof}

Let $\mc E_1$ be the event that $G(n,p)$ has more than $n^{2 - 1/m_2(H_2) - \delta}$ non-isolated copies of $H_1$, which Claim~\ref{clm:fewnoniso} shows we may assume does not occur.  We denote by $L$ the set of edges in non-isolated copies of $H_1$.  Note that any edge $e \in E(G(n,p)) \setminus L$ is contained in at most one copy of $H_1$.

\medskip

Now suppose that $G(n,p)$ is not robustly $(H_1, H_2)$-Ramsey with respect to $(\mc F_1, \mc F_2)$, and fix a $2$-colouring of its edges where the only red copies of $H_1$ and blue copies of $H_2$ in $G(n,p)$ are  forbidden (i.e. in $\mc F_1$ or $\mc F_2$ respectively).  We may safely assume that any edge of $G(n,p)$ that is not contained in a copy of $H_1$ is red, since if not, recolouring it red does not create a red copy of $H_1$.  Furthermore, we may assume that every isolated non-forbidden copy of $H_1$ has exactly one blue edge --- since it is not in $\mc F_1$, it cannot be monochromatic red, and if it has more than one blue edge, then since it is an isolated copy of $H_1$, recolouring all but one of them red cannot create a red copy of $H_1$.

Let $B$ be the subgraph of $G(n,p)$ consisting of all edges not in $L$ that are coloured blue.  Note that each edge of $B$ must be contained in a copy of $H_1$, and moreover these copies are pairwise edge-disjoint.  We thus call a set $E$ of edges in $G(n,p)$ \emph{viable} if each $e \in E$ is contained in some copy $W_e$ of $H_1$ such that the copies $\{ W_e : e \in E \}$ are pairwise edge-disjoint.  We further call such a set of copies a \emph{witness} for the viability of $E$.

Note that a set of edges $E$ can only be contained in $B$ if $E$ is viable.  The next claim bounds the likelihood of viability.

\begin{claim} \label{clm:viableprob}
If $p = C n^{- 1/m_2(H_1, H_2)}$ for some positive constant $C$, then a set $E$ of $s$ edges is viable with probability at most $\rho^s$, where $\rho := C^{e(H_1)} v(H_1)^2 n^{-1/m_2(H_2)}$.
\end{claim}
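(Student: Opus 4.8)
The plan is a direct first-moment (union bound) argument over the possible witnesses. Fix a set $E \subseteq E(K_n)$ with $|E| = s$. If $E$ is viable in $G(n,p)$, then by definition there is a witness: a collection $\{W_e : e \in E\}$ of pairwise edge-disjoint copies of $H_1$, all lying in $G(n,p)$, with $e \in W_e$ for each $e \in E$. So I would bound $\mathbb{P}(E \text{ viable})$ by the sum, over all such candidate witnesses, of the probability that every member of the witness is present in $G(n,p)$, and then estimate the number of witnesses and each of these probabilities separately.

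First I would count candidate witnesses. For a fixed edge $e$, the number of copies of $H_1$ in $K_n$ containing $e$ is at most $2 e(H_1)\, n^{v(H_1)-2} \le v(H_1)^2\, n^{v(H_1)-2}$: choose which of the at most $e(H_1)$ edges of $H_1$ plays the role of $e$ (with its two orientations), then embed the remaining $v(H_1)-2$ vertices of $H_1$ into $[n]$ in at most $n^{v(H_1)-2}$ ways, and use $2e(H_1) \le v(H_1)(v(H_1)-1) \le v(H_1)^2$. Hence the number of collections $\{W_e : e \in E\}$ with $e \in W_e$ for all $e$ (forgetting for now the edge-disjointness constraint, which we retain only to estimate the probabilities) is at most $\bigl( v(H_1)^2\, n^{v(H_1)-2} \bigr)^{s}$.

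Next, for a fixed witness $\{W_e : e\in E\}$ the copies are pairwise edge-disjoint, so $\bigcup_{e \in E} E(W_e)$ consists of exactly $s\, e(H_1)$ distinct edges of $K_n$, each present in $G(n,p)$ independently with probability $p$; thus the probability that all of $W_e$, $e \in E$, appear is exactly $p^{s\, e(H_1)}$. (Note $E \subseteq \bigcup_e E(W_e)$, so this event already forces $E \subseteq E(G(n,p))$, as required.) Combining the two estimates via the union bound gives
\[ \mathbb{P}(E \text{ viable}) \le \bigl( v(H_1)^2\, n^{v(H_1)-2}\, p^{e(H_1)} \bigr)^{s}. \]

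Finally I would simplify $n^{v(H_1)-2}\, p^{e(H_1)}$ using $p = C n^{-1/m_2(H_1,H_2)}$ and assumption (b). Since $H_1$ is strictly balanced with respect to $m_2(\cdot, H_2)$, the maximum in the definition of $m_2(H_1,H_2)$ is attained by $H_1$ itself, so $e(H_1)/m_2(H_1,H_2) = v(H_1) - 2 + 1/m_2(H_2)$; substituting, $n^{v(H_1)-2}\, p^{e(H_1)} = C^{e(H_1)}\, n^{\,v(H_1)-2 - e(H_1)/m_2(H_1,H_2)} = C^{e(H_1)}\, n^{-1/m_2(H_2)}$, whence $\mathbb{P}(E \text{ viable}) \le \bigl( C^{e(H_1)} v(H_1)^2\, n^{-1/m_2(H_2)} \bigr)^{s} = \rho^{s}$, as claimed. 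There is no serious obstacle here; the only points meriting care are the appeal to edge-disjointness both for the independence and for the exact count $s\,e(H_1)$ of edges in a fixed witness, and the use of assumption (b) to upgrade the inequality $m_2(H_1,H_2) \ge e(H_1)/(v(H_1)-2+1/m_2(H_2))$ to the equality needed for the exponent to come out exactly $-1/m_2(H_2)$.
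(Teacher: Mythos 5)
Your proposal is correct and follows essentially the same route as the paper: a first-moment bound over all candidate witnesses, using the count of at most $v(H_1)^2 n^{v(H_1)-2}$ copies of $H_1$ through a fixed edge and the edge-disjointness of a witness to get appearance probability $p^{s\,e(H_1)}$, then simplifying $n^{v(H_1)-2}p^{e(H_1)}$ to $C^{e(H_1)}n^{-1/m_2(H_2)}$. The only cosmetic difference is that you spell out why the exponent simplification is an equality (via strict balancedness, assumption (b)), whereas the paper simply states the identity $p^{e(H_1)} = C^{e(H_1)} n^{2-v(H_1)-1/m_2(H_2)}$.
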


\begin{proof}
We take a union bound over all possible witnesses for $E$.  Since the copies $W_e$ of $H_1$ in a witness are edge-disjoint, each witness requires exactly $e(H_1) s $ edges to appear in $G(n,p)$, and hence appears with probability $p^{e(H_1) s}$.

To estimate the number of possible witnesses, note that for each edge $e \in E$, we have fewer than $v(H_1)^2$ ways to assign vertices of $H_1$ to the vertices of $e$ in the copy $W_e$.  There are then at most a further $n^{v(H_1) - 2}$ choices for the remaining vertices in $W_e$.  This shows there are at most $(v(H_1)^2 n^{v(H_1) - 2})^s$ possible witnesses for $E$.

The expected number of witnesses for $E$, which bounds from above the probability of $E$ being viable, is therefore at most
\[ \left( v(H_1)^2 n^{v(H_1) - 2} \right)^s p^{e(H_1) s} = \left( v(H_1)^2 n^{v(H_1) - 2} p^{e(H_1)} \right)^s = \rho^s, \]
where in the last equality we use the fact that $p^{e(H_1)} = C^{e(H_1)} n^{2 - v(H_1) - 1/m_2(H_2)}$.
\end{proof}

Only forbidden copies of $H_2$ can be monochromatically blue.  Furthermore, a copy of $H_2$ in $B$ must also be viable.  Claim~\ref{clm:viableforbid} asserts there cannot be many viable forbidden copies of $H_2$.  We will defer its proof for the moment, first seeing how it implies the robust Ramsey result.

\begin{claim} \label{clm:viableforbid}
The number of viable forbidden copies of $H_2$ from $\mc F_2$ is with high probability at most $\gamma K n^{v(H_2) - e(H_2)/m_2(H_2)}$ for some constant $K = K(C, H_1, H_2)$.
\end{claim}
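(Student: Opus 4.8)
Write $X$ for the number of viable forbidden copies of $H_2$ in $G(n,p)$; we must show that with high probability $X \le \gamma K n^{v(H_2) - e(H_2)/m_2(H_2)}$ for a suitable constant $K = K(C, H_1, H_2)$. By definition, every viable copy $H_2'$ of $H_2$ carries a \emph{witness}: a family $\{W_e : e \in E(H_2')\}$ of pairwise edge-disjoint copies of $H_1$ in $G(n,p)$ with $e \in W_e$ for each edge $e$ of $H_2'$. As the $W_e$ are edge-disjoint, the graph $W := \bigcup_{e} W_e$ has exactly $e(H_1)e(H_2)$ edges and contains $H_2'$, and there are only finitely many isomorphism types of such configuration graphs. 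Let $Z$ count the pairs $(H_2', W)$ for which $H_2'$ is a forbidden copy of $H_2$ and $W$ is a witness for it present in $G(n,p)$, so that $X \le Z$.

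The first step is to bound $\mathbb E[Z]$. There are at most $\gamma\, v(H_2)!\, n^{v(H_2)}$ forbidden copies of $H_2$, for each of the $e(H_2)$ edges of such a copy there are at most $v(H_1)^2 n^{v(H_1) - 2}$ copies of $H_1$ through it, and each resulting configuration $W$ lies in $G(n,p)$ with probability $p^{e(H_1)e(H_2)}$. Under hypothesis (b), $H_1$ attains the maximum in \eqref{2density}, so $m_2(H_1,H_2) = e(H_1)/(v(H_1) - 2 + 1/m_2(H_2))$ and hence $n^{v(H_1)-2} p^{e(H_1)} = C^{e(H_1)} n^{-1/m_2(H_2)}$. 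Multiplying these estimates gives $\mathbb E[Z] \le \gamma K' n^{v(H_2) - e(H_2)/m_2(H_2)}$ with $K' := v(H_2)!\,(v(H_1)^2 C^{e(H_1)})^{e(H_2)}$, which matches the target up to a constant factor. Since $m_2(H_2) \ge 1$ the exponent $v(H_2) - e(H_2)/m_2(H_2)$ is positive, so if $\mathbb E[Z] = O(1)$ the claim is immediate from Markov's inequality, and it remains to treat the case $\mathbb E[Z] = \omega(1)$.

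Because $\mathbb E[Z]$ only matches the target exponent up to a constant, Markov's inequality yields merely a constant bound on $\mathbb P(Z \ge \gamma K n^{v(H_2) - e(H_2)/m_2(H_2)})$, so a genuine concentration estimate is needed; this is the step I expect to be the main obstacle. The plan is to run the second-moment method for subgraph counts on $Z = \sum_{(H_2',W)} \mathbf{1}[W \subseteq G(n,p)]$: bounding the covariance of each edge-sharing pair of indicators by $p$ raised to the number of edges in their union, one reduces $\Var(Z)/\mathbb E[Z]^2$ to a constant-sized sum of terms $n^{-v(J)} p^{-e(J)}$, plus the diagonal contribution $1/\mathbb E[Z] = o(1)$, where $J$ runs over the subgraphs with at least one edge of the finitely many configuration graphs $W$. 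It thus suffices to verify that $e(J)/v(J) < m_2(H_1,H_2)$ for every such $J$ (equivalently $n^{v(J)} p^{e(J)} = \omega(1)$). For $J$ contained in a single copy $W_e \cong H_1$ this follows from the definition of $m_2(H_1,H_2)$ together with the strict balance of $H_1$, which forces $e(J')/v(J') < m_2(H_1,H_2)$ for all $J' \subseteq H_1$ with an edge; in particular $n^{v(H_1)} p^{e(H_1)} = C^{e(H_1)} n^{2 - 1/m_2(H_2)} \to \infty$. The delicate point is that two edge-disjoint copies of $H_1$ can share many vertices, which might make $W$ too dense: when $H_1$ is a clique --- which is the case in every application of Theorem~\ref{thm:global1ramsey}(i) in this regime --- two edge-disjoint copies meet in at most one vertex, so $W$ is essentially a forest of cliques and the density bound is a short computation; for a general $H_1$ one restricts attention to \emph{economical} witnesses, in which the copies $W_e$ are chosen to overlap as little as possible, and argues separately that with high probability no viable forbidden copy of $H_2$ is forced to use a non-economical witness, since the heavily overlapping configurations are rare subgraphs of $G(n,p)$. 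Granting the variance bound, Chebyshev's inequality gives $Z < 2\mathbb E[Z]$ with high probability, whence $X \le Z < \gamma K n^{v(H_2) - e(H_2)/m_2(H_2)}$ with $K := 2K'$.
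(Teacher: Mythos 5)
Your high-level strategy (bound the expected number of witnessing configurations, then apply a second-moment argument) is indeed the one the paper uses, and your expectation calculation is correct. You also correctly identify that the crux lies in verifying $n^{v(J)}p^{e(J)} = \omega(1)$ for subconfigurations $J$ of the witness graphs. However, the proof is incomplete precisely at that crux, and the device you propose to close the gap is not the paper's and is not carried out.

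Two specific issues. First, you verify the density bound only for $J$ contained in a single $W_e \cong H_1$ (using strict balance of $H_1$), and then wave at the case of $J$ spanning several of the $W_e$'s. But that is where the real work is: such a $J$ intersects the central copy of $H_2$ in some subgraph $F_0^*$ and intersects each $W_h$ in some $F_0(h)$, and one must combine the constraint $e(F_0(h)) \le (v(F_0(h)) - 2 + 1/m_2(H_2)) m_2(H_1,H_2)$ from the definition of $m_2(H_1,H_2)$ with the constraint on $F_0^* \subseteq H_2$ from the definition of $m_2(H_2)$, keeping careful track of which vertices of $J$ are shared between the $W_h$'s and the central $H_2$. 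This is the computation the paper devotes the entire second half of the claim's proof to, and it cannot be dismissed as short even when $H_1$ is a clique (the copies $W_e$ still share central $H_2$-vertices in nontrivial patterns). Second, your fallback of "economical witnesses" for general $H_1$ is not defined precisely and is not needed in the paper's cleaner formulation: the paper enumerates the finitely many isomorphism types $\Gamma_1,\hdots,\Gamma_m$ of witness configurations and observes that whenever two of the $W_e$'s overlap outside the central copy of $H_2$, the vertex count drops by at least one, so $\mathbb{E}[X_i]$ already carries an extra factor of $1/n$ and Markov's inequality alone finishes that type (Case~I). Only the vertex-maximal types (Case~II) require the second-moment argument, and there the $W_e$'s are automatically pairwise vertex-disjoint away from $H_2$, so no separate "economical" argument is needed. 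You should also drop the appeal to "$H_1$ is a clique in every application" — the claim, as part of Theorem~\ref{thm:global1ramsey}, must be proved in full generality, not just for the instances invoked in the main text.
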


We now lay some groundwork for the rest of the proof, including specifying our choice of the constant $\gamma$.  Let $\pi = \pi(H_1, H_2, K_2)$ be the constant from Proposition~\ref{prop:supersatramsey}.  We now apply Corollary~\ref{cor:Hcontainers} with $H := H_2$ and $\eps := \pi /2$.  This gives us constants $q, \beta$ and $\kappa$, together with a collection $\mc T := \{(T^{(1)}, \hdots, T^{(q)}) \in \mc P(E(K_n))^q : \card{T^{(j)}} \le \beta n^{2 - 1/m_2(H_2)}, j \in [q] \}$ and a set of containers $\mc C := \{ \psi(T) : T \in \mc T\}$ for nearly-$H_2$-free graphs.  Finally, for whatever sufficiently large constant we will later choose $C$ to be, we set $\gamma := \min \{ \kappa / K, \pi/ (2 v(H_1)^{v(H_1)}) \}$, where $K = K(C, H_1, H_2)$ is the constant from Claim~\ref{clm:viableforbid}.

\medskip
 
Now let $\mc E_0$ be the event that there are more than $\kappa n^{v(H_2) - e(H_2) / m_2(H_2)}$ viable forbidden copies of $H_2$, which, by Claim~\ref{clm:viableforbid} and the fact that $\kappa \geq \gamma K$, we may assume does not occur.  Since all copies of $H_2$ in $B$ must be viable and forbidden, it follows that $B$ has at most $\kappa n^{v(H_2) - e(H_2) / m_2(H_2)}$ copies of $H_2$.  By Corollary~\ref{cor:Hcontainers}, there is then some $T \in \mc T \cap \mc P(E(B))^q$ with $B \subseteq \psi(T)$.  Let $\mc E_2(T)$ be the event that the set of edges $E(T) = \cup_{j=1}^q T^{(j)}$ is viable, which must hold for them to be in $B$.  By Claim~\ref{clm:viableprob}, we have
\begin{equation} \label{eqn:E2bound}
\mathbb{P}(\mc E_2(T)) \le \rho^{\card{E(T)}}.
\end{equation}

Next define $R = R(T,L) := E(K_n) \setminus \left( \psi(T) \cup L \right)$, and consider the $3$-colouring of $E(K_n)$ where all edges in $R$ receive colour $1$, edges in $\psi(T)$ have colour $2$ and those in $L \setminus \psi(T)$ are coloured $3$.  Corollary~\ref{cor:Hcontainers} guarantees that there are at most $\pi \binom{n}{v(H_2)} / 2$ copies of $H_2$ coloured $2$, while Claim~\ref{clm:fewnoniso} guarantees that there are fewer than $\pi \binom{n}{2}$ copies of $K_2$ in colour $3$.  By Proposition~\ref{prop:supersatramsey}, we must therefore have at least $\pi \binom{n}{v(H_1)}$ copies of $H_1$ in $R$.  Let $\mc H$ be those copies of $H_1$ in $R$ that are not forbidden, and note that since $\card{\mc F_1} \le \gamma n^{v(H_1)}$, we have $\card{\mc H} \ge \pi \binom{n}{v(H_1)} / 2 \ge \xi n^{v(H_1)}$ for $\xi := \pi / (2 v(H_1)^{v(H_1)})$.

Let $\mc E_3(T,L)$ be the event that none of the copies of $H_1$ in $\mc H$ appear in $G(n,p)$, and note that this must hold.  Otherwise, the copy of $H_1 \in \mc H$, which is not forbidden, must be monochromatic red, since all blue edges of $G(n,p)$ lie either in $L$ or in $B \subseteq \psi(T)$, contradicting our assumption that the only red copies of $H_1$ are forbidden.

By Theorem~\ref{thm:janson}, we have $\mathbb{P}(\mc E_3(T,L)) \le \exp( - \xi \mu_1(H_1) / (2^{v(H_1) + 1} v(H_1)! ) )$.  As we saw in~\eqref{eqn:mu0calc}, since $H_1$ is strictly balanced with respect to $m_2(\cdot, H_2)$, it follows that $\mu_1(H_1) = n^{v(H_1)}p^{e(H_1)} = C^{e(H_1)} n^{2 - 1/m_2(H_2)}$.  Thus, setting $\xi' := \xi / (2^{v(H_1)+1} v(H_1)!)$, we have
\begin{equation} \label{eqn:E3bound}
\mathbb{P}(\mc E_3(T,L)) \le \exp( - \xi C^{e(H_1)} n^{2 - 1/m_2(H_2)} / (2^{v(H_1) + 1} v(H_1)! ) ) = \exp(- \xi' C^{e(H_1)} n^{2 - 1/m_2(H_2)}).
\end{equation}

\medskip

To summarise, in order for $G(n,p)$ to have a $2$-colouring where all the red copies of $H_1$ and blue copies of $H_2$ are forbidden, we either need $\mc E_0$ or $\mc E_1$ to hold, or, if they do not, for $\mc E_2(T) \wedge \mc E_3(T,L)$ to hold for some choice of $T \in \mc T$ and some specified set $L$ of edges in non-isolated copies of $H_1$.  We can thus estimate the probability that $G(n,p)$ is not robustly Ramsey by taking a union bound over these results:
\begin{equation} \label{eqn:errorboundb}
	\mathbb{P}( G(n,p) \textrm{ is not robustly } (H_1, H_2)\textrm{-Ramsey}) \le \mathbb{P}(\mc E_0) + \mathbb{P}(\mc E_1) + \sum_{T, L} \mathbb{P}(\mc E_2(T) \wedge \mc E_3(T,L)).
\end{equation}

Since $\mc E_2(T)$ --- that all of the edges from $E(T)$ appear in $B$ --- is a monotone increasing graph property, while $\mc E_3(T,L)$ --- that none of the copies of $H_1$ from $R(T,L)$ appear in $G(n,p)$ --- is a monotone decreasing graph property, it follows from the FKG Inequality (see~\cite[Theorem 6.3.3]{as}) that $\mathbb{P}(\mc E_2(T) \wedge \mc E_3(T,L)) \le \mathbb{P}(\mc E_2(T)) \mathbb{P}(\mc E_3(T,L))$.  Using the upper bounds from~\eqref{eqn:E2bound} and~\eqref{eqn:E3bound}, we can bound the sum by
\[ \sum_{T,L} \mathbb{P}\left( \mc E_2(T) \wedge \mc E_3(T,L) \right) \le \exp\left( - \xi' C^{e(H_1)} n^{2 - 1/m_2(H_2)} \right) \sum_L \sum_T \rho^{\card{E(T)}}. \]

There are at most $n^{2 - 1/m_2(H_2) - \delta}$ non-isolated copies of $H_1$ (as otherwise we are covered by $\mc E_1$), and hence the sum $L$ only runs over sets of at most $e(H_1) n^{2 - 1/m_2(H_2) - \delta}$ edges.  Thus
\begin{align} \label{eqn:summingL}
	\sum_{T,L} \mathbb{P} \left( \mc E_2(T) \wedge \mc E_3(T,L) \right) &\le \exp \left( - \xi' C^{e(H_1)} n^{2- 1/m_2(H_2)} \right) \left( \sum_{\ell = 0}^{e(H_1) n^{2 - 1/m_2(H_2) - \delta}} \binom{\binom{n}{2}}{\ell} \right) \sum_T \rho^{\card{E(T)}} \notag \\
	&\le \exp\left( - \xi' C^{e(H_1)} n^{2 - 1/m_2(H_2)} + o \left( n^{2 - 1/m_2(H_2)} \right) \right) \sum_T \rho^{\card{E(T)}}.
\end{align}

We now bound the sum over $T$ as we did in Case (a).  From Corollary~\ref{cor:Hcontainers}, we know $s = \card{E(T)} \le M_n := q \beta n^{2 - 1/m_2(H_2)}$.  We group the $q$-tuples $T$ by the size of $E(T)$, observing that each set of edges $E(T)$ corresponds to at most $2^{q\card{E(T)}}$ $q$-tuples in $\mc T$.  Recalling the definition of $\rho$ from Claim~\ref{clm:viableprob}, we have
\[ \sum_T \rho^{\card{E(T)}} \le \sum_{s = 0}^{M_n} \sum_{\substack{T \in \mc T: \\ \card{E(T)} = s}} \rho^s \le \sum_{s=0}^{M_n} \binom{n^2}{s} \left( 2^q \rho \right)^s \le \sum_{s=0}^{M_n} \left( \frac{v(H_1)^2  e 2^q C^{e(H_1)} n^{2 - 1/m_2(H_2)} }{s} \right)^s. \]
As before, this summand is maximised when $s = M_n$, yielding, for $\lambda := v(H_1)^2 e 2^q / (q \beta)$,
\[ \sum_T \rho^{\card{E(T)}} \le \left( q \beta n^{2 - 1/m_2(H_2)} + 1 \right) \left( \lambda C^{e(H_1)} \right)^{q \beta n^{2 - 1/m_2(H_2)}}. \]
Making the substitution in~\eqref{eqn:summingL}, we find
\begin{equation} \label{eqn:E2E3sum}
\sum_{T,L} \mathbb{P}\left( \mc E_2(T) \wedge \mc E_3(T,L) \right) \le \exp \left( \left( - \xi' C^{e(H_1)} + q \beta \ln \left( \lambda C^{e(H_1)} \right)+ o(1) \right) n^{2 - 1/m_2(H_2)} \right).
\end{equation}

Thus, if we choose $C$ to be a sufficiently large constant (with respect to $\xi', q, \beta$ and $\lambda$), this sum is $o(1)$.  By Claims~\ref{clm:viableforbid} and~\ref{clm:fewnoniso} respectively, so are $\mathbb{P}(\mc E_0)$ and $\mathbb{P}(\mc E_1)$.  Hence, by~\eqref{eqn:errorboundb}, it follows that $G(n,p)$ is with high probability robustly $(H_1, H_2)$-Ramsey with respect to $(\mc F_1, \mc F_2)$. \qed

To complete the proof, we prove Claim~\ref{clm:viableforbid}.

\begin{proof}[Proof of Claim~\ref{clm:viableforbid}]
In order for a forbidden copy $H \in \mc F_2$ of $H_2$ to be viable, it must have a witness --- that is, a collection of pairwise edge-disjoint copies $W_e$ of $H_1$ containing each edge $e \in E(H)$.  We shall bound the number of witnesses for forbidden copies of $H_2$, thereby obtaining an upper bound on the number of viable forbidden copies of $H_2$.

Note that there are various non-isomorphic types of witnesses, depending on how the copies $W_e$ of $H_1$ are attached to the edges of the forbidden copy of $H_2$, and whether or not they share vertices.  Let $\Gamma_1, \hdots, \Gamma_m$ represent the different isomorphism classes, where $m := m(H_1, H_2)$ is some constant.  Although these witnesses can have different numbers of vertices, up to a maximum of $v(H_2) + e(H_2)(v(H_1) - 2)$, the edge-disjointness of the $W_e$ ensures they each have exactly $e(H_1) e(H_2)$ edges.

We will show that for $p = Cn^{-1/m_2(H_1,H_2)}$, if we set $K' := 2 \gamma C^{e(H_1) e(H_2)}$, then for each $i \in [m]$ the probability that there are at most $\gamma K' n^{v(H_2) - e(H_2) / m_2(H_2)}$ copies of $\Gamma_i$ with the central copy of $H_2$ coming from $\mc F_2$ is $O(1/n)$.  Then, taking $K := m K'$, a union bound over the different types of witnesses $\Gamma_i$ shows that the probability of there being more than $\gamma K n^{v(H_2) - e(H_2)/m_2(H_2)}$ viable forbidden copies of $H_2$ is also $O(1/n)$.

Let us fix some witness $\Gamma_i$, where $i \in [m]$, and let $X_i$ denote the number of copies of $\Gamma_i$ in $G(n,p)$ with the central copy of $H_2$ coming from $\mc F_2$.  If we first select this central copy, and then the remaining vertices of $\Gamma_i$, we see there are at most $\card{\mc F_2} n^{v(\Gamma_i) - v(H_2)} = \gamma n^{v(\Gamma_i)}$ possible witnesses of this form, each appearing with probability $p^{e(H_1)e(H_2)}$.  Thus $\mathbb{E}[X_i] \le \gamma n^{v(\Gamma_i)} p^{e(H_1) e(H_2)}$.

We now differentiate between two cases.

\subsubsection*{Case I: $v(\Gamma_i) \le v(H_2) + e(H_2)(v(H_1) - 2) - 1$.}

In this case we have 
\[ \mathbb{E}[X_i] \le \gamma n^{v(H_2) + e(H_2)(v(H_1) - 2) - 1} p^{e(H_1)e(H_2)} = \gamma n^{v(H_2) - 1} \left( n^{v(H_1) - 2} p^{e(H_1)} \right)^{e(H_2)}.\]
Since $n^{v(H_1) - 2} p^{e(H_1)} = C^{e(H_1)} n^{-1/m_2(H_2)}$, we have $\mathbb{E}[X_i] \le \gamma C^{e(H_1) e(H_2)} n^{v(H_2) - e(H_2)/m_2(H_2) - 1}$.  By Markov's inequality, $\mathbb{P}(X_i > \gamma K' n^{v(H_2) - e(H_2)/m_2(H_2)}) = O(1/n)$, as desired.

\subsubsection*{Case II: $v(\Gamma_i) = v(H_2) + e(H_2)(v(H_1) - 2)$.}

In this case the copies $\{ W_e : e \in E(H_2) \}$ of $H_1$ in $\Gamma_i$ are all vertex-disjoint except for where a pair  intersect in precisely one vertex in the central copy of $H_2$.  

The calculation above shows $\mathbb{E}[X_i] \le \gamma C^{e(H_1)e(H_2)} n^{v(H_2) - e(H_2) / m_2(H_2)}$.
We can again use Lemma~\ref{lem:covariance} to bound $\Var(X_i)$, applying the lemma with $H := \Gamma_i$ and $\mc H$ the collection of all possible witnesses of this type for forbidden copies of $H_2$.  As in the proof of Claim~\ref{clm:fewforbidden}, this gives
\begin{align*}
\Var(X_i) \le \Delta + \mathbb{E}[X_i] &\le \frac{\gamma 2^{v(\Gamma_i)} v(\Gamma_i)! (n^{v(\Gamma_i)} p^{e(\Gamma_i)})^2}{\mu_0(\Gamma_i)} + \gamma n^{v(\Gamma_i)} p^{e(\Gamma_i)} \\
	&= O\left( \frac{\left( n^{v(\Gamma_i)} p^{e(\Gamma_i)} \right)^2}{\mu_1(\Gamma_i)} \right) = O\left( \frac{\left(n^{v(H_2) - e(H_2)/m_2(H_2)}\right)^2}{\mu_1(\Gamma_i)}\right),
\end{align*}
where $\mu_1(\Gamma_i)$ is as defined in Theorem~\ref{thm:janson}.  The penultimate equality above then follows from the fact that $\mu_1(\Gamma_i) = \min \{ \mu_0(\Gamma_i), n^{v(\Gamma_i)} p^{e(\Gamma_i)} \}$.
We shall shortly show that $\mu_1(\Gamma_i) = \Omega(n)$.  Chebyshev's inequality then gives the desired bound:
\begin{align*} \mathbb{P} \left( X_i > K' n^{v(H_2) - e(H_2)/m_2(H_2)} \right) &\le \mathbb{P} \left( X_i - \mathbb{E}[X_i] > \gamma C^{e(H_1) e(H_2)} n^{v(H_2) - e(H_2)/m_2(H_2)} \right) \\
	&\le \frac{\Var(X_i)}{\left(\gamma C^{e(H_1)e(H_2)} n^{v(H_2) - e(H_2)/m_2(H_2)} \right)^2} = O \left( \frac{1}{n} \right).
\end{align*}

To finish, we establish a lower bound for $\mu_1(\Gamma_i) = \min \{ n^{v(F)} p^{e(F)} : F \subseteq \Gamma_i, e(F) \ge 1 \}$, and let $F_0$ be the subgraph of $\Gamma_i$ that minimises this expression.  Recall that for each edge $h$ in the central forbidden copy $H \in \mc F_2$ of $H_2$, we have a copy $W_h$ of $H_1$ containing $h$.  Let $F_0(h)$ be the subgraph of $H_1$ induced by $F_0$ on $W_h$, and let $w_h := e(F_0(h))$ be the number of edges it contains.  We then have $e(F_0) = \sum_{h \in H_2} w_h$.

When we count the vertices of $F_0$, we observe that the vertices from the central $H_2$ will belong to several of the $F_0(h)$.  Thus, for each $h \in E(H_2)$, we let $v_h := v(F_0(h))$, and let $r_h \in \{0,1,2\}$ denote the number of the vertices from $h$ that are in $F_0$.  Let $F_0^*$ denote the subgraph of the central $H_2$ contained within $F_0$; in particular, we have $r_h = 2$ if and only if $h \in E(F_0^*)$.  Moreover, $v(F_0) = v(F_0^*) + \sum_{h \in E(H_2)} (v_h - r_h)$.

Thus we have 
\[ \mu_1(\Gamma_i) = n^{v(F_0)} p^{e(F_0)} = n^{v(F_0^*)} \prod_{h \in E(H_2)} n^{v_h - r_h} p^{w_h} \ge  C^{e(F_0)} n^{v(F_0^*)} \prod_{\substack{h \in E(H_2) : \\ w_h \ge 1}} n^{v_h - r_h - w_h/m_2(H_1, H_2)}. \]
For each $h$ in the final product, since $F_0(h) \subseteq H_1$, the definition of $m_2(H_1, H_2)$ implies $w_h \le (v_h - 2 + 1/m_2(H_2))m_2(H_1, H_2)$.  Therefore
\[ \mu_1(\Gamma_i) \ge C^{e(F_0)} n^{v(F_0^*)} \prod_{\substack{h \in E(H_2): \\ w_h \ge 1}} n^{2 - r_h - 1/m_2(H_2)} \ge C^{e(F_0)} n^{v(F_0^*) + \card{\{ h \in E(H_2): w_h \ge 1, r_h = 0 \}} - e(F_0^*)/m_2(H_2)}, \]
where the last inequality is due to the fact that, since $m_2(H_2) \ge 1$, the exponent $2 - r_h - 1/m_2(H_2)$ is at least $1$ if $r_h = 0$, is non-negative when $r_h = 1$, and is $-1/m_2(H_2)$ when $r_h = 2$ or, equivalently, $h \in E(F_0^*)$.

If $e(F_0^*) \ge 1$ (and therefore $v(F_0^*) \ge 2$), then by definition of $m_2(H_2)$ we have $e(F_0^*) \le (v(F_0^*) - 2) m_2(H_2) + 1$. Substituting this into the exponent, we find $n^{v(F_0^*) - e(F_0^*)/m_2(H_2)} \ge n^{2 - 1/m_2(H_2)} \ge n$, and hence $\mu_1(\Gamma_i) = \Omega(n)$.

Therefore we may assume $e(F_0^*) = 0$, in which case $\mu_1(\Gamma_i) = \Omega(n^{v(F_0^*) + \card{\{ h \in E(H_2): w_h \ge 1, r_h = 0\}}})$.  This exponent is always at least $1$, since either $v(F_0^*) \ge 1$, or $r_h = 0$ for all $h \in E(H_2)$ and, since $e(F_0) \ge 1$, we must have $w_h \ge 1$ for some $h$ as well.  Thus we indeed have $\mu_1(\Gamma_i) = \Omega(n)$, completing the proof of this claim.
\end{proof}

\subsubsection{Random graphs are globally Ramsey}

Having proven part (i) of Theorem~\ref{thm:global1ramsey}, we finally  deduce the global Ramsey properties of $G(n,p)$ from the above proofs, thereby establishing the remaining parts (ii) and (iii).  Here we wish to show that, for appropriate probabilities, $G(n,p)$ is $\mu$-globally $(H_1, H_2)$-Ramsey, for $\mu > 0$ constant and $\mu = n^{-\beta}$ respectively.

\medskip

We begin with the assumption (a), that $m_2(H_1) = m_2(H_2)$.  For (ii), let $\mu > 0$ be a fixed constant, and set $C_2 := C_1 \mu^{-1/m_2(H_1, H_2)}$, where $C_1$ is the constant from part (i).  To show that $G(n,p)$ is $\mu$-globally $(H_1, H_2)$-Ramsey when $p \ge C_2 n^{-1/m_2(H_1, H_2)}$, it will suffice to apply the union bound over all subsets $U \subseteq [n]$ of $n_0 := \mu n$ vertices.

Indeed, for any such set $U$, we have $G(n,p)[U] \sim G(n_0,p)$, where $p = C_2 n^{-1/m_2(H_1, H_2)} = C_1 n_0^{-1/m_2(H_1, H_2)}$.  By~\eqref{eqn:errorbounda}, the probability that $G(n,p)[U]$ is not $(H_1, H_2)$-Ramsey is at most $\exp \left( - C_1 \pi n_0^{2 - 1/m_2(H_1)} / 8 \right) + o(1)$, where $\pi = \pi(H_1, H_2, K_2)$ from Proposition~\ref{prop:supersatramsey}.

However, the $o(1)$ error term came from the event $\mc E_0$, where many forbidden copies of $H_i$ appeared in $G(n_0,p)$.  Since we do not have forbidden copies in our current setting, we do not incur this error.  Thus, recalling that $m_2(H_1) \ge 1$, and assuming, as we may freely do, $C_1 \ge 8 \pi \mu^{-1}$, we see that the probability that $G(n,p)[U]$ is not $(H_1, H_2)$-Ramsey is at most $\exp(-n)$.  We may therefore take a union bound over all such sets $U$, of which there are fewer than $2^n$, which shows  that $G(n,p)$ is $\mu$-globally $(H_1, H_2)$-Ramsey with high probability.

\medskip

For part (iii), we instead set $n_0 := n^{1 - \beta}$, and set $C_3 := C_1$.  Letting $U \subseteq [n]$ be a set of $n_0$ vertices, we have $G(n,p)[U] \sim G(n_0, p)$, where $p \ge C_1 n_0^{-1/m_2(H_1, H_2)}$.  As above, the probability that $G(n,p)[U]$ is not $(H_1, H_2)$-Ramsey is at most $\exp( - C_1 \pi n_0^{2 - 1/m_2(H_1)} / 8)$.

We can now take a union bound over all such sets $U$, of which there are $\binom{n}{n_0} \le \exp \left( n_0 \ln \left( n e / n_0 \right) \right) = \exp ( n_0^{1 + o(1)} )$.  Hence the probability that there is a non-Ramsey subgraph induced on some $U$ is at most $\exp( n_0^{1 + o(1)} - C_1 \pi n_0^{2 - 1/m_2(H_1)} / 8)$.  Since $m_2(H_1) > 1$, it follows that $G(n,p)$ is $n^{-\beta}$-globally $(H_1, H_2)$-Ramsey with high probability.

\medskip

The argument under the assumption (b), that $H_1$ is strictly balanced with respect to $m_2(\cdot, H_2)$, is very similar.  We will again run a union bound over all sets $U$ of size $n_0 := \mu n$
(for constant $\mu >0$ and $\mu=n^{-\beta}$), using the fact that $G(n,p)[U] \sim G(n_0, p)$ and choosing the constants $C_2$ and $C_3$ such that $p \ge C_1 n_0^{-1/m_2(H_1, H_2)}$.

In~\eqref{eqn:errorboundb} we bounded from above the probability of $G(n,p)$ not being robustly $(H_1, H_2)$-Ramsey.  Since we do not have collections of forbidden copies of $H_1$ and $H_2$, we can again omit the error term from the event $\mc E_0$ of there being too many viable forbidden copies of $H_2$.  We then have
\[ \mathbb{P}(G(n,p)[U] \textrm{ is not } (H_1, H_2)\textrm{-Ramsey} ) \le \mathbb{P}(\mc E_1) + \sum_{T,L} \mathbb{P}( \mc E_2 (T) \wedge \mc E_3(T,L) ). \]

Recall that $\mc E_1$ is the event that we have more than $n^{2 - 1/m_2(H_2) - \delta}$ non-isolated copies of $H_1$.  In Claim~\ref{clm:fewnoniso} we only obtained polynomially small bounds on its probability, and so we cannot afford to take a union bound over this event holding for each induced subgraph $G(n,p)[U]$.  Instead, we replace the event by $\mc E_1'$, the event that $G(n,p)$ has more than $n_0^{2 - 1/m_2(H_2) - \delta}$ non-isolated copies of $H_1$.  Since a non-isolated copy of $H_1$ in $G(n,p)[U]$ must be non-isolated in $G(n,p)$ as well, this modified event would suffice for our purposes, and we can avoid the union bound for $\mc E_1$.

To see that $\mc E_1'$ holds with vanishingly small probability, observe that in Claim~\ref{clm:fewnoniso}, the expected number of pairs of intersecting copies of $H_1$, which we bounded by $O \left( n^{2 - 1/m_2(H_1,H_2) - 2 \delta} \right)$, was polynomial in $n$ and $p$.  Hence when we increase $p$ to either $C_2 n^{-1/m_2(H_1, H_2)}$ or $C_3 n^{-(1- \beta)/m_2(H_1, H_2)}$, provided $\beta < \beta_0(H_1, H_2)$, this expected number will grow to at most $O \left( n^{2 - 1/m_2(H_1, H_2) - 5 \delta / 3} \right)$.  Similarly, since $n_0 \ge n^{1 - \beta_0}$, we have $n_0^{2 - 1/m_2(H_2) - \delta} = \Omega \left( n^{2 - 1/m_2(H_2) - 4 \delta / 3} \right)$.  Thus we can still apply Markov's inequality to deduce $\mathbb{P}(\mc E_1') = o(1)$.

\medskip

This leaves us with the sum $\sum_{T,L} \mathbb{P}\left( \mc E_2(T) \wedge \mc E_3(T,L) \right)$, which we saw in~\eqref{eqn:E2E3sum} can be bounded by $\exp(- \Omega\left(n^{2 - 1/m_2(H_2)}\right)$.  As under the assumption (a), we can afford to take a union bound over all $\binom{n}{n_0}$ choices for the set $U$, and still have the probability that there is some induced subgraph $G(n,p)[U]$ where $\mc E_2(T) \wedge \mc E_3(T,L)$ holds be $o(1)$.  Hence it follows that $G(n,p)$ is with high probability $\mu$-globally $(H_1, H_2)$-Ramsey in this case as well, completing the proof of Theorem~\ref{thm:global1ramsey}. \qed

\end{document}